\newcommand*{\math@version@bold}{bold}
\DeclareMathOperator\DD{%
	\textrm{%
		\usefont{T2A}{cmr}{\ifx\math@version\math@version@bold bx\else m\fi}{n}%
		\CYRD
	}%
} 
\colorlet{darkgreen}{green!70!black}
\definecolor{darkblue}{RGB}{0,0,139}
\definecolor{lightblue}{RGB}{173,216,230}
\colorlet{lightviolet}{violet!30!white}
\colorlet{lightblue}{blue!30!white}
\colorlet{lightred}{red!30!white}
\let\cref\Cref
\Crefname{equation}{}{}
\Crefname{subsection}{Subsection}{Subsection}
\Crefname{enumi}{}{}
\newtheorem{theorem}{Theorem}[section]
\newtheorem{lemma}[theorem]{Lemma}
\newtheorem{corollary}[theorem]{Corollary}
\newtheorem{proposition}[theorem]{Proposition}
\theoremstyle{definition}
\newtheorem{definition}[theorem]{Definition}
\newtheorem{example}[theorem]{Example}
\newtheorem{remark}[theorem]{Remark}
\newcommand{\qua}{\hskip 0.4em \ignorespaces}
\def\arxiv#1{\relax\ifhmode\unskip\qua\fi
\href{http://arxiv.org/abs/#1}%
{\tt arXiv:\penalty -100\unskip#1}}
\def\MR#1{\relax\ifhmode\unskip\qua\fi
\href{http://www.ams.org/mathscinet-getitem?mr=#1}{\tt MR#1}}
\def\ZB#1{\relax\ifhmode\unskip\qua\fi
\href{https://zbmath.org/?q=an:#1}{\tt Zbl\:#1}}
\def\xox#1{\csname xx#1\endcsname}
\renewcommand*{\backrefalt}[4]{%
\ifcase #1 %
No citations.%
\or
Cited on page~#2.%
\else
Cited on pages~#2.%
\fi
}
  \def\unskip{}%
  \def\\{}%
  \def\texttt#1{<#1>}%
  \def\lambda{λ}%
\newcommand{\lambdau}{\lambda^0}
\newcommand{\myqed}{\pushQED{\qed}\qedhere}
\newcommand{\BNr}{{\operatorname{BN}}}%
\newcommand{\Khr}{\widetilde{\operatorname{Kh}}}%
\newcommand{\id}{\operatorname{id}}%
\newcommand{\Cob}{\operatorname{Cob}}%
\newcommand{\Mor}{\operatorname{Mor}}%
\newcommand{\BNAlgH}{\mathcal{B}}%
\newcommand{\F}{\mathbb{F}}
\newcommand{\Z}{\mathbb{Z}}
\newcommand{\Q}{\mathbb{Q}}
\newcommand{\QGrad}[1]{{\textcolor{violet}{#1}}}
\newcommand{\HomGrad}[1]{{\textcolor{black}{#1}}}
\newcommand{\GGzqh}[4]{\prescript{\QGrad{#3}}{}{#1}_\HomGrad{#4}}
\newcommand{\conn}[1]{\operatorname{x}(#1)}
\renewcommand{\theta}{\ensuremath{\vartheta}}
\def\co{\colon\thinspace\relax}%
\newcommand{\textnormal{$\vcenter{\hbox{\def\svgwidth{11.6pt}
\begingroup%
  \makeatletter%
  \providecommand\color[2][]{%
    \errmessage{(Inkscape) Color is used for the text in Inkscape, but the package 'color.sty' is not loaded}%
    \renewcommand\color[2][]{}%
  }%
  \providecommand\transparent[1]{%
    \errmessage{(Inkscape) Transparency is used (non-zero) for the text in Inkscape, but the package 'transparent.sty' is not loaded}%
    \renewcommand\transparent[1]{}%
  }%
  \providecommand\rotatebox[2]{#2}%
  \newcommand*\fsize{\dimexpr\f@size pt\relax}%
  \newcommand*\lineheight[1]{\fontsize{\fsize}{#1\fsize}\selectfont}%
  \ifx\svgwidth\undefined%
    \setlength{\unitlength}{11.61026645bp}%
    \ifx\svgscale\undefined%
      \relax%
    \else%
      \setlength{\unitlength}{\unitlength * \real{\svgscale}}%
    \fi%
  \else%
    \setlength{\unitlength}{\svgwidth}%
  \fi%
  \global\let\svgwidth\undefined%
  \global\let\svgscale\undefined%
  \makeatother%
  \begin{picture}(1,0.99940785)%
    \lineheight{1}%
    \setlength\tabcolsep{0pt}%
    \put(0,0){\includegraphics[width=\unitlength,page=1]{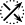}}%
  \end{picture}%
\endgroup%
}}$}}{\textnormal{$\vcenter{\hbox{\def\svgwidth{11.6pt}
\begingroup%
  \makeatletter%
  \providecommand\color[2][]{%
    \errmessage{(Inkscape) Color is used for the text in Inkscape, but the package 'color.sty' is not loaded}%
    \renewcommand\color[2][]{}%
  }%
  \providecommand\transparent[1]{%
    \errmessage{(Inkscape) Transparency is used (non-zero) for the text in Inkscape, but the package 'transparent.sty' is not loaded}%
    \renewcommand\transparent[1]{}%
  }%
  \providecommand\rotatebox[2]{#2}%
  \newcommand*\fsize{\dimexpr\f@size pt\relax}%
  \newcommand*\lineheight[1]{\fontsize{\fsize}{#1\fsize}\selectfont}%
  \ifx\svgwidth\undefined%
    \setlength{\unitlength}{11.61026645bp}%
    \ifx\svgscale\undefined%
      \relax%
    \else%
      \setlength{\unitlength}{\unitlength * \real{\svgscale}}%
    \fi%
  \else%
    \setlength{\unitlength}{\svgwidth}%
  \fi%
  \global\let\svgwidth\undefined%
  \global\let\svgscale\undefined%
  \makeatother%
  \begin{picture}(1,0.99940785)%
    \lineheight{1}%
    \setlength\tabcolsep{0pt}%
    \put(0,0){\includegraphics[width=\unitlength,page=1]{rational1.pdf}}%
  \end{picture}%
\endgroup%
}}$}}
\newcommand{\textnormal{$\vcenter{\hbox{\def\svgwidth{11.6pt}
\begingroup%
  \makeatletter%
  \providecommand\color[2][]{%
    \errmessage{(Inkscape) Color is used for the text in Inkscape, but the package 'color.sty' is not loaded}%
    \renewcommand\color[2][]{}%
  }%
  \providecommand\transparent[1]{%
    \errmessage{(Inkscape) Transparency is used (non-zero) for the text in Inkscape, but the package 'transparent.sty' is not loaded}%
    \renewcommand\transparent[1]{}%
  }%
  \providecommand\rotatebox[2]{#2}%
  \newcommand*\fsize{\dimexpr\f@size pt\relax}%
  \newcommand*\lineheight[1]{\fontsize{\fsize}{#1\fsize}\selectfont}%
  \ifx\svgwidth\undefined%
    \setlength{\unitlength}{11.61026587bp}%
    \ifx\svgscale\undefined%
      \relax%
    \else%
      \setlength{\unitlength}{\unitlength * \real{\svgscale}}%
    \fi%
  \else%
    \setlength{\unitlength}{\svgwidth}%
  \fi%
  \global\let\svgwidth\undefined%
  \global\let\svgscale\undefined%
  \makeatother%
  \begin{picture}(1,0.99940789)%
    \lineheight{1}%
    \setlength\tabcolsep{0pt}%
    \put(0,0){\includegraphics[width=\unitlength,page=1]{rational-1.pdf}}%
  \end{picture}%
\endgroup%
}}$}}{\textnormal{$\vcenter{\hbox{\def\svgwidth{11.6pt}
\begingroup%
  \makeatletter%
  \providecommand\color[2][]{%
    \errmessage{(Inkscape) Color is used for the text in Inkscape, but the package 'color.sty' is not loaded}%
    \renewcommand\color[2][]{}%
  }%
  \providecommand\transparent[1]{%
    \errmessage{(Inkscape) Transparency is used (non-zero) for the text in Inkscape, but the package 'transparent.sty' is not loaded}%
    \renewcommand\transparent[1]{}%
  }%
  \providecommand\rotatebox[2]{#2}%
  \newcommand*\fsize{\dimexpr\f@size pt\relax}%
  \newcommand*\lineheight[1]{\fontsize{\fsize}{#1\fsize}\selectfont}%
  \ifx\svgwidth\undefined%
    \setlength{\unitlength}{11.61026587bp}%
    \ifx\svgscale\undefined%
      \relax%
    \else%
      \setlength{\unitlength}{\unitlength * \real{\svgscale}}%
    \fi%
  \else%
    \setlength{\unitlength}{\svgwidth}%
  \fi%
  \global\let\svgwidth\undefined%
  \global\let\svgscale\undefined%
  \makeatother%
  \begin{picture}(1,0.99940789)%
    \lineheight{1}%
    \setlength\tabcolsep{0pt}%
    \put(0,0){\includegraphics[width=\unitlength,page=1]{rational-1.pdf}}%
  \end{picture}%
\endgroup%
}}$}}
\newcommand{\vcenter{\hbox{\def\svgwidth{8pt}
\begingroup%
  \makeatletter%
  \providecommand\color[2][]{%
    \errmessage{(Inkscape) Color is used for the text in Inkscape, but the package 'color.sty' is not loaded}%
    \renewcommand\color[2][]{}%
  }%
  \providecommand\transparent[1]{%
    \errmessage{(Inkscape) Transparency is used (non-zero) for the text in Inkscape, but the package 'transparent.sty' is not loaded}%
    \renewcommand\transparent[1]{}%
  }%
  \providecommand\rotatebox[2]{#2}%
  \newcommand*\fsize{\dimexpr\f@size pt\relax}%
  \newcommand*\lineheight[1]{\fontsize{\fsize}{#1\fsize}\selectfont}%
  \ifx\svgwidth\undefined%
    \setlength{\unitlength}{11.6198051bp}%
    \ifx\svgscale\undefined%
      \relax%
    \else%
      \setlength{\unitlength}{\unitlength * \real{\svgscale}}%
    \fi%
  \else%
    \setlength{\unitlength}{\svgwidth}%
  \fi%
  \global\let\svgwidth\undefined%
  \global\let\svgscale\undefined%
  \makeatother%
  \begin{picture}(1,0.99995636)%
    \lineheight{1}%
    \setlength\tabcolsep{0pt}%
    \put(0,0){\includegraphics[width=\unitlength,page=1]{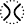}}%
  \end{picture}%
\endgroup%
}}}{\vcenter{\hbox{\def\svgwidth{8pt}
\begingroup%
  \makeatletter%
  \providecommand\color[2][]{%
    \errmessage{(Inkscape) Color is used for the text in Inkscape, but the package 'color.sty' is not loaded}%
    \renewcommand\color[2][]{}%
  }%
  \providecommand\transparent[1]{%
    \errmessage{(Inkscape) Transparency is used (non-zero) for the text in Inkscape, but the package 'transparent.sty' is not loaded}%
    \renewcommand\transparent[1]{}%
  }%
  \providecommand\rotatebox[2]{#2}%
  \newcommand*\fsize{\dimexpr\f@size pt\relax}%
  \newcommand*\lineheight[1]{\fontsize{\fsize}{#1\fsize}\selectfont}%
  \ifx\svgwidth\undefined%
    \setlength{\unitlength}{11.6198051bp}%
    \ifx\svgscale\undefined%
      \relax%
    \else%
      \setlength{\unitlength}{\unitlength * \real{\svgscale}}%
    \fi%
  \else%
    \setlength{\unitlength}{\svgwidth}%
  \fi%
  \global\let\svgwidth\undefined%
  \global\let\svgscale\undefined%
  \makeatother%
  \begin{picture}(1,0.99995636)%
    \lineheight{1}%
    \setlength\tabcolsep{0pt}%
    \put(0,0){\includegraphics[width=\unitlength,page=1]{conn_infty.pdf}}%
  \end{picture}%
\endgroup%
}}}
\newcommand{\vcenter{\hbox{\def\svgwidth{8pt}
\begingroup%
  \makeatletter%
  \providecommand\color[2][]{%
    \errmessage{(Inkscape) Color is used for the text in Inkscape, but the package 'color.sty' is not loaded}%
    \renewcommand\color[2][]{}%
  }%
  \providecommand\transparent[1]{%
    \errmessage{(Inkscape) Transparency is used (non-zero) for the text in Inkscape, but the package 'transparent.sty' is not loaded}%
    \renewcommand\transparent[1]{}%
  }%
  \providecommand\rotatebox[2]{#2}%
  \newcommand*\fsize{\dimexpr\f@size pt\relax}%
  \newcommand*\lineheight[1]{\fontsize{\fsize}{#1\fsize}\selectfont}%
  \ifx\svgwidth\undefined%
    \setlength{\unitlength}{11.61980321bp}%
    \ifx\svgscale\undefined%
      \relax%
    \else%
      \setlength{\unitlength}{\unitlength * \real{\svgscale}}%
    \fi%
  \else%
    \setlength{\unitlength}{\svgwidth}%
  \fi%
  \global\let\svgwidth\undefined%
  \global\let\svgscale\undefined%
  \makeatother%
  \begin{picture}(1,0.99995649)%
    \lineheight{1}%
    \setlength\tabcolsep{0pt}%
    \put(0,0){\includegraphics[width=\unitlength,page=1]{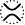}}%
  \end{picture}%
\endgroup%
}}}{\vcenter{\hbox{\def\svgwidth{8pt}
\begingroup%
  \makeatletter%
  \providecommand\color[2][]{%
    \errmessage{(Inkscape) Color is used for the text in Inkscape, but the package 'color.sty' is not loaded}%
    \renewcommand\color[2][]{}%
  }%
  \providecommand\transparent[1]{%
    \errmessage{(Inkscape) Transparency is used (non-zero) for the text in Inkscape, but the package 'transparent.sty' is not loaded}%
    \renewcommand\transparent[1]{}%
  }%
  \providecommand\rotatebox[2]{#2}%
  \newcommand*\fsize{\dimexpr\f@size pt\relax}%
  \newcommand*\lineheight[1]{\fontsize{\fsize}{#1\fsize}\selectfont}%
  \ifx\svgwidth\undefined%
    \setlength{\unitlength}{11.61980321bp}%
    \ifx\svgscale\undefined%
      \relax%
    \else%
      \setlength{\unitlength}{\unitlength * \real{\svgscale}}%
    \fi%
  \else%
    \setlength{\unitlength}{\svgwidth}%
  \fi%
  \global\let\svgwidth\undefined%
  \global\let\svgscale\undefined%
  \makeatother%
  \begin{picture}(1,0.99995649)%
    \lineheight{1}%
    \setlength\tabcolsep{0pt}%
    \put(0,0){\includegraphics[width=\unitlength,page=1]{conn_0.pdf}}%
  \end{picture}%
\endgroup%
}}}
\newcommand{\vcenter{\hbox{\def\svgwidth{8pt}
\begingroup%
  \makeatletter%
  \providecommand\color[2][]{%
    \errmessage{(Inkscape) Color is used for the text in Inkscape, but the package 'color.sty' is not loaded}%
    \renewcommand\color[2][]{}%
  }%
  \providecommand\transparent[1]{%
    \errmessage{(Inkscape) Transparency is used (non-zero) for the text in Inkscape, but the package 'transparent.sty' is not loaded}%
    \renewcommand\transparent[1]{}%
  }%
  \providecommand\rotatebox[2]{#2}%
  \newcommand*\fsize{\dimexpr\f@size pt\relax}%
  \newcommand*\lineheight[1]{\fontsize{\fsize}{#1\fsize}\selectfont}%
  \ifx\svgwidth\undefined%
    \setlength{\unitlength}{11.61026645bp}%
    \ifx\svgscale\undefined%
      \relax%
    \else%
      \setlength{\unitlength}{\unitlength * \real{\svgscale}}%
    \fi%
  \else%
    \setlength{\unitlength}{\svgwidth}%
  \fi%
  \global\let\svgwidth\undefined%
  \global\let\svgscale\undefined%
  \makeatother%
  \begin{picture}(1,0.99940785)%
    \lineheight{1}%
    \setlength\tabcolsep{0pt}%
    \put(0,0){\includegraphics[width=\unitlength,page=1]{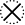}}%
  \end{picture}%
\endgroup%
}}}{\vcenter{\hbox{\def\svgwidth{8pt}
\begingroup%
  \makeatletter%
  \providecommand\color[2][]{%
    \errmessage{(Inkscape) Color is used for the text in Inkscape, but the package 'color.sty' is not loaded}%
    \renewcommand\color[2][]{}%
  }%
  \providecommand\transparent[1]{%
    \errmessage{(Inkscape) Transparency is used (non-zero) for the text in Inkscape, but the package 'transparent.sty' is not loaded}%
    \renewcommand\transparent[1]{}%
  }%
  \providecommand\rotatebox[2]{#2}%
  \newcommand*\fsize{\dimexpr\f@size pt\relax}%
  \newcommand*\lineheight[1]{\fontsize{\fsize}{#1\fsize}\selectfont}%
  \ifx\svgwidth\undefined%
    \setlength{\unitlength}{11.61026645bp}%
    \ifx\svgscale\undefined%
      \relax%
    \else%
      \setlength{\unitlength}{\unitlength * \real{\svgscale}}%
    \fi%
  \else%
    \setlength{\unitlength}{\svgwidth}%
  \fi%
  \global\let\svgwidth\undefined%
  \global\let\svgscale\undefined%
  \makeatother%
  \begin{picture}(1,0.99940785)%
    \lineheight{1}%
    \setlength\tabcolsep{0pt}%
    \put(0,0){\includegraphics[width=\unitlength,page=1]{conn_x.pdf}}%
  \end{picture}%
\endgroup%
}}}
\DeclareMathOperator{\Char}{char}
\tikzstyle{mor}=[%
\newsavebox\justabox
\newsavebox\OrangeBox
\newsavebox\BlueBox
\newsavebox\GreenBox
\newsavebox\bluepiece
\newsavebox\greenpiece
\newsavebox\theotherpieces
\newsavebox\thepieces
\newsavebox\piecesTwoN
\newsavebox\HomologyTorusThreeNPlusOne
\newsavebox\HomologyTorusThreeNPlusTwo
\newsavebox\HomologyTorusTwoNPlusOne
\title[Khovanov homology and refined bounds for Gordian distances]{Khovanov homology \break and refined bounds for Gordian distances}
\newcommand{\myemail}[1]{\href{mailto:#1}{#1}}
\author{Lukas Lewark}
\address{ETH Z\"urich, R\"amistrasse 101, 8092 Z\"urich, Switzerland}
\email{\myemail{llewark@math.ethz.ch}}
\urladdr{\url{https://people.math.ethz.ch/~llewark/}}
\author{Laura Marino}
\address{Institut de Mathématiques de Jussieu -- Paris Rive Gauche (IMJ-PRG)}
\email{\myemail{laura.marino@imj-prg.fr}}
\urladdr{\url{https://sites.google.com/view/laura--marino/}}
\author{Claudius Zibrowius}
\address{Fakultät für Mathematik, Ruhr-Universität Bochum, Universitätsstraße 150, 44780 Bochum, Germany}
\email{\myemail{claudius.zibrowius@rub.de}}
\urladdr{\url{https://cbz20.raspberryip.com/}}
\newcommand{\vc}[1]{\vcenter{\hbox{#1}}}%
\newcommand{\mypic}[2]{%
  \newcommand{#2}{%
    \vc{%
      \includegraphics[page=#1]%
      {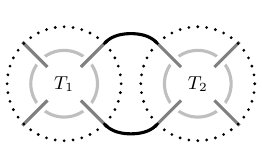}%
    }%
  }%
}%
\begin{document}

\begin{abstract}
From Khovanov homology, we extract a new lower bound for the Gordian distance of knots, which combines and strengthens the previously existing bounds coming from Rasmussen invariants and from torsion invariants. We also improve the bounds for the proper rational Gordian distance.
\end{abstract}

\maketitle

\section{Introduction}\label{sec:intro}

The historically first and still most famous geometric application of Khovanov homology is
the lower bound for the smooth slice genus
given by the Rasmussen invariant~\cite{RasmussenSlice}.
Recent years, however, have seen the rise of a new flavor of applications,
such as the lower bounds for the unknotting number given by Alishahi~\cite{zbMATH07178864} and Alishahi--Dowlin~\cite{zbMATH07005602}.
Let us take the perspective of the universal version of Khovanov homology introduced by Naot~\cite{naot} based on Bar-Natan's homology~\cite{BarNatanKhT},
which unifies the likes of Lee homology,
Bar-Natan homology, and both unreduced and reduced Khovanov homology.
This homology theory associates with a knot $K$ the homotopy class
of a graded chain complex $\BNr(K)$ over the graded polynomial ring~$\Z[G]$
in one variable $G$.
The Rasmussen invariant is then determined by the \emph{free} part of $H_*(\BNr(K) \otimes_{\Z[G]} \mathbb{Q}[G])$,
whereas Alishahi--Dowlin's invariant is determined by the \emph{torsion} part of $H_*(\BNr(K) \otimes_{\Z[G]} \mathbb{Q}[G])$.

Just as the Rasmussen invariant was modelled on the Floer theoretic $\tau$-invariant~\cite{osz10,phdrasmussen},
so the Khovanov `torsion invariants' were inspired by Alishahi--Eftekhary's torsion invariants from knot Floer homology~\cite{zbMATH07262229,zbMATH07305772}.
One fascinating feature of torsion invariants in both the Khovanov and the Floer setting
is their versatility. They provide lower bounds for a multitude of different geometric quantities:
not just for the unknotting number and the Gordian distance,
but also for the number of maxima in orientable cobordisms (and thus for the bridge index)~\cite{zbMATH07311838,zbMATH07527797},
for the number of maxima in non-orientable cobordisms~\cite{zbMATH07786943},
for the ribbon distance~\cite{zbMATH07195384,gujral},
for the Turaev genus~\cite{zbMATH07608601},
and for the proper rational unknotting number~\cite{lambda1,2203.09319}.

In this paper, we extract from $\BNr$ a new lower bound $\lambdau(K,J)$ for the
\emph{Gordian distance} $u(K,J)$ between knots $K$, $J$, which is the minimal number of crossing changes required to transform $K$ into $J$.
\begin{restatable}{theorem}{thmlambdagordian}\label{thm:lambda-gordian}
	For any two knots \(K\) and~\(J\),
	\[
	\lambdau(K,J) \leq u(K,J).
	\]
\end{restatable}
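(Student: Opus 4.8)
The plan is to control how the Bar--Natan complex changes under a single crossing change and then to concatenate along an optimal unknotting sequence.

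\emph{Step 1: a crossing change is a ``$G^1$-equivalence''.} Suppose $K'$ is obtained from $K$ by changing one crossing, which I may assume is a crossing $c$ of a fixed diagram $D$ of $K$, with $D'$ the modified diagram. Resolving $c$ presents both $\BNr(K)$ and $\BNr(K')$, up to homological and quantum shifts, as mapping cones of the two saddle cobordisms at $c$: writing $s\colon\KhTl{D_0}\to\KhTl{D_1}$ and $\bar s\colon\KhTl{D_1}\to\KhTl{D_0}$ for the saddle read in the two directions, one has $\BNr(K)\simeq\operatorname{Cone}(s)$ and $\BNr(K')\simeq\operatorname{Cone}(\bar s)$. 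First I would write down the standard change-of-cone chain maps $f\colon\BNr(K)\to\BNr(K')$ and $g\colon\BNr(K')\to\BNr(K)$, assembled from the identity maps on $\KhTl{D_0}$ and $\KhTl{D_1}$ together with $s$ and $\bar s$. Then $g\circ f$ and $f\circ g$ are homotopic to endomorphisms of $\BNr(K)$, resp.\ $\BNr(K')$, built from the ``tube along $c$'' cobordisms on the two resolutions; by neck-cutting such a tube equals the sum of the two dot actions at $c$ minus $G$, and over $\Z[G]$ with $G$ inverted this is an isomorphism whose inverse has $G$-denominator $1$. Recording the homological and quantum shifts between the two cone presentations --- the quantum discrepancy being exactly the $\pm1$ that realises $|s(K)-s(K')|\le 2$ --- one reads off from the definition of $\lambdau$ that $\lambdau(K,K')\le 1$.

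\emph{Step 2: concatenation.} If $u(K,J)=n$, I choose a chain of single crossing changes $K=K_0\rightsquigarrow K_1\rightsquigarrow\dots\rightsquigarrow K_n=J$. Composing the maps of Step~1 gives $F=f_n\circ\dots\circ f_1\colon\BNr(K)\to\BNr(J)$ and $F'=g_1\circ\dots\circ g_n$ in the reverse direction, with $F'\circ F$ and $F\circ F'$ homotopic to products of the $n$ tube endomorphisms, hence to $G$-equivalences of $G$-denominator at most $n$ and total shift at most $n$ in either direction. Since $\lambdau(K,J)$ is bounded above by the cost of any $G$-equivalence between $\BNr(K)$ and $\BNr(J)$, this yields $\lambdau(K,J)\le n=u(K,J)$; if $\lambdau$ is defined directly as an infimum over such $G$-equivalences, the subadditivity used here is automatic.

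\emph{The hard part.} The substance is Step~1: producing $f$ and $g$ over $\Z[G]$ with the asserted composites, and --- above all --- pinning down every homological and quantum shift so that the single invariant $\lambdau$, which is engineered to refine \emph{both} the torsion bound and the Rasmussen bound at once, is genuinely controlled by a $G^1$-equivalence rather than, say, a $G^2$-equivalence (which the naive genus-$1$ crossing-change cobordism would give). Along the way one must track coefficients carefully (where the torsion sits when passing between $\Z[G]$ and $\Q[G]$) and justify discarding acyclic summands so that the invertibility statement for the tube endomorphism after inverting $G$ is legitimate. Once Step~1 is established, Step~2 and the comparison with the classical bounds are purely formal.
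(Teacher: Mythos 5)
Your global strategy (reduce to a single crossing change, then concatenate via the triangle inequality for $\lambdau$) matches the paper's, and Step~2 is fine. The problem is that Step~1 is not actually proved: you explicitly defer ``pinning down every homological and quantum shift'' to ``the hard part'', but that bookkeeping \emph{is} the theorem. The fact that a crossing change yields maps $f,g$ with $g\circ f\simeq G\cdot\id$ and $f\circ g\simeq G\cdot\id$ is the known Alishahi--Dowlin-type input; the new content here is that $f$ and $g$ can be taken to be homogeneous, to preserve the homological grading, and to have quantum degrees $0$ and $-2$ (in particular both $\leq 0$), which is what makes them count for $\lambdau$ rather than only for the ungraded torsion bound. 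Your cone-of-saddles presentation makes this delicate, because the naive ``identity'' components $\KhTl{D_0}[1]\to\KhTl{D_0}$ and $\KhTl{D_1}\to\KhTl{D_1}[1]$ shift homological degree in opposite directions, and one must check that the global $(t,q)$-shifts relating $\BNr(K_\pm)$ to the two cones (which change because $n_\pm$ change) exactly compensate. None of this is carried out, and the quoted ``$\pm1$ quantum discrepancy'' does not match the required even quantum degrees.

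A second, independent gap: ``by neck-cutting the tube equals the sum of the two dot actions minus $G$, and after inverting $G$ this is an isomorphism with $G$-denominator $1$'' does not give what the definition of $\lambdau$ demands. You need $g\circ f$ to be chain homotopic to $G\cdot\id$ itself, not merely to a map that becomes invertible after localisation; passing from the tube endomorphism $X_{p_1}+X_{p_2}\pm G$ to $\pm G\cdot\id$ on the \emph{reduced} complex $\BNr(K)$ requires a basepoint-moving argument and is exactly where the paper invokes the nullhomotopy $a_{\conn{T}}\cdot\id_{\DD(T)}\simeq 0$ (\cref{lem:nullhomotopic_basepoint_action_from_connectivity}). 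The paper sidesteps all of the global bookkeeping by working locally: it computes homogeneous maps $1$ and $D_\bullet$ with explicit bidegrees between the two-generator complexes $\DD(\NCr)$ and $\DD(\PCr)$ (\cref{lem:lambda_crossing_change}), and then transports them to $\BNr(\NCr\cup T)$ and $\BNr(\PCr\cup T)$ via the Pairing Theorem together with the decomposition $G=b_{\cinfty}+b_{\czero}+b_{\cx}$ (\cref{lem:maps-tangles-to-links}). To complete your proof you would either need to reproduce that grading analysis within the cone picture, or adopt the tangle-local route.
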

As we shall see, $\lambdau$ subsumes both the `free' Rasmussen invariant and the `torsion' Alishahi--Dowlin invariant, i.e.\ $\lambdau(K,J)$ is greater than or equal to the lower bounds for $u(K,J)$ given by either of those two invariants. In fact, $\lambdau$ appears to be the strongest known lower bound for $u$ coming from Khovanov homology.
For example, $\lambdau(T(3,4), T(2,9)) = 2$ (see \cref{thm:pairs_torus_knots}), whereas the Rasmussen invariant and all other previously known lower bounds from Khovanov homology only yield ${u(T(3,4), T(2,9)) \geq 1}$.

The essential ingredient in the proof of \cref{thm:lambda-gordian}
is the construction of the following chain maps:
Given two knots $K_+$~and~$K_-$ related by a crossing change,
there are chain maps 
\[
\begin{tikzcd}
	\BNr(K_+)
	\arrow[bend left=5]{r}{f}
	\arrow[bend right=5,leftarrow,swap]{r}{g}
	&
	\BNr(K_-)
\end{tikzcd}
\]
such that either composition is homotopic to $G$ times the identity chain map,
i.e.~$g\circ f \simeq G \cdot \id_{\BNr(K_+)}$ and $f\circ g \simeq G \cdot \id_{\BNr(K_-)}$.
Similar constructions are at the heart of all of the Khovanov `torsion' lower bounds for the Gordian distance, starting with~\cite{zbMATH07178864,zbMATH07005602}.
However, previous work ignored the behaviour of $f$ and $g$ vis-à-vis the gradings.
We prove that those chain maps indeed preserve the homological grading and change the quantum grading by $-2$ and~$0$, respectively.
This motivates the definition of  $\lambdau(K, J)$ as
the minimal $n\geq 0$ such that
there are homogeneous chain maps 
\[
\begin{tikzcd}
	\BNr(K)
	\arrow[bend left=5]{r}{f}
	\arrow[bend right=5,leftarrow,swap]{r}{g}
	&
	\BNr(J)
\end{tikzcd}
\]
preserving the homological grading and not increasing the quantum grading,
with $g\circ f \simeq G^n \cdot \id_{\BNr(K)}$ and $f\circ g \simeq G^n \cdot \id_{\BNr(J)}$.

Dropping the condition that $f$ and $g$ do not increase the quantum grading
results in our definition of $\lambda(K, J)$. 
Naturally, this integer is less than or equal to~$\lambdau(K, J)$.
However, it is not just a lower bound for the Gordian distance~$u(K, J)$,
but for the \emph{proper rational Gordian distance~$u_q(K,J)$},
i.e.~the minimal number of proper rational tangle replacements relating $K$ and~$J$
(see \cref{def:proper_rational_gordian}).

\begin{restatable}{theorem}{thmlambdarational}\label{thm:lambda_proper_rational_gordian}
    For any two knots \(K\) and~\(J\),
    \[
    \lambda(K,J) \leq u_q(K,J).
    \]
\end{restatable}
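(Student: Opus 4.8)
The plan is to follow the architecture of the proof of \cref{thm:lambda-gordian}: reduce to a single local move, and then produce the relevant chain maps via the Bar-Natan invariant of $4$-ended tangles. This also extends the argument used for the proper rational unknotting number in \cite{lambda1,2203.09319}.

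\textbf{Reduction to one move.} First I would record that $\lambda$ satisfies the triangle inequality, $\lambda(K,J)\le\lambda(K,L)+\lambda(L,J)$, with $\lambda(K,K)=0$. This is formal: given homogeneous chain maps $f_i$, $g_i$ realising $\lambda(K,L)=n_1$ and $\lambda(L,J)=n_2$, the composites $f_2\circ f_1$ and $g_1\circ g_2$ are again homogeneous, still preserve the homological grading, and — since $G$ is central and all maps are $\Z[G]$-linear — satisfy $(g_1\circ g_2)\circ(f_2\circ f_1)=g_1\circ(g_2\circ f_2)\circ f_1\simeq G^{n_2}\cdot(g_1\circ f_1)\simeq G^{n_1+n_2}\cdot\id$, and symmetrically. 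Since $u_q(K,J)$ is, by \cref{def:proper_rational_gordian}, the length of a shortest chain $K=L_0,\dots,L_d=J$ in which consecutive knots differ by a single proper rational tangle replacement, it remains to show that a proper rational tangle replacement relating two knots $L$ and $L'$ forces $\lambda(L,L')\le 1$.

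\textbf{Localising.} A proper rational tangle replacement is supported in a ball, so $L=\KhTl{T}\cup\KhTl{R}$ and $L'=\KhTl{T}\cup\KhTl{R'}$ for a common exterior $4$-ended tangle $T$ and rational tangles $R$, $R'$. I would then invoke the gluing theorem for the Bar-Natan invariant: the homotopy type of $\BNr$ of the closure is recovered by pairing $\KhTl{T}$ with $\KhTl{R}$ over the algebra $\BNAlgH$, functorially in each factor, and the homological grading is additive under this pairing. Hence it suffices to produce homogeneous morphisms $\phi\colon\KhTl{R}\to\KhTl{R'}$ and $\psi\colon\KhTl{R'}\to\KhTl{R}$ that preserve the homological grading and satisfy $\psi\circ\phi\simeq G\cdot\id$ and $\phi\circ\psi\simeq G\cdot\id$; pairing each with $\id_{\KhTl{T}}$ then delivers the desired $f$ and $g$. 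Note that here the weaker hypotheses defining $\lambda$ — as opposed to $\lambdau$ — are exactly what make the step manageable: we need not track the quantum degrees of $\phi$ and $\psi$.

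\textbf{The local model (the crux).} What remains is the construction of $\phi$ and $\psi$ for a pair of rational tangles related by a proper replacement. The enabling input is that $\KhTl{R}$ and $\KhTl{R'}$ are homotopy equivalent to single embedded curves in the $4$-punctured sphere $\FourPuncturedSphereKh$ carrying the trivial local system, of the slopes determined by $R$ and $R'$; in particular their morphism complexes over $\BNAlgH$ are directly computable from the two curves. I expect the properness hypothesis to be precisely the condition ensuring that $G\cdot\id$ — rather than a higher power of $G$ — factors through this pair: one should be able to normalise, via a self-homeomorphism of $\FourPuncturedSphereKh$ (that is, a change of rational coordinates on the exterior), so that the pair of curves, and hence the pair of tangles, takes a standard shape amenable to direct computation, and then read off $\phi$, $\psi$ and verify the two composition identities using the delooping and neck-cutting relations in the Bar-Natan cobordism category $\Cob$. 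Pinning this down — that properness is the correct hypothesis, and that the homological grading and the factorisation through $G$ transport as claimed — is the main obstacle; everything upstream is bookkeeping. Should the normalisation prove awkward, the fallback is to compute $\Mor_{\BNAlgH}(\KhTl{R},\KhTl{R'})$ and $\Mor_{\BNAlgH}(\KhTl{R'},\KhTl{R})$ directly from the explicit curves and exhibit homological-degree-$0$ elements that pair to $G\cdot\id$ on both sides.
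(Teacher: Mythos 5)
Your overall architecture --- triangle inequality, localisation to a single replacement, an explicit tangle-level computation, and transport through the pairing theorem --- is the same as the paper's. But the crux step is formulated incorrectly, and as stated it would fail. You ask for homological-grading-preserving morphisms $\phi\colon\DD(R)\to\DD(R')$ and $\psi\colon\DD(R')\to\DD(R)$ with $\psi\circ\phi\simeq G\cdot\id$ and $\phi\circ\psi\simeq G\cdot\id$ \emph{at the tangle level}. Such maps do not exist even for a single crossing change: with $\DD(Q_1)=[\GGzqh{\circ}{0}{-2}{-1}\xrightarrow{S}\GGzqh{\bullet}{0}{-1}{0}]$ and $\DD(Q_{-1})=[\GGzqh{\bullet}{0}{1}{0}\xrightarrow{S}\GGzqh{\circ}{0}{2}{1}]$, any grading-preserving $\phi$ vanishes on the $\circ$-generator of $\DD(Q_1)$ (the other complex has nothing in homological degree $-1$), so $\psi\circ\phi$ vanishes there; but $G\cdot\id$ restricted to that generator is $S_\bullet S_\circ-D_\circ$, and the $D_\circ$-component is not of the form $h\circ S_\circ$, hence cannot be homotoped away. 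This is precisely the obstruction recorded as $\lambda_{D_\circ}(\NCr,\PCr)=\infty$ in \cref{lem:lambda_crossing_change}, and it also defeats your fallback of exhibiting degree-$0$ elements of the morphism complexes pairing to $G\cdot\id$.

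What is actually true locally is the weaker identity $\psi\circ\phi\simeq D_\bullet\cdot\id$ (\cref{lem:lambda_rational_tangles}), where $D_\bullet=a_{\Ni}$ is the element of $Z(\BNAlgH)$ determined by the connectivities: properness gives $\conn{R}=\conn{R'}$, and $a$ is chosen so that $\{\conn{R},\conn{T},x_a\}=\{\Ni,\ConnectivityX,\No\}$. The upgrade from $D_\bullet^n\cdot\id$ to $G^n\cdot\id$ happens only after gluing in the exterior tangle $T$, and it requires a genuine extra argument (\cref{lem:maps-tangles-to-links}): one writes $G^n=\sum_x b_x^n$ with $x$ ranging over the three connectivities and uses $a_{\conn{R}}\cdot\id_{\DD(R)}\simeq 0$ and $a_{\conn{T}}\cdot\id_{\DD(T)}\simeq 0$ (Bar-Natan's basepoint-moving lemma, \cref{lem:nullhomotopic_basepoint_action_from_connectivity}) to kill the two superfluous summands inside $\Mor(-\DD(T),\DD(R))$. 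So your plan needs two repairs: (i) replace the target $G\cdot\id$ of the local model by $a_x\cdot\id$ for the connectivity-determined $a_x$ --- this is where properness genuinely enters, not merely as a normalisation device; and (ii) prove the gluing statement that converts $a_x^n\cdot\id$ into $G^n\cdot\id$ on the closed-up complexes, rather than pairing with $\id_{\DD(T)}$ and expecting the relation to transport verbatim. With those corrections, the remaining local computation via the zigzag description of $\DD(Q_{\nicefrac{p}{q}})$ and the identification of $\mathrm{End}(\DD(Q_{\nicefrac{p}{q}}))$ through the pairing theorem proceeds as you envisage.
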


We show that~$\lambda(K,U)$, where $U$ denotes the unknot, is bounded from below by half of the difference between the Rasmussen invariants $s_{\mathbb{F}}(K)$ over different fields~$\mathbb{F}$.
In particular, this implies the inequality
\[
\frac{|s_{\mathbb{F}}(K) - s_{\mathbb{F}'}(K)|}2 \leq u_q(K,U)
\]
for any fields \(\mathbb{F}\) and \(\mathbb{F}'\),
which was previously only known for the unknotting number $u(K,U)$ in lieu of the proper rational unknotting number~$u_q(K,U)$.

The new invariant $\lambda$ is greater than or equal to the invariant (also called $\lambda$) introduced by the first two authors and Iltgen~\cite{lambda1}, which had a similar definition, but without requiring the chain maps $f$ and $g$ to be homogeneous.
As a consequence of the preservation of the homological grading by the chain maps in the definition of the new $\lambda$, the difference of torsion orders \emph{in each individual homological degree} can be seen to be a lower bound for $\lambda$.

\begin{proposition}\label{prop:localtorsion}
For a knot \(K\) and an integer \(i\), let \(\mathfrak{u}_i(K)\) be the maximal \(G\)-torsion order
of \(\BNr(K)\) in homological grading \(i\). Then for all knots \(K\) and \(J\), 
\[
\max_{i\in\mathbb{Z}} |\mathfrak{u}_i(K) - \mathfrak{u}_i(J)| \leq \lambda(K,J).
\]
\end{proposition}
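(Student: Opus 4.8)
The plan is to isolate the one property of the chain maps in the definition of \(\lambda\) that matters here — preservation of the homological grading — and to run an elementary \(G\)-torsion estimate separately in each homological degree. Set \(n := \lambda(K,J)\) and fix homogeneous chain maps \(f\co\BNr(K)\to\BNr(J)\) and \(g\co\BNr(J)\to\BNr(K)\) preserving the homological grading with \(g\circ f\simeq G^n\cdot\id\) and \(f\circ g\simeq G^n\cdot\id\). Passing to homology yields \(\Z[G]\)-module maps \(f_*\co H_*(\BNr(K))\to H_*(\BNr(J))\) and \(g_*\) in the opposite direction, both of homological degree \(0\), with \(g_*\circ f_* = G^n\cdot\id\) and \(f_*\circ g_* = G^n\cdot\id\). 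Since \(f_*\) and \(g_*\) have homological degree \(0\), for each \(i\) they restrict to \(\Z[G]\)-module maps between the summands \(H_i(\BNr(K))\) and \(H_i(\BNr(J))\) living in homological grading \(i\); these are finitely generated \(\Z[G]\)-modules, so their \(G\)-power-torsion submodules have well-defined exponents, which are precisely \(\mathfrak{u}_i(K)\) and \(\mathfrak{u}_i(J)\).

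Next I would record the following elementary fact. If \(\phi\co A\to B\) and \(\psi\co B\to A\) are maps of \(\Z[G]\)-modules with \(\psi\circ\phi = G^n\cdot\id_A\), and \(x\in A\) satisfies \(G^a x = 0\) but \(G^{a-1}x\neq 0\) with \(a>n\), then \(G^{a}\phi(x)=0\) and \(G^{a-1-n}\phi(x)\neq 0\): indeed \(G^{a-1}x = G^{a-1-n}\cdot\psi(\phi(x)) = \psi\bigl(G^{a-1-n}\phi(x)\bigr)\), so \(G^{a-1-n}\phi(x)=0\) would force \(G^{a-1}x=0\), a contradiction. Thus \(\phi(x)\) is a \(G\)-power-torsion element of \(B\) of torsion order at least \(a-n\).

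Finally I would apply this in homological degree \(i\) with \(A=H_i(\BNr(K))\), \(B=H_i(\BNr(J))\), \(\phi=f_*\), \(\psi=g_*\), choosing \(x\) to realise \(a=\mathfrak{u}_i(K)\) (the case \(\mathfrak{u}_i(K)\le n\) being vacuous). This produces a \(G\)-power-torsion element of \(H_i(\BNr(J))\) of order at least \(\mathfrak{u}_i(K)-n\), so \(\mathfrak{u}_i(J)\ge\mathfrak{u}_i(K)-n\); swapping the roles of \(K\) and \(J\) gives \(\mathfrak{u}_i(K)\ge\mathfrak{u}_i(J)-n\), hence \(|\mathfrak{u}_i(K)-\mathfrak{u}_i(J)|\le n\) for every \(i\), and taking the maximum over \(i\) finishes the proof. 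I do not expect a genuine obstacle: the substance is entirely the homological-grading preservation that was established while setting up the new \(\lambda\), and the only point needing a word of care is the finiteness and attainment of \(\mathfrak{u}_i\), which follows from finite generation of \(\BNr(K)\) over the Noetherian ring \(\Z[G]\).
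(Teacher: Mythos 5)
Your proof is correct and follows essentially the same route as the paper's: restrict the grading-preserving chain maps realising \(\lambda(K,J)\) to each homological degree \(i\), pass to homology, and use \(g_*\circ f_*=G^n\cdot\id\) to compare \(G\)-torsion orders, giving \(\mathfrak{u}_i(J)\geq\mathfrak{u}_i(K)-n\) and symmetrically. The paper phrases the key step as the inequality \(\operatorname{ord}_G(f_{i,*}(a))\geq\operatorname{ord}_G(G^n\cdot a)\geq\operatorname{ord}_G(a)-n\) rather than your explicit contradiction with a maximal-order element, but this is only a cosmetic difference.
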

This lower bound may be strict.
For example, we find $\lambda(T (3, 7), T (2, 13)) = 2$,
whereas $\max_{i\in\mathbb{Z}} |\mathfrak{u}_i(T (3, 7)) - \mathfrak{u}_i(T (2, 13))| = 1$
(and $s_{\mathbb{F}}(T (2, 13)) = s_{\mathbb{F}}(T (3, 7))$ for all fields~$\mathbb{F}$).
Let us showcase an application of the bound.
Writing $c_+(D)$ for the number of positive crossings of a knot diagram $D$, we find the following.
\begin{restatable}{theorem}{thmtorusPRTR}\label{thm:torus_PRTR}
    Let \(K\) be a knot related by a proper rational tangle replacement to \(T(3,3n+1)\) or \(T(3,3n+2)\), for some \(n\geq 1\). 
    Then \(c_+(D)\geq 4n+1\) for any diagram \(D\) of \(K\).
    Moreover, the bound on $c_+(D)$ is optimal for all $n$.
\end{restatable}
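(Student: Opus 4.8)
The plan is to reduce the lower bound to a single structural statement, namely that \(H_{4n+1}\bigl(\BNr(K)\otimes_{\Z[G]}\Q[G]\bigr)\neq 0\), and then to derive it by transporting the corresponding property of the torus knot across the proper rational tangle replacement. For the reduction, recall that the Bar-Natan complex associated with a diagram \(D\) having \(c_+(D)\) positive and \(c_-(D)\) negative crossings is supported in homological gradings \(-c_-(D),\dots,c_+(D)\); since this complex represents \(\BNr(K)\) and homology is a knot invariant, nonvanishing of \(H_{4n+1}\) of \(\BNr(K)\)---equivalently of \(\BNr(K)\otimes_{\Z[G]}\Q[G]\)---forces \(c_+(D)\geq 4n+1\) for every diagram \(D\) of \(K\). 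So it suffices to establish that nonvanishing.

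The key input is the structure of the universal Khovanov homology of the relevant torus knots: in both \(\BNr(T(3,3n+1))\) and \(\BNr(T(3,3n+2))\) the topmost \(G\)-torsion summand lies in homological grading \(4n+1\) and has order at least \(2\); equivalently, writing \(T\) for the relevant torus knot, \(\mathfrak{u}_{4n+1}(T)\geq 2\). The hypothesis that \(K\) is obtained from \(T\) by a single proper rational tangle replacement gives \(u_q(K,T)\leq 1\), hence \(\lambda(K,T)\leq 1\) by \cref{thm:lambda_proper_rational_gordian}, and then \cref{prop:localtorsion} yields
\[
\mathfrak{u}_{4n+1}(K)\;\geq\;\mathfrak{u}_{4n+1}(T)-\lambda(K,T)\;\geq\;2-1\;=\;1 .
\]
Thus \(\BNr(K)\otimes_{\Z[G]}\Q[G]\) carries a nonzero \(G\)-torsion summand in homological grading \(4n+1\), which is exactly what the reduction asked for. (Equivalently, without invoking \cref{prop:localtorsion} as a black box: the homogeneous, homological-grading-preserving chain maps \(f,g\) realizing \(\lambda(K,T)\leq 1\) satisfy \(f\circ g\simeq G^m\cdot\id_{\BNr(T)}\) with \(m\leq 1\); since \(G^m\) acts nontrivially on the order-\(\geq 2\) torsion of \(H_{4n+1}(\BNr(T))\) and this map factors through \(H_{4n+1}(\BNr(K))\), the latter group is nonzero.)

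For the optimality statement I would, for each \(n\geq 1\), produce an explicit knot \(K\) related to \(T(3,3n+1)\) (resp. \(T(3,3n+2)\)) by a proper rational tangle replacement together with a diagram of \(K\) having exactly \(4n+1\) positive crossings---for instance by starting from the standard positive diagram of the torus knot as the closure of \((\sigma_1\sigma_2)^{3n+1}\) and carrying out a rational tangle replacement inside a suitable sub-tangle that cancels the requisite positive crossings---and then verify directly that the replacement is proper and that the crossing count is as claimed. Since the first part already forces \(c_+(D')\geq 4n+1\) for every diagram \(D'\) of this \(K\), such a diagram realizes the bound.

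The main obstacle is the torus-knot input: computing \(\BNr(T(3,3n+1))\) and \(\BNr(T(3,3n+2))\) precisely enough to locate the topmost \(G\)-torsion of order \(\geq 2\) in homological grading exactly \(4n+1\). I would obtain this either inductively, via the unoriented skein long exact sequence relating \(\BNr(T(3,q))\) to the universal homology of closures of shorter \(3\)-braids, tracking both the homological and quantum gradings throughout, or from the tangle decomposition of the closed \(3\)-braid \((\sigma_1\sigma_2)^q\) together with the classification of indecomposable summands over the relevant cobordism algebra. The transport step (\cref{thm:lambda_proper_rational_gordian} and \cref{prop:localtorsion}) and the homological-support argument are short by comparison, and the optimality construction, though explicit, is routine once the right family of diagrams is in hand.
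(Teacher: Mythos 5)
Your lower-bound argument is correct and is essentially the paper's: the paper proves $\lambda(T(3,3n+i),K;\F_2)\leq 1$ via \cref{thm:lambda_proper_rational_gordian}, locates the $G^2$-knight of $\BNr(T(3,3n+i);\F_2[G])$ in homological degrees $4n$ and $4n+1$, and concludes that the chain modules of any diagram of $K$ must be nonzero in degree $4n+1$; the paper even records your torsion-order packaging (via \cref{prop:localtorsion}) as an explicit alternative proof in a remark. Two caveats. First, be careful with coefficients: \cref{prop:homology_torus} is stated over $\F_2$ (and holds over $\Q$ by \cref{rem:homology_torus:coeffs}, only conjecturally over $\Z$), so you should run the torsion argument with $\mathfrak{u}_{4n+1}(\,\cdot\,;\F_2)$ or $\mathfrak{u}_{4n+1}(\,\cdot\,;\Q)$ and use $\lambda(K,T;\F)\leq\lambda(K,T)\leq u_q(K,T)$. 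Second, the computational input you defer as the "main obstacle" is obtained in the paper not by a skein-exact-sequence induction but by citing Benheddi's computation of reduced $\F_2$-Khovanov homology of $T(3,q)$ and observing that one-dimensionality in each homological degree forces the $G$-action, hence the decomposition into a pawn, $G$-knights, and $G^2$-knights with the top $G^2$-knight in degrees $4n,4n+1$. Your proposed induction would likely work but is substantially more labor than you suggest.

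The optimality half is where your proposal has a genuine gap. "Cancelling the requisite positive crossings inside the standard closed braid $(\sigma_1\sigma_2)^{3n+1}$" does not work as stated: that diagram has $6n+2$ positive crossings and no twist region of parallel $\sigma_1$'s, and removing an even number of crossings from it cannot land on $4n+1$ ($6n+2-(2n+2)=4n$). The paper's construction (\cref{lem:optimal_torus_PRTR}) instead proves the braid identity $(\sigma_1\sigma_2)^{3n+1}=\sigma_1^{2n+2}\beta_n$ with $\beta_n=\sigma_1^{-1}(\sigma_2\sigma_1^2\sigma_2)^n\sigma_2$, so the replacement $Q_{-2n-2}\leadsto Q_0$ produces the closure of $\beta_n$, a diagram with exactly $4n+1$ positive crossings \emph{and one negative crossing}; a parallel identity handles $T(3,3n+2)$. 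Finding such a rewriting of the braid word (and checking properness of the replacement) is the actual content of the optimality claim, and your sketch does not yet supply it.
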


\begin{figure}[t]
\includegraphics{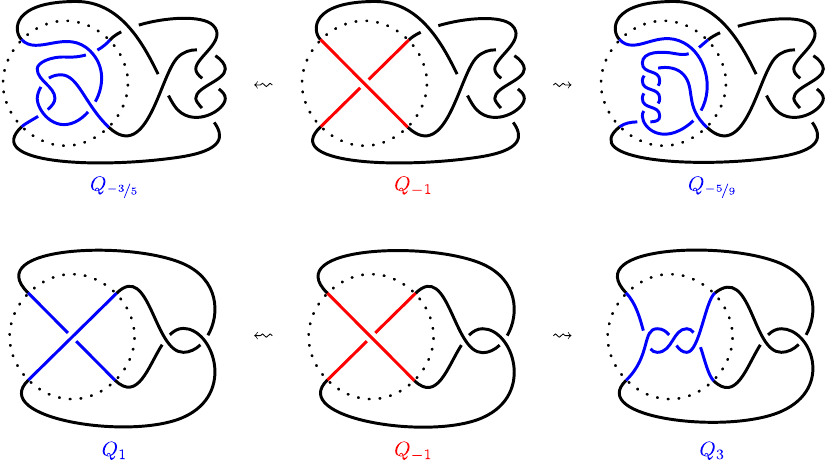}
\caption{Four examples of proper rational tangle replacements from the positive trefoil $T(2,3)$ (middle) to the unknot (left and right). The $0$-closures of the tangles replacing $Q_{-1}$ are
$Q_{\nicefrac{-3}{5}}(0) = -T(2,3)$,
$Q_{\nicefrac{-5}{9}}(0) = -T(2,5)$,
$Q_{1}(0) = U$, and
$Q_{3}(0) = -T(2,3)$.
\label{fig:LambdaExample}
}
\end{figure}
A more general variation $\Lambda$ of the invariants \(\lambda\) and \(\lambdau\) is obtained as follows: We may ask which pairs of integers are realized as the quantum gradings of chain maps \(f\) and \(g\) as above.  This leads to a set-valued invariant \(\Lambda(K,J)\subseteq\Z^2\), which subsumes both integer invariants \(\lambda(K,J)\) and \(\lambdau(K,J)\). 
The invariant \(\Lambda(K,J)\) allows for more refined statements about slopes in rational tangle replacements.
For example, the positive trefoil $T(2,3)$ may be transformed into the unknot
by a variety of different proper rational tangle replacements, four of which are shown in \cref{fig:LambdaExample}.
A glance at \cref{thm:obstruction_rational_repl_other_cr} and \(\Lambda(T(2,3),U)\)---see \cref{fig:Lambda_trefoil} on page \pageref{fig:Lambda_trefoil} for the latter---now reveals that if $T(2,3)$ is transformed into the unknot by the proper rational tangle replacement $Q_{-1} \leadsto Q_{\nicefrac{p}{q}}$, then the signature \(\sigma(Q_{\nicefrac{p}{q}}(0))\) of the $0$-closure of $Q_{\nicefrac{p}{q}}$ must equal $0$, $2$ or $4$. These three values of the signature are indeed realized, see \cref{fig:LambdaExample}.

\cref{thm:obstruction_rational_repl_other_cr} also implies a slightly stronger version of \cref{thm:lambda-gordian}, which we state in \cref{cor:strongerthm1}. Namely, in \cref{def:u0} we consider an intermediate Gordian distance \(u^0(K, J)\) that allows only certain proper rational replacements (among them crossing changes and Przytycki's \(\overline{t}_{2k}\)-moves~\cite{zbMATH04088518}), such that \(u_q(K, J) \leq u^0(K, J) \leq u(K, J)\). Then, $\lambda^0$ is a lower bound not just for $u$, but also for $u^0$.
We thus have the chain of inequalities
\[
\frac{|s_{\mathbb{F}}(K) - s_{\mathbb{F}}(J)|}2 \leq \lambdau(K,J)\leq u^0(K,J)\leq u(K, J),
\]
which in particular strengthens the previously known bound 
$|s_{\mathbb{F}}(K)|/2 \leq u(K)$.

\subsection*{Structure of the paper}
\cref{sec:prelim} introduces the necessary preliminaries
regarding the Bar-Natan homology $\BNr(K)$ of knots $K$, Bar-Natan's complexes for 4-ended tangles,
and more specifically Bar-Natan complexes of rational tangles.
In \cref{sec:small_lambda}, the new invariants $\lambda$ and $\lambdau$ are defined,
and their relationships with Gordian distances, with the Rasmussen invariant, and with torsion is analysed.
In particular, \cref{thm:lambda-gordian,thm:lambda_proper_rational_gordian} are proven.
\cref{sec:applications} contains applications to 2- and 3-stranded torus knots.
In \cref{sec:Lambda_set}, the invariant $\Lambda$ is defined and examined, which generalises $\lambda$ and $\lambdau$.
\cref{sec:appendix} establishes some required properties of the Bar-Natan complexes of rational tangles.

\subsection*{Acknowledgments}
The authors warmly thank Hoel Queffelec and Dirk Schütz for comments on drafts of this article and Peter Teichner for insightful remarks. 
We also thank Tuomas Kelomäki for pointing us to the computations in \cite{benheddi}. 
The authors are grateful to Peter Feller for inspiring conversations and funding an extended visit of L.M.~to the ETH Zurich, during which the paper advanced significantly, via the SNSF Grant~181199.
L.L.~gratefully acknowledges support by the DFG (German Research Foundation) via the Emmy Noether Programme, project no.~412851057.
L.M.~has received funding from the European Union's Horizon 2020 research and innovation programme under the Marie Skłodowska-Curie grant agreement No 945332 $\vc{\includegraphics[height=11pt]{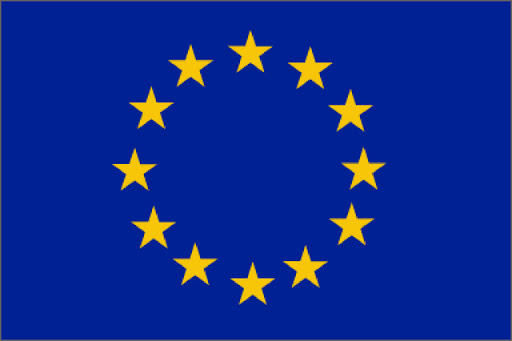}}$.
C.Z. is supported by an individual research grant (project no.~505125645) from the DFG and a Starting Grant (project no.~101115822) from the European Research Council (ERC). 

\section{A review of Bar-Natan homology}\label{sec:prelim}

For an arbitrary~$n\geq 1$, let $T$ be a $2n$-ended oriented tangle.
Bar-Natan \cite{BarNatanKhT} showed how to associate to a diagram $D_T$ of $T$ a bigraded chain complex~$\llbracket D_T \rrbracket$. This complex lives in an additive category $\text{Mat}(\Cob_{/l}(2n))$ generated by $2n$-ended crossingless tangle diagrams and cobordisms between them, modulo some relations, which are denoted by~$l$. The category $\text{Mat}(\Cob_{/l}(2n))$ carries a quantum grading~$\QGrad{q}$, defined by setting $\QGrad{q}(C)=\chi(C)-n$ for a cobordism~$C$.
Up to chain homotopy equivalence, $\llbracket D_T \rrbracket$ is an invariant of the tangle, and will therefore be denoted by~$\llbracket T \rrbracket$. 
Bar-Natan's construction is a key ingredient in the definition of two central objects of this paper, the 4-ended tangle invariant $\DD(T)$ and the invariant $\BNr(K)$ of knots~$K$.

\subsection{\texorpdfstring{$\Z[G]$-homology}{Z[G]-homology}}

There is a one-to-one correspondence between knots and 2-ended tangles without any closed components. By cutting a knot open at a point one obtains a 2-ended tangle and, vice-versa, a knot can be obtained from a 2-ended tangle without closed components by joining the two end-points with an arc.
Two-ended tangles are used in \cite[Section 2.3]{lambda1} to construct a universal variation of Khovanov homology, called $\Z[G]$-homology, for knots. Let us summarize this construction.
Let $G$ be a formal variable, and consider the ring~$\Z[G]$. We equip $\Z[G]$ with a \emph{quantum grading}~$\QGrad{q}$, determined by 
\[
\QGrad{q}(1)=0 \qquad \text{and} \qquad \QGrad{q}(G)=-2.
\]
Let $\mathcal{M}_{\Z[G]}$ be the category of graded, finitely generated, free $\Z[G]$-modules. 

\begin{proposition}[{\cite[Proposition 2.8]{lambda1}}]
\label{prop:equiv_cat_2-tangles}
    The functor 
    \[
    \mathcal{M}_{\Z[G]} \to \textnormal{Mat}(\Cob_{/l}(2))
    \]
    sending \(\Z[G]\) to the crossingless diagram \(\vcenter{\hbox{\def\svgwidth{7pt}\includegraphics[width=0.028\textwidth]{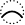}}}\) of the trivial 2-ended tangle and sending the linear map given by multiplication with~\(G^k\), for~\(k\geq 0\), to the connected cobordism of genus~\(k\), determines an equivalence of categories.
\end{proposition}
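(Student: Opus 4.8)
The plan is to check that the functor displayed in the statement, call it $F$, is additive, preserves the quantum grading, and is both fully faithful and essentially surjective; an additive functor with those properties is an equivalence of categories. First I would isolate two structural facts. (i) In $\textnormal{Mat}(\Cob_{/l}(2))$ every object is isomorphic to a finite direct sum $\bigoplus_i a\{m_i\}$ of grading-shifted copies of the single crossingless arc $a$ — the only crossingless $2$-ended diagram without closed components — because each closed circle in a crossingless diagram is removed by the delooping isomorphism, which is a formal consequence of the relations $l$. (ii) The graded endomorphism ring
\[
\bigoplus_{m\in\Z}\operatorname{Hom}_{\Cob_{/l}(2)}\bigl(a,a\{m\}\bigr)
\]
is the polynomial ring $\Z[G]$ on the degree $-2$ class $G$ of the connected genus-one cobordism from $a$ to $a$; equivalently, this group vanishes unless $m=-2k\le 0$, where it is infinite cyclic on the connected genus-$k$ cobordism. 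Fact (ii) is essentially Bar-Natan's computation, and I would cite \cite{BarNatanKhT} for it; it is the only input that is not bookkeeping.

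Granting (i) and (ii), the functor $F$ is then essentially forced. The source $\mathcal M_{\Z[G]}$ is the matrix category, with quantum grading shifts, on the one-object category whose endomorphism ring is $\Z[G]$; by (i), $\textnormal{Mat}(\Cob_{/l}(2))$ is likewise the matrix category on the one-object category with endomorphism ring $\operatorname{End}_{\Cob_{/l}(2)}(a)$, with its grading shifts. By the universal property of $\textnormal{Mat}(-)$ it thus suffices to give a grading-preserving ring homomorphism $\Z[G]\to\operatorname{End}_{\Cob_{/l}(2)}(a)$, and I would take the one sending $1$ to the identity cobordism and $G^{k}$ to the connected genus-$k$ cobordism. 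That this is a ring homomorphism is exactly the statement that stacking the connected genus-$a$ and genus-$b$ cobordisms produces the connected genus-$(a+b)$ cobordism; it respects the grading since $\deg G=-2$ on both sides, and the induced matrix-level functor $F$ — which replaces each homogeneous entry $c\,G^{k}$ of a grading-preserving $\Z[G]$-linear matrix by $c$ times the genus-$k$ cobordism — is additive by construction. The choices of homogeneous bases needed to present morphisms as matrices do not affect $F$, again by the universal property just invoked.

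Full faithfulness and essential surjectivity then follow formally. Essential surjectivity is precisely (i). For full faithfulness, on the morphism group from $\bigoplus_i\Z[G]\{m_i\}$ to $\bigoplus_j\Z[G]\{m'_j\}$ the functor $F$ acts by applying the ring isomorphism of (ii) entry by entry; since, entrywise and degree by degree, the source group $\operatorname{Hom}_{\mathcal M_{\Z[G]}}$ and the target group $\operatorname{Hom}_{\Cob_{/l}(2)}$ are the same free abelian group (infinite cyclic in the appropriate non-positive even degree, zero otherwise), this map is a bijection. Hence $F$ is an equivalence of categories. I expect the only genuine work to be (ii), and within it the claim that the powers of $G$ remain $\Z$-linearly independent — so that $\operatorname{End}_{\Cob_{/l}(2)}(a)$ is a free polynomial ring rather than a proper quotient of one — which I would settle by quoting Bar-Natan's description of the relevant cobordism groups together with the delooping reduction used in (i).
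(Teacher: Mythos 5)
The paper itself does not prove this proposition---it is quoted verbatim from \cite[Proposition 2.8]{lambda1}---so your attempt can only be measured against the argument given there, which, like yours, rests on two inputs: delooping, and the computation of the graded morphism groups of the arc. Your overall architecture (reduce to the full subcategories generated by shifted arcs resp.\ shifted copies of $\Z[G]$, check full faithfulness there, and extend over the additive closure) is the right one and matches the standard argument.

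The genuine gap is your treatment of fact (ii). The statement that $\bigoplus_m\operatorname{Hom}_{\Cob_{/l}(2)}(a,a\{m\})$ is the free polynomial ring on the genus-one connected cobordism is not ``essentially Bar-Natan's computation'' and cannot simply be cited from \cite{BarNatanKhT}; it is Naot's theorem (the paper's references \cite{naot,naotphd}), and it is the entire substance of the proposition rather than an auxiliary input. Two things must actually be proved there: first, that the relations $l$ (sphere, torus, 4Tu) reduce an \emph{arbitrary} cobordism from the arc to itself---in particular one with closed components of genus $\geq 2$, which do not evaluate by the sphere/torus relations alone---to a $\Z$-linear combination of connected genus-$k$ cobordisms; second, that the powers of $G$ remain $\Z$-linearly independent, i.e.\ the ring is $\Z[G]$ and not a quotient, which one establishes e.g.\ by applying the TQFT functors associated to rank-two Frobenius algebras. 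Your proposal flags the second point but defers both to a source where they do not appear, so as written it reduces the proposition to an unproved assertion. A smaller point: delooping in this \emph{undotted} category is not the usual dotted circle-splitting; one removes circles using the adjacent arc (as in the paper's \cref{fig:delooping}), so your step (i) should be phrased as reduced delooping in the presence of the arc---it is still correct, but not for the reason ``formal consequence of $l$ alone'' in the dotted sense.
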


This equivalence allows to translate~$\llbracket T \rrbracket$, for any oriented 2-ended tangle~$T$, into a chain complex over~$\mathcal{M}_{\Z[G]}$. 
We denote this complex by~$\BNr(T)$. For an oriented 2-ended tangle $T$ with no closed components, let $K$ be the corresponding oriented knot.  Then $\BNr$ extends to a chain complex~$\BNr(K)$, which is a knot invariant up to chain homotopy equivalence. 
Given a graded module $M=\bigoplus_i M_i$ and an integer~$\ell$, we denote by $\QGrad{q^\ell}M$ the \emph{quantum grading shift by $\ell$}, i.e.~$(\QGrad{q^\ell}M)_i=M_{i-\ell}$. Similarly, $t^jM$ denotes the \emph{homological grading shift by $j\in\Z$}. 
As is common practice in Khovanov homology, we follow cohomological conventions: 
The \(t\)-grading increases by 1 along the differential. 
We say that a map $f\colon C \to C'$ between two chain complexes $(C,d)$ and $(C',d')$ is a \emph{chain map} if $f \circ d = d' \circ f$ and if $f$ preserves the homological grading\footnote{While this is the standard definition of chain map, it differs from the definition mostly used in \cite{lambda1}, where the chain maps were \emph{ungraded}, i.e.\ did not necessarily preserve the homological grading.}.

\begin{remark}\label{rem:universal}
    There is a second way to obtain the chain complex~$\BNr(K)$: This complex is chain homotopy equivalent to the reduced Khovanov chain complex associated to the Frobenius algebra $\Z[G][X]/(X^2+GX)$ over~$\Z[G]$. 

    It is stated in \cite{naot,naotphd} and shown explicitly in \cite[Theorem 2.17]{lambda1} that, for any rank 2 Frobenius system $\mathcal{F}=(R,A,\Delta,\varepsilon)$, the \emph{unreduced} Khovanov chain complex~$C^{\text{unred}}_{\mathcal{F}}$ corresponding to~$\mathcal{F}$ can be obtained from $\BNr$ by tensoring with the Frobenius algebra~$A$. 
    The correspondence between $\BNr$ and the reduced and unreduced Khovanov chain complexes is summarized in \cref{fig:chain_complexes_related}, when~$A=R[X]/(X^2)$, for $R=\Z$ or a field~$\F$.
In this paper, we almost exclusively use reduced chain complexes and homologies. So to simplify notation, we omit the tilde $\widetilde{\quad}$ that in the literature often signifies reducedness.
\end{remark}

\begin{figure}[t]
	\centering
	\[
	\begin{tikzcd}[column sep=huge, row sep=large]
		&
		&
		C_\Z(K)
		\arrow{r}{- \otimes_\Z \F}
		&
		C_\F(K)
		\\
		t^0\Z
		&
		\BNr({K;\Z[G]})
		\arrow[sloped]{l}{G=1}
		\arrow[sloped]{ru}{G=0}
		\arrow{r}{-\otimes_{\Z[G]}\F[G]}
		\arrow[swap, sloped]{rd}{-\otimes_{\Z[G]} A_\Z}
		&
		\BNr({K;\F[G]})
		\arrow[sloped]{ru}{G=0}
		\arrow{r}{G=1}
		\arrow[sloped]{rd}{-\otimes_{\F[G]} A_\F}
		&
		t^0\F
		\\
		&
		&
		C^{\text{unred}}_\Z(K)
		\arrow[swap]{r}{-\otimes_\Z \F}
		&
		C^{\text{unred}}_\F(K)
	\end{tikzcd}
	\]
	\caption{For any knot $K$ and field $\F$, this diagram summarizes, up to chain homotopy equivalence, the relations between $\BNr(K;\Z[G])$, $\BNr(K;\F[G])$, and the reduced and unreduced Khovanov chain complexes over $\Z$ and $\F$. Here, $A_R=\QGrad{q^{-1}}R \oplus \QGrad{q}R$, for $R=\Z$ or $\F$, and $A_R$ is a $R[G]$-module via $G$ acting as 
		$\bigl(\begin{smallmatrix} 0 & 2 \\ 0 & 0 \end{smallmatrix}\bigr)$.}
	\label{fig:chain_complexes_related}
\end{figure}

For any $\Z[G]$-module~$M$, we define a chain complex of $\Z[G]$-modules
\[
\BNr(K;M)\coloneqq\BNr(K)\otimes_{\Z[G]} M.
\]
This chain complex may inherit structure from $M$.
Namely,
$\BNr(K;M)$ is graded if $M$ is,
and for some commutative unital ring $R$,
$\BNr(K;M)$ is an $(R,\Z[G])$-bimodule if $M$ is.

\begin{remark}\label{rem:pawn_knight_pieces}
    When $\F$ is a field, the ring $\F[G]$ is a PID. 
    One can show that the complex $\BNr(K;\F[G])$ decomposes, up to chain homotopy, into a single grading-shifted copy of the base ring $t^0\QGrad{q^s}\F[G]$ in homological degree 0, and some summands of the form
    \[
    t^i\QGrad{q^\ell}\F[G] \overset{G^k}{\longrightarrow} t^{i+1}\QGrad{q^{2k+\ell}}\F[G]
    \]
    for $s,i,\ell\in \Z$ and~$k\in\Z_{>0}$. 
    The summand $t^0\QGrad{q^s}\F[G]$ is called the \emph{pawn piece} of the chain complex, and $s=s_{\F}(K)\in 2\Z$ is the Rasmussen invariant of $K$ over the field~$\F$. 
    We will refer to summands of the second kind as \emph{$G^k$-knight pieces}. 
    More details on the decomposition of $\BNr(K;\F[G])$ into pieces can be found in \cite[Section 3.3]{lambda1}.

    Consider the complex $\BNr(K;\F[G]/(G-1))=\BNr(K;\F[G]) \otimes_{\F[G]} \F[G]/(G-1)$. 
    The differential in each knight piece of $\BNr(K;\F[G])$ becomes an isomorphism in $\BNr(K;\F[G]/(G-1))$, and can be cancelled using Gaussian elimination. 
    Therefore, $\BNr(K;\F[G]/(G-1))\simeq t^0\F$.
\end{remark}

\subsection{Bar-Natan's invariant for Conway tangles}

Let $T$ be a \emph{Conway tangle}, i.e.\ a 4-ended tangle, with a \emph{basepoint}, i.e.\ a choice of a distinguished tangle end.
Using delooping, it was shown in \cite[Theorem 1.1]{KWZ} that $\llbracket T \rrbracket$ can be rewritten as a chain complex $\DD(T)$ over the quiver algebra:
\begin{equation}\label{eq:B_quiver}
\BNAlgH
\coloneqq
\Z\Big[
\begin{tikzcd}[row sep=2cm, column sep=1.5cm]
\bullet
\arrow[leftarrow,in=145, out=-145,looseness=5]{rl}[description]{D_{\bullet}}
\arrow[leftarrow,bend left]{r}[description]{S_{\circ}}
&
\circ
\arrow[leftarrow,bend left]{l}[description]{S_{\bullet}}
\arrow[leftarrow,in=35, out=-35,looseness=5]{rl}[description]{D_{\circ}}
\end{tikzcd}
\Big]\Big/\Big(
\parbox[c]{90pt}{\text{\footnotesize\centering
	$D_{\bullet} \cdot S_{\circ}=0=S_{\circ}\cdot D_{\circ}$} \\
	\text{\footnotesize\centering $D_{\circ}\cdot S_{\bullet}=0=S_{\bullet}\cdot D_{\bullet}$}
}\Big)
\end{equation}

As an algebra over $\Z$ (i.e.\ as a ring), $\BNAlgH$ consists of formal sums of paths in the quiver.
Multiplication is given by composing paths when possible and by 0 otherwise, modulo the relations given in \cref{eq:B_quiver}. We read elements of the algebra from right to left. Observe that the unit of the algebra is given by $1=\iota_\circ + \iota_\bullet$, where $\iota_\circ$ and $\iota_\bullet$ are the idempotents corresponding to the constant paths in the quiver. 
We set
\[
D\coloneqq D_{\circ}+D_{\bullet}
\quad\text{and}\quad
S\coloneqq S_{\circ}+S_{\bullet}.
\]
By setting
\[
    G=S_\circ S_\bullet+S_\bullet S_\circ-D_\bullet-D_\circ=S^2-D,
\]
the algebra $\BNAlgH$ can be considered as an algebra over the ring~$\Z[G]$.
The algebra \(\BNAlgH\) carries a quantum grading \(\QGrad{q}\), which is determined by 
\[
\QGrad{q}(D_{\bullet}) = \QGrad{q}(D_{\circ}) = -2
\qquad 
\text{and}
\qquad
\QGrad{q}(S_{\bullet}) = \QGrad{q}(S_{\circ}) = -1.
\] 
The complex $\DD(T)$ is a bigraded chain complex of right $\BNAlgH$-modules, and its differentials are homogeneous elements of~$\BNAlgH$. Each differential has homological degree 1 and quantum degree 0. Given~$i,\ell\in\Z$, we denote by $\GGzqh{\bullet}{0}{\ell}{i}$ (resp.\ by $\GGzqh{\circ}{0}{\ell}{i}$) the element $\bullet$ (resp.\ $\circ$) with homological grading shifted by $i$ and quantum grading shifted by~$\ell$.

Let us briefly describe the correspondence between $\DD(T)$ and Bar-Natan's tangle invariant~$\llbracket T \rrbracket$. It is based on an equivalence between the category $\text{Mat}(\Cob_{/l}(4))$ and a category $\mathcal{M}_{\BNAlgH}$ associated to the algebra~$\BNAlgH$.

The construction of the category $\mathcal{M}_{\BNAlgH}$ relies on the idempotents $\iota_\circ$ and~$\iota_\bullet$. The objects of the category are (sums of) the projective right $\BNAlgH$-modules $\iota_\circ\BNAlgH$ and~$\iota_\bullet\BNAlgH$, generated by the paths of the quiver that end at $\circ$ and $\bullet$, respectively. 
The morphisms of $\mathcal{M}_{\BNAlgH}$ are homomorphisms of right $\BNAlgH$-modules.
Consider an element~$\varphi\in \BNAlgH$. For any $x,y \in \{ \circ, \bullet \}$, $\varphi$ induces a morphism in~$\mathcal{M}_{\BNAlgH}$, denoted again by~$\varphi$, 
\[
\varphi \colon \iota_x\BNAlgH \to \iota_y\BNAlgH, \qquad p \mapsto \iota_y \cdot \varphi \cdot p \qquad \text{for } p\in \iota_x\BNAlgH.
\]

For instance, if $\varphi=S,\, x=\bullet$ and~$y=\circ$, one finds $\iota_\circ \cdot S \cdot p = S_\bullet \cdot p $ for all $p\in \iota_\bullet\BNAlgH$. Hence, as a morphism $\iota_\bullet\BNAlgH \to \iota_\circ\BNAlgH $ in~$\mathcal{M}_{\BNAlgH}$, one has~$S=S_\bullet$. On the other hand, one can easily check that $S=S_\circ$ as a morphism $\iota_\circ\BNAlgH \to \iota_\bullet\BNAlgH$ in~$\mathcal{M}_{\BNAlgH}$.
Composition in the category $\mathcal{M}_{\BNAlgH}$ is given by multiplication and, for~$x\in \{\circ,\bullet\}$,~$\id_x=\iota_x$.
For simplicity, we write $\circ=\iota_\circ\BNAlgH$ and~$\bullet=\iota_\bullet\BNAlgH$.

The following is a reformulation of \cite[Theorem 4.21]{KWZ}.

\begin{proposition}\label{prop:equiv_cat_4-tangles}
		For each choice of basepoint, 
		there exists an equivalence of categories
		\(\mathcal{M}_{\BNAlgH} \longrightarrow \textnormal{Mat}(\Cob_{/l}(4))\).
		If we choose the bottom left basepoint 
		\textnormal{\(\vcenter{\hbox{\def\svgwidth{12pt}
\begingroup%
  \makeatletter%
  \providecommand\color[2][]{%
    \errmessage{(Inkscape) Color is used for the text in Inkscape, but the package 'color.sty' is not loaded}%
    \renewcommand\color[2][]{}%
  }%
  \providecommand\transparent[1]{%
    \errmessage{(Inkscape) Transparency is used (non-zero) for the text in Inkscape, but the package 'transparent.sty' is not loaded}%
    \renewcommand\transparent[1]{}%
  }%
  \providecommand\rotatebox[2]{#2}%
  \newcommand*\fsize{\dimexpr\f@size pt\relax}%
  \newcommand*\lineheight[1]{\fontsize{\fsize}{#1\fsize}\selectfont}%
  \ifx\svgwidth\undefined%
    \setlength{\unitlength}{11.75406457bp}%
    \ifx\svgscale\undefined%
      \relax%
    \else%
      \setlength{\unitlength}{\unitlength * \real{\svgscale}}%
    \fi%
  \else%
    \setlength{\unitlength}{\svgwidth}%
  \fi%
  \global\let\svgwidth\undefined%
  \global\let\svgscale\undefined%
  \makeatother%
  \begin{picture}(1,1.09265902)%
    \lineheight{1}%
    \setlength\tabcolsep{0pt}%
    \put(0,0){\includegraphics[width=\unitlength,page=1]{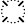}}%
    \put(0.11277773,-0.23927009){\color[rgb]{0,0,0}\rotatebox{13.15106325}{\makebox(0,0)[lt]{\lineheight{1.25}\smash{\begin{tabular}[t]{l}*\end{tabular}}}}}%
  \end{picture}%
\endgroup%
}}\)}, this equivalence is given by
    \begin{align*}
        \circ & \mapsto \vc{\rotatebox{90}{$\Lo$}}
        &
        S_\circ & \mapsto \vcenter{\hbox{\def\svgwidth{7pt}\includegraphics[width=0.05\textwidth]{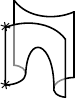}}}
        &
        G_\circ & \mapsto \vcenter{\hbox{\def\svgwidth{7pt}\includegraphics[width=0.054\textwidth]{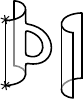}}}
        \\
        \bullet & \mapsto \vc{\rotatebox{90}{$\Li$}}
        &        
        S_\bullet & \mapsto \vcenter{\hbox{\def\svgwidth{7pt}\includegraphics[width=0.05\textwidth]{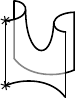}}} 
        &
        G_\bullet & \mapsto \vcenter{\hbox{\def\svgwidth{7pt}\includegraphics[width=0.057\textwidth]{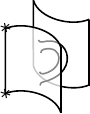}}}
    \end{align*}
    where \(G_\circ=S_\bullet S_\circ-D_\circ\) and~\(G_\bullet=S_\circ S_\bullet-D_\bullet\).
\qed
\end{proposition}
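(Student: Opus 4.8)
The plan is to derive this proposition as a translation of \cite[Theorem 4.21]{KWZ}: I would recall that result, identify the endomorphism algebra appearing there with $\BNAlgH$, and then verify that the images of the generators of $\BNAlgH$ are exactly the cobordisms displayed in the statement.

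First I would apply delooping to the objects of $\textnormal{Mat}(\Cob_{/l}(4))$. Every crossingless $4$-ended diagram is, up to quantum grading shift, either one of the two flat tangles $\Lo$, $\Li$, or it contains a closed circle and therefore splits, after delooping, into a direct sum of grading shifts of the circle-free diagram. Hence $\textnormal{Mat}(\Cob_{/l}(4))$ with the bottom-left basepoint is equivalent to the category of finitely generated projective right modules over the endomorphism algebra $\operatorname{End}(\Lo\oplus\Li)$ in $\Cob_{/l}(4)$; this is the content of \cite[Theorem 4.21]{KWZ}, where that algebra is moreover presented by generators and relations. The assignment $\circ\mapsto\Lo$, $\bullet\mapsto\Li$, $\id_x\mapsto\id$ is then forced, so what remains is to match $\operatorname{End}(\Lo\oplus\Li)$ with $\BNAlgH$ on generators.

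The saddle cobordisms between $\Lo$ and $\Li$ generate the off-diagonal $\operatorname{Hom}$-spaces; since the morphism of projective modules induced by $\varphi\in\BNAlgH$ is $p\mapsto\iota_y\cdot\varphi\cdot p$, the saddle $\Lo\to\Li$ corresponds to $S_\circ$ and the saddle $\Li\to\Lo$ to $S_\bullet$ --- which is precisely why the two saddle pictures appear with their subscripts swapped in the displayed dictionary, exactly as in the worked example for $S$ following the statement. The diagonal $\operatorname{Hom}$-spaces $\operatorname{End}(\Lo)$ and $\operatorname{End}(\Li)$ are generated by the identity together with a single dot on one of the two strands; using the Frobenius relation $X^2+GX=0$ (equivalently, the dotted-cobordism relations over $\Z[G]$) I would rewrite such a dot on $\Lo$ as $S_\bullet S_\circ-D_\circ=G_\circ$ and on $\Li$ as $S_\circ S_\bullet-D_\bullet=G_\bullet$, which pins down the last two entries of the dictionary. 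The quiver relations in \cref{eq:B_quiver} hold because the corresponding composite cobordisms contain a closed sphere and hence vanish, and the fact that these are \emph{all} the relations --- equivalently, that the resulting functor is fully faithful --- is the faithfulness half of \cite[Theorem 4.21]{KWZ}.

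Finally I would check that the quantum gradings are compatible: with $\QGrad{q}(C)=\chi(C)-2$ on $\Cob_{/l}(4)$, a saddle has quantum degree $-1$ and a dot has quantum degree $-2$, so every assignment is homogeneous of the stated degree, and in particular $\QGrad{q}(G_\circ)=\QGrad{q}(G_\bullet)=-2$, consistently with $G$ acting with quantum degree $-2$ as in \cref{prop:equiv_cat_2-tangles}. The only genuinely delicate part of all this is the bookkeeping of conventions: the basepoint placement, the right-to-left reading order for composition in $\mathcal{M}_{\BNAlgH}$, and the normalisation of the delooping isomorphism, which must be chosen so that it is $S^2-D$ --- and not some other combination of the generators --- that equals $G$. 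Once those are fixed, the proposition is a direct rewriting of \cite[Theorem 4.21]{KWZ}.
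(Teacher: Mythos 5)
Your proposal is correct and takes essentially the same route as the paper: the paper gives no independent argument, presenting the proposition as a reformulation of \cite[Theorem 4.21]{KWZ} and stating it without proof, and your sketch is precisely an unpacking of that citation --- delooping, identifying the endomorphism algebra of the two crossingless $4$-ended tangles with \(\BNAlgH\), and bookkeeping of the basepoint, composition-order, and grading conventions. As in the paper, the substantive content (the presentation of the endomorphism algebra and full faithfulness) is imported wholesale from \cite{KWZ}, so the remaining steps in your write-up are exactly the convention-matching the authors implicitly leave to the reader.
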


Throughout this paper, we will only use the equivalence corresponding to the choice of bottom left basepoint. 
Specifically, the distinguished tangle end in each Conway tangle is the bottom left tangle end. 

Below, we state some definitions and results about Conway tangles that will be used throughout the paper. In particular, \cref{thm:pairing_thm}, referred to as the \emph{Pairing Theorem}, gives a way of ``decomposing'' the chain complex of a knot $K$ into the chain complexes of two Conway tangles whose \emph{union} is~$K$.

\begin{definition}
    The \emph{connectivity} 
    \(\conn{T}\) of a Conway tangle \(T\) is an element of \(\{\Ni,\ConnectivityX,\No\}\) defined as follows:
	\[
	\conn{T}=
	\begin{cases*}
	\Ni & if \(T\) connects the tangle ends as in the tangle \(\Ni\) \\
	\ConnectivityX & if \(T\) connects the tangle ends as in the tangle \(\CrossingX\)\\
	\No & if \(T\) connects the tangle ends as in the tangle \(\No\)
	\end{cases*}
	\]
\end{definition}

\begin{definition}
    Given two Conway tangles \(T_1\) and \(T_2\), their \textit{union} \(T_1\cup T_2\) is the link defined by the diagram in \cref{fig:tangleunion}.

    \begin{figure}[!ht]
		\centering
		\(\union\)
		\caption{The union \(T_1\cup T_2\) of two Conway tangles \(T_1\) and \(T_2\).}\label{fig:tangleunion}
	\end{figure}
    
\end{definition}

Let $-C$ be the dual complex of a bigraded chain complex~$C$.
In practice, $-C$ may be obtained by reversing the direction of all differentials of~$C$, and by switching the sign of all the gradings. For a Conway tangle~$T$, let $-T$ denote its mirror image, i.e.\ the tangle obtained by switching over- and under-strands at all crossings in a diagram of~$T$. Similarly, we let $-K$ be the mirror image of a knot~$K$.

\begin{lemma}[{\cite[Proposition 4.26]{KWZ}}]
\label{lem:mirror_complex}
    For any Conway tangle \(T\) and knot~\(K\), 
    \[
    \DD(-T) \simeq -\DD(T) \qquad \text{and} \qquad \BNr(-K) \simeq -\BNr(K).
    \]
\end{lemma}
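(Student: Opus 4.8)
\textbf{Proof plan for \cref{lem:mirror_complex}.}

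The plan is to prove the tangle statement $\DD(-T) \simeq -\DD(T)$ first, and then deduce the knot statement $\BNr(-K) \simeq -\BNr(K)$ as a special case (or by an analogous, simpler argument using \cref{prop:equiv_cat_2-tangles}). The key point is that mirroring a tangle diagram at the level of Bar-Natan's category $\text{Mat}(\Cob_{/l}(2n))$ is implemented by the operation of \emph{taking the dual complex}: reversing each crossing $\CrossingL \leftrightarrow \CrossingR$ turns the local Bar-Natan complex $(\text{smoothing} \to \text{smoothing})$ into its dual, and this operation is compatible with composition of tangles (Bar-Natan's complex is functorial under horizontal/vertical gluing, hence under the planar-algebra operations out of which any tangle diagram is built). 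Concretely, I would recall that the single-crossing complexes $\llbracket\CrossingL\rrbracket$ and $\llbracket\CrossingR\rrbracket$ are mutually dual up to the grading conventions, check that the cobordism maps (saddles) are sent to their duals, and observe that taking duals is a monoidal functor on $\text{Kom}(\text{Mat}(\Cob_{/l}(2n)))$; therefore $\llbracket -D_T\rrbracket \simeq -\llbracket D_T\rrbracket$ for every diagram, and by invariance $\llbracket -T\rrbracket\simeq -\llbracket T\rrbracket$.

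Next I would transport this across the equivalence of categories $\mathcal{M}_{\BNAlgH} \longrightarrow \text{Mat}(\Cob_{/l}(4))$ from \cref{prop:equiv_cat_4-tangles}. The main bookkeeping task is to verify that the dualization operation on $\text{Mat}(\Cob_{/l}(4))$ corresponds, under this equivalence, precisely to the dualization $C \mapsto -C$ on complexes over $\mathcal{M}_{\BNAlgH}$ as defined in the excerpt (reverse all differentials, negate all gradings). For this one checks the effect on the generating objects and morphisms: the two crossingless tangle diagrams $\vc{\rotatebox{90}{$\Lo$}}$ and $\vc{\rotatebox{90}{$\Li$}}$ (images of $\circ$ and $\bullet$) are self-dual as objects, the algebra generators $S_\bullet, S_\circ, D_\bullet, D_\circ$ are sent under dualization to the ``reversed'' generators with opposite quantum-degree sign, and one confirms that the algebra $\BNAlgH$ is (graded) isomorphic to its opposite via the obvious arrow-reversing symmetry of the quiver in \cref{eq:B_quiver}. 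Since $G = S^2 - D$ is fixed (up to sign of grading) by this symmetry, the $\Z[G]$-module structure is respected, and the homological-grading reversal matches the cohomological conventions. Combining this identification with $\llbracket -T\rrbracket \simeq -\llbracket T\rrbracket$ gives $\DD(-T) \simeq -\DD(T)$.

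For the knot statement, cutting $-K$ open at the mirror of the chosen basepoint gives the mirror $-T'$ of the $2$-ended tangle $T'$ obtained by cutting $K$ open; the same dualization argument for $\text{Mat}(\Cob_{/l}(2))$ — or, equivalently, \cref{prop:equiv_cat_2-tangles} together with the observation that the genus-$k$ cobordism generating $G^k$ is self-dual up to the grading sign, so $\mathcal{M}_{\Z[G]}$ is isomorphic to its opposite — yields $\BNr(-K) = \BNr(-T') \simeq -\BNr(T') = -\BNr(K)$. I expect the one genuinely delicate point to be the \emph{grading conventions}: one must make sure that the quantum-grading shift $\QGrad{q}(C) = \chi(C) - n$ on cobordisms, the homological cohomological convention, and the sign flips built into the definition of $-C$ all line up so that no stray overall grading shift appears; this is exactly the sort of normalization that \cite{KWZ} set up, so I would mostly cite their Proposition~4.26 and limit the new verification to confirming that our conventions here agree with theirs.
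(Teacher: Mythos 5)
The paper gives no proof of this lemma: it is quoted verbatim from \cite[Proposition 4.26]{KWZ}, which is exactly the citation your sketch ends by deferring to. Your outline (mirroring a diagram dualizes the local crossing complexes, dualization is compatible with the planar-algebra composition, and the equivalence of \cref{prop:equiv_cat_4-tangles} transports cobordism-level dualization to the algebraic dual $-C$ via the arrow-reversing symmetry of $\BNAlgH$, with the grading conventions as the only delicate normalization) is the correct standard argument and matches how the cited result is established, so there is nothing to add.
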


Given two bigraded chain complexes $X$ and $Y$ of right $\mathcal{B}$-modules,
we denote by $\Mor(X,Y)$ the graded $Z(\mathcal{B})$-module
of $\mathcal{B}$-homomorphisms $f\colon X\to Y$,
which are neither required to be homogeneous, nor to be chain maps.
Here, $Z(\mathcal{B})$ denotes the center of $\mathcal{B}$,
which is the commutative graded subring of $\mathcal{B}$ generated (as a ring) by
$D_\circ, D_\bullet, S$. Note that $\Z[G] \subset Z(\mathcal{B})$.
We introduce a chain complex structure on $\Mor(X,Y)$ as follows. 
The $k^\text{th}$ chain module of $\Mor(X,Y)$ is given by 
\[
\bigoplus_{j-i=k} \Mor(X_i,Y_j),
\]
and the differential sends $f\in  \Mor(X_i,Y_j)$ to 
\[
(d_{Y} \circ f, (-1)^{k+1}f \circ d_{X}) \in \Mor(X_i,Y_{j+1})\oplus \Mor(X_{i-1},Y_j).
\]
This construction is also known as the \emph{internal hom of chain complexes}.
Note that the $0$-cycles of $\Mor(X,Y)$ are precisely the chain maps $X\to Y$,
the $0$-boundaries are precisely the null-homotopic chain maps,
and so the graded $\Z[G]$-module of homotopy classes of chain maps $X\to Y$
is isomorphic to $H_0(\Mor(X,Y))$.

\begin{theorem}[{\cite[Proposition 4.31]{KWZ}}]
	\label{thm:pairing_thm}
	For any splitting \(T_1\cup T_2\) of an oriented knot into two oriented Conway tangles \(T_1\) and \(T_2\),
	\[
	\QGrad{q^{-1}}\BNr(T_1\cup T_2) \simeq \Mor(-\DD(T_1),\DD(T_2))
	\]
	as chain complexes of \(\Z[G]\)-modules.
\end{theorem}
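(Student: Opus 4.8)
The plan is to derive the Pairing Theorem from two ingredients: the functoriality of the Bar--Natan bracket $\llbracket-\rrbracket$ under planar gluing of tangle diagrams \cite{BarNatanKhT}, and the algebraic equivalences of \cref{prop:equiv_cat_4-tangles,prop:equiv_cat_2-tangles}. First I would fix a diagram for $T_1\cup T_2$ as in \cref{fig:tangleunion}, obtained by inserting fixed diagrams $D_{T_1}$ and $D_{T_2}$ into a fixed planar pattern $U$ with two $4$-marked input disks, with a basepoint placed on one of the arcs joining the two tangles so that we are in the reduced setting. By Bar--Natan's divide-and-conquer principle --- the bracket is a morphism of the relevant planar algebra of complexes --- one has $\llbracket D_{T_1\cup T_2}\rrbracket\simeq U\bigl(\llbracket D_{T_1}\rrbracket,\llbracket D_{T_2}\rrbracket\bigr)$, where $U$ now denotes the induced bilinear operation on complexes over $\textnormal{Mat}(\Cob_{/l}(4))$: it sends a pair of crossingless $4$-ended diagrams $(a,b)$ to the closed $1$-manifold $a\cup b$, a pair of cobordisms to their disjoint union, and then applies the reduced Bar--Natan TQFT of $\Z[G][X]/(X^2+GX)$ together with \cref{prop:equiv_cat_2-tangles} to land in $\mathcal{M}_{\Z[G]}$. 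By construction the resulting complex is $\QGrad{q^{c}}\BNr(T_1\cup T_2)$ for the constant $c$ comparing the Euler-characteristic normalisations of the $4$-ended cobordism categories with that of $\BNr$; one of the things to be determined is that $c=-1$.

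The heart of the proof is to identify $U$, transported through the equivalence $F\colon\mathcal{M}_{\BNAlgH}\xrightarrow{\;\sim\;}\textnormal{Mat}(\Cob_{/l}(4))$, with the internal-hom operation $(X,Y)\mapsto\Mor(-X,Y)$ defined just before the theorem. On objects $F$ carries $\circ$ and $\bullet$ to the two crossingless $4$-ended tangles (\cref{prop:equiv_cat_4-tangles}), and a short check over the four possible pairs $(a,b)$ of crossingless $4$-ended diagrams shows that $a\cup b$ is a single circle exactly when $F^{-1}(a)\neq F^{-1}(b)$, and a $2$-component unlink exactly when $F^{-1}(a)=F^{-1}(b)$. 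On the algebraic side $\Mor(\iota_x\BNAlgH,\iota_y\BNAlgH)=\iota_y\BNAlgH\iota_x$ is free over $\Z[G]$ of rank $1$ concentrated in quantum degree $-1$ when $x\neq y$, and free of rank $2$ in quantum degrees $0$ and $-2$ when $x=y$. The reduced TQFT of one circle is $\QGrad{q^{0}}\Z[G]$ and of two disjoint circles is $\QGrad{q^{1}}\Z[G]\oplus\QGrad{q^{-1}}\Z[G]$, so after the global shift $\QGrad{q^{-1}}$ the ranks and quantum degrees match on the nose, which is what pins down $c=-1$. The crucial remaining point is to upgrade these isomorphisms of $\Z[G]$-modules to an isomorphism of \emph{chain complexes}: one must check naturality in the cobordisms, namely that gluing a cobordism on the $T_2$-side becomes postcomposition with the corresponding element of $\BNAlgH$ --- reproducing the term $f\mapsto d_Y\circ f$ of the $\Mor$-differential --- while gluing a cobordism on the $T_1$-side becomes precomposition, which only makes sense after dualising and is exactly the source both of the mirror $-\DD(T_1)\simeq\DD(-T_1)$ (\cref{lem:mirror_complex}) and, in combination with the cube-of-resolutions sign conventions, of the Koszul sign $(-1)^{k+1}$. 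It suffices to verify this on the algebra generators $D_\bullet,D_\circ,S_\bullet,S_\circ$, after which additivity extends the identification to all objects.

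With these two steps the identification extends from crossingless generators to the complexes $\llbracket D_{T_1}\rrbracket\simeq F(\DD(T_1))$ and $\llbracket D_{T_2}\rrbracket\simeq F(\DD(T_2))$, yielding $\QGrad{q^{-1}}\BNr(T_1\cup T_2)\simeq\Mor\bigl(-\DD(T_1),\DD(T_2)\bigr)$ as chain complexes of $\Z[G]$-modules; homotopy invariance requires no extra work, since $\llbracket-\rrbracket$, $\DD(-)$ and $\Mor(-,-)$ all preserve chain homotopy equivalences. I expect the only genuine difficulty to be in the second step --- not the finite rank-and-grading bookkeeping, but the verification that the purely geometric gluing reproduces the full right $\BNAlgH$-module structure, and hence the differentials, exactly rather than merely up to homotopy, with all Koszul signs and the correct incarnation of the mirror.
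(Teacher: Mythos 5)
This statement carries no proof in the paper: it is quoted directly from \cite[Proposition 4.31]{KWZ}, so there is no in-paper argument to compare against. Your sketch reconstructs the strategy of the proof in that source: Bar-Natan's planar-algebra gluing of $\llbracket D_{T_1}\rrbracket$ and $\llbracket D_{T_2}\rrbracket$, delooping of the glued crossingless generators, and identification of the resulting complex with $\Mor(-\DD(T_1),\DD(T_2))$ through the equivalences of \cref{prop:equiv_cat_2-tangles,prop:equiv_cat_4-tangles}; your rank-and-degree bookkeeping (one circle when the generators differ, two when they agree, matching $\iota_y\BNAlgH\iota_x$ and pinning down the overall shift $\QGrad{q^{-1}}$) is correct. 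You also correctly locate the genuine content in the step you defer: checking that gluing cobordisms reproduces post- and pre-composition, hence the $\Mor$-differential with its Koszul sign and the dual on the $T_1$-side. The one ingredient you pass over too quickly is the basepoint bookkeeping: the reduced theory needs the basepoint of the knot and the distinguished tangle ends defining $\DD(T_1)$ and $\DD(T_2)$ to sit compatibly on the gluing arcs, and this compatibility is part of what makes the right $\BNAlgH$-action match pre-/post-composition on the nose.
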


\subsection{The chain complex of rational tangles}

When $T$ is a rational tangle, it was shown in \cite{thompson, lambda1} that the complex $\DD(T)$ has a particularly simple structure: It can be expressed in terms of a \emph{zigzag complex} and computed recursively. We outline its structure here. For the recursive construction of the chain complex and the proofs of \cref{lem:end_zz,lem:grading_end_zz} we refer the reader to the appendix.

\begin{definition}
    A 4-ended tangle \(T\) in a 3-ball \(B\) is called \emph{rational} if 
    the pair \((B,T)\) is homeomorphic to \((D^2\times [0,1], \{(-\tfrac12,0),(\tfrac12,0)\}\times [0,1])\),
    drawn in \cref{fig:trivrational}. The homeomorphism need not fix the boundary pointwise.
    
    \begin{figure}[ht]
    \centering
    \includegraphics[width=0.15\textwidth]{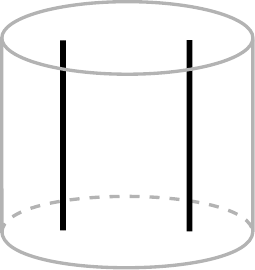}
    \caption{The rational tangle $(D^2\times [0,1], \{(-\tfrac12,0),(\tfrac12,0)\}\times [0,1])$.}
        \label{fig:trivrational}
    \end{figure}
\end{definition}

Using Conway's one-to-one correspondence between (unoriented) rational tangles and rational numbers, we denote the rational tangle corresponding to $r\in \Q\cup \{\infty\}$ by~$Q_r$. Any tangle obtained by adding an orientation to $Q_r$ will again be called~$Q_r$. 
As an example, $Q_0$ denotes the tangle~$\No$, while the negative crossing $\textnormal{$\vcenter{\hbox{\def\svgwidth{11.6pt}}}$}$ and the positive crossing $\textnormal{$\vcenter{\hbox{\def\svgwidth{11.6pt}}}$}$ represent oriented tangles corresponding to $Q_1$ and $Q_{-1}$ respectively.

By Conway's correspondence, the tangles $Q_{-r}$ and $-Q_r$ are isotopic, for all rational numbers~$r$. It follows that, when~$r\in\Q_{<0}$, the complex $\DD(Q_{r})$ can be obtained from $\DD(Q_{-r})$ by using \cref{lem:mirror_complex}: 
\[
\DD(Q_{r}) \simeq \DD(-Q_{-r}) \simeq -\DD(Q_{-r}).
\]
Therefore, we will limit our discussion to positive rational numbers~$r$.

\begin{definition}\label{def:zigzag}
A \emph{zigzag complex} is a graded chain complex $(\bigoplus_{i=0}^{n} A_i,\sum_{i=1}^{n} d_i)$ over~$\BNAlgH$
satisfying the following:
\begin{enumerate}[label=(\roman*)]
\item Each $A_i$ is either $\circ$ or~$\bullet$, with some homological and quantum shifts. We call $A_0$ and $A_n$ the \emph{ends} of the complex.
\label{eq:2objects}
\item Each $d_i$ has domain and target $A_{i-1}\to A_i$ or~$A_i\to A_{i-1}$,
and is one of the following five maps:

\[
S_\circ,\quad S_\circ S_\bullet, \quad S_\bullet S_\circ, \quad D_\circ, \quad D_\bullet.
\]
\label{eq:lineshapecomplex}
\item There is a partition of differentials into \emph{odd} and \emph{even}, such that if two differentials are consecutive (no matter what their domain and target are), then one of them is odd and the other one even. All differentials $S_\circ,S_\circ S_\bullet$ and $S_\bullet S_\circ$ are odd and all differentials $D_\circ$ and $D_\bullet$ are even. 
We call an end of the complex \emph{even} (resp.\ \emph{odd}) if the differential adjacent to it is even (resp.\ odd).
\label{eq:consecutived}
\item There is at least one differential~$S_\circ$.
\label{eq:saddleexists}
\end{enumerate}
\end{definition}

\begin{figure}[tb]
\[
\begin{tikzcd}[column sep=11ex,row sep=-.5ex]
  & & A_0 = t^2\QGrad{q^6}\circ \ar[rd,"d_0 = S",near start]      \\
  & &  & A_1 = t^3\QGrad{q^7}\bullet \\
  & & A_2 = t^2\QGrad{q^5}\bullet \ar[ru,"d_1 = D",swap,near start] \\
  & A_3 = t^1\QGrad{q^4}\circ \ar[ru,"d_2 = S",swap,near start]  \\
A_4 = t^0\QGrad{q^2}\circ \ar[ru,"d_3 = D",swap,near start] \\
\end{tikzcd}
\]
\caption{An example of a zigzag complex, which is homotopy equivalent to $\DD(Q_{\nicefrac{3}{2}})$.}
\label{fig:zigzag_example}
\end{figure}

See \cref{fig:zigzag_example} for an example of a zigzag complex.
Observe that, by the definition above, zigzag complexes do not contain any differential $S_\bullet$ nor (more generally) any differential oriented as~$\bullet \to \circ$.

\begin{proposition}[{\cite[Theorem 5.6]{lambda1}, \cite{thompson}, cf.\ \cref{prop:zigzag_equivalent}}]
   For~\(r\in\Q_{>0}\),\linebreak[3] let \(Q_r\) be an oriented rational tangle. Up to chain homotopy equivalence, the complex\linebreak[3] \(\DD(Q_r)\) has the structure of a zigzag complex.
\end{proposition}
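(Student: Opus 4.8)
The plan is to argue by induction on the \emph{twist complexity} of $Q_r$. Writing $r\in\Q_{>0}$ as a continued fraction $[a_1,\dots,a_k]$ with all entries $a_i\geq 1$, every such rational tangle is built from the one-crossing tangle $Q_1$ by repeatedly adjoining one of two elementary twists (a horizontal twist on the right, realising $r\mapsto r+1$, and a vertical twist at the bottom, realising $r\mapsto\frac r{r+1}$). For the base case I would compute $\DD(Q_1)$ directly: Bar-Natan's cube of resolutions for a single crossing has two vertices and one edge, so $\DD(Q_1)$ is a two-object complex with a single saddle differential, which under the equivalence of \cref{prop:equiv_cat_4-tangles} (after delooping) is $\circ\xrightarrow{S_\circ}\bullet$ up to overall shifts. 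This is a zigzag complex: its unique differential is the odd map $S_\circ$, so conditions \ref{eq:2objects}--\ref{eq:saddleexists} of \cref{def:zigzag} hold trivially. Note that the hypothesis $r>0$ is genuinely needed here: $Q_0$ and $Q_\infty$ have trivial complexes, which fail \ref{eq:saddleexists}, and the mirror of a zigzag complex is not a zigzag complex; negative $r$ is instead accommodated separately via $\DD(Q_{-r})\simeq-\DD(Q_r)$ from \cref{lem:mirror_complex}, which is precisely why the recursion should be based at $Q_1$ rather than at the trivial tangle.

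For the inductive step, I would assume $\DD(Q_r)$ is a zigzag complex and consider $Q_{r'}$ obtained by adjoining one elementary twist. Adjoining a twist amounts to composing $Q_r$ (horizontally or vertically) with the one-crossing tangle, so by the cube of resolutions $\llbracket Q_{r'}\rrbracket$ is the total complex of a two-step filtered complex whose associated graded pieces are $\llbracket Q_r\rrbracket$ and a copy of $\llbracket Q_r\rrbracket$ with a turn-back attached. Translating into $\BNAlgH$-modules, delooping the circle(s) produced by the turn-back, and cancelling the resulting isomorphism components by Gaussian elimination yields an explicit new complex built from $\DD(Q_r)$. The content of the step is then the purely combinatorial claim that this cone--deloop--cancel procedure sends zigzag complexes to zigzag complexes: each of the two twist operations acts at one of the two ends $A_0,A_n$ of the zigzag, appending a short chain of differentials of the five permitted types while leaving the rest of the complex untouched, and one must check that the appended differentials continue the odd/even alternation \ref{eq:consecutived}, that no forbidden differential is created (in particular no $S_\bullet$ and no differential oriented $\bullet\to\circ$), and that at least one $S_\circ$ survives \ref{eq:saddleexists}. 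Carrying this out requires knowing, for the current zigzag, which object ($\circ$ or $\bullet$) and which parity sits at each end, which is exactly the bookkeeping recorded in \cref{lem:end_zz}.

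The hard part will be this last verification: it is a finite but somewhat intricate case analysis over the two twist operations crossed with the possible types ($\circ$ versus $\bullet$, even versus odd) of the end being twisted, in each case identifying the objects and maps that remain after delooping and Gaussian elimination and matching them against \cref{def:zigzag}. This is precisely the recursive computation to be carried out in \cref{sec:appendix}; see \cref{prop:zigzag_equivalent}. Finally, since the orientation of $Q_r$ affects $\DD(Q_r)$ only through overall homological and quantum shifts, which are permitted by \ref{eq:2objects}, it suffices to treat a single orientation, which completes the induction.
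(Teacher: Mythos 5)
Your overall strategy---induction over the twist (continued-fraction) construction of $Q_r$, with each added crossing handled as a mapping cone followed by delooping and Gaussian elimination, plus a combinatorial check that the zigzag axioms survive---is the route taken by the sources this statement comes from. Note, though, that the paper itself does not reprove the statement: it quotes it from \cite{lambda1} and \cite{thompson}, and its appendix only records the resulting recursion for the combinatorial model $zz(r)$ (rules (Z1)--(Z3) and \cref{table:x+1}), again citing those references for \cref{prop:zigzag_equivalent}. Your base case $\DD(Q_1)\simeq\bigl(\circ\xrightarrow{S_\circ}\bullet\bigr)$ is correct, as is the reduction to a single orientation and the remark about why $r>0$ is needed. (That your second generator is the vertical twist $r\mapsto\nicefrac{r}{r+1}$ rather than the paper's rotation $r\mapsto\nicefrac{1}{r}$ is an inessential difference.)

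The gap is that the inductive step is announced rather than proved, and that step is the entire mathematical content of the statement. One must determine, for each twist move and each possible configuration at the relevant end of the current zigzag (object $\circ$ or $\bullet$, odd or even adjacent differential), what the cone--deloop--cancel procedure yields, and then verify conditions (i)--(iv) of \cref{def:zigzag}---in particular that no $S_\bullet$ and no $\bullet\to\circ$ differential is created and that the odd/even alternation persists; this is essentially a re-derivation of \cref{table:x+1}, which you do not carry out. Deferring it to \cref{sec:appendix} does not discharge it, because the appendix proves facts about $zz(r)$ (\cref{lem:one_saddle_per_segment_of_zz}, \cref{lem:end_zzAppendix}) but not the equivalence $zz(r)\simeq\DD(Q_r)$ itself. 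Moreover, your proposed bookkeeping via \cref{lem:end_zz} is circular as stated: that lemma concerns $\DD(Q_{\nicefrac{p}{q}})$ and is deduced from the very proposition under proof. The standard fix is to strengthen the induction hypothesis so that it carries the end data along (for instance, that every segment contains exactly one $S_\circ$, as in \cref{lem:one_saddle_per_segment_of_zz}) and to check that each twist move propagates this data; until that case analysis is actually done, what you have is a correct proof plan rather than a proof.
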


The following lemma is well-known (see e.g.~\cite{zbMATH03960559,KWZ_linear}) and can easily be checked inductively.

\begin{lemma}\label{lem:parity}
    Let \(p\) and \(q\) be coprime integers. The connectivity of \(Q_{\nicefrac{p}{q}}\) is determined as follows:
    \[
        \conn{Q_{\nicefrac{p}{q}}}=
        \begin{cases*}
            \ConnectivityX & if $p$ and $q$ are odd, \\
            \No & if $p$ is even and $q$ is odd, \\
            \Ni & if $p$ is odd and $q$ is even.
        \end{cases*}
    \]%

\vspace{-1.15\baselineskip}
\mbox{}\qed
\end{lemma}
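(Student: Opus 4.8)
The plan is to induct on $|p|+|q|$, using Conway's description of rational tangles by iterated twists. Since $\conn{Q_{\nicefrac{p}{q}}}$ depends neither on the orientation of the tangle nor on the signs of its crossings, it suffices to track how the connectivity type and the parity class of the coprime pair $(p,q)$ transform under two elementary operations: the \emph{horizontal twist} $h^{\pm}$, which adds a crossing on the right of a tangle and realises $Q_{r'}\mapsto Q_{r'\pm 1}$, i.e.\ $(p',q')\mapsto (p'\pm q',q')$; and the \emph{vertical twist} $v^{\pm}$, which adds a crossing at the bottom and realises $Q_{r'}\mapsto Q_{r'/(1\pm r')}$, i.e.\ $(p',q')\mapsto(p',q'\pm p')$.

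For the base case $|p|+|q|=1$ we have either $Q_{\nicefrac{p}{q}}=Q_0=\No$ with $(p,q)=(0,\pm 1)$, or $Q_{\nicefrac{p}{q}}=Q_\infty=\Ni$ with $(p,q)=(\pm 1,0)$; in both cases the connectivity is immediate from the picture of the tangle and matches the asserted parity pattern. For the inductive step I would use that if $\gcd(p,q)=1$ and $|p|+|q|\geq 2$, then $p$ and $q$ are both nonzero, and a single Euclidean step writes $Q_{\nicefrac{p}{q}}$ as one twist applied to a rational tangle with strictly smaller $|p'|+|q'|$: if $|p|\geq |q|$, choose the sign with $|p\mp q|<|p|$ and write $Q_{\nicefrac{p}{q}}=h^{\pm}(Q_{(p\mp q)/q})$; if $|q|>|p|$, choose the sign with $|q\mp p|<|q|$ and write $Q_{\nicefrac{p}{q}}=v^{\pm}(Q_{p/(q\mp p)})$. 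Coprimality is preserved, so the induction hypothesis applies to the smaller tangle.

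It then remains to check that $h^{\pm}$ and $v^{\pm}$ act on the three connectivity types exactly as they act on the three parity classes. Topologically, a crossing on the right transposes the two right-hand endpoints, hence swaps $\No\leftrightarrow\ConnectivityX$ and fixes $\Ni$; a crossing at the bottom transposes the two bottom endpoints, hence swaps $\Ni\leftrightarrow\ConnectivityX$ and fixes $\No$. Arithmetically, $(p',q')\mapsto(p'\pm q',q')$ swaps the class ``$p'$ even, $q'$ odd'' with ``$p',q'$ both odd'' and fixes ``$p'$ odd, $q'$ even'', while $(p',q')\mapsto(p',q'\pm p')$ swaps ``$p'$ odd, $q'$ even'' with ``$p',q'$ both odd'' and fixes ``$p'$ even, $q'$ odd''. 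Under the dictionary $\No\leftrightarrow(\text{even},\text{odd})$, $\Ni\leftrightarrow(\text{odd},\text{even})$, $\ConnectivityX\leftrightarrow(\text{odd},\text{odd})$, the two tables coincide, which closes the induction.

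I expect the only genuine subtlety to be pinning down the twist conventions: that horizontal and vertical twists really implement $r\mapsto r\pm 1$ and $r\mapsto r/(1\pm r)$ and act on the tangle boundary by the claimed transpositions. This is precisely the classical content of Conway's correspondence between rational tangles and continued fractions (cf.\ \cite{zbMATH03960559,KWZ_linear}); granting it, the remaining steps are the bookkeeping above. One could equally well run the same argument along a continued-fraction expansion of $p/q$ in place of the Euclidean reduction.
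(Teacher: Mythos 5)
Your proof is correct and is essentially the argument the paper has in mind: the paper offers no written proof, stating only that the lemma is well-known and "can easily be checked inductively" (citing the same sources you invoke for Conway's correspondence). Your Euclidean induction on $|p|+|q|$, matching the action of horizontal and vertical twists on the three connectivity types with their action on the parity classes of $(p,q)$, is exactly that inductive check, carried out correctly.
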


\begin{lemma}[{\cite[Lemma 5.12]{lambda1}}]
\label{lem:goodzigzag}
    Let \(r = \nicefrac{p}{q}\) with \(p,q\) positive coprime integers. Then the following hold.
    \begin{enumerate}[label=(\roman*)]
        \item \(\DD(Q_r)\) has an even end if and only if \(p\) or \(q\) is even.
        \item \(\DD(Q_r)\) has an odd \(\circ\)-end if and only if \(p\) is odd.
        \item \(\DD(Q_r)\) has an odd \(\bullet\)-end if and only if \(q\) is odd.
    \end{enumerate}
\end{lemma}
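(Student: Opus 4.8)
The plan is to argue by induction, using the recursive description of $\DD(Q_r)$ provided in the appendix. A zigzag complex has exactly two ends, each labelled $\circ$ or $\bullet$ and each either even or odd, so the three assertions together merely pin down this pair of labelled ends in terms of the parities of $p$ and $q$. It streamlines the induction to track, alongside the ends, the connectivity of \cref{lem:parity}: one expects---and verifies along the way---that the two ends are both $\circ$ when $\conn{Q_r}=\Ni$, both $\bullet$ when $\conn{Q_r}=\No$, and one of each when $\conn{Q_r}=\ConnectivityX$, while their parities are exactly those forced by the three bullet points (so, in particular, $\DD(Q_r)$ never has two even ends, consistently with $p$ and $q$ never being both even).

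For the base case I would take the simplest positive rational tangle handled by the recursion---$Q_1$, the one-crossing tangle---and read off its zigzag complex directly: $\DD(Q_1)$ is, up to grading shifts, the two-term complex $\circ\xrightarrow{S}\bullet$, which has one odd $\circ$-end and one odd $\bullet$-end, matching $p=q=1$ (both odd, connectivity $\ConnectivityX$). (If the appendix instead uses the integer tangles $Q_n$, $n\ge 1$, as base cases, those are checked the same way.) For the inductive step, recall from the appendix (\cref{prop:zigzag_equivalent} and \cref{lem:end_zz}) that a zigzag model for $\DD(Q_r)$ is obtained from that of a simpler tangle by adding a single crossing, which on the level of fractions is one of the two moves $r\mapsto r+1$, i.e.\ $(p,q)\mapsto(p+q,q)$, or $r\mapsto\frac{r}{r+1}$, i.e.\ $(p,q)\mapsto(p,p+q)$; these two moves generate $\Q_{>0}$ from $1$ by the subtractive Euclidean algorithm. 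On the complex side each move appends a short string of new objects and differentials at \emph{one} of the two ends, leaving the other untouched, and \cref{lem:end_zz} records precisely which differential is attached and hence how that end's colour and parity change. It then remains to run through the finitely many parity cases: for instance, if $p,q$ are both odd then the ends are $\{\text{odd }\circ,\ \text{odd }\bullet\}$, and one checks that $r\mapsto r+1$ converts the odd $\circ$-end into an even end while preserving the odd $\bullet$-end, exactly as predicted for $(p+q,q)=(\mathrm{even},\mathrm{odd})$; the remaining cases ($p$ even, $q$ odd and $p$ odd, $q$ even) and the move $r\mapsto\frac{r}{r+1}$ are entirely analogous, with the roles of $\circ$, $\bullet$ and of the two ends interchanged.

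The heart of the argument is not the parity bookkeeping on $\Q$, which is routine, but matching it with the combinatorics of the appendix: one must know, for each move and each current connectivity, which of the two ends is being extended and by which of the five allowed differentials $S_\circ$, $S_\circ S_\bullet$, $S_\bullet S_\circ$, $D_\circ$, $D_\bullet$. This is exactly the information packaged in \cref{lem:end_zz}, so in practice the proof of \cref{lem:goodzigzag} reduces to assembling that lemma's output across all cases. The one genuinely delicate point is keeping track of which of the two ends a given twist acts on, since this alternates with the connectivity; \cref{fig:zigzag_example} and the recursive picture in the appendix are the right sanity checks here.
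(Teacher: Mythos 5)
Your overall strategy is the right one, and (since the paper itself does not prove this lemma but imports it from \cite[Lemma 5.12]{lambda1}) it is essentially the standard argument: generate $\nicefrac{p}{q}$ from $1$ by the moves $r\mapsto r+1$ and $r\mapsto\nicefrac{1}{r}$ (or the composites $(p,q)\mapsto(p+q,q)$ and $(p,q)\mapsto(p,p+q)$ that you use), and track the colour and parity of the two ends of $zz(r)$ through the recursion: under $r\mapsto\nicefrac1r$ the colours swap and parities are preserved, matching the swap of $p$ and $q$ in the statement, and under $r\mapsto r+1$ one checks the finitely many local configurations at the ends.

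There are, however, two concrete problems with the bookkeeping as you describe it. First, the auxiliary invariant you propose to "verify along the way" is false: it is not true that both ends are $\circ$ when $\conn{Q_r}=\Ni$, both $\bullet$ when $\conn{Q_r}=\No$, and one of each when $\conn{Q_r}=\ConnectivityX$. Already $zz(2)=\circ\xrightarrow{D}\circ\xrightarrow{S}\bullet$ has one end of each colour although $\conn{Q_2}=\No$, and $zz(\nicefrac12)=\circ\xrightarrow{S}\bullet\xrightarrow{D}\bullet$ has one end of each colour although $\conn{Q_{\nicefrac12}}=\Ni$ (whereas $zz(\nicefrac32)$ in \cref{fig:zigzag_example} has two $\circ$-ends with the same connectivity $\Ni$, so the colour multiset is not even constant on connectivity classes). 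The connectivity determines only the colours of the \emph{odd} ends---which is precisely the content of the lemma---and says nothing about the colour of the even end. Since you explicitly rely on this correspondence to decide "which of the two ends a given twist acts on", the induction as written would derail. Second, you attribute to \cref{lem:end_zz} the information of which differential sits at each end after each move; that lemma only describes the neighbourhood of the $\bullet$-end in the case where $p$ and $q$ are both odd, and in the main text it is itself deduced from \cref{lem:goodzigzag}, so invoking it here would be circular. The correct inputs for the inductive step are \cref{table:x+1} together with rule (Z2) (and \cref{lem:one_saddle_per_segment_of_zz}, which rules out the end configurations $\cdots\to\circ$ and $\bullet$ with outgoing arrow). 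With the false invariant discarded and the induction run purely on the pair (colour, parity) of each end, your argument does go through.
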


\begin{lemma}[\cref{lem:end_zzAppendix}]\label{lem:end_zz}
    Let \(p\) and \(q\) be odd coprime integers such that~\(\nicefrac{p}{q}>0\). By \cref{lem:goodzigzag}, the chain complex \(\DD(Q_{\nicefrac{p}{q}})\) has an odd \(\circ\)-end and an odd \(\bullet\)-end. In a neighborhood of the \(\bullet\)-end, the complex has the following shape (where we ignore the quantum and homological gradings):
    \begin{equation}\label{eq:ends_zz}
    \DD(Q_{\nicefrac{p}{q}})\simeq
    \begin{cases*}
        \cdots \; \circ \overset{D}{\longrightarrow} \circ \overset{S}{\longrightarrow} \bullet 
        & if 
        \(\nicefrac{p}{q} > 1\),
        \\
        \qquad\qquad\, \circ \overset{S}{\longrightarrow} \bullet 
        & 
        if \(\nicefrac{p}{q} = 1\),
        \\
        \cdots \; \circ \overset{D}{\longleftarrow} \circ \overset{S}{\longrightarrow} \bullet 
        & 
        if \(\nicefrac{1}{2} < \nicefrac{p}{q} < 1\),
        \\
        \cdots \; \bullet \overset{D}{\longrightarrow} \bullet \overset{S^2}{\longrightarrow} \bullet 
        & 
        if \(0 < \nicefrac{p}{q} < \nicefrac{1}{2}\).
    \end{cases*}
    \end{equation}
\end{lemma}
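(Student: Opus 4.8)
The plan is to prove this by induction on the continued fraction expansion of $\nicefrac{p}{q}$, since the recursive construction of $\DD(Q_r)$ (referred to in the appendix) builds the zigzag complex by successively adding crossings, i.e.\ by passing from $Q_r$ to $Q_{r+1}$ or $Q_{\nicefrac{1}{\nicefrac{1}{r}+1}}$ and their mirrors. First I would establish the base case $\nicefrac{p}{q} = 1$, where $Q_1$ is a single crossing and $\DD(Q_1) \simeq (\circ \overset{S}{\longrightarrow} \bullet)$ can be computed directly from Bar-Natan's cube of resolutions together with \cref{prop:equiv_cat_4-tangles}; this matches the second case of \eqref{eq:ends_zz}.

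For the inductive step, I would track what happens to the $\bullet$-end under the elementary operations used in the recursive construction. The key observation is that adding a crossing to a rational tangle corresponds, on the level of $\DD$, to tensoring with (or pairing against) the complex of a single crossing, and this operation modifies only a bounded neighbourhood of one end of the zigzag complex in a controlled way. Concretely, for $\nicefrac{p}{q} > 1$ one writes $\nicefrac{p}{q} = \nicefrac{p'}{q} + 1$ with $0 < \nicefrac{p'}{q} < \nicefrac{p}{q}$, applies the induction hypothesis to $Q_{\nicefrac{p'}{q}}$ (noting that $p'$ and $q$ remain odd or the parity shifts appropriately—here one must be careful, as $p' = p - q$ may be even), and checks how the last one or two differentials change. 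The parity bookkeeping from \cref{lem:parity,lem:goodzigzag} is exactly what tells us which of the four cases we land in: whether the relevant end stays a $\circ$-end or a $\bullet$-end, and whether the penultimate differential is $D$ or $S^2$. I expect the cleanest organization is to split into the four ranges $\nicefrac{p}{q} > 1$, $= 1$, $\in (\nicefrac12, 1)$, $\in (0, \nicefrac12)$ and verify that each elementary move sends the appropriate range into another range whose local shape, after appending the new differential(s), produces the claimed picture.

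The main obstacle will be the careful handling of gradings and signs during the delooping and Gaussian elimination steps that simplify the complex after adding a crossing: the raw complex $\llbracket D_{Q_r} \rrbracket$ is not in zigzag form, and one must cancel acyclic summands to recover one. Keeping track of exactly which differentials survive near the $\bullet$-end—and in particular distinguishing a genuine $S$ differential from a composite $S^2$, and an incoming $D$ from an outgoing $D$—requires knowing the local structure two steps in from the end, not just one. This is why the statement is phrased in terms of a \emph{neighbourhood} of the end rather than just the last object. I would therefore prove a slightly stronger inductive statement that records the last two or three objects and differentials (with their orientations) for each parity class, so that the induction has enough information to propagate; the four cases in \eqref{eq:ends_zz} are then read off as a corollary. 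Since the excerpt defers the recursive construction and the proofs of \cref{lem:end_zz,lem:grading_end_zz} to the appendix, I would carry out this induction there, using \cref{prop:zigzag_equivalent} as the backbone.
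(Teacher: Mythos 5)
Your proposal matches the paper's argument: the appendix proves this by analysing the last one or two steps of the continued-fraction recursion for $zz(\nicefrac{p}{q})$ (base case $r=1$, operations $r\mapsto r+1$ and $r\mapsto\nicefrac{1}{r}$ with an explicit replacement table), and the ``slightly stronger inductive statement'' you anticipate is realised there as \cref{lem:one_saddle_per_segment_of_zz} (each segment contains exactly one $S_\circ$), which rules out the spurious end configurations. One small correction: the operation $r\mapsto r+1$ rewrites \emph{every} arrow of the zigzag complex, not just a neighbourhood of one end — what makes the argument work is that the rewriting is local per arrow, so the shape near an end of $zz(r+1)$ is still determined by the shape near the corresponding end of $zz(r)$; also, the delooping/Gaussian-elimination issues you worry about are already packaged into the cited recursion (\cref{prop:zigzag_equivalent}) and need not be revisited here.
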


Given a Conway tangle~\(T\), the \emph{0-closure} of \(T\) is defined to be the link 
\[
T(0)=Q_{0}\cup T.
\]
For any tangle \(T\) without closed components, 
\(T\) has connectivity \(\ConnectivityX\) or \(\Ni\) 
if and only if 
the 0-closure $T(0)$ is a knot.

The bigrading of the complexes in \cref{eq:ends_zz} is determined by the following lemma.

\begin{lemma}[\cref{lem:grading_end_zzAppendix}]\label{lem:grading_end_zz}
	Let \(p\) and \(q\) be odd coprime integers. 
	We equip the rational tangle \(Q_{\nicefrac{p}{q}}\) with the same orientation at the tangle ends as the tangle \textnormal{$\vcenter{\hbox{\def\svgwidth{11.6pt}}}$}.
	(This is possible by \cref{lem:parity}.)
	Then the odd \(\bullet\)-end of \(\DD(Q_{\nicefrac{p}{q}})\) has bigrading~\(\GGzqh{\bullet}{0}{\ell}{0}\), where
	\[
	\ell = \begin{cases*}
		s_\Q(Q_{\nicefrac{p}{q}}(0))-1 
		& 
		if \(\nicefrac{p}{q}>0\),
		\\
		s_\Q(Q_{\nicefrac{p}{q}}(0))+1 
		& 
		if \(\nicefrac{p}{q}<0\).
	\end{cases*}
	\]
\end{lemma}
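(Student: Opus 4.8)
The plan is to extract the bigrading of the $\bullet$-end of $\DD(Q_{\nicefrac{p}{q}})$ from the Pairing Theorem (\cref{thm:pairing_thm}), applied to the splitting $Q_{\nicefrac{p}{q}}(0)=Q_0\cup Q_{\nicefrac{p}{q}}$, together with the description of the Rasmussen invariant via the pawn piece of Bar-Natan homology (\cref{rem:pawn_knight_pieces}). First I would reduce to $\nicefrac{p}{q}>0$. For $\nicefrac{p}{q}<0$ we have $Q_{\nicefrac{p}{q}}\simeq-Q_{\nicefrac{-p}{q}}$ by Conway's correspondence, with the $\NCr$-orientation preserved by the mirror, so $\DD(Q_{\nicefrac{p}{q}})\simeq-\DD(Q_{\nicefrac{-p}{q}})$ by \cref{lem:mirror_complex}. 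Since $Q_0$ is its own mirror and $\DD(Q_0)$ is a single generator, \cref{lem:mirror_complex} gives $\DD(Q_0)\simeq-\DD(Q_0)$, which forces dualisation to fix each of the generators $\circ$ and $\bullet$ rather than interchange them; hence the $\bullet$-end of $\DD(Q_{\nicefrac{p}{q}})$ is the dual of the $\bullet$-end of $\DD(Q_{\nicefrac{-p}{q}})$, and so lies in homological degree $0$ and quantum degree $-(s_\Q(Q_{\nicefrac{-p}{q}}(0))-1)=s_\Q(Q_{\nicefrac{p}{q}}(0))+1$, using $Q_{\nicefrac{-p}{q}}(0)=-Q_{\nicefrac{p}{q}}(0)$ and $s_\Q(-K)=-s_\Q(K)$. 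This is the claimed value, so I may assume $\nicefrac{p}{q}>0$.

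Then, since $p,q$ are odd, $Q_{\nicefrac{p}{q}}$ has connectivity $\ConnectivityX$ (\cref{lem:parity}) and $Q_{\nicefrac{p}{q}}(0)$ is a knot. The crossingless tangle $Q_0=\No$ corresponds under \cref{prop:equiv_cat_4-tangles} to the generator $\bullet$ in bigrading $(0,0)$, so $-\DD(Q_0)\simeq\bullet$ as well, and \cref{thm:pairing_thm} gives a homotopy equivalence of chain complexes of $\Z[G]$-modules
\[
\QGrad{q^{-1}}\BNr(Q_{\nicefrac{p}{q}}(0))\;\simeq\;\Mor(\bullet,\DD(Q_{\nicefrac{p}{q}})).
\]
Because $\bullet=\iota_\bullet\BNAlgH$ is projective and concentrated in homological degree $0$, the right-hand side is the complex obtained from a zigzag complex homotopy equivalent to $\DD(Q_{\nicefrac{p}{q}})$ by replacing every generator $\circ$ (resp.\ $\bullet$) of quantum grading $k$ by $\QGrad{q^k}\iota_\circ\BNAlgH\iota_\bullet$ (resp.\ $\QGrad{q^k}\iota_\bullet\BNAlgH\iota_\bullet$) in the same homological degree, and every differential by post-composition, which on these summands is left multiplication in $\BNAlgH$. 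A direct computation in $\BNAlgH$ then shows that $\iota_\bullet\BNAlgH\iota_\bullet$ is free of rank $2$ over $\Z[G]$ with homogeneous basis $\{\iota_\bullet,D_\bullet\}$ in quantum degrees $0$ and $-2$, and that $\iota_\circ\BNAlgH\iota_\bullet$ is free of rank $1$, generated by $S_\bullet$ in quantum degree $-1$.

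Next I would isolate the odd $\bullet$-end $b$ of $\DD(Q_{\nicefrac{p}{q}})$, of bigrading $(\ell,j)$. By \cref{lem:goodzigzag,lem:end_zz} it is adjacent to exactly one differential of $\DD(Q_{\nicefrac{p}{q}})$, which points into $b$ and, as an element of $\BNAlgH$, equals $S_\circ$ (if $\nicefrac{p}{q}>\nicefrac{1}{2}$) or $S_\circ S_\bullet$ (if $0<\nicefrac{p}{q}<\nicefrac{1}{2}$). Using the relations $S_\bullet D_\bullet=0$ and $S_\circ S_\bullet=G\cdot\iota_\bullet+D_\bullet$, left multiplication by this element carries the adjacent summand of $\Mor(\bullet,\DD(Q_{\nicefrac{p}{q}}))$ onto $\QGrad{q^\ell}\Z[G]\cdot(G\cdot\iota_\bullet+D_\bullet)\subseteq\QGrad{q^\ell}\iota_\bullet\BNAlgH\iota_\bullet$, and no differential leaves $\QGrad{q^\ell}\iota_\bullet\BNAlgH\iota_\bullet$. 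Since $G^k\iota_\bullet\notin\Z[G]\cdot(G\cdot\iota_\bullet+D_\bullet)$ for all $k\geq0$, the element $\QGrad{q^\ell}\iota_\bullet$ is then a cycle representing a non-torsion class of $H_*\big(\Mor(\bullet,\DD(Q_{\nicefrac{p}{q}}))\otimes_{\Z[G]}\Q[G]\big)$ in homological degree $j$ and quantum degree $\ell$. On the other hand, by \cref{rem:pawn_knight_pieces} the free part of $H_*(\BNr(Q_{\nicefrac{p}{q}}(0);\Q[G]))$ is a single copy of $\Q[G]$ in homological degree $0$ and quantum degree $s_\Q(Q_{\nicefrac{p}{q}}(0))$. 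Transporting this across the displayed equivalence, which lowers the quantum grading by $1$, the class of $\QGrad{q^\ell}\iota_\bullet$ must generate that free part, which forces $j=0$ and $\ell=s_\Q(Q_{\nicefrac{p}{q}}(0))-1$.

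The main obstacle will be the computations underlying the third paragraph: identifying $\iota_\bullet\BNAlgH\iota_\bullet$ and $\iota_\circ\BNAlgH\iota_\bullet$ as graded $\Z[G]$-modules and the maps induced by the five admissible zigzag differentials, and checking that the one differential adjacent to the $\bullet$-end hits exactly the rank-$1$ direct summand $\Z[G]\cdot(G\cdot\iota_\bullet+D_\bullet)$ of $\iota_\bullet\BNAlgH\iota_\bullet$, which avoids $\iota_\bullet$ --- here the relations $D_\circ S_\bullet=0$, $S_\bullet D_\bullet=0$ and $S_\circ S_\bullet=G\cdot\iota_\bullet+D_\bullet$ do all the work. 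The remaining bookkeeping --- that $\DD(Q_0)$ is $\bullet$ and not $\circ$, and that dualisation fixes $\bullet$ --- is immediate from \cref{prop:equiv_cat_4-tangles} and \cref{lem:mirror_complex}, and everything else follows formally from the Pairing Theorem and the pawn-piece description of $s_\Q$.
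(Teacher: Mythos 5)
Your route (Pairing Theorem with $\DD(Q_0)=\bullet$, then an explicit computation of $\Mor(\bullet,\DD(Q_{\nicefrac{p}{q}}))$) is genuinely different from the paper's, which instead closes up the tangle in the cobordism category, deloops and Gaussian-eliminates to exhibit $t^{0}\QGrad{q^{\ell+1}}\Z[G]$ as a one-term \emph{direct summand of the chain complex} $\BNr(Q_{\nicefrac{p}{q}}(0))$, which must then be the pawn piece. Most of your computation checks out: $\iota_\bullet\BNAlgH\iota_\bullet$ is free over $\Z[G]$ on $\{\iota_\bullet,D_\bullet\}$, $\iota_\circ\BNAlgH\iota_\bullet$ is free on $S_\bullet$, the unique differential entering the end-summand has image $\Z[G]\cdot(G\iota_\bullet+D_\bullet)$, no differential leaves it, and hence $[\QGrad{q^{\ell}}\iota_\bullet]$ is a $G$-non-torsion class; the reduction to $\nicefrac{p}{q}>0$ by mirroring also agrees with the paper. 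The gap is the final sentence: ``the class of $\QGrad{q^{\ell}}\iota_\bullet$ must generate that free part.'' Non-torsion does \emph{not} imply generating: modulo torsion, a non-torsion homogeneous class is only some $c\,G^{k}$ times the generator of the single pawn copy of $\Q[G]$ (e.g.\ $G\cdot(\text{pawn generator})$ is non-torsion but not a generator). So what you have actually proved is $j=0$ and $\ell=s_\Q(Q_{\nicefrac{p}{q}}(0))-1-2k$ for some $k\geq 0$, i.e.\ only the inequality $\ell\leq s_\Q(Q_{\nicefrac{p}{q}}(0))-1$, whereas the lemma needs equality; nothing in your argument rules out $k>0$.

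The gap is closable inside your framework, but it needs an extra step showing that $[\QGrad{q^{\ell}}\iota_\bullet]$ is not divisible by $G$ modulo torsion, and this is where an argument must be supplied. For instance, suppose $G^{m}\,\iota_\bullet^{(b)}=G^{m+1}z+dw$ over $\Q[G]$ (which is what divisibility mod torsion would give after clearing the torsion part). Since the only differential component landing in the end-summand has image $\Q[G]\cdot(G\iota_\bullet+D_\bullet)$, comparing the $\iota_\bullet$- and $D_\bullet$-coefficients of the end-summand components yields $G^{m}=G^{m+1}\alpha+\gamma G$ and $0=G^{m+1}\beta+\gamma$, hence $1\in G\,\Q[G]$, a contradiction; this upgrade of your own coefficient computation forces $k=0$ and hence $\ell=s_\Q(Q_{\nicefrac{p}{q}}(0))-1$, $j=0$. (The paper sidesteps this issue entirely by producing a chain-level one-term free direct summand, which by the pawn/knight classification must be the pawn, so the equality comes for free.) As written, though, the assertion ``must generate'' is an unjustified step, and it is exactly the step on which the precise value of $\ell$ rests.
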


Observe that \cref{lem:grading_end_zz} determines the gradings of the complex $\DD(Q_{\nicefrac{p}{q}})$ completely: The degree shifts of all modules of the complex are uniquely given by the fact that the differentials must preserve the quantum grading and increase the homological grading by~1.
\begin{remark}\label{rem:s_of_2-bridge_knots}
	The expression \(s_{\Q}(Q_{\nicefrac{p}{q}}(0))\) in the formula for
        \(\ell\) in \cref{lem:grading_end_zz}
        can be replaced by $s_{\F}(Q_{\nicefrac{p}{q}}(0))$ over any field~$\F$. 
	In fact, 
	\(s_{\Q}(Q_{\nicefrac{p}{q}}(0))\) agrees with 	$-\sigma(Q_{\nicefrac{p}{q}}(0))$, since the knots $Q_{\nicefrac{p}{q}}(0)$ are alternating \cite{lee2002supportkhovanovsinvariantsalternating}.
	Below is a formula  for the signature $\sigma$ of 2-bridge knots $Q_{\nicefrac{p}{q}}(0)$ when $p$ and $q$ are coprime odd integers and $p > 0$:%
	\[
	s_{\Q}(Q_{\nicefrac{p}{q}}(0))
	=
	s_\F(Q_{\nicefrac{p}{q}}(0)) 
	= 
	-\sigma(Q_{\nicefrac{p}{q}}(0)) 
	= 
	-\sum_{i=1}^{p-1}(-1)^{\lfloor \nicefrac{iq}{p}\rfloor}.
	\]
	This formula is taken from \cite[Theorem 9.4]{murasugi}, which Murasugi attributes to Shinohara \cite{MR0563649}.%
	\footnote{%
		Our knot $Q_{\nicefrac{p}{q}}(0)$ corresponds to 
		the mirror of $B(p,q)$ in the notation of \cite{murasugi}. 
		Murasugi uses the convention that the signature of positive knots is positive,
		whereas we use the opposite convention.
		Moreover, he states the signature formula only for $p > q > 0$. 
		However, from that narrower case, 
		it may be deduced that the hypothesis $p > 0$ suffices, 
		by using that 
		$Q_{\nicefrac{p}{q}}(0) = Q_{\nicefrac{p}{q+2p}}(0)$, 
		$\sigma(Q_{\nicefrac{-p}{q}}(0)) = - \sigma(Q_{\nicefrac{p}{q}}(0))$, and 
		$\lfloor \nicefrac{iq}{p}\rfloor + \lfloor \nicefrac{-iq}{p}\rfloor = - 1$.
	}
	Alternative formulas, 
	based on continued fractions of~$\nicefrac{p}{q}$, 
	have been given by Murasugi \cite{Murasugi_2bridge} 
	and 
	Gallaspy--Jabuka~\cite{zbMATH06504434}.
\end{remark}

\section{\texorpdfstring{The invariants $\lambda$ and $\lambdau$}{The invariants λ and λ⁻}}\label{sec:small_lambda}

We start this section with the definition of our protagonists $\lambda$ and $\lambdau$.
Then, we prove \cref{thm:lambda-gordian,thm:lambda_proper_rational_gordian}
about $\lambda, \lambdau$, and Gordian distances.
We continue with a brief look at how $\lambda$ and $\lambdau$ change under non-rational proper tangle replacements, and then analyze their relationships with the Rasmussen invariants and with torsion.

\subsection{Definitions and basic properties}

Let $R$ denote a unital commutative ring. 

\begin{definition}\label{def:lambda}
	Given two knots \(K_1\) and \(K_2\), we define

    \[
    \lambda(K_1,K_2;R)
	=
	\min
	\left\{%
	n\in\Z_{\geq0}
	\left|\,
	\begin{minipage}{6.5cm}
    \centering
	\(
	\exists
    \)
    homogeneous chain maps
    \\
    \(
	\begin{tikzcd}
	\BNr(K_1;R[G])
	\arrow[bend left=5]{r}{f}
	\arrow[bend right=5,leftarrow,swap]{r}{g}
	&
	\BNr(K_2;R[G]):
	\end{tikzcd} 
    \)
    \\
    \(
	f\circ g\simeq G^n\cdot \id, \,
	g\circ f\simeq G^n\cdot \id
	\)
	\end{minipage}
	\right.
	\right\}
	\]
  and
	\[
	\lambdau(K_1,K_2;R)
	=
	\min
	\left\{%
	n\in\Z_{\geq0}
	\left|\,
    \begin{minipage}{6.5cm}
    \centering
	\(
	\exists
    \)
    homogeneous chain maps
    \\
    \(
	\begin{tikzcd}
	\BNr(K_1;R[G])
	\arrow[bend left=5]{r}{f}
	\arrow[bend right=5,leftarrow,swap]{r}{g}
	&
	\BNr(K_2;R[G]):
	\end{tikzcd} 
    \)
    \\
    \(
	f\circ g\simeq G^n\cdot \id, \,
	g\circ f\simeq G^n\cdot \id
	\)
    \\
    and $\QGrad{q}(f),\QGrad{q}(g)\leq 0$
	\end{minipage}
	\right.
	\right\}
	\]
	By convention, we set~$\min \varnothing = \infty$.
\end{definition}
Similar definitions appear in 
\cite[Def.~3.1]{zbMATH07305772},
\cite[Def.~7.1]{zbMATH07311838},
and \cite[Def.~1.7]{lambda1}.
When~$R=\Z$, we write $\lambda(K_1,K_2) = \lambda(K_1,K_2;\Z)$ and $\lambdau(K_1,K_2) = \lambdau(K_1,K_2;\Z)$. Moreover, for $K$ a knot and $U$ the unknot, we set 
\[
\lambda(K;R)=\lambda(K,U;R) \qquad \text{and} \qquad \lambdau(K;R)=\lambdau(K,U;R).
\]

\begin{remark}
    Observe that for any knots $K_1,\, K_2$ and any ring $R$
    \[
    \lambda(K_1,K_2;R) \leq \lambda(K_1,K_2) 
    \qquad \text{and} \qquad
    \lambda(K_1,K_2;R) \leq \lambdau(K_1,K_2;R) \leq \lambdau(K_1,K_2).
    \]
\end{remark}

\begin{remark}
Instead of working with the $\Z[G]$-chain complexes $\BNr(K)$ and homotopy classes of chain maps between them, as we do here, one could try to extract information from the homology $H_*(\BNr(K))$. 
However, this would be less practical than it might seem,
because $H_*(\BNr(K))$ is typically a non-free $\Z[G]$-module, and such modules over a non-PID such as $\Z[G]$ are not classified. More to the point, the isomorphism class of $H_*(\BNr(K))$ contains strictly less information than the homotopy class of $\BNr(K)$. Indeed, here is an example of two non-homotopy-equivalent $\Z[G]$-chain complexes $C_1, C_2$ with isomorphic graded homology modules.
\[
\begin{tikzcd}[column sep=7em,ampersand replacement=\&]
C_1 = \&[-7em]\&
(\QGrad{q^0} + \QGrad{q^{-2}})\Z[G]
\ar{r}{\begin{pmatrix}2&G\end{pmatrix}}
\& \QGrad{q^0}\Z[G] 
\\
C_2 = \&[-7em]
  \QGrad{q^{-2}}\Z[G] \ar{r}{\begin{pmatrix}G&-2&0\end{pmatrix}^{\top}}
\& (\QGrad{q^0}+\QGrad{q^{-2}}+\QGrad{q^{-2}})\Z[G] \ar{r}{\begin{pmatrix}2&G&0\end{pmatrix}}
\& \QGrad{q^0}\Z[G]
\end{tikzcd}
\]
In particular, for knots $K_i$ with $\BNr(K_i) \simeq C_i$, one has $\lambda(K_1, K_2) > 0$, but this would not be visible from homology.
\end{remark}

Below are some properties of the invariants $\lambda$ and~$\lambdau$. These properties show that both $\lambda$ and $\lambdau$ are pseudometrics on the set of isotopy classes of knots.

\begin{proposition}\label{prop:properties_lambda}
	Given knots \(K_1\), \(K_2\), and~\(K_3\), the following is true:
	\begin{enumerate}[label=(\roman*)]
		\item \label{item:properties_lambda:1} \(\lambda(K_1,K_2;R)=\lambda(K_2,K_1;R)\) and \(\lambdau(K_1,K_2;R)=\lambdau(K_2,K_1;R)\).
		\item \label{item:properties_lambda:2} \(\lambda(K_1,K_3;R)\leq \lambda(K_1,K_2;R) + \lambda(K_2,K_3;R)\), and\\
                      \(\lambdau(K_1,K_3;R)\leq \lambdau(K_1,K_2;R) + \lambdau(K_2,K_3;R)\).
		\item \label{item:properties_lambda:3} \(\lambda(K_1,K_2;R)=0\) if and only if \(\BNr(K_1;R[G]) \simeq \QGrad{q^\ell} \BNr(K_2;R[G])\), for some~\(\ell \in \Z\).
		\item \label{item:properties_lambda:4} \(\lambdau(K_1,K_2;R)=0\) if and only if \(\BNr(K_1;R[G]) \simeq \BNr(K_2;R[G])\).
	\end{enumerate}
\end{proposition}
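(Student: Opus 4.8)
The plan is to verify the four claims in order. Parts \cref{item:properties_lambda:1} and \cref{item:properties_lambda:2} are purely formal manipulations of the chain maps in \cref{def:lambda}, while parts \cref{item:properties_lambda:3} and \cref{item:properties_lambda:4} amount to understanding how a homogeneous chain homotopy equivalence interacts with quantum grading shifts.

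For \cref{item:properties_lambda:1}: if homogeneous chain maps \(f\colon\BNr(K_1;R[G])\to\BNr(K_2;R[G])\) and \(g\colon\BNr(K_2;R[G])\to\BNr(K_1;R[G])\) realise \(\lambda(K_1,K_2;R)=n\), then the pair \((g,f)\)---now reading \(g\) as the forward and \(f\) as the backward map---realises the same \(n\) for \((K_2,K_1)\), since the conditions \(g\circ f\simeq G^n\cdot\id\), \(f\circ g\simeq G^n\cdot\id\) and \(\QGrad{q}(f),\QGrad{q}(g)\leq0\) are symmetric in \(f\) and \(g\). This gives \(\lambda(K_2,K_1;R)\leq\lambda(K_1,K_2;R)\), hence equality, and the same applies to \(\lambdau\). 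For \cref{item:properties_lambda:2} one composes: given \(f_{ab},g_{ab}\) realising \(\lambda(K_a,K_b;R)=n_{ab}\) for \((a,b)\in\{(1,2),(2,3)\}\), set \(f_{13}=f_{23}\circ f_{12}\) and \(g_{13}=g_{12}\circ g_{23}\). A composite of homogeneous chain maps is again one, and quantum degrees add, so in the \(\lambdau\)-case the inequalities \(\QGrad{q}\leq 0\) persist. Since the chain maps are \(R[G]\)-linear and \(G\) is central, \(g_{13}\circ f_{13}=g_{12}\circ(g_{23}\circ f_{23})\circ f_{12}\simeq g_{12}\circ(G^{n_{23}}\cdot\id)\circ f_{12}=G^{n_{23}}\cdot(g_{12}\circ f_{12})\simeq G^{n_{12}+n_{23}}\cdot\id\), and symmetrically \(f_{13}\circ g_{13}\simeq G^{n_{12}+n_{23}}\cdot\id\); hence \(\lambda(K_1,K_3;R)\leq n_{12}+n_{23}\), and likewise for \(\lambdau\). (If \(n_{12}\) or \(n_{23}\) is \(\infty\) the inequality is vacuous.)

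For \cref{item:properties_lambda:3} and \cref{item:properties_lambda:4} the backward directions are immediate: a graded chain homotopy equivalence \(\BNr(K_1;R[G])\simeq\QGrad{q^\ell}\BNr(K_2;R[G])\) and its inverse, reread as chain maps between \(\BNr(K_1;R[G])\) and \(\BNr(K_2;R[G])\), are homogeneous, with both composites homotopic to \(\id=G^0\cdot\id\); this yields \(\lambda(K_1,K_2;R)=0\), and if \(\ell=0\) these maps have quantum degree \(0\), so \(\lambdau(K_1,K_2;R)=0\). Conversely, suppose \(n=0\) is realised by homogeneous chain maps \(f,g\), so that \(f\) is a chain homotopy equivalence with homotopy inverse \(g\); write \(d=\QGrad{q}(f)\) and \(d'=\QGrad{q}(g)\). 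The crucial point is that \(d+d'=0\): the homotopy class of \(g\circ f\) equals that of \(\id_{\BNr(K_1;R[G])}\), which is nonzero because \(\BNr(K_1;R[G])\) is not contractible---after base change along any surjection of \(R\) onto a field it acquires a pawn piece, cf.\ \cref{rem:pawn_knight_pieces}---and a nonzero element of the graded module of homotopy classes of self-chain-maps of \(\BNr(K_1;R[G])\) is homogeneous of a unique quantum degree, which is \(0\) (as \([\id]\)) and also \(d+d'\) (as \([g\circ f]\)). Consequently, regarding \(f\) as a chain map into \(\QGrad{q^{-d}}\BNr(K_2;R[G])\) makes it quantum-degree preserving, and, since \(d+d'=0\), the same shift makes \(g\) quantum-degree preserving as well; as \(f\) and \(g\) remain mutually homotopy inverse, \(\BNr(K_1;R[G])\simeq\QGrad{q^{-d}}\BNr(K_2;R[G])\), proving \cref{item:properties_lambda:3}. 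In the \(\lambdau\)-case we additionally have \(d,d'\leq0\), which with \(d+d'=0\) forces \(d=d'=0\); then the shift is trivial and \(\BNr(K_1;R[G])\simeq\BNr(K_2;R[G])\), proving \cref{item:properties_lambda:4}.

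Essentially all of this is bookkeeping; the only point demanding genuine care is the identity \(d+d'=0\)---that a homogeneous homotopy equivalence cannot change the quantum grading by a net nonzero amount---for which non-contractibility of \(\BNr(K)\) is exactly the input needed. (Alternatively, \(\ell=0\) in \cref{item:properties_lambda:4} can be extracted from the fact that \(\BNr(K)\) is not homotopy equivalent to any nontrivial quantum shift of itself, e.g.\ by comparing graded Euler characteristics.)
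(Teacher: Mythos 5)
Your proof is correct and follows essentially the same route as the paper's: symmetry of the definition for (i), composition of the realising maps for (ii), and for (iii)--(iv) the observation that non-contractibility of $\BNr(K;R[G])$ forces $\QGrad{q}(f)+\QGrad{q}(g)=\QGrad{q}(\id)=0$. You spell out the degree bookkeeping in (iii) more explicitly than the paper does, but the underlying argument is identical.
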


\begin{proof}
  Property \cref{item:properties_lambda:1} follows directly from the definition of $\lambda$ and~$\lambdau$.
    
  For point \cref{item:properties_lambda:2}, 
  if \(\lambda(K_1,K_2;R)=\infty\) or \(\lambda(K_2,K_3;R)=\infty\), 
  then there is nothing to show. 
	So suppose \(\lambda(K_1,K_2;R)\) and \(\lambda(K_2,K_3;R)\) are finite. 
	Then there exist 
	maps \(f\) and \(g\) realizing \(\lambda(K_1,K_2;R)\) and 
	maps \(f'\) and \(g'\) realizing \(\lambda(K_2,K_3;R)\). 
	Clearly, \(f'\circ f\) and \(g\circ g'\) give rise to an upper bound of 
	\(\lambda(K_1,K_2;R)
	+
	\lambda(K_2,K_3;R)\) 
	for \(\lambda(K_1,K_3;R)\). The analogous argument applies to $\lambdau$.

    For property \cref{item:properties_lambda:3}, the implication $\Leftarrow$ is obvious, since $\lambda(K_1,K_2;R)=0$ can be realised by the maps $f=\id$ and~$g=\id$. For the other implication, $\lambda(K_1,K_2;R)=0$ implies that there exist homogeneous chain maps \(f\) and \(g\) with 
	\(f\circ g\simeq \id\) and \(g\circ f\simeq \id\). This, in turn, means that $\BNr(K_1;R[G])$ and $\BNr(K_2;R[G])$ are chain homotopy equivalent up to a shift in quantum gradings.

    The implication $\Leftarrow$ of \cref{item:properties_lambda:4} is obvious. The other direction can be seen as follows. 
	Suppose 
	\(
	\lambdau(K_1,K_2;R)=0
	\). 
	Then there exist homogeneous chain maps \(f\) and \(g\) with 
	\(f\circ g\simeq \id\), \(g\circ f\simeq \id\) and \(\QGrad{q}(f),\QGrad{q}(g)\leq0\). 
	It suffices to show that, in fact, \(\QGrad{q}(f)=\QGrad{q}(g)=0\). 
	Since neither \(\BNr(K_1;R[G])\) nor \(\BNr(K_2;R[G])\) are contractible, we know that \(f\circ g\neq0\). Hence \(\QGrad{q}(f)+\QGrad{q}(g) = \QGrad{q}(f\circ g)=\QGrad{q}(\id)=0\), which indeed implies \(\QGrad{q}(f)=\QGrad{q}(g)=0\).
\end{proof}

\subsection{Relationship with Gordian distances}

We now relate the invariants $\lambda$ and $\lambdau$ to two topological quantities: the \emph{Gordian distance} and the \emph{proper rational Gordian distance}.

\begin{definition} \label{def:gordian}
	The \emph{Gordian distance} $u(K_1,K_2)$ between two knots $K_1,\, K_2$ is defined as the minimal number of crossing changes relating $K_1$ to~$K_2$.
\end{definition}

\thmlambdagordian*

\begin{remark}
	In general, \cref{thm:lambda-gordian} does not hold for links with two or more components. 
	For instance, one may check that $\lambdau(L,U_2)=\infty$ if $L$ is the Hopf link and $U_2$ the two component unlink. 
	This follows from the fact that both \(\BNr(L;\Z[G])\) and \(\BNr(U_2;\Z[G])\) are free of rank 2, but the former is supported in distinct homological gradings whereas the latter is supported in just a single homological grading. 
\end{remark}

\begin{definition} \label{def:proper_rational_gordian}
    Two knots $K_1$ and $K_2$ are related by a  \emph{proper rational tangle replacement} if $K_2$ can be obtained from $K_1$ by replacing a rational tangle in $K_1$ by another rational tangle with the same connectivity. 
    The \emph{proper rational Gordian distance} or \emph{$u_q$-distance} $u_q(K_1,K_2)$ between two knots $K_1,\, K_2$ is defined as the minimal number of proper rational replacements relating $K_1$ to~$K_2$.
\end{definition}

\thmlambdarational*

\begin{remark}
    In general, $\lambdau$ is not a lower bound for~$u_q$;  counterexamples are given by $(2,n)$-torus knots. 
\end{remark}

\begin{remark}\label{remark:slopechange}
Any proper rational tangle replacement $Q_{\nicefrac{a}{b}} \rightsquigarrow Q_{\nicefrac{c}{d}}$
may be achieved by making instead a proper rational tangle replacement $Q_{-1} \rightsquigarrow Q_{\nicefrac{p}{q}}$.
To see this, pick a non-boundary-fixing homeomorphism $\varphi$ of the ambient 3-balls sending $Q_{\nicefrac{a}{b}}$ to $Q_{-1}$, and setting $Q_{\nicefrac{p}{q}} = \varphi(Q_{\nicefrac{c}{d}})$. That such a $\varphi$ exists follows immediately from the definition of rational tangle.
Properness then forces $p$ and $q$ to be odd (see \cref{lem:parity}).
If the rational tangles $Q_{\nicefrac{a}{b}}, Q_{\nicefrac{c}{d}}$ carry the same orientation,
then $\varphi$ may be chosen to preserve this orientation. Consequently,
every orientation-preserving rational tangle replacement may be achieved by making instead a replacement 
$\textnormal{$\vcenter{\hbox{\def\svgwidth{11.6pt}}}$} \rightsquigarrow Q_{\nicefrac{p}{q}}$.
In what follows, we will often focus on replacements $\textnormal{$\vcenter{\hbox{\def\svgwidth{11.6pt}}}$} \rightsquigarrow Q_{\nicefrac{p}{q}}$ without loss of generality.

Taking this a little further, one may after $\varphi$ apply another non-boundary fixing homeomorphism $\psi$ such that $\psi(Q_{-1}) = Q_{-1}$. A simple calculation in \cite[Lemma 5.15]{lambda1} shows that there exists such $\psi$ such that $\psi(Q_{\nicefrac{p}{q}}) = Q_{\nicefrac{p'}{q'}}$ with $\nicefrac{p'}{q'} > 0$. Hence w.l.o.g.~one may assume $\nicefrac{p}{q} > 0$. A similar calculation, the details of which we omit, shows that if the orientation of the tangles shall be preserved by $\psi$, one may still w.l.o.g.~assume $\nicefrac{p}{q} > -\nicefrac{1}{2}$. 
\end{remark}

In order to prove \cref{thm:lambda_proper_rational_gordian,thm:lambda-gordian}, 
we extend $\lambda$ and $\lambdau$ to Conway tangles.

\begin{definition}\label{def:lambda_tangle}
	Given two Conway tangles \(T_1\) and \(T_2\) and \(a\in\{D_\bullet,S^2,D_\circ,G\}\subset\mathcal{B}\), define

    \[
    \lambda_{a}(T_1,T_2)
	=
	\min
	\left\{%
	n\in\Z_{\geq0}
	\left|\,
    \begin{minipage}{5cm}
    \centering
	\(
	\exists
    \)
    homogeneous chain maps
    \\
    \(
	\begin{tikzcd}
	\DD(T_1)
	\arrow[bend left=5]{r}{f}
	\arrow[bend right=5,leftarrow,swap]{r}{g}
	&
	\DD(T_2):
	\end{tikzcd} 
    \)
    \\
    \(
	f\circ g\simeq a^n\cdot \id, \,
	g\circ f\simeq a^n\cdot \id
	\)
	\end{minipage}
	\right.
	\right\}
	\]
   and
	\[
	\lambdau_{a}(T_1,T_2)
	=
	\min
	\left\{%
	n\in\Z_{\geq0}
	\left|\,
	\begin{minipage}{5cm}
    \centering
	\(
	\exists
    \)
    homogeneous chain maps
    \\
    \(
	\begin{tikzcd}
	\DD(T_1)
	\arrow[bend left=5]{r}{f}
	\arrow[bend right=5,leftarrow,swap]{r}{g}
	&
	\DD(T_2):
	\end{tikzcd} 
    \)
    \\
    \(
	f\circ g\simeq a^n\cdot \id, \,
	g\circ f\simeq a^n\cdot \id
	\)
    \\
    and $\QGrad{q}(f),\QGrad{q}(g)\leq 0$
	\end{minipage}
	\right.
	\right\}
	\]
\end{definition}
We take the stance that for $f$ the zero map, $\QGrad{q}(f) \leq 0$ holds.
So, if $a^n\cdot \id$ is nullhomotopic for both $T_1$ and $T_2$, then
$\lambdau_{a}(T_1,T_2) \leq n$.

\begin{remark}\label{rem:basic_properties_lambda_tangles}
    Observe that \cref{prop:properties_lambda} still holds
    with $\lambda_a,\, \lambdau_a$ and tangles $T_1,\, T_2,\, T_3$ in lieu of
    $\lambda,\, \lambdau$ and knots.
\end{remark}

\begin{definition}
	For \(x\in \{\Ni, \ConnectivityX, \No\}\), define 
	\[
	a_x
	\coloneqq
	\begin{cases*}
	D_\bullet & if \(x=\Ni\)\\
	S^2 & if \(x=\ConnectivityX\)\\
	D_\circ & if \(x=\No\)
	\end{cases*}
	\]
	Conversely, for
	\(a\in\{D_\bullet,S^2,D_\circ\}\subset\mathcal{B}\), define 
	\(x_a\in \{\Ni, \ConnectivityX, \No\}\) by \(a_{x_a}=a\). 
\end{definition}

\begin{lemma}\label{lem:nullhomotopic_basepoint_action_from_connectivity}
	For any Conway tangle \(T\) with a basepoint, \(a_{\conn{T}}\cdot \id_{\DD(T)}\simeq 0\). 
\end{lemma}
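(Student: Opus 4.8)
The plan is to reduce the statement to a computation for a single crossingless tangle representing the connectivity of $T$, and then verify it by hand. First I would recall that $a_{\conn{T}} \cdot \id_{\DD(T)}$ is (up to homotopy) an invariant of the chain homotopy type of $\DD(T)$ together with the element $a_{\conn{T}} \in Z(\BNAlgH)$ acting on it; since $a_{\conn{T}}$ depends only on $\conn{T}$, it suffices to check the claim for the three crossingless tangles $\Ni$, $\CrossingX$ (i.e.\ a crossing, which has connectivity $\ConnectivityX$), and $\No$ — or more precisely, for their complexes $\DD(\Ni)$, $\DD(\CrossingX)$, $\DD(\No)$ — because any Conway tangle $T$ has the same connectivity as one of these, and connectivity is an invariant of the tangle. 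Here I need to be slightly careful: I actually want the statement for arbitrary $T$ with connectivity $x$, not just for the standard representatives, so I should argue that the action of $a_x$ on $\DD(T)$ being nullhomotopic follows from a general structural feature of $\DD(T)$ that is forced by $\conn{T} = x$.

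The cleanest route is via delooping and the structure of $\DD(T)$ as established in \cite{KWZ}. Concretely, under the equivalence $\mathcal{M}_{\BNAlgH} \to \textnormal{Mat}(\Cob_{/l}(4))$ of \cref{prop:equiv_cat_4-tangles}, the generators $\circ$ and $\bullet$ correspond to the two crossingless tangles $\vc{\rotatebox{90}{$\Lo$}}$ and $\vc{\rotatebox{90}{$\Li$}}$, and the central elements $D_\circ$, $D_\bullet$, $S^2$ correspond to specific cobordisms (dotted identity cobordisms / genus handles / a specific sphere-puncturing configuration). The key local fact is: for the crossingless tangle $\vc{\rotatebox{90}{$\Lo$}}$ (connectivity $\No$), the cobordism corresponding to $D_\circ$ is nullhomotopic (it factors through a closed component which bounds, using the $S$ and $4Tu$/sphere relations in $\Cob_{/l}$), while for $\vc{\rotatebox{90}{$\Li$}}$ (connectivity $\Ni$) the same holds for $D_\bullet$. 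For connectivity $\ConnectivityX$, I would use that $S^2 = G + D$, together with the fact that a tangle of connectivity $\ConnectivityX$ — after delooping, and since its underlying flat tangle is a single crossing resolved either way — has a complex on which $S^2$ acts nullhomotopically; this is essentially the statement that the unknot component created by closing up dies. Then, for a general tangle $T$ with $\conn{T} = x$: every generator appearing in $\DD(T)$ lies over a crossingless tangle of the corresponding type, so $a_x$ acts componentwise by the local nullhomotopy, and assembling these local homotopies (they are natural enough to commute with the differential up to the usual sign bookkeeping, since $a_x$ is central) gives a global nullhomotopy $a_x \cdot \id_{\DD(T)} \simeq 0$.

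The main obstacle I anticipate is the assembly step: producing local nullhomotopies of $a_x$ on each generator is easy, but patching them into a single chain homotopy $h$ with $dh + hd = a_x \cdot \id$ requires that the local homotopies be compatible with the differentials of $\DD(T)$. Because $a_x$ is central in $\BNAlgH$, the naive guess $h = \sum h_i$ (sum of local homotopies over generators) satisfies $dh + hd = a_x\cdot\id + (\text{error terms from }d)$, and one must check the error terms vanish — this is where the specific relations in $\BNAlgH$ from \cref{eq:B_quiver} (namely $D_\bullet S_\circ = 0 = S_\circ D_\circ$, etc.) do the work, since all differentials of $\DD(T)$ are among $S_\circ, S_\circ S_\bullet, S_\bullet S_\circ, D_\circ, D_\bullet$ by the zigzag/tangle structure and these annihilate the relevant $D_\circ$, $D_\bullet$ appropriately. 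An alternative, possibly slicker, approach that sidesteps the assembly entirely: observe that $a_{\conn{T}} \cdot \id_{\DD(T)}$ corresponds under the Pairing Theorem (\cref{thm:pairing_thm}) or a direct cobordism argument to a cobordism from $T$ to itself containing a closed surface component, which is zero in $\Cob_{/l}$ by the sphere/$S$ relation; I would try this first and fall back on the explicit homotopy only if the cobordism-level argument needs more care about which closed component appears for each connectivity type.
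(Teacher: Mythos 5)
There is a genuine gap: the central ``local nullhomotopy plus assembly'' step fails because its premise is false. The generators of $\DD(T)$ are copies of $\circ$ and $\bullet$ obtained by delooping the crossingless \emph{resolutions} of a diagram of $T$, and those resolutions do not all share the connectivity of $T$ (the two resolutions of a single crossing already have different connectivities). So a complex of connectivity $\No$ will in general contain $\circ$-generators, on which $D_\circ$ acts as the nonzero loop $D_\circ\in\iota_\circ\BNAlgH\iota_\circ$; there is no ``local'' nullhomotopy of $a_{\conn{T}}$ on an individual generator, hence nothing to assemble. The paper itself contains the decisive counterexample to the local picture: $\DD(\NCr)\simeq[\circ\overset{S}{\longrightarrow}\bullet]$, and the proof of \cref{lem:lambda_crossing_change} shows that $D_\circ^k\cdot\id$ is \emph{not} nullhomotopic on this complex, whereas $S^2\cdot\id$ is (consistently with $\conn{\NCr}=\ConnectivityX$). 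Since the underlying collection of generators is the same in both cases, whether $a_x\cdot\id_{\DD(T)}$ is nullhomotopic is an irreducibly global property of the complex tied to $\conn{T}$, not something visible generator by generator. Your fallback is also not correct as stated: $D_\circ$, $D_\bullet$ and $S^2$ correspond to dotted identity cobordisms and compositions of saddles, none of which contains a closed surface component, so the sphere relation does not apply directly.

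The paper's actual proof is a one-line reduction to Bar-Natan's basepoint moving lemma as implemented in \cite[Lemma 5.11 and Proposition 5.9]{KWZ_linear}: up to homotopy, $a_{\conn{T}}$ is a difference of dot (basepoint) actions at two of the four tangle ends, and the basepoint moving lemma says that dots at two ends joined by a strand of $T$ induce homotopic endomorphisms of $\DD(T)$; the connectivity determines exactly which pairs of ends are joined, hence which of $D_\bullet$, $S^2$, $D_\circ$ acts nullhomotopically. If you want a self-contained argument, that is the mechanism you would need to reproduce.
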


For example, if \(T=\No\), then \(\DD(T)=\bullet\), and hence \(D_\circ\cdot \id_{\DD(T)}\) is in fact equal to 0.

\begin{proof}
	This follows from Bar-Natan's basepoint moving lemma \cite[Lemma 5.11]{KWZ_linear}.
	The proof is identical to the proof of \cite[Proposition 5.9]{KWZ_linear}, except that we consider \(\DD(T)\) instead of \(\Khr(T)\), which is the mapping cone of \(G\cdot \id_{\DD(T)}\).
\end{proof}

\begin{lemma}\label{lem:lambda-01}
	Let \(T_1\) and \(T_2\) be two Conway tangles with a basepoint, with the same connectivity \(x\in \{\Ni, \ConnectivityX, \No\}\). 
	Then 
    \[
	\lambda_{a_x}(T_1,T_2)
	=
	\begin{cases*}
	0 & if \(\DD(T_1) \simeq \QGrad{q^\ell}\DD(T_2)\) for some~$\ell\in\Z$,
	\\
	1 & else.
	\end{cases*}
	\]
    and
    \[
	\lambdau_{a_x}(T_1,T_2)
	=
	\begin{cases*}
	0 & if \(\DD(T_1) \simeq \DD(T_2)\),
	\\
	1 & else.
	\end{cases*}
	\]
\end{lemma}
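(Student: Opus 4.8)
The proof has the usual shape for this kind of ``$\lambda$ is $0$ or $1$'' statement: show that $1$ is always an upper bound, and that $\lambda_{a_x} = 0$ (resp.\ $\lambdau_{a_x}=0$) forces the complexes to agree up to (resp.\ without) a quantum shift.

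\emph{Upper bound.} First I would show $\lambda_{a_x}(T_1,T_2)\leq 1$ and $\lambdau_{a_x}(T_1,T_2)\leq 1$ always. The point is that since $T_1$ and $T_2$ have the same connectivity $x$, \cref{lem:nullhomotopic_basepoint_action_from_connectivity} gives $a_x\cdot\id_{\DD(T_i)}\simeq 0$ for both $i=1,2$. Hence the pair of \emph{zero} maps $f=0\colon\DD(T_1)\to\DD(T_2)$ and $g=0\colon\DD(T_2)\to\DD(T_1)$ satisfies $g\circ f = 0 \simeq a_x\cdot\id_{\DD(T_1)}$ and $f\circ g = 0 \simeq a_x\cdot\id_{\DD(T_2)}$; these are (vacuously) homogeneous chain maps, and by the convention stated just after \cref{def:lambda_tangle} the zero map has $\QGrad{q}\leq 0$. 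So $n=1$ is realized in both definitions, giving $\lambda_{a_x},\lambdau_{a_x}\leq 1$.

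\emph{Lower bound / the $n=0$ case.} Next I would observe that $\lambda_{a_x}(T_1,T_2)=0$ means there exist homogeneous chain maps $f,g$ with $f\circ g\simeq \id_{\DD(T_2)}$ and $g\circ f\simeq\id_{\DD(T_1)}$; that is precisely a chain homotopy equivalence, except that $f$ and $g$ need not have quantum degree $0$. Since each of $\DD(T_1),\DD(T_2)$ is indecomposable only up to quantum shift but in any case $f\circ g\simeq\id\neq 0$ (neither complex is contractible, as it computes a nontrivial tangle invariant over $\Z[G]$), one gets $\QGrad{q}(f)+\QGrad{q}(g)=\QGrad{q}(\id)=0$, so writing $\ell=\QGrad{q}(f)$ we get $\QGrad{q}(g)=-\ell$, and $f,g$ exhibit $\DD(T_1)\simeq\QGrad{q^{\ell}}\DD(T_2)$. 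Conversely, if $\DD(T_1)\simeq\QGrad{q^{\ell}}\DD(T_2)$ then the equivalence and its inverse are homogeneous chain maps of quantum degrees $\ell$ and $-\ell$ realizing $n=0$. This is exactly the argument in the proof of \cref{prop:properties_lambda}\cref{item:properties_lambda:3}, \cref{item:properties_lambda:4}, transported to the tangle setting (legitimate by \cref{rem:basic_properties_lambda_tangles}). For $\lambdau_{a_x}$ the same reasoning applies, but now $\QGrad{q}(f),\QGrad{q}(g)\leq 0$ together with $\QGrad{q}(f)+\QGrad{q}(g)=0$ forces $\QGrad{q}(f)=\QGrad{q}(g)=0$, i.e.\ a genuine (grading-preserving) chain homotopy equivalence $\DD(T_1)\simeq\DD(T_2)$; and conversely such an equivalence realizes $n=0$ with quantum degrees $0$.

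\emph{Main obstacle.} The only real content beyond bookkeeping is that $a_x\cdot\id_{\DD(T_i)}$ is null-homotopic whenever $\conn{T_i}=x$, and this is already packaged as \cref{lem:nullhomotopic_basepoint_action_from_connectivity}; so in fact once that lemma is in hand the proof is essentially a formality. The one subtlety I would be careful about is the non-contractibility claim used to rule out $f\circ g\simeq 0$: one must note that $\DD(T)$ is never contractible for a genuine Conway tangle, which can be seen e.g.\ from the Pairing Theorem (\cref{thm:pairing_thm}) applied to a closure, or directly from the zigzag description in the rational case and invariance in general. Apart from that, the argument is just a transcription of the closed-knot case from \cref{prop:properties_lambda}.
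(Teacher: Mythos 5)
Your proposal is correct and follows essentially the same route as the paper: the zero maps together with \cref{lem:nullhomotopic_basepoint_action_from_connectivity} give the upper bound $1$, and the $n=0$ case is handled exactly as in \cref{prop:properties_lambda}\cref{item:properties_lambda:3}, \cref{item:properties_lambda:4} via \cref{rem:basic_properties_lambda_tangles}. Your extra remark on the non-contractibility of $\DD(T)$ is a reasonable way to make explicit a point the paper leaves implicit in transporting that argument to tangles.
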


\begin{proof}
	By \cref{lem:nullhomotopic_basepoint_action_from_connectivity}, \(a_x\cdot \id_{\DD(T_i)}\simeq 0\) for \(i=1,2\), so the maps \(f=g=0\) give rise to the upper bounds \(\lambda_{a_x}(T_1,T_2)\leq 1\) and \(\lambdau_{a_x}(T_1,T_2)\leq 1\). We conclude by \cref{item:properties_lambda:3} and \cref{item:properties_lambda:4} of \cref{prop:properties_lambda} and \cref{rem:basic_properties_lambda_tangles}. 
\end{proof}

\begin{lemma} \label{lem:lambda_crossing_change}
	We have
	\[
    \lambda_{a}(\textnormal{$\vcenter{\hbox{\def\svgwidth{11.6pt}}}$},\textnormal{$\vcenter{\hbox{\def\svgwidth{11.6pt}}}$})
    =
	\lambdau_{a}(\textnormal{$\vcenter{\hbox{\def\svgwidth{11.6pt}}}$},\textnormal{$\vcenter{\hbox{\def\svgwidth{11.6pt}}}$})
	=
	\begin{cases*}
	\infty & if \(a=D_\circ\), and
	\\
	1 & if \(a=D_\bullet\) or \(S^2\).
	\end{cases*}
	\]
\end{lemma}

\begin{proof}
	Since \(\DD(\textnormal{$\vcenter{\hbox{\def\svgwidth{11.6pt}}}$})\not\simeq \QGrad{q^\ell}\DD(\textnormal{$\vcenter{\hbox{\def\svgwidth{11.6pt}}}$})\) for all~$\ell\in\Z$, \cref{lem:lambda-01} yields \(\lambda_{S^2}(\textnormal{$\vcenter{\hbox{\def\svgwidth{11.6pt}}}$},\textnormal{$\vcenter{\hbox{\def\svgwidth{11.6pt}}}$})=\lambdau_{S^2}(\textnormal{$\vcenter{\hbox{\def\svgwidth{11.6pt}}}$},\textnormal{$\vcenter{\hbox{\def\svgwidth{11.6pt}}}$})=1\).
	The following maps \(f\) and \(g\) witness the identity \(\lambda_{D_\bullet}(\textnormal{$\vcenter{\hbox{\def\svgwidth{11.6pt}}}$},\textnormal{$\vcenter{\hbox{\def\svgwidth{11.6pt}}}$})=\lambdau_{D_\bullet}(\textnormal{$\vcenter{\hbox{\def\svgwidth{11.6pt}}}$},\textnormal{$\vcenter{\hbox{\def\svgwidth{11.6pt}}}$})=1\):
	\[
	\begin{tikzcd}
	\DD(\textnormal{$\vcenter{\hbox{\def\svgwidth{11.6pt}}}$})
	\arrow[bend left=10]{d}{f}
	\arrow[bend right=10,leftarrow,swap]{d}{g}
	&
	\GGzqh{\circ}{0}{-2}{-1}
	\arrow{r}{S}
	&
	\GGzqh{\bullet}{0}{-1}{0}
	\arrow[bend left=10]{d}{D_\bullet}
	\arrow[bend right=10,leftarrow,swap]{d}{1}
    &
	\\
	\DD(\textnormal{$\vcenter{\hbox{\def\svgwidth{11.6pt}}}$})
	&
	&
	\GGzqh{\bullet}{0}{1}{0}
    \arrow{r}{S}
    &
    \GGzqh{\circ}{0}{2}{1}
	\end{tikzcd}
	\]
	As for $\lambda_{D_\circ}(\textnormal{$\vcenter{\hbox{\def\svgwidth{11.6pt}}}$},\textnormal{$\vcenter{\hbox{\def\svgwidth{11.6pt}}}$})$ and $\lambdau_{D_\circ}(\textnormal{$\vcenter{\hbox{\def\svgwidth{11.6pt}}}$},\textnormal{$\vcenter{\hbox{\def\svgwidth{11.6pt}}}$})$,
  observe that for chain maps $f$ and $g$ preserving the homological grading,
  \(g\circ f\) is zero on the \(\circ\)-end of \(\DD(\textnormal{$\vcenter{\hbox{\def\svgwidth{11.6pt}}}$})\). 
  Moreover, for any homotopy $h$ of \(\DD(\textnormal{$\vcenter{\hbox{\def\svgwidth{11.6pt}}}$})\), $dh + hd$ equals $h S_{\circ}$ on the \(\circ\)-end. 
  Since $D^k_\circ$ is not of the form $h S_{\circ}$ for any $k \geq 0$, 
  it follows that \(g\circ f \not\simeq D^k_\circ \cdot \id_{\DD(\textnormal{$\vcenter{\hbox{\def\svgwidth{10pt}}}$})}\). 
  So there are no desired maps $f$ and $g$.
\end{proof}

\begin{proposition}\label{prop:lambda-tangles-to-links}
	Let \(T_1\), \(T_2\), \(T\) be three Conway tangles with \(\conn{T_1}=\conn{T_2}\neq \conn{T}\). 
	Let \(a\in\{D_\bullet,S^2,D_\circ\}\subset\mathcal{B}\) be such that 
	\(\{\conn{T_1}, \conn{T},x_a\}=\{\Ni, \ConnectivityX, \No\}\). 
	Then
	\[
	\lambda(T_1\cup T,T_2\cup T)
	\leq
	\lambda_{a}(T_1,T_2)
	\qquad \text{and} \qquad
	\lambdau(T_1\cup T,T_2\cup T)
	\leq
	\lambdau_{a}(T_1,T_2). 
	\]
\end{proposition}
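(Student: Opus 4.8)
The plan is to reduce everything to the Pairing Theorem (\cref{thm:pairing_thm}). Since the union of two Conway tangles does not depend on their order as a link, applying \cref{thm:pairing_thm} with $T$ in the first slot gives, for $i=1,2$, homotopy equivalences of $\Z[G]$-module chain complexes
\[
\QGrad{q^{-1}}\BNr(T_i\cup T)\;\simeq\;\Mor(-\DD(T),\DD(T_i))=:M_i .
\]
Writing it this way makes $M_i$ \emph{covariantly} functorial in $\DD(T_i)$ via post-composition, which is what will let gradings pass through cleanly. So I would assume $n:=\lambda_a(T_1,T_2)<\infty$ (otherwise there is nothing to prove), realised by homogeneous chain maps $f\colon\DD(T_1)\to\DD(T_2)$ and $g\colon\DD(T_2)\to\DD(T_1)$ with $g\circ f\simeq a^n\cdot\id$ and $f\circ g\simeq a^n\cdot\id$. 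Post-composition yields chain maps $\bar f:=f_*\colon M_1\to M_2$ and $\bar g:=g_*\colon M_2\to M_1$; these are homogeneous, of the same homological degree $0$ as $f,g$, with $\QGrad{q}(\bar f)=\QGrad{q}(f)$ and $\QGrad{q}(\bar g)=\QGrad{q}(g)$. Under the equivalences above they correspond to homogeneous chain maps between $\BNr(T_1\cup T)$ and $\BNr(T_2\cup T)$ preserving the homological grading, which additionally satisfy $\QGrad{q}\leq 0$ when $f,g$ were chosen with $\QGrad{q}(f),\QGrad{q}(g)\leq0$ (the $\lambdau$-case).

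Next I would identify the compositions. As $(-)_*$ is functorial and preserves homotopies, $\bar g\circ\bar f=(g\circ f)_*\simeq(a^n\cdot\id_{\DD(T_1)})_*$, and since $a\in Z(\BNAlgH)$ the right-hand side is simply the action of $a^n$ on $M_1$; similarly $\bar f\circ\bar g\simeq a^n\cdot\id_{M_2}$. The heart of the argument is the claim that on each $M_i$ one has $a^n\cdot\id\simeq\pm G^n\cdot\id$ for a fixed sign. Indeed, $G=S^2-D_\circ-D_\bullet=a_{\ConnectivityX}-a_{\No}-a_{\Ni}$, so it suffices to understand how $a_{\Ni},a_{\ConnectivityX},a_{\No}$ act on $M_i=\Mor(-\DD(T),\DD(T_i))$. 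The element $a_{\conn{T_i}}$ acts by post-composition with $a_{\conn{T_i}}\cdot\id_{\DD(T_i)}$, which is null-homotopic by \cref{lem:nullhomotopic_basepoint_action_from_connectivity}; and $a_{\conn{T}}$ acts by pre-composition with $a_{\conn{T}}\cdot\id_{-\DD(T)}$, which — using $-\DD(T)\simeq\DD(-T)$ from \cref{lem:mirror_complex} and that mirroring preserves connectivity and the basepoint — is also null-homotopic by \cref{lem:nullhomotopic_basepoint_action_from_connectivity}. Since $\conn{T_i}=\conn{T_1}$ and $\conn{T}$ are precisely the two connectivities distinct from $x_a$, only the summand $a_{x_a}=a$ survives, giving $G\cdot\id_{M_i}\simeq\epsilon\cdot a\cdot\id_{M_i}$ with $\epsilon=+1$ if $x_a=\ConnectivityX$ and $\epsilon=-1$ otherwise. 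Raising to the $n$-th power proves the claim with sign $\epsilon^n$.

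Finally, $\bar g\circ\bar f\simeq\epsilon^n G^n\cdot\id_{M_1}$ and $\bar f\circ\bar g\simeq\epsilon^n G^n\cdot\id_{M_2}$; replacing $\bar f$ by $\epsilon^n\bar f$ makes both compositions equal to $G^n\cdot\id$ up to homotopy. Transporting along the equivalences of \cref{thm:pairing_thm} (which are equivalences of $\Z[G]$-module chain complexes, hence carry these maps to homotopy classes of $\Z[G]$-linear chain maps between $\BNr(T_1\cup T)$ and $\BNr(T_2\cup T)$ with the same grading behaviour) yields $\lambda(T_1\cup T,T_2\cup T)\leq n=\lambda_a(T_1,T_2)$, and the quantum-degree bookkeeping from the first paragraph gives the analogous bound for $\lambdau$. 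The main obstacle is this middle step: one must phrase the Pairing Theorem in the covariant form so that homological and quantum gradings are honestly preserved, and then see exactly why the hypothesis $\{\conn{T_1},\conn{T},x_a\}=\{\Ni,\ConnectivityX,\No\}$ forces the basepoint action $a$ to agree (up to sign) with $G$ on the paired complexes; the remaining sign and functoriality manipulations are routine.
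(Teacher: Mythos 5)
Your proposal is correct and takes essentially the same route as the paper's proof: it is the content of \cref{lem:maps-tangles-to-links}, namely applying \cref{thm:pairing_thm} in the form $\Mor(-\DD(T),\DD(T_i))$, inducing maps by post-composition with $f$ and $g$, killing the basepoint actions $a_{\conn{T_i}}$ and $a_{\conn{T}}$ via \cref{lem:nullhomotopic_basepoint_action_from_connectivity} (together with centrality and \cref{lem:mirror_complex}), and fixing the sign at the end. The only cosmetic difference is that the paper uses the identity $G^n=b^n_{x_1}+b^n_{x_2}+b^n_{x_3}$ (summing over the three connectivities, with $b_x=\pm a_x$), whereas you first establish $G\cdot\id\simeq\epsilon\, a\cdot\id$ on the paired complex and then raise this homotopy to the $n$-th power; both hinge on exactly the same ingredients.
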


The proof essentially boils down to the following lemma.

\begin{lemma}\label{lem:maps-tangles-to-links}
    Let \(T_1\), \(T_2\) be two Conway tangles with \(\conn{T_1}=\conn{T_2}\). 
	Let \(a\in\{D_\bullet,S^2,D_\circ\}\subset\mathcal{B}\) be such that 
	\(\conn{T_i}\ne x_a\). 
	Consider a pair of homogeneous maps
    \[
    \begin{tikzcd}
	\DD(T_1)
	\arrow[bend left=5]{r}{f}
	\arrow[bend right=5,leftarrow,swap]{r}{g}
	&
	\DD(T_2)
	\end{tikzcd}
    \]
    such that
	\(
	f\circ g\simeq a^n\cdot \id
	\), 
	\(
	g\circ f\simeq a^n\cdot \id
	\)
    for some~$n\geq 0$.
    Then, given any tangle $T$ such that \(\{\conn{T_1}, \conn{T},x_a\}=\{\Ni, \ConnectivityX, \No\}\), the maps $f$ and $g$ induce homogeneous chain maps
    \[
    \begin{tikzcd}
	\BNr(T_1\cup T)
	\arrow[bend left=5]{r}{f'}
	\arrow[bend right=5,leftarrow,swap]{r}{g'}
	&
	\BNr(T_2\cup T)
	\end{tikzcd}
    \]
    with 
	\(
	f'\circ g'\simeq G^n\cdot \id
	\), 
	\(
	g'\circ f'\simeq G^n\cdot \id
	\)
    and \(\QGrad{q}(f')=\QGrad{q}(f),\, \QGrad{q}(g')=\QGrad{q}(g)\).
\end{lemma}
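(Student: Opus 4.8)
The plan is to use the Pairing Theorem (\cref{thm:pairing_thm}) to turn the given maps between the tangle complexes $\DD(T_i)$ into maps between the knot complexes $\BNr(T_i\cup T)$. First I would note that, by \cref{thm:pairing_thm}, up to an overall quantum shift $\QGrad{q^{-1}}$ we have $\BNr(T_i\cup T)\simeq \Mor(-\DD(T),\DD(T_i))$ for $i=1,2$; the shift will cancel from both sides and can be ignored for the purposes of this lemma. The homogeneous maps $f\colon\DD(T_1)\to\DD(T_2)$ and $g\colon\DD(T_2)\to\DD(T_1)$ induce, by post-composition, homogeneous $\Z[G]$-linear chain maps
\[
f'\coloneqq f\circ(-) \colon \Mor(-\DD(T),\DD(T_1))\to \Mor(-\DD(T),\DD(T_2)),
\qquad
g'\coloneqq g\circ(-)
\]
in the opposite direction. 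Post-composition preserves the homological grading (it shifts the internal grading of $\Mor$ by the homological degree of $f$, which is $0$ since $f$ is a chain map) and changes the quantum grading by exactly $\QGrad{q}(f)$, so $\QGrad{q}(f')=\QGrad{q}(f)$ and likewise for $g'$. Also, since $f,g$ are chain maps and post-composition is functorial, $f'$ and $g'$ are chain maps, and a homotopy $f\circ g\simeq a^n\cdot\id$ induces a homotopy $f'\circ g'\simeq (a^n\cdot\id)\circ(-)$, and symmetrically for $g'\circ f'$.

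The remaining point---and the crux of the lemma---is to identify the induced endomorphism $(a^n\cdot\id_{\DD(T_i)})\circ(-)$ of $\Mor(-\DD(T),\DD(T_i))$ with $G^n\cdot\id$ on $\BNr(T_i\cup T)$. Here $a\in\{D_\bullet,S^2,D_\circ\}$ is a central element of $\mathcal B$, so $a^n\cdot\id$ acts on $\Mor$ simply by multiplication by $a^n$, and I must show that, under the chain homotopy equivalence $\QGrad{q^{-1}}\BNr(T_i\cup T)\simeq\Mor(-\DD(T),\DD(T_i))$, this agrees with the action of $G^n$. The key input is that $G=S^2-D$ acts on $\BNr$ of a knot in the standard way, whereas the three basepoint actions $D_\bullet$, $S^2$, $D_\circ$ of $\mathcal B$ each correspond, after capping off with the tangle $T$, to \emph{either} the $G$-action \emph{or} the zero action, depending on which of the three strand-connectivity types is involved. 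Concretely: by \cref{lem:nullhomotopic_basepoint_action_from_connectivity}, $a_{\conn T}\cdot\id_{\DD(T)}\simeq 0$, and dually the other two of the three basepoint actions become homotopic to $G\cdot\id$ after pairing. Since the hypothesis $\{\conn{T_1},\conn T,x_a\}=\{\Ni,\ConnectivityX,\No\}$ exactly says that $a=a_{x_a}$ is the ``third'' connectivity type---neither that of $T_1$ (=$T_2$) nor that of $T$---it is the one that pairs to the $G$-action rather than to $0$. I would make this precise by a small computation with the two-sided module structure of $\mathcal B$: on $\Mor(-\DD(T),\DD(T_i))$ the left $\mathcal B$-action coming from $\DD(T_i)$ and the right $\mathcal B$-action coming from $\DD(T)$ agree up to the relations in $\mathcal B$, so multiplying by $a^n$ on the $\DD(T_i)$-side equals multiplying by $a^n$ on the $\DD(T)$-side, and one checks that $S^2-D$ acts as $G$ while each of the three ``pure'' pieces $D_\bullet$, $S^2$, $D_\circ$ either kills the complex (the connectivity of $T$) or acts as $G$ (the other two).

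The main obstacle I anticipate is precisely this last identification: verifying carefully that under the Pairing Theorem equivalence the central element $a$ acts as $G$ (and not as $0$) on $\BNr(T_i\cup T)$ for the specified $a$, and that this is compatible with the homotopies. This is essentially a bookkeeping argument about the bimodule structure underlying $\Mor(-\DD(T),\DD(T_i))$ and the way $G\in\Z[G]\subset Z(\mathcal B)$ sits inside; the connectivity hypothesis is what guarantees we land on the non-trivial action, and the quantum-grading bookkeeping ($\QGrad{q^{-1}}$ shifts, $\QGrad{q}(D_\bullet)=\QGrad{q}(D_\circ)=-2$, $\QGrad{q}(S^2)=-2=\QGrad{q}(G)$) has to be tracked to confirm $\QGrad{q}(f')=\QGrad{q}(f)$ exactly. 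Once this is in place, $f'$ and $g'$ are the required maps, $f'\circ g'\simeq G^n\cdot\id_{\BNr(T_1\cup T)}$ and $g'\circ f'\simeq G^n\cdot\id_{\BNr(T_2\cup T)}$, with the stated quantum gradings, completing the proof; \cref{prop:lambda-tangles-to-links} then follows immediately by applying the lemma to maps realising $\lambda_a(T_1,T_2)$, resp.\ $\lambdau_a(T_1,T_2)$.
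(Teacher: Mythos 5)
Your overall strategy is the same as the paper's: identify $\QGrad{q^{-1}}\BNr(T_i\cup T)$ with $X_i=\Mor(-\DD(T),\DD(T_i))$ via \cref{thm:pairing_thm}, set $f'=f\circ(-)$ and $g'=g\circ(-)$, observe that these are homogeneous chain maps of the same quantum degree as $f$ and $g$, and then compare the induced action of $a^n$ with that of $G^n$ using \cref{lem:nullhomotopic_basepoint_action_from_connectivity} together with the decomposition of $G$ into the three connectivity pieces and the fact that these are central, so that post-composition on the $\DD(T_i)$ side can be traded for pre-composition on the $\DD(T)$ side. However, your execution of the crucial comparison step contains two errors. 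First, you assert that $a_{\conn{T}}$ pairs to zero while ``the other two of the three basepoint actions become homotopic to $G\cdot\id$ after pairing''. In fact \emph{two} of the three die: $a_{\conn{T}}\cdot\id_{\DD(T)}\simeq0$ kills one via pre-composition, and $a_{\conn{T_1}}\cdot\id_{\DD(T_i)}\simeq0$ (note $\conn{T_1}=\conn{T_2}$) kills a second via post-composition; only the remaining piece $a_{x_a}=a$ survives. If two of them really acted as $G$, your own decomposition of $G$ would force $G\simeq 2G$ on $X_i$. You do state the correct version of this elsewhere in the same paragraph, but the argument must be run with the correct count.

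Second, and more seriously for the stated conclusion, you drop the signs. Since $G=S^2-D_\bullet-D_\circ$, the surviving piece $a$ acts on $X_i$ as $-G$ (not $G$) whenever $a\in\{D_\bullet,D_\circ\}$. What your argument yields directly is therefore $g'\circ f'\simeq(\pm1)^n\,G^n\cdot\id_{X_1}$ and $f'\circ g'\simeq(\pm1)^n\,G^n\cdot\id_{X_2}$, with the same sign in both, and one must replace $f'$ by $(\pm1)^nf'$ to obtain $f'\circ g'\simeq G^n\cdot\id$ and $g'\circ f'\simeq G^n\cdot\id$ exactly as claimed. The paper handles this by introducing $b_x=-a_x$ for $x\in\{\Ni,\No\}$ and $b_x=a_x$ for $x=\ConnectivityX$, noting that $G^k$ equals the sum of the $k$-th powers of the three $b_x$ (the cross-terms vanish by the quiver relations; alternatively one can iterate the degree-one identity $n$ times, as your setup would allow), and adjusting $f'$ by the resulting sign at the end. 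Without this correction you prove the lemma only up to sign.
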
    

\begin{proof}
	Given $x\in \{\Ni,\No,\ConnectivityX\}$, let
    \begin{equation}\label{eq:lem:maps-tangles-to-links}
    b_x=
    \begin{cases*}
    -a_x & for $x=\Ni$ and $x=\No$, \\
    a_x & for $x=\ConnectivityX$.
    \end{cases*}
  \end{equation}
    Then $G$ can be expressed in terms of the $b_x$ as $G=b_{
    \vcenter{\hbox{\def\svgwidth{8pt}}}
    }+b_{
    \vcenter{\hbox{\def\svgwidth{8pt}}}
    }+b_{
    \vcenter{\hbox{\def\svgwidth{8pt}}}}$ and, for all~$k\geq 0$, one has
    $G^k=b^k_{
    \vcenter{\hbox{\def\svgwidth{8pt}}}
    }+b^k_{
    \vcenter{\hbox{\def\svgwidth{8pt}}}
    }+b^k_{
    \vcenter{\hbox{\def\svgwidth{8pt}}}}$.
    Let $f$ and $g$ be as in the statement of the lemma. 
    By \cref{thm:pairing_thm}, 
    \[
    X_i
    \coloneqq 
    \Mor(-\DD(T),\DD(T_i))
    \simeq
    \QGrad{q^{-1}} \BNr(T_i\cup T)
    \quad
    \text{for }
    i=1,2.
    \] 
	We now define chain maps
	\(f'\coloneqq f\circ-\colon X_1 \to X_2\) and  \(g'\coloneqq g\circ-\colon X_2 \to X_1\). 
	Then, by \cref{lem:nullhomotopic_basepoint_action_from_connectivity},
        and using that $b_x$ is an element of the center $Z(\mathcal{B})$ of $\mathcal{B}$ for all $x$, it follows for all \(n\geq1\) that
	\begin{align*}
		G^n\cdot \id_{X_1}
		&\simeq
		(G^n\cdot \id_{\DD(T_1)})\circ -
		\\
		&\simeq
		((G^n-b^n_{\conn{T_1}})\cdot \id_{\DD(T_1)})\circ -
		=
		-\circ ((G^n-b^n_{\conn{T_1}})\cdot \id_{\DD(T)})
		\\
		&\simeq
		-\circ((G^n-b^n_{\conn{T_1}}-b^n_{\conn{T}})\cdot \id_{\DD(T)})
		\simeq 
		-\circ (b_{x_a}^n\cdot \id_{\DD(T)})
		\simeq 
		\pm a^n\cdot \id_{X_1}
		\\
		&\simeq 
		\pm g'\circ f'. 
	\end{align*}
	Analogously, we can prove that for \(n\geq 1\), 
	\(G^n\cdot \id_{X_2}\simeq \pm f'\circ g'\). 
	The sign \(\pm\), which is determined by the sign in \eqref{eq:lem:maps-tangles-to-links} 
	and the parity of \(n\), agrees in both identities. 
	Thus, by replacing \(f'\) by \(\pm f'\), we obtain 
	\[
	G^n\cdot \id_{X_1}\simeq g'\circ f'
	\quad
	\text{and}
	\quad
	G^n\cdot \id_{X_2}\simeq f'\circ g'
	\] 
	as desired. 
	For \(n=0\), these identities obviously hold.
\end{proof}

\begin{proof}[Proof of \cref{prop:lambda-tangles-to-links}]
	Let \(n=\lambdau_a(T_1,T_2)\), and consider maps $f$ and $g$ realising it. 
    By \cref{lem:maps-tangles-to-links}, they induce homogeneous maps 
	\[
	\begin{tikzcd}
		\BNr(T_1\cup T)
		\arrow[bend left=5]{r}{f'}
		\arrow[bend right=5,leftarrow,swap]{r}{g'}
		&
		\BNr(T_2\cup T)
	\end{tikzcd}
	\]
	with 
  \(
	f'\circ g'\simeq G^n\cdot \id
	\), 
	\(
	g'\circ f'\simeq G^n\cdot \id
	\)
    and \(\QGrad{q}(f'), \QGrad{q}(g')\leq 0\). 
    This shows that $\lambdau({T_1\cup T}, {T_2\cup T}) \leq n$. The proof that $\lambda(T_1\cup T, T_2\cup T) \leq \lambda_{a}(T_1,T_2)$ is analogous.
\end{proof}

\begin{corollary}\label{cor:lambda-tangles-to-links}
	For any Conway tangle \(T\) with connectivity \(\conn{T}=\No\), 
	\[
	\lambdau(\textnormal{$\vcenter{\hbox{\def\svgwidth{11.6pt}}}$}\cup T,\textnormal{$\vcenter{\hbox{\def\svgwidth{11.6pt}}}$}\cup T)
	\leq 1.
	\]
\end{corollary}

\begin{proof}
	It is immediate from \cref{prop:lambda-tangles-to-links} with \(T_1=\textnormal{$\vcenter{\hbox{\def\svgwidth{11.6pt}}}$}\) and \(T_2=\textnormal{$\vcenter{\hbox{\def\svgwidth{11.6pt}}}$}\) 
    and from \cref{lem:lambda_crossing_change} that
    $\lambdau(\textnormal{$\vcenter{\hbox{\def\svgwidth{11.6pt}}}$}\cup T,\textnormal{$\vcenter{\hbox{\def\svgwidth{11.6pt}}}$}\cup T)
	\leq
    \lambdau_{D_\bullet}(\textnormal{$\vcenter{\hbox{\def\svgwidth{11.6pt}}}$},\textnormal{$\vcenter{\hbox{\def\svgwidth{11.6pt}}}$})=1$. 
\end{proof}

We are ready to prove \cref{thm:lambda-gordian}.

\begin{proof}[Proof of \cref{thm:lambda-gordian}]
	Let \(u=u(K_1,K_2)\) be the Gordian distance between the two knots \(K_1\) and \(K_2\). 
	Then there exists a sequence of knots \(K_1=J_0,J_1,\dots,J_u=K_2\), such that \(J_i\) is obtained from \(J_{i-1}\) by a single crossing change. 
	Then indeed
	\[
	\lambdau(K_1,K_2)
	\leq
	\sum_{i=1}^{u} \lambdau(J_{i-1},J_i)
	\leq u
	\]
	where the first inequality follows from \cref{item:properties_lambda:2} of \cref{prop:properties_lambda} and the second from \cref{cor:lambda-tangles-to-links}. Indeed, each $J_i$ can be written as $J_i=\textnormal{$\vcenter{\hbox{\def\svgwidth{11.6pt}}}$}\cup T$ or $\textnormal{$\vcenter{\hbox{\def\svgwidth{11.6pt}}}$}\cup T$ for some tangle~$T$. Since the $J_i$ are knots (i.e.\ they only have one component), the connectivity of $T$ must be either $\Ni$ or~$\No$. However, because of the orientation of $\textnormal{$\vcenter{\hbox{\def\svgwidth{11.6pt}}}$}$ and~$\textnormal{$\vcenter{\hbox{\def\svgwidth{11.6pt}}}$}$, one can only have~$\conn{T}=\No$.
\end{proof}

In order to prove \cref{thm:lambda_proper_rational_gordian}, we need the following lemma.
For a bigraded chain complex $C$ of right $\mathcal{B}$-modules,
let us write $\textnormal{End}(C)$ for the graded $\Z[G]$-module of homotopy classes of chain maps $C\to C$, and $\textnormal{End}_n(C)$ for the abelian group arising as homogeneous part of $\textnormal{End}(C)$ consisting of maps of quantum degree \(-2n\).

\begin{lemma} \label{lem:iso_endomorphisms}
    Given \(p\) and \(q\) coprime odd integers, the chain complex \(\DD(Q_{\nicefrac{p}{q}})\) has a \(\circ\)-end and a \(\bullet\)-end. Denote these ends by \(X\) and \(Y\) respectively:
    \begin{equation}\label{eq:lem:iso_endomorphisms}
    \begin{tikzcd}[row sep=-0.2cm, font=\small]
        (X=\circ)
        \arrow[dash]{r}{S \textrm{ or } S^2}
        &
        \cdots
        \arrow[dash]{r}{S \textrm{ or } S^2}
        &
        (\bullet=Y)
    \end{tikzcd}
    \end{equation}
    Let \(\psi\) be a representative of an element in
    \(\textnormal{End}_{n}(\DD(Q_{\nicefrac{p}{q}}))\). 
    The restriction of $\psi$ to $X$ can be written uniquely as 
    \(
    \left.\psi\right|_{X\to X} 
    = 
    a_{\psi}D_{\circ}^n+b_{\psi}S^{2n}
    \) 
    for \(a_{\psi},b_{\psi}\in \Z\) 
    and \(b_{\psi}=0\) when \(n=0\). 
    Similarly, 
    \(
    \left.\psi\right|_{Y\to Y} 
    = 
    c_{\psi}D_{\bullet}^n+d_{\psi}S^{2n}
    \) 
    with \(c_{\psi},d_{\psi}\in \Z\) 
    and \(d_{\psi}=0\) when \(n=0\).
    Define maps
    \[
    \varphi_0\co \textnormal{End}_0(\DD(Q_{\nicefrac{p}{q}})) \longrightarrow \Z,
    \quad
    [\psi] \longmapsto a_{\psi},
    \]
    \[
    \varphi_n\co \textnormal{End}_n(\DD(Q_{\nicefrac{p}{q}}))  \longrightarrow \Z^2, 
    \quad
    [\psi] \longmapsto (a_{\psi},c_{\psi}) \qquad \text{ for } n>0.
    \]
    The maps \(\varphi_n\) are well-defined. 
    Moreover, they are isomorphisms for all \(n\geq 0\).
\end{lemma}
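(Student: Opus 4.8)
The plan is to split the lemma into four parts: (a) uniqueness of the expressions $\psi|_{X\to X}=a_\psi D_\circ^n+b_\psi S^{2n}$ and $\psi|_{Y\to Y}=c_\psi D_\bullet^n+d_\psi S^{2n}$; (b) well-definedness of $\varphi_n$ on homotopy classes; (c) surjectivity of $\varphi_n$; and (d) injectivity of $\varphi_n$. Parts (a)--(c) are essentially formal; part (d) is the heart of the matter. I may assume $\nicefrac pq>0$: for $\nicefrac pq<0$ the mirror isomorphism $\DD(Q_{\nicefrac pq})\simeq-\DD(Q_{\nicefrac{-p}{q}})$ interchanges the $\circ$- and $\bullet$-ends (and the roles of $a_\psi$ and $c_\psi$), reducing that case to $\nicefrac{-p}{q}>0$. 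Since $p,q$ are odd, \cref{lem:goodzigzag} gives that $\DD(Q_{\nicefrac pq})$ is, up to homotopy, a zigzag complex with exactly one odd $\circ$-end $X$ and one odd $\bullet$-end $Y$, and \cref{lem:end_zz} together with a symmetric argument records its shape near these ends.

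For (a): a $\mathcal B$-module map $\iota_\circ\mathcal B\to\iota_\circ\mathcal B$ is left multiplication by an element of $\iota_\circ\mathcal B\iota_\circ$, and enumerating the reduced paths in the quiver \eqref{eq:B_quiver} shows that the $\QGrad q$-degree $-2n$ part of $\iota_\circ\mathcal B\iota_\circ$ is free abelian with basis $\{D_\circ^n,(S_\bullet S_\circ)^n\}$ if $n\ge1$ and $\{\iota_\circ\}$ if $n=0$; identifying $(S_\bullet S_\circ)^n$ with $S^{2n}$ as an endomorphism of $X$ gives the claimed unique expression, and the $\bullet$-end is identical. For (b): suppose $\psi=d\circ h+h\circ d$. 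Since $X$ is an \emph{odd} end, the unique differential of $\DD(Q_{\nicefrac pq})$ incident to $X$ is $S_\circ$, $S_\bullet S_\circ$, or $S_\circ S_\bullet$ — in particular a product of $S$-arrows. As no reduced path in $\mathcal B$ involving an $S$-arrow equals $D_\circ^n$ or $D_\bullet^n$, the $X\to X$-component of $d\circ h+h\circ d$, being a product of that differential with another element of $\mathcal B$, has vanishing $D_\circ^n$-coefficient; hence $a_\psi$ depends only on $[\psi]$, and likewise $c_\psi$ via the odd $\bullet$-end $Y$. For (c): since $D_\circ,D_\bullet,S\in Z(\mathcal B)$, the maps $D_\circ^n\cdot\id$, $D_\bullet^n\cdot\id$ and $S^{2n}\cdot\id$ are chain endomorphisms of $\QGrad q$-degree $-2n$, and $\iota_\circ D_\circ^n\iota_\circ=D_\circ^n$, $\iota_\bullet D_\circ^n\iota_\bullet=0$ (and symmetrically) give $\varphi_n(D_\circ^n\cdot\id)=(1,0)$, $\varphi_n(D_\bullet^n\cdot\id)=(0,1)$, $\varphi_0(\id)=1$, so $\varphi_n$ is onto. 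I also record that $\conn{Q_{\nicefrac pq}}=\ConnectivityX$ by \cref{lem:parity}, whence \cref{lem:nullhomotopic_basepoint_action_from_connectivity} gives $S^2\cdot\id_{\DD(Q_{\nicefrac pq})}\simeq0$ and therefore $S^{2n}\cdot\id\simeq0$ for every $n\ge1$.

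The remaining point (d) is the main obstacle. It suffices to prove that $\textnormal{End}_n(\DD(Q_{\nicefrac pq}))$ is generated modulo homotopy by $D_\circ^n\cdot\id$ and $D_\bullet^n\cdot\id$ (by $\id$ when $n=0$): granted this, any $\psi$ with $\varphi_n([\psi])=0$ is homotopic to $a\,D_\circ^n\cdot\id+c\,D_\bullet^n\cdot\id$, and evaluating the (now well-defined) numbers $a_\psi,c_\psi$ forces $a=c=0$, so $\psi\simeq0$. Because $S^{2n}\cdot\id\simeq0$ and $Z(\mathcal B)$ in degree $-2n$ is spanned by $D_\circ^n,D_\bullet^n,S^{2n}$, this generation statement is equivalent to: every chain endomorphism of $\DD(Q_{\nicefrac pq})$ of $\QGrad q$-degree $-2n$ is homotopic to a central-element multiple of the identity. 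I would prove this by induction on the length of the zigzag $\DD(Q_{\nicefrac pq})$, via the recursive construction in \cref{sec:appendix}: the base case $\nicefrac pq=1$, with $\DD(Q_1)=\circ\overset{S}{\longrightarrow}\bullet$, is a direct computation on a two-term complex (the degree-$(-2n)$ chain maps form a free group of rank $3$, the degree-$(-2n)$ null-homotopies one of rank $1$, and the quotient is spanned by $D_\circ^n\cdot\id$ and $D_\bullet^n\cdot\id$; for $n=0$ the ranks are $1$ and $0$); in the inductive step, adjoining one twist to the tangle enlarges the zigzag only near one of its ends through a mapping-cone extension, so that a chain endomorphism of the longer complex can, after a homotopy supported near that end, be arranged to restrict to one of the shorter complex, to which the inductive hypothesis applies. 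The technically delicate part — and where the precise shapes from \cref{lem:end_zz} and the recursion are used — is the bookkeeping of these homotopies: showing that a chain map vanishing on an end may be homotoped module-by-module along the zigzag until it vanishes entirely. (Alternatively, one may try the Pairing Theorem, rewriting $\Mor(\DD(Q_{\nicefrac pq}),\DD(Q_{\nicefrac pq}))\simeq\Mor(-\DD(Q_{\nicefrac{-p}{q}}),\DD(Q_{\nicefrac pq}))\simeq\QGrad{q^{-1}}\BNr(Q_{\nicefrac{-p}{q}}\cup Q_{\nicefrac pq})$, identifying $Q_{\nicefrac{-p}{q}}\cup Q_{\nicefrac pq}$ with the two-component unlink and computing its Bar-Natan homology.) Putting (a)--(d) together shows $\varphi_n$ is an isomorphism for all $n\ge0$.
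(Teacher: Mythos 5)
Your parts (a)--(c) are sound and essentially coincide with what the paper does: well-definedness is exactly the paper's observation that, because both ends are odd, the differentials incident to $X$ and $Y$ are of $S$-type, so no term $d\circ h+h\circ d$ can contribute a pure $D_\circ^n$ or $D_\bullet^n$ component there; and your surjectivity witnesses $D_\circ^n\cdot\id$, $D_\bullet^n\cdot\id$ serve the same purpose as the paper's end-supported maps. The genuine gap is in (d), and you flag it yourself: the inductive step of your main route is never carried out, and as described it would not go through in the stated form. The passage from $zz(r)$ to $zz(r+1)$ in \cref{table:x+1} replaces arrows \emph{throughout} the zigzag, not only near one end, so the claim that adjoining a twist enlarges the complex only near an end misdescribes the recursion; and the key assertion that an endomorphism vanishing on an end can be homotoped away module-by-module along the zigzag is precisely the content to be proved --- it is essentially a restatement of the injectivity of $\varphi_n$, for which you offer no argument beyond calling it delicate bookkeeping. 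So the injectivity half of the lemma is not established by your proposal.

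The paper sidesteps all of this with the route you mention only parenthetically: by \cref{thm:pairing_thm},
\(
\textnormal{End}(\DD(Q_{\nicefrac{p}{q}}))
\cong
\QGrad{q^{-1}}H_0(\BNr(-Q_{\nicefrac{p}{q}}\cup Q_{\nicefrac{p}{q}}))
\cong
\QGrad{q^{-1}}H_0(\BNr(U\sqcup U))
\cong
\Z[G]\oplus\QGrad{q^{-2}}\Z[G],
\)
since the union of a rational tangle with its mirror is the two-component unlink. Hence $\textnormal{End}_0(\DD(Q_{\nicefrac{p}{q}}))\cong\Z$ and $\textnormal{End}_n(\DD(Q_{\nicefrac{p}{q}}))\cong\Z^2$ for $n>0$, and a surjective homomorphism between free abelian groups of the same finite rank is automatically an isomorphism; no statement about generation by $D_\circ^n\cdot\id$ and $D_\bullet^n\cdot\id$ has to be proved directly (it follows a posteriori). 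To repair your write-up, either execute this short computation --- which also quietly supplies the rank bound your induction was meant to produce --- or genuinely prove the zigzag induction, including the homotopy bookkeeping you deferred; as written, the argument stops exactly where the work begins.
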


\begin{proof}
    By \cref{lem:goodzigzag},
    the ends $X$ and $Y$ are odd.
    Recall from \ref{eq:consecutived} of \cref{def:zigzag} that this means that the differentials attached to each of these ends are indeed either given by $S$ or~$S^2$, as shown in \eqref{eq:lem:iso_endomorphisms}. 
    It follows that the $D_{\circ}^n$ and $D_{\bullet}^n$ components of $\left.\psi\right|_{X\to X}$ and $\left.\psi\right|_{Y\to Y}$ cannot be homotoped away; 
    in other words, for any $\psi'$ homotopic to~$\psi$, one finds $a_\psi=a_{\psi'}$ and~$c_\psi=c_{\psi'}$. This shows that the maps $\varphi_n$ only depend on the homotopy class of~$\psi$, and so they are well-defined.
    The maps $\varphi_n$ are surjective, since \(\varphi_0([\id]) = 1\) and 
    \[
        \varphi_n([X \overset{D^n_{\circ}}{\longrightarrow} X]) = (1,0), 
        \qquad
        \varphi_n([Y \overset{D^n_{\bullet}}{\longrightarrow} Y]) = (0,1) 
        \qquad
        \text{for } n>0.
    \]
    Furthermore, \cref{thm:pairing_thm} yields
    \[
    \text{End}(\DD(Q_{\nicefrac{p}{q}})) \cong \QGrad{q^{-1}}H_0(\BNr(-Q_{\nicefrac{p}{q}}\cup Q_{\nicefrac{p}{q}})) \cong \QGrad{q^{-1}}H_0(\BNr(U\sqcup U)) \cong \Z[G]\oplus \QGrad{q^{-2}}\Z[G].
    \]
    Therefore $\text{End}_0(\DD(Q_{\nicefrac{p}{q}}))\simeq \Z$ and $\text{End}_n(\DD(Q_{\nicefrac{p}{q}}))\simeq \Z^2$ for~$n>0$.
    So for~$n\geq1$, $\varphi_n$ is a surjective morphism of free abelian groups of the same rank. 
    It follows that these maps are isomorphisms.
\end{proof}

\begin{lemma}\label{lem:lambda_rational_tangles}
	Let 
	\(
	Q_{\nicefrac{p}{q}}
	\neq
	\textnormal{$\vcenter{\hbox{\def\svgwidth{11.6pt}}}$}
	\) 
	be a rational tangle with the same connectivity and orientation as 
	\(
	\textnormal{$\vcenter{\hbox{\def\svgwidth{11.6pt}}}$}.
	\)
	Then,  \(
	\lambda_{D_{\bullet}}
	(
	\textnormal{$\vcenter{\hbox{\def\svgwidth{11.6pt}}}$},
	Q_{\nicefrac{p}{q}}
	)
	=
	1.
	\) 
\end{lemma}

\begin{proof}
    By \cref{lem:parity},  $p$ and $q$ are odd coprime integers. 
    Since $\DD(Q_{\nicefrac{p}{q}}) \not\simeq \QGrad{q^n} \DD(\textnormal{$\vcenter{\hbox{\def\svgwidth{11.6pt}}}$})$ for all~$n\in \Z$, we know by \cref{prop:properties_lambda}\cref{item:properties_lambda:3} that $\lambda_{D_\bullet}(\textnormal{$\vcenter{\hbox{\def\svgwidth{11.6pt}}}$},Q_{\nicefrac{p}{q}})\geq 1$.
    We look for chain maps
    \[
    \begin{tikzcd}
    	\DD(\textnormal{$\vcenter{\hbox{\def\svgwidth{11.6pt}}}$})
    	\arrow[bend left=5]{r}{f}
    	\arrow[bend right=5,leftarrow,swap]{r}{g}
    	&
    	\DD(Q_{\nicefrac{p}{q}})
    \end{tikzcd}
    \]
    realising 
    $\lambda_{D_\bullet}(\textnormal{$\vcenter{\hbox{\def\svgwidth{11.6pt}}}$},Q_{\nicefrac{p}{q}})=1$. 
    The $\bullet$-end of the zigzag complex $\DD(Q_{\nicefrac{p}{q}})$ 
    can be determined from \cref{lem:mirror_complex}, \cref{lem:end_zz} and \cref{lem:grading_end_zz}. 
    We distinguish four cases depending on the slope \(\nicefrac{p}{q}\neq -1\), 
    which are shown in 
    \cref{tab:lambda_rational_tangles:maps_for_cases_p/q}.
    In each case, 
    the upper row shows the complex \(\DD(\textnormal{$\vcenter{\hbox{\def\svgwidth{11.6pt}}}$})\)
    and
    the lower row shows the $\bullet$-end 
    of $\DD(Q_{\nicefrac{p}{q}})$. 
    The vertical arrows define homogeneous chain maps $f$ and~$g$. 
    Since $D_\bullet \cdot \id_{\circ} = 0$, one has 
    \[
    g\circ f = D_\bullet \cdot \id_{\DD(\textnormal{$\vcenter{\hbox{\def\svgwidth{11.6pt}}}$})}.
    \] 
    Now consider the endomorphism $f \circ g$ of $\DD(Q_{\nicefrac{p}{q}})$. 
    The image of $f \circ g$ under the isomorphism 
    $\varphi_1$ of \cref{lem:iso_endomorphisms} 
    is equal to the image of the map 
    $D_\bullet \cdot \id_{\DD(Q_{\nicefrac{p}{q}})}$. 
    This implies that they represent the same element of
    $\text{End}_1(\DD(Q_{\nicefrac{p}{q}}))$, 
    i.e.\ 
    $f\circ g \simeq D_\bullet \cdot \id_{\DD(Q_{\nicefrac{p}{q}})}$.
\end{proof}

\begin{figure}[h]
	\centering
	\begin{subfigure}{0.5\textwidth}
		\centering
		\(
		\begin{tikzcd}
			&
			\GGzqh{\bullet}{0}{1}{0}
			\arrow[leftarrow, bend left=10]{d}{D_\bullet}
			\arrow[bend right=10,swap]{d}{1}
			\arrow{r}{S}
			&
			\GGzqh{\circ}{0}{2}{1}
			\\
			\cdots
			\arrow{r}{S \text{ or } S^2}
			&
			\GGzqh{\bullet}{0}{\ell}{0}
		\end{tikzcd}
		\) 
		\caption{$\nicefrac{p}{q}>0$}
	\end{subfigure}%
	\begin{subfigure}{0.5\textwidth}
		\centering
		\(
		\begin{tikzcd}
			\GGzqh{\bullet}{0}{1}{0}
			\arrow[bend right=10, swap]{d}{1}
			\arrow[leftarrow,bend left=10]{d}{D_\bullet}
			\arrow{r}{S}
			&
			\GGzqh{\circ}{0}{2}{1}
			\arrow[bend right=10, swap]{d}{S}			
			\\
			\GGzqh{\bullet}{0}{\ell}{0}
			\arrow{r}{S^2}
			&
			\GGzqh{\bullet}{0}{\ell+2}{1}
			\arrow{r}{D}
			&
			\bullet
			\cdots
		\end{tikzcd}
		\) 
		\caption{$-\nicefrac{1}{2}<\nicefrac{p}{q}<0$}
	\end{subfigure}%
	\\\bigskip
	\begin{subfigure}{0.5\textwidth}
		\centering
		\(
		\begin{tikzcd}
			\GGzqh{\bullet}{0}{1}{0}
			\arrow[bend right=10, swap]{d}{1}
			\arrow[leftarrow, bend left=10]{d}{D_\bullet}
			\arrow{r}{S}
			&
			\GGzqh{\circ}{0}{2}{1}
			\arrow[bend right=10, swap]{d}{1}			
			\\
			\GGzqh{\bullet}{0}{\ell}{0}
			\arrow{r}{S}
			&
			\GGzqh{\circ}{0}{\ell+1}{1}
			\arrow[leftarrow]{r}{D}
			&
			\circ
			\cdots
		\end{tikzcd}
		\)
		\caption{$-1<\nicefrac{p}{q}<-\nicefrac{1}{2}$}
	\end{subfigure}%
	\begin{subfigure}{0.5\textwidth}
		\centering
		\(
		\begin{tikzcd}
			\GGzqh{\bullet}{0}{1}{0}
			\arrow[leftarrow, bend left=10]{d}{1}
			\arrow[bend right=10,swap]{d}{D_\bullet}
			\arrow{r}{S}
			&
			\GGzqh{\circ}{0}{2}{1}
			\arrow[leftarrow, bend left=10]{d}{1}			
			\\
			\GGzqh{\bullet}{0}{\ell}{0}
			\arrow{r}{S}
			&
			\GGzqh{\circ}{0}{\ell+1}{1}
			\arrow{r}{D}
			&
			\circ
			\cdots
		\end{tikzcd}
		\)
		\caption{$\nicefrac{p}{q}<-1$}
	\end{subfigure}%
	\caption{%
		The maps $f$ and $g$ realising 
		$\lambda_{D_\bullet}(\protect\textnormal{$\vcenter{\hbox{\def\svgwidth{11.6pt}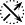}}$},Q_{\nicefrac{p}{q}})=1$ 
		depending on the slope~$\nicefrac{p}{q}$.
	}
	\label{tab:lambda_rational_tangles:maps_for_cases_p/q}
\end{figure}

\begin{remark}
	\cref{lem:lambda_rational_tangles} remains true for any other rational tangle with the same connectivity and orientation in lieu of \(\textnormal{$\vcenter{\hbox{\def\svgwidth{11.6pt}}}$}\).
	The relevant morphisms required for the proof can be constructed by analysing the slopes of the rational tangles and applying the interpretation of morphisms as intersection points of the corresponding curves \cite[Section~5]{KWZ}; compare also \cite[Proof of Lemma~6.2]{LZ}. 
	Moreover, the lemma remains true if we change the orientation of \emph{both} strands, since this does not affect the tangle invariants \cite[Proposition~4.8]{KWZ}.
	However, if we change the orientation of only one strand the statement becomes false. 
\end{remark}

\begin{proof}[Proof of \cref{thm:lambda_proper_rational_gordian}]  	
    Let $K_1$ and $K_2$ be two knots related by a proper rational tangle replacement. 
    One can write $K_1=T_1 \cup T$ and $K_2=T_2 \cup T$ with $T_1,\, T_2$ rational tangles. 
    By applying the same boundary-changing tangle-orientation-preserving homeomorphism to $T_1$ and~$T_2$, we may assume that  
    \(T_1=\textnormal{$\vcenter{\hbox{\def\svgwidth{11.6pt}}}$}\), see \cref{remark:slopechange}. 
    The tangle $T_2$ then has the same orientation and connectivity as \(\textnormal{$\vcenter{\hbox{\def\svgwidth{11.6pt}}}$}\).
    Moreover,~$\conn{T}=\No$. 
    The statement now follows from combining \cref{lem:lambda_rational_tangles} and \cref{prop:lambda-tangles-to-links}. 
\end{proof}

\subsection{Relationship with non-rational proper tangle replacements}
	There are non-rational proper tangle replacements \(T\rightsquigarrow T'\) with \(\lambda_G(T,T')=1\) to which we can apply the same argument as in the proofs of \cref{thm:lambda-gordian,thm:lambda_proper_rational_gordian}.  Let us consider two examples.

\begin{example}
	Let \(T_1\) be the \((3,-3)\)-pretzel tangle, i.e.\ the tangle sum \(Q_{\nicefrac{1}{3}}+Q_{\nicefrac{-1}{3}}\).
	Then a short computation shows that \(\DD(T_1)\) consists of a summand isomorphic to~\(\DD(\No)\), plus a component isomorphic to \(s_{4}(0)\) up to a grading shift, with notation from \cite[Definition~2.1]{KWZ_strong_inversions}. 
	(Note that the absolute grading of \(\DD(T_1)\) is independent of the orientation.)
	Since \(s_{4}(0)\) is a mapping cone of \(G\cdot \id_X\) on some complex \(X\), \(G\cdot\id_{s_4(0)}\simeq0\). 
	Hence \(\lambdau_G(T_1,\No)=1\). 
	Therefore, \(\lambdau(K,J)\leq 1\) for any two knots \(K\) and \(J\) that are related by a tangle replacement \(T=T_1\rightsquigarrow T'=\No\). 
	The same should be true for the \((2n+1,-2n-1)\)-pretzel tangle \(T_{n}\), \(n\geq1\), in place of \(T_1\).
	Indeed, computations suggest \cite{khtpp} that \(\DD(T_n)\) consists of a summand isomorphic to \(\DD(\No)\) and, for each \(i=1,\dots,n\), a component isomorphic to \(s_{4i}(0)\) up to a grading shift, for which \(G\cdot\id_{s_{4i}(0)}\simeq0\).
	
	Greene proved that the \((2n-1,-2n+1,m)\)-pretzel knots \(K_{n,m}\) with $n, m \geq 2$ are not quasi-alternating \cite{zbMATH05937353}, but their rational Khovanov homology was shown to be thin by Starkston and Qazaqzeh \cite{zbMATH06017041,MR2847230}. 
	From thinness, it follows that \(\BNr(K_{n,m};\Q[G])\) does not contain any \(G^k\)-knight piece for \(k>1\).
	We can now recover this absence of higher knights as a consequence of the tangle replacement \(T_n\rightsquigarrow \No\) which turns \(K_{n,m}\) into an unknot (see~\cref{fig:pretzel}).
\end{example}
\begin{figure}[h]
\includegraphics{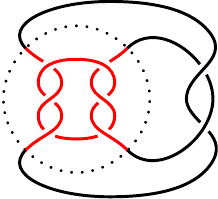}\hspace{3cm}%
\includegraphics{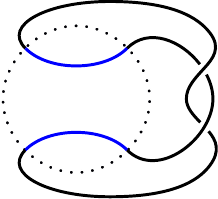}
\caption{Transforming the $(3,-3,2)$-pretzel knot ($8_{20}$ in the tables) into the unknot
by replacing the \textcolor{red}{$(3,-3)$-pretzel tangle} by \textcolor{blue}{$Q_0$ = \(\protect\Nob\)}.}
\label{fig:pretzel}
\end{figure}

\begin{figure}[t]
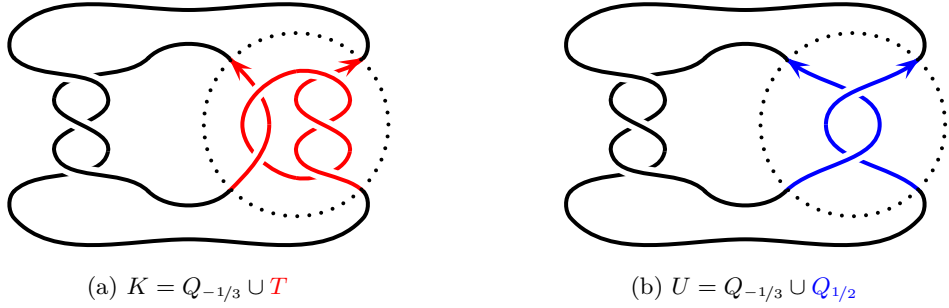
\centering
	\begin{subfigure}{0.5\textwidth}\centering
		\(\TorusknotFromPT\)
		\caption{\(K=Q_{\nicefrac{-1}{3}}\cup \textcolor{red}{T}\)}\label{fig:pretzel-replacement:torusknot}
	\end{subfigure}%
	\begin{subfigure}{0.5\textwidth}\centering
		\(\UnknotFromPT\)
		\caption{\(U=Q_{\nicefrac{-1}{3}}\cup \textcolor{blue}{Q_{\nicefrac{1}{2}}}\)}\label{fig:pretzel-replacement:unknot}
	\end{subfigure}
	\caption{%
		A tangle replacement 
		\(\textcolor{red}{T}\rightsquigarrow\textcolor{blue}{Q_{\nicefrac{1}{2}}}\) 
		from (a)~the torus knot \(K=T(3,4)\) 
		to (b)~the unknot \(U\). 
		As explained in \cref{exa:pretzel-replacement}, 
		the orientation on the tangles 
		\(\textcolor{red}{T}\) and \(\textcolor{blue}{Q_{\nicefrac{1}{2}}}\) 
		is crucial. 
		There is no such tangle replacement between \(K\) and \(U\) 
		if we reverse the orientation on one of the strands.
}\label{fig:pretzel-replacement}
\end{figure}

\begin{example}\label{exa:pretzel-replacement}
	Consider the torus knot \(K=T(3,4)\). 
	Its invariant is computed in \cref{prop:homology_torus} 
	and is equal to
	\[
		\BNr(K;\F_2[G])
		\simeq
			\F_2[G]
			\oplus
			\left(
			\F_2[G]
			\overset{G}{\longrightarrow}
			\F_2[G]
			\right)
			\oplus
			\left(
			\F_2[G]
			\overset{G^2}{\longrightarrow}
			\F_2[G]
			\right).
	\]
	From this, 
	we can compute \(\lambda(K,U;\F_2)=2\); 
	see also \cref{prop:upper_bds_on_lambda-_fields}, 
	part \cref{enu4:prop:upper_bds_on_lambda-_fields}. 
	
	\cref{fig:pretzel-replacement} shows a tangle replacement 
	which turns \(K\) into the unknot: 
	The highlighted tangle \(\textcolor{red}{T}\) 
	on the right of \cref{fig:pretzel-replacement:torusknot} 
	is replaced by the rational tangle \(\textcolor{blue}{Q_{\nicefrac{1}{2}}}\). 
	The tangle invariant for \(\textcolor{red}{T}\) 
	was computed in \cite[Example~4.28]{KWZ} 
	and is equal to
	\[
	\textcolor{red}{\DD(T)}
	\simeq
	\textcolor{red}{\Big[
		\begin{tikzcd}[column sep=20pt,ampersand replacement=\&,red]
			\GGzqh{\bullet}{0}{-7}{-2}
			\&
			\bullet
			\arrow[swap]{l}{D}
			\&
			\circ
			\arrow[swap]{l}{S}
			\&
			\circ
			\arrow[swap]{l}{D}
			\arrow{r}{S}
			\&
			\bullet
			\arrow{r}{D}
			\&
			\bullet
			\arrow{r}{S^2}
			\&
			\bullet
			\arrow{r}{D}
			\&
			\bullet
			\arrow{r}{S}
			\&
			\circ
		\end{tikzcd}
	\Big]}.
	\]
	By \cref{prop:lambda-tangles-to-links}, 
	\(
	\lambda_{D_\circ}(
	\textcolor{red}{T},
	\textcolor{blue}{Q_{\nicefrac{1}{2}}}
	)
	\geq
	2
	\). 
	In fact, 
	one can show that 
	\(
	\lambda_{D_\circ}(
	\textcolor{red}{T},
	\textcolor{blue}{Q_{\nicefrac{1}{2}}}
	)
	=
	2
	\).
	
	Let 
	\(\textcolor{red}{T'}\) 
	be the tangle obtained from 
	\(\textcolor{red}{T}\) 
	by reversing the orientation of one of the two strands. 
	Then 
	\(
	\textcolor{red}{\DD(T')}
	\simeq 
	t^2\QGrad{q^6}\textcolor{red}{\DD(T)}
	\); 
	see for example \cite[Proposition~4.8]{KWZ}. 
	Let \(\textcolor{blue}{Q'_{\nicefrac{1}{2}}}\) 
	be the rational tangle \(\textcolor{blue}{Q_{\nicefrac{1}{2}}}\) 
	equipped with the same orientation at the tangle ends 
	as~\(\textcolor{red}{T'}\). 
	We claim that 
	\(
	\lambda_{S^2}(
	\textcolor{red}{T'},
	\textcolor{blue}{Q'_{\nicefrac{1}{2}}}
	)
	=
	1
	\). 
	Indeed, the Bar-Natan invariants of these two tangles are not isomorphic, and maps \(f\) and \(g\) realizing this value, 
	along with a homotopy \(g\circ f\simeq_h S^2\cdot \id\), 
	can be easily found:
	\begin{align*}
		\tikzmarknode{s0}{\textcolor{red}{\DD(T')}}
		&=
		\textcolor{red}{\Big[
			\begin{tikzcd}[column sep=20pt,ampersand replacement=\&,red]
				\tikzmarknode{s1}{\GGzqh{\bullet}{0}{-1}{0}}
				\&
				\tikzmarknode{s2}{\bullet}
				\arrow[swap]{l}{D}
				\&
				\tikzmarknode{s3}{\circ}
				\arrow[swap]{l}{S}
				\&
				\circ
				\arrow[swap]{l}{D}
				\arrow{r}{S}
				\&
				\bullet
				\arrow{r}{D}
				\arrow[black,dashed,bend left]{l}{S}
				\&
				\bullet
				\arrow{r}{S^2}
				\&
				\bullet
				\arrow{r}{D}
				\arrow[black,dashed,bend left]{l}{1}
				\&
				\bullet
				\arrow{r}{S}
				\&
				\circ
				\arrow[black,dashed,bend left]{l}{S}
			\end{tikzcd}
		\Big]}
		\\[10pt]
		\tikzmarknode{e0}{\textcolor{blue}{\DD(Q'_{\nicefrac{1}{2}})}}
		&=
		\textcolor{blue}{\Big[
		\begin{tikzcd}[column sep=20pt,blue,ampersand replacement=\&]
			\tikzmarknode{e1}{\GGzqh{\bullet}{0}{-1}{0}}
			\&
			\tikzmarknode{e2}{\bullet}
			\arrow[swap]{l}{D}
			\&
			\tikzmarknode{e3}{\circ}
			\arrow[swap]{l}{S}
		\end{tikzcd}
		\Big]}
	\end{align*}
	\begin{tikzpicture}[overlay,remember picture]
		\draw[mor,dashed] (s0.north west) to [in=-160,out=160,looseness=5] node[left] {h} (s0.south west);
		\draw[mor] ($(s0.south east)+(-20pt,0)$) to [in=105,out=-105] node[left]  {f} ($(e0.north east)+(-20pt,0)$);
		\draw[mor] ($(e0.north east)+(-20pt,0)$) to [in=-75,out=75]   node[right] {g} ($(s0.south east)+(-20pt,0)$);
		\draw[mor] (s1.south) to [in=105,out=-105] node[left]  {S^2} (e1.north);
		\draw[mor] (e1.north) to [in=-75,out=75]   node[right] {1}   (s1.south);
		\draw[mor] (s2.south) to [in=105,out=-105] node[left]  {S^2} (e2.north);
		\draw[mor] (e2.north) to [in=-75,out=75]   node[right] {1}   (s2.south);
		\draw[mor] (s3.south) to [in=105,out=-105] node[left]  {S^2} (e3.north);
		\draw[mor] (e3.north) to [in=-75,out=75]   node[right] {1}   (s3.south);
	\end{tikzpicture}%
	In fact, these maps are witnesses for
	\(
	\lambdau_{S^2}(
	\textcolor{red}{T'},
	\textcolor{blue}{Q'_{\nicefrac{1}{2}}}
	)
	=
	1
	\).
	Applying \cref{prop:lambda-tangles-to-links}, 
	we conclude that 
	there is no tangle replacement	\(\textcolor{red}{T'}\rightsquigarrow\textcolor{blue}{Q'_{\nicefrac{1}{2}}}\)
	between \(K\) and \(U\). 
\end{example}

\begin{remark}\label{ref:simple}
	\cref{exa:pretzel-replacement} 
	underlines the importance of orientations in tangle replacements. 
	Another takeaway from this example is the following:  
	\(\lambda(K)\) is not only a lower bound on the proper rational unknotting number. 
	By the same arguments as in the proofs of 
	\cref{thm:lambda-gordian,thm:lambda_proper_rational_gordian}, 
	\(\lambda(K,J)\) 
	is a lower bound on the minimum length of sequences of knots 
	\[
	(K=K_0,\dots,K_\ell=J)
	\]
	where for all
	\(i=1,\dots,\ell\),
	the knots 
	\(K_{i-1}\)
	and 
	\(K_i\)
	are related by certain ``simple'' tangle replacements; 
	namely,
	a tangle replacement \(T_1\rightsquigarrow T_2\) could be called simple
	if 
	\(\lambda_{S^2}(T_1,T_2)=1\), 
	\(\conn{T_1}=\conn{T_2}=\Ni\), 
	and the orientations of opposite tangle ends in \(T_1\) and \(T_2\) agree 
	(thus forcing \(\conn{T}=\No\) for the third tangle \(T\) 
	in \cref{lem:maps-tangles-to-links}).
	Naturally, tangle replacements of the form
	\(Q_{\nicefrac{p}{q}}\rightsquigarrow Q_{\nicefrac{p'}{q'}}\) 
	for \(q,q'\) even and \(p,p'\) odd 
	are simple in this sense 
	(by a computation analogous to \cref{lem:lambda_rational_tangles}).
	However, there are many other simple tangle replacements involving non-rational tangles,
	such as  \(\textcolor{red}{T'}\rightsquigarrow\textcolor{blue}{Q'_{\nicefrac{1}{2}}}\) 
	from \cref{exa:pretzel-replacement}. 
	In fact, we might call the tangle replacement
	\(\textcolor{red}{T'}\rightsquigarrow\textcolor{blue}{Q'_{\nicefrac{1}{2}}}\) 
	``very simple'', 
	since not only 
	\(
	\lambda_{S^2}(
	\textcolor{red}{T'},
	\textcolor{blue}{Q'_{\nicefrac{1}{2}}}
	)
	=
	1
	\), 
	but in fact
	\(
	\lambdau_{S^2}(
	\textcolor{red}{T'},
	\textcolor{blue}{Q'_{\nicefrac{1}{2}}}
	)
	=
	1
	\).
	Many rational tangle replacements are very simple in this sense,
        as we will further discuss in \cref{def:u0} and \cref{cor:strongerthm1}.
	This explains why the integer \(\lambdau(K,J)\), 
	though being an improvement on many other Gordian distance bounds from Khovanov homology, 
	is still relatively small for many examples. 
\end{remark}

\subsection{\texorpdfstring{Relationship with the $s$-invariant}{Relationship with the s-invariant}}

Given a field $\F$ and two knots~$K_1,\,K_2$, define 
\[
s_{\F}(K_1,K_2)\coloneqq s_{\F}(K_1)-s_{\F}(K_2).
\]

\begin{proposition}\label{thm:s_invariant_and_graded_lambda}
	Let \(K_1\) and \(K_2\) be knots. Then for any field \(\F\)
		\begin{equation} \label{eq:rasmussen_u_bound1}
		|s_{\F}(K_1,K_2)| 
		\leq 
		2\lambdau(K_1,K_2;\F).
		\end{equation}
	Moreover, 
		\begin{equation} \label{eq:rasmussen_u_bound2}
		\max \{0\, ,\, \max_{\F} s_{\F}(K_1,K_2)\} 
		+ 
		\max \{0\, ,\, \max_{\F} s_{\F}(K_2,K_1)\} 
		\leq 
		2\lambdau(K_1,K_2).
		\end{equation}
	Similarly,
		\[
		\max_{\F} s_{\F}(K_1,K_2) 
		+ 
		\max_{\F} s_{\F}(K_2,K_1) 
		\leq 
		2\lambda(K_1,K_2).
		\]
\end{proposition}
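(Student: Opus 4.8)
The plan is to extract, from any homogeneous chain maps $f,g$ witnessing $g\circ f\simeq G^n\cdot\id$ and $f\circ g\simeq G^n\cdot\id$, the two inequalities $s_\F(K_1)-s_\F(K_2)\leq-\QGrad{q}(f)$ and $s_\F(K_2)-s_\F(K_1)\leq-\QGrad{q}(g)$ over every field~$\F$; combined with the identity $\QGrad{q}(f)+\QGrad{q}(g)=-2n$, all three displayed bounds then follow by elementary bookkeeping. First I would pass to the free part of homology. By \cref{rem:pawn_knight_pieces}, over the PID $\F[G]$ the homology of $\BNr(K_i;\F[G])$ has as its free part a single copy of $\QGrad{q^{s_i}}\F[G]$, where $s_i:=s_\F(K_i)$, concentrated in homological degree~$0$ (the knight pieces contribute $G$-torsion only, and only in their target homological degree). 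A homogeneous chain map $f\colon\BNr(K_1;\F[G])\to\BNr(K_2;\F[G])$ preserves the homological grading and is $\F[G]$-linear, so the induced map on degree-$0$ homology preserves the torsion submodule and hence descends to a homogeneous $\F[G]$-linear map $\bar f\colon\QGrad{q^{s_1}}\F[G]\to\QGrad{q^{s_2}}\F[G]$ of quantum degree $\QGrad{q}(f)$; likewise one obtains $\bar g$. Assuming $n<\infty$ (otherwise there is nothing to prove), $g\circ f\simeq G^n\cdot\id$ gives $\bar g\circ\bar f=G^n\cdot\id$, which is injective, so $\bar f$ and $\bar g$ are nonzero.

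Next I would read off the key inequalities. An $\F[G]$-linear map between rank-one free graded $\F[G]$-modules is multiplication by a single polynomial, and since it is homogeneous and nonzero this polynomial has the form $c\,G^a$ with $c\in\F^{\times}$ and $a\geq0$. Thus $\bar f$ sends the degree-$s_1$ generator to $c\,G^a$ times the degree-$s_2$ generator, which forces $\QGrad{q}(f)=(s_2-2a)-s_1$ and hence
\[
s_\F(K_1)-s_\F(K_2)=-\QGrad{q}(f)-2a\leq-\QGrad{q}(f);
\]
symmetrically $s_\F(K_2)-s_\F(K_1)\leq-\QGrad{q}(g)$. Writing $\bar g$ as multiplication by $c'G^b$, the equation $\bar g\circ\bar f=G^n\cdot\id$ gives $a+b=n$, so $\QGrad{q}(f)+\QGrad{q}(g)=-2(a+b)=-2n$.

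Finally I would assemble the three bounds. For \eqref{eq:rasmussen_u_bound1}, carry out the argument over $R=\F$ with $n=\lambdau(K_1,K_2;\F)$, where moreover $\QGrad{q}(f),\QGrad{q}(g)\leq0$: then $-\QGrad{q}(f)$ and $-\QGrad{q}(g)$ are nonnegative and sum to $2n$, so each is $\leq2n$, whence $|s_\F(K_1,K_2)|\leq2n$. For the other two, carry out the argument over $R=\Z$ with $n=\lambdau(K_1,K_2)$ (respectively $n=\lambda(K_1,K_2)$), tensoring the fixed maps $f,g$ over $\Z$ with each field $\F$ in turn; this gives $s_\F(K_1,K_2)\leq-\QGrad{q}(f)$ and $s_\F(K_2,K_1)\leq-\QGrad{q}(g)$ for \emph{every} $\F$, so taking suprema over $\F$ and adding yields $\max_\F s_\F(K_1,K_2)+\max_\F s_\F(K_2,K_1)\leq-\QGrad{q}(f)-\QGrad{q}(g)=2n$, and in the $\lambdau$ case the nonnegativity of $-\QGrad{q}(f)$ and $-\QGrad{q}(g)$ lets one insert the $\max\{0,\cdot\}$'s. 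The hard part is really the functoriality used in the first step --- that a homotopy class of homogeneous chain maps descends to a well-defined homogeneous $\F[G]$-linear map of the predicted quantum degree, and that $\bar f$ is nonzero --- which rests on the structure theorem over $\F[G]$ from \cref{rem:pawn_knight_pieces}: the free summand occupies a single homological degree, so homological-degree-preserving maps respect it, and $G^n\cdot\id$ is non-nullhomotopic because that summand is nonzero.
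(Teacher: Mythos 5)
Your proposal is correct and follows essentially the same route as the paper: the two key inequalities $s_\F(K_1)-s_\F(K_2)\leq-\QGrad{q}(f)$ and $s_\F(K_2)-s_\F(K_1)\leq-\QGrad{q}(g)$ are exactly the content of \cref{lem:qdeg_f_g_and_s_invariant}, which the paper proves by restricting $f$ and $g$ to the pawn summands of the chain-level decomposition of \cref{rem:pawn_knight_pieces} (nontriviality seen by setting $G=1$), the chain-level counterpart of your passage to the free quotient of $H_0$. The subsequent bookkeeping --- the relation $\QGrad{q}(f)+\QGrad{q}(g)=-2n$, base-changing the integral maps to every field $\F$ for the second and third bounds, and using $\QGrad{q}(f),\QGrad{q}(g)\leq0$ to insert the $\max\{0,\cdot\}$ terms --- matches the paper's proof.
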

\begin{remark}
As a direct consequence of \cref{thm:s_invariant_and_graded_lambda}, $\lambdau$ is sharper than the $s$-invariant as a bound for the unknotting number: For any knot $K$ and any field $\mathbb{F}$,
\[
\tfrac{1}{2}|s_{\mathbb{F}}(K)| 
\leq 
\lambdau(K;\F) 
\leq 
\lambdau(K).
\]
In fact, the inequality \eqref{eq:rasmussen_u_bound1} of \cref{thm:s_invariant_and_graded_lambda} says that $\lambdau$ is sharper than the $s$-invariant as a bound for the Gordian distance.
Note that $\tfrac{1}{2}|s_{\mathbb{F}}(K_1, K_2)|$ is indeed the optimal lower bound for $u(K_1, K_2)$ given by $s_{\mathbb{F}}$, meaning that
for all $a_1,a_2\in\mathbb{Z}$ there exist knots $K_1, K_2$ with $s_{\mathbb{F}}(K_i) = 2a_i$ such that $u(K_1, K_2) = \tfrac{1}{2}|s_{\mathbb{F}}(K_1, K_2)|$. For example, simply take $K_i = T(2,2a_i+1)$ for $a_i \geq 0$ and $K_i = T(2,2a_i-1)$ for $a_i < 0$.

The left-hand side of inequality \eqref{eq:rasmussen_u_bound2}
is a lower bound for $u(K_1, K_2)$, since $\lambdau(K_1, K_2)$ is.
But this can also be proven more directly, using that $s_{\mathbb{F}}$ does not decrease under a negative-to-positive crossing change~\cite{LivingstonComputations}. For example, a knot $K$ with $s_{\mathbb{F}}(K) = 2$ and $s_{\mathbb{F}'}(K) = -2$ has $u(K) \geq 2$, since at least one crossing change of each sign is required to make $K$ trivial.

We conjecture that the left-hand side of \eqref{eq:rasmussen_u_bound2}
is the optimal lower bound for $u(K_1, K_2)$ provided by the set of all Rasmussen invariants.
Let us make this precise. Let $C = \{0,2,3,\ldots\}$ be the set of zero and all primes.
Let $a_1, a_2\colon C \to \mathbb{Z}$ be given such that $a_i(c) = a_i(0)$ for almost all $c\in C$.
Then we conjecture that there exist knots $K_1, K_2$ such that
$s_{\mathbb{F}}(K_i) = 2a_i(\Char \mathbb{F})$, and $u(K_1, K_2)$ equals the left-hand side of \eqref{eq:rasmussen_u_bound2}.

In fact, we can prove this---provided we accept the following second conjecture as true:
For $n \geq 2$, let $W_n$ be the $(n^2-1)$-twisted positive Whitehead double $W_n$ of the torus knot $T(n,n+1)$. 
Then $s_{\mathbb{F}}(W_n)$ is expected to be equal to $2$ if $\Char\mathbb{F}$ divides~$n$, and $0$ otherwise (cf.~\cite[Conj.~6.2 and 6.3]{schuetz},
\cite[Conj.~6.9]{LZ}, \cite[Conj.~1.3]{arXiv2401.08480}).
Indeed, if this is true, and $a_1$ and $a_2$ as above are given,
one may build knots $K_1$ and $K_2$ with the desired properties as connected sums of copies of the trefoil and various $W_n$ and their mirror images. 
The proof goes by induction over the left hand side of \eqref{eq:rasmussen_u_bound2}, using that $u(W_n) = 1$. 
We omit the details.
\end{remark}

\begin{corollary} \label{cor:s_invariant_and_ungr_lambda}
	For any knot \(K\) and any fields \(\F\) and \(\F'\) 
		\[
		|s_{\F}(K)-s_{\F'}(K)| \leq 2\lambda(K).
		\]    
\end{corollary}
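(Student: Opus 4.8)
The plan is to derive this directly from the last inequality of \cref{thm:s_invariant_and_graded_lambda}, specialised to the pair $(K_1,K_2)=(K,U)$ where $U$ is the unknot. Since $s_{\F}(U)=0$ for every field~$\F$, we have $s_{\F}(K,U)=s_{\F}(K)$ and $s_{\F}(U,K)=-s_{\F}(K)$ for all $\F$; recalling that $\lambda(K)=\lambda(K,U)$ by definition, \cref{thm:s_invariant_and_graded_lambda} therefore gives
\[
\max_{\F''} s_{\F''}(K) + \max_{\F''}\bigl(-s_{\F''}(K)\bigr) \leq 2\lambda(K),
\]
where the maxima range over all fields. In particular, for the two fields $\F$ and $\F'$ in the statement, $s_{\F}(K)\leq\max_{\F''} s_{\F''}(K)$ and $-s_{\F'}(K)\leq\max_{\F''}\bigl(-s_{\F''}(K)\bigr)$, so $s_{\F}(K)-s_{\F'}(K)\leq 2\lambda(K)$; interchanging the roles of $\F$ and $\F'$ yields $s_{\F'}(K)-s_{\F}(K)\leq 2\lambda(K)$ as well, and hence $|s_{\F}(K)-s_{\F'}(K)|\leq 2\lambda(K)$, which is the claim.

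There is essentially no obstacle here: the corollary is a book-keeping consequence of \cref{thm:s_invariant_and_graded_lambda}. The one point deserving a moment's attention is that $\max_{\F''}\bigl(-s_{\F''}(K)\bigr)=-\min_{\F''} s_{\F''}(K)$, so that the left-hand side of the specialised inequality is precisely the full spread $\max_{\F''} s_{\F''}(K)-\min_{\F''} s_{\F''}(K)$ of the Rasmussen invariant over all fields, which dominates the two-field difference $|s_{\F}(K)-s_{\F'}(K)|$ from above. (When $\lambda(K)=\infty$ the statement is vacuous, so one may assume $\lambda(K)$ finite throughout.)
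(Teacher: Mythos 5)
Your proposal is correct and is essentially identical to the paper's own proof: both specialise the last inequality of \cref{thm:s_invariant_and_graded_lambda} to $(K_1,K_2)=(K,U)$, rewrite the left-hand side as the spread $\max_{\F} s_{\F}(K)-\min_{\F} s_{\F}(K)$, and observe that this dominates $|s_{\F}(K)-s_{\F'}(K)|$ for any two fields.
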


\begin{proof}
	Applying the last inequality of \cref{thm:s_invariant_and_graded_lambda} to $K_1=K$ and $K_2=U$ yields
	\[
	\max_{\F} s_{\F}(K) - \min_{\F} s_{\F}(K) \leq 2\lambda(K).
	\]
	Thus for any fields $\F,\,\F'$
	\[
	|s_{\F}(K) - s_{\F'}(K)| \leq \max_{\F} s_{\F}(K) - \min_{\F} s_{\F}(K) \leq 2\lambda(K).
	\myqed
	\]
\end{proof}

The proof of \cref{thm:s_invariant_and_graded_lambda} requires the following lemma.

\begin{lemma}\label{lem:qdeg_f_g_and_s_invariant}
	Let \(K_1\) and \(K_2\) be two knots and \(\F\) a field. Consider homogeneous maps 
	\[
	\begin{tikzcd}
		\BNr(K_1;\F[G])
		\arrow[bend left=5]{r}{f}
		\arrow[bend right=5,leftarrow,swap]{r}{g}
		&
		\BNr(K_2;\F[G])
	\end{tikzcd}
	\]
	such that
	\(
	f\circ g\simeq G^n\cdot \id
	\) and
	\(
	g\circ f\simeq G^n\cdot \id
	\)
	for some~\(n\geq 0\).
	Then 
	\begin{equation}\label{eq:qdeg_f_and_g_pawn}
			\QGrad{q}(f) \leq s_{\F}(K_2,K_1),
			\quad
			\text{and}
			\quad
			\QGrad{q}(g) \leq s_{\F}(K_1,K_2).
	\end{equation}
\end{lemma}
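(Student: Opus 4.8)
The plan is to reduce everything to an elementary computation on the pawn pieces of $\BNr(K_1;\F[G])$ and $\BNr(K_2;\F[G])$. Write $C_i\coloneqq\BNr(K_i;\F[G])$ and $s_i\coloneqq s_\F(K_i)$. By \cref{rem:pawn_knight_pieces}, over the field $\F$ the complex $C_i$ splits, up to homotopy and as a graded complex of $\F[G]$-modules, as $C_i\simeq P_i\oplus N_i$ with $P_i=t^0\QGrad{q^{s_i}}\F[G]$ the pawn piece and $N_i$ a direct sum of $G^k$-knight pieces, $k>0$. I would fix such grading-preserving homotopy equivalences, use them to identify $C_i$ with $P_i\oplus N_i$, and write $f$ and $g$ as matrices; let $f_P\co P_1\to P_2$ and $g_P\co P_2\to P_1$ be their pawn-to-pawn components. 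These are homogeneous $\F[G]$-linear maps with $\QGrad{q}(f_P)=\QGrad{q}(f)$ and $\QGrad{q}(g_P)=\QGrad{q}(g)$; note $f$ and $g$ are both nonzero, since $C_1$ and $C_2$ are not contractible.

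The heart of the argument is to show $g_P\circ f_P=G^n\cdot\id_{P_1}$. To see this I would invert $G$, i.e.\ tensor with the fraction field $\F(G)$ of $\F[G]$. Each knight piece becomes acyclic, as its differential $G^k$ becomes an isomorphism, so $H_*(C_i\otimes_{\F[G]}\F(G))\cong P_i\otimes_{\F[G]}\F(G)$, concentrated in homological degree $0$, with the summand inclusion inducing the isomorphism onto $H_0$. Chasing a cycle through the matrix of $f\otimes\id$ shows that, under this identification, the map $f$ induces on $H_0$ is exactly $f_P\otimes\id_{\F(G)}$ — the knight part of $f(x)$ is a cycle in an acyclic complex, hence a boundary — and likewise for $g$. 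Functoriality then gives that $g\circ f$ induces $(g_P\circ f_P)\otimes\id_{\F(G)}$ on $H_0$, while $g\circ f\simeq G^n\cdot\id$ induces multiplication by $G^n$. Since $P_1$ is free of rank $1$ over the domain $\F[G]$, the map $g_P\circ f_P$ is multiplication by a unique polynomial, which must therefore equal $G^n$.

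It remains to read off the gradings. As $\F[G]$ is a unique factorisation domain, $g_P\circ f_P=G^n\cdot\id_{P_1}$ forces $f_P$ to be multiplication by $c_1G^{a}$ and $g_P$ to be multiplication by $c_2G^{b}$ for some units $c_1,c_2\in\F^\times$ and integers $a,b\geq 0$. Since $P_i=t^0\QGrad{q^{s_i}}\F[G]$, the generator of $P_1$ lives in quantum degree $s_1$ and is sent by $f_P$ into quantum degree $s_2-2a$, so $\QGrad{q}(f)=\QGrad{q}(f_P)=s_2-s_1-2a\leq s_2-s_1=s_\F(K_2,K_1)$; symmetrically $\QGrad{q}(g)=\QGrad{q}(g_P)=s_1-s_2-2b\leq s_\F(K_1,K_2)$.

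The step I expect to need the most care is the middle one: making sure the splittings $C_i\simeq P_i\oplus N_i$ can be taken grading-preserving (so that $\QGrad{q}(f_P)=\QGrad{q}(f)$), and checking rigorously that the pawn-to-pawn component of the composite computes the composite of the maps induced on $H_0$ after inverting $G$ — i.e.\ that the knight summands genuinely cannot contribute to homology over $\F(G)$. Everything else is routine.
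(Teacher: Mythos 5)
Your proof is correct and follows essentially the same route as the paper's: decompose $\BNr(K_i;\F[G])$ into a pawn piece and knight pieces, show the pawn-to-pawn components of $f$ and $g$ are nontrivial (hence given by multiplication by $cG^a$, $c'G^b$ with $a,b\geq 0$), and read off the quantum degrees from the pawn gradings. The only cosmetic difference is that you detect nontriviality by inverting $G$ (tensoring with $\F(G)$, where the knight pieces become acyclic, which even gives $g_P\circ f_P=G^n$ exactly), whereas the paper sets $G=1$; both base changes kill the knights and the concluding degree computation is identical.
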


\begin{proof}
    Let $f$ and $g$ be as in the statement. By \cref{rem:pawn_knight_pieces}, the complexes~$\BNr(K_i;\F[G])$, for~$i=1,2$, decompose as the direct sum of a pawn piece $t^0\QGrad{q^{s_{\F}(K_i)}}\F[G]$ and some $G^k$-knight pieces, with~$k \geq 1$. 
    The pawn piece of each complex clearly has no non-trivial homotopies, and so $G^n \cdot \id \not\simeq 0$. In particular, since when setting $G=1$ the chain complexes $\BNr(K_i;\F[G]/(G-1))$ for $i=1,2$ reduce to just the pawn pieces, this implies that the restrictions of $f$ and $g$ to the pawn pieces are non-trivial.
    Up to multiplication by units, they are given by
    \begin{align*}
    	f_{\sympawn}\co& 
    	t^0\QGrad{q^{s_{\F}(K_1)}}\F[G]
    	\overset{G^{a}}{\longrightarrow} 
    	t^0\QGrad{q^{s_{\F}(K_2)}}\F[G]
    	\\
    	g_{\sympawn}\co& 
    	t^0\QGrad{q^{s_{\F}(K_2)}}\F[G]
    	\overset{G^{b}}{\longrightarrow} 
    	t^0\QGrad{q^{s_{\F}(K_1)}}\F[G]
    \end{align*}
    for some $a,\, b \geq 0$. One finds
    \begin{equation*}
    \begin{split}
    \QGrad{q}(f) &= \QGrad{q}(f_{\sympawn})=s_{\F}(K_2)-s_{\F}(K_1)-2a \leq s_{\F}(K_2)-s_{\F}(K_1), \\
    \QGrad{q}(g) &= \QGrad{q}(g_{\sympawn})=s_{\F}(K_1)-s_{\F}(K_2)-2b \leq s_{\F}(K_1)-s_{\F}(K_2).
    \myqed
    \end{split}
    \end{equation*}
\end{proof}

\begin{proof}[Proof of \cref{thm:s_invariant_and_graded_lambda}]
We prove the first inequality. 
By \cref{prop:properties_lambda}\cref{item:properties_lambda:1}, we may assume without loss of generality that~$s_{\F}(K_1) \geq s_{\F}(K_2)$. 
Let $n=\lambdau(K_1,K_2;\F)$, and consider a pair of homogeneous chain maps $f$ and $g$ witnessing this. 
Then, in particular, $\QGrad{q}(g) \leq 0$ and
\(
\QGrad{q}(f) \leq s_{\F}(K_2)-s_{\F}(K_1)
\)
by \cref{lem:qdeg_f_g_and_s_invariant}.
Thus 
\[
2n 
= 
-\QGrad{q}(f)-\QGrad{q}(g)
\geq 
s_{\F}(K_1)-s_{\F}(K_2),
\]
proving the first inequality.
For the second inequality, let $n=\lambdau(K_1,K_2)$ and consider maps $f$ and $g$ realising it. For any field~$\F$, $f$ and $g$ induce homogeneous maps 
\[
\begin{tikzcd}
    \BNr(K_1;\F[G])
    \arrow[bend left=5]{r}{f_{\F}}
    \arrow[bend right=5,leftarrow,swap]{r}{g_{\F}}
    &
    \BNr(K_2;\F[G])
\end{tikzcd}
\]
with $\QGrad{q}(f_{\F}),\, \QGrad{q}(g_{\F}) \leq 0$ and $f_{\F} \circ g_{\F} \simeq G^{n} \cdot \id,\, g_{\F} \circ f_{\F} \simeq G^{n} \cdot \id$. Then $\QGrad{q}(f_{\F})$ and $\QGrad{q}(g_{\F})$ satisfy the inequalities of \cref{eq:qdeg_f_and_g_pawn}. Since $\QGrad{q}(f)=\QGrad{q}(f_{\F})$ and $\QGrad{q}(g)=\QGrad{q}(g_{\F})$ for all fields~$\F$, one gets
\begin{equation*}
\begin{split}
\QGrad{q}(f) &\leq \min_{\F} (s_{\F}(K_2)-s_{\F}(K_1))= - \max_{\F} (s_{\F}(K_1)-s_{\F}(K_2)), \\
\QGrad{q}(g) &\leq \min_{\F} (s_{\F}(K_1)-s_{\F}(K_2))= - \max_{\F} (s_{\F}(K_2)-s_{\F}(K_1)).
\end{split}
\end{equation*}
Combining this with the fact that $\QGrad{q}(f),\, \QGrad{q}(g) \leq 0$ yields
\begin{equation*}
\begin{split}
\QGrad{q}(f) &\leq  - \max \{0\, ,\, \max_{\F} (s_{\F}(K_1)-s_{\F}(K_2)) \}, \\
\QGrad{q}(g) &\leq - \max \{0\, ,\, \max_{\F} (s_{\F}(K_2)-s_{\F}(K_1)) \}.
\end{split}
\end{equation*}
Then 
\[
2n = -\QGrad{q}(f)-\QGrad{q}(g)\geq \max \{0\, ,\, \max_{\F} s_{\F}(K_1,K_2) \} + \max \{0\, ,\, \max_{\F} s_{\F}(K_2,K_1) \}.
\]
The third inequality is proved analogously by removing the condition $\QGrad{q}(f),\, \QGrad{q}(g) \leq 0$. 
\end{proof}

An integral version of the Rasmussen invariant has been introduced by Schütz~\cite{schuetz},
and other versions of the Rasmussen invariant, consisting of chain complexes up to an equivalence relation, have been proposed independently by Dunfield--Lipshitz--Schütz \cite{arXiv2312.09114} and the first author~\cite{arXiv2401.08480}.
The latter comes with a lower bound $d(K_1, K_2)$ for the smooth oriented cobordism distance (and thus also for the Gordian distance) between knots $K_1$ and $K_2$, which may be defined as follows:
$d(K_1, K_2)$ is the minimal $n \geq 0$ such that there exist homogeneous chain maps
$f\colon \BNr(K_1) \to \BNr(K_2)$,
$g\colon \BNr(K_2) \to \BNr(K_1)$
with $\QGrad{q}(f) = \QGrad{q}(g) = -2n$ such that $f$ and $g$ induce isomorphisms on
the (ungraded) homology of $\BNr(\ \cdot\ ;\Z[G]/(G-1))$.
\begin{proposition}\label{prop:sz}
For any two knots \(K_1\) and \(K_2\), we have \(d(K_1, K_2) \leq \lambdau(K_1, K_2)\).
\end{proposition}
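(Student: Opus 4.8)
The plan is to take maps realizing $\lambdau(K_1,K_2)$ and rescale them by appropriate powers of $G$ so that they acquire the correct quantum degree, and then to observe that the rescaled maps automatically satisfy the remaining condition in the definition of $d(K_1,K_2)$.

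If $\lambdau(K_1,K_2)=\infty$ there is nothing to prove, so I would set $n\coloneqq\lambdau(K_1,K_2)$ and fix homogeneous chain maps $f\colon\BNr(K_1)\to\BNr(K_2)$ and $g\colon\BNr(K_2)\to\BNr(K_1)$ with $\QGrad{q}(f),\QGrad{q}(g)\leq 0$ and $g\circ f\simeq G^n\cdot\id$, $f\circ g\simeq G^n\cdot\id$. The first step is to pin down the quantum degrees of $f$ and $g$. By \cref{fig:chain_complexes_related} (the reduction $G=1$) one has $\BNr(K_i;\Z[G]/(G-1))\simeq t^0\Z$, which is not contractible, so $G^n\cdot\id$ is not null-homotopic on either $\BNr(K_i)$: indeed, applying $-\otimes_{\Z[G]}\Z[G]/(G-1)$ would turn it into $\id_{t^0\Z}$, which is not null-homotopic since $H_*(t^0\Z)\neq 0$. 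Hence $\QGrad{q}(f)+\QGrad{q}(g)=\QGrad{q}(f\circ g)=\QGrad{q}(G^n\cdot\id)=-2n$, and since both summands are non-positive (and even, as $\BNr$-complexes of knots are supported in even quantum gradings) I can write $\QGrad{q}(f)=-2a$ and $\QGrad{q}(g)=-2b$ with $a,b\geq 0$ and $a+b=n$.

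The second step is to set $f'\coloneqq G^b\cdot f$ and $g'\coloneqq G^a\cdot g$. Because the differential of each $\BNr(K_i)$ is $\Z[G]$-linear, multiplication by $G$ is a chain map of quantum degree $-2$ commuting with everything, so $f'$ and $g'$ are again homogeneous chain maps, now of quantum degree $-2(a+b)=-2n$. Tensoring with $\Z[G]/(G-1)$ sends $G$ to $1$, so $f'$ and $g'$ induce on $H_*(\BNr(\ \cdot\ ;\Z[G]/(G-1)))$ exactly the same maps as $f$ and $g$; and since $g\circ f\simeq G^n\cdot\id$ and $f\circ g\simeq G^n\cdot\id$ become homotopic to the identity after setting $G=1$, these induced maps are mutually inverse isomorphisms between $H_*(\BNr(K_1;\Z[G]/(G-1)))$ and $H_*(\BNr(K_2;\Z[G]/(G-1)))$. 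Therefore $f'$ and $g'$ witness $d(K_1,K_2)\leq n=\lambdau(K_1,K_2)$.

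I do not anticipate a genuine obstacle here; the only points requiring a little care are that quantum degrees of homogeneous chain maps between $\BNr$-complexes of knots are even (so that $a$ and $b$ are integers) and that $G^n\cdot\id$ is not null-homotopic (needed for the degree bookkeeping in the first step), both of which are immediate from the description of $\BNr$ recalled in \cref{sec:prelim}.
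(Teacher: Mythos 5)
Your proof is correct and follows essentially the same route as the paper's: both pin down $\QGrad{q}(f)+\QGrad{q}(g)=-2n$, rescale $f$ and $g$ by the complementary powers of $G$ to make each of quantum degree $-2n$, and observe that after setting $G=1$ the compositions become homotopic to the identity, so the rescaled maps induce mutually inverse isomorphisms on $H_*(\BNr(\ \cdot\ ;\Z[G]/(G-1)))$. The only difference is that you spell out the non-null-homotopy of $G^n\cdot\id$ and the evenness of the quantum degrees, which the paper leaves implicit.
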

\begin{proof}
Let $n = \lambdau(K_1, K_2)$.
By definition of $\lambdau$,
there exist homogeneous chain maps
$f'\colon \BNr(K_1) \to \BNr(K_2)$, $g'\colon \BNr(K_2) \to \BNr(K_1)$
with $f\circ g \simeq G^n\cdot\id$, $g\circ f \simeq G^n\cdot\id$
and $\QGrad{q}(f'), \QGrad{q}(g') \leq 0$.
Since $\QGrad{q}(f') + \QGrad{q}(g') = \QGrad{q}(G^n) = -2n$, 
it follows that $\QGrad{q}(f'), \QGrad{q}(g') \geq -2n$.
So we may set $f = G^{n + \QGrad{q}(f')/2} \cdot f'$ and 
$g = G^{n + \QGrad{q}(g')/2} \cdot g'$.
We claim that $f$ and $g$ are chain maps witnessing $d(K_1, K_2) \leq n$.
Indeed, $\QGrad{q}(f) = \QGrad{q}(g) = -2n$.
Moreover, it follows from $f\circ g \simeq G^{2n}\cdot\id$ and $g\circ f \simeq G^{2n}\cdot\id$
that $f$ and $g$ induce mutually inverse isomorphisms on $\BNr(\ \cdot\ ;\Z[G]/(G-1))$.
\end{proof}
\cref{prop:sz} combined with the inequality
$\tfrac{1}{2}|s_{\mathbb{F}}(K_1, K_2)| \leq d(K_1, K_2)$ shown in~\cite[Proposition~8.5]{arXiv2401.08480}
provides an alternative proof of \eqref{eq:rasmussen_u_bound1}.

\subsection{\texorpdfstring{Relationship with $G$-torsion}{Relationship with G-torsion}}

Let $K$ be a knot and $R$ a commutative ring.
Consider the chain complex $\BNr(K;R[G])$ in the category $\mathcal{M}_{R[G]}$ of graded, finitely generated, free $R[G]$-modules. Since $\mathcal{M}_{R[G]} \subset R[G]\textrm{-Mod}$, the category of modules over~$R[G]$, we can take the homology $H(K;R[G]) \coloneqq H_*(\BNr(K;R[G]))$ in the latter category. 

\begin{definition}\label{def:G-torsion}
	The \emph{\(G\)-torsion order} of an element $a$ of an $R[G]$-module is defined by
	\[
	\text{ord}_G(a)
	\coloneqq
	\min\{n\in\Z^{\geq0}\mid G^n \cdot a = 0\}.
	\]
	If $\text{ord}_G(a)<\infty$, we say that $a$ is $G$\emph{-torsion}.
	For~$i\in\Z$, we denote the $R[G]$-module of $G$-torsion elements of $H_i(K;R[G])$ by $T_i(K;R[G])$ and define
	\[
	\mathfrak{u}_i(K;R)\coloneqq \max_{a \in T_i(K;R[G])}\text{ord}_G(a).
	\]
\end{definition}
The following was stated as \cref{prop:localtorsion} in the introduction for $R = \mathbb{Z}$.

\begin{proposition}\label{prop:G_torsion_and_lambda}
	For all knots \(K_1\) and \(K_2\) and integers~\(i\),
	\[
	|\mathfrak{u}_i(K_1;R)-\mathfrak{u}_i(K_2;R)| \leq \lambda(K_1,K_2;R).
	\]
\end{proposition}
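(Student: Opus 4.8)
The plan is to unwind \cref{def:lambda} and to exploit the feature that distinguishes the present invariant $\lambda$ from the ungraded variant of \cite{lambda1}: the chain maps occurring in \cref{def:lambda} preserve the homological grading, so we may argue one homological degree at a time. First I would dispose of the trivial cases: if $\lambda(K_1,K_2;R)=\infty$ there is nothing to prove, and if $\mathfrak{u}_i(K_1;R)=\infty$ or $\mathfrak{u}_i(K_2;R)=\infty$ the inequality also holds vacuously. So assume $n\coloneqq\lambda(K_1,K_2;R)<\infty$ and both torsion quantities are finite. By definition of $\lambda$ there are homogeneous chain maps $f\colon\BNr(K_1;R[G])\to\BNr(K_2;R[G])$ and $g\colon\BNr(K_2;R[G])\to\BNr(K_1;R[G])$ with $g\circ f\simeq G^n\cdot\id$ and $f\circ g\simeq G^n\cdot\id$.

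Next I would pass to homology in the category of $R[G]$-modules. Since chain maps preserve the homological grading, for every $i$ the maps $f$ and $g$ induce $R[G]$-linear maps $f_*\colon H_i(K_1;R[G])\to H_i(K_2;R[G])$ and $g_*\colon H_i(K_2;R[G])\to H_i(K_1;R[G])$; and as homotopic chain maps induce the same map on homology, $g_*\circ f_*=G^n\cdot\id$ and $f_*\circ g_*=G^n\cdot\id$ on each $H_i$. Being $R[G]$-linear, $f_*$ and $g_*$ send $G$-torsion elements to $G$-torsion elements (if $G^k a=0$ then $G^k f_*(a)=f_*(G^k a)=0$), hence restrict to maps between the torsion submodules $T_i(K_1;R[G])$ and $T_i(K_2;R[G])$ in either direction.

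Now fix $a\in T_i(K_1;R[G])$. Then $f_*(a)\in T_i(K_2;R[G])$, so by \cref{def:G-torsion} we have $G^{\mathfrak{u}_i(K_2;R)}\cdot f_*(a)=0$; applying $g_*$ and using $g_*\circ f_*=G^n\cdot\id$ gives $G^{\mathfrak{u}_i(K_2;R)+n}\cdot a = g_*\bigl(G^{\mathfrak{u}_i(K_2;R)}\cdot f_*(a)\bigr)=0$, whence $\text{ord}_G(a)\leq\mathfrak{u}_i(K_2;R)+n$. Taking the supremum over all $a\in T_i(K_1;R[G])$ yields $\mathfrak{u}_i(K_1;R)\leq\mathfrak{u}_i(K_2;R)+\lambda(K_1,K_2;R)$. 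Since $\lambda$ is symmetric in its knot arguments by \cref{prop:properties_lambda}\cref{item:properties_lambda:1}, the same argument with $K_1$ and $K_2$ interchanged gives the reverse inequality, and the two together give the claim.

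The argument has no genuine obstacle: the real leverage is simply that the maps in \cref{def:lambda} are chain maps in the strict sense and so respect the homological splitting of $\BNr$, which is built into the definition; everything else is elementary bookkeeping with torsion orders. The only point worth a careful word is the reduction to finite $\mathfrak{u}_i$ and the fact that \cref{def:G-torsion} is phrased as a supremum rather than a maximum — but since the estimate $\text{ord}_G(a)\leq\mathfrak{u}_i(K_2;R)+n$ is established for each individual torsion element $a$, passing to the supremum is harmless whether or not it is attained.
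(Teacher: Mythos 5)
Your argument is correct and is essentially the paper's own proof: both pass to the induced maps on $H_i$ (using that the chain maps in \cref{def:lambda} preserve the homological grading), exploit $g_*\circ f_*=G^n\cdot\id$, and compare torsion orders, the only cosmetic difference being that you phrase the key step as annihilation ($G^{\mathfrak{u}_i(K_2;R)+n}\cdot a=0$) while the paper writes the equivalent chain of inequalities $\operatorname{ord}_G(f_{i,*}(a))\geq\operatorname{ord}_G(G^n\cdot a)\geq\operatorname{ord}_G(a)-n$. The preliminary finiteness casework is unnecessary (and the inequality is not literally vacuous when exactly one $\mathfrak{u}_i$ is infinite), but since your main estimate already gives $\mathfrak{u}_i(K_1;R)\leq\mathfrak{u}_i(K_2;R)+n$ in every case, this does not affect the proof.
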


\begin{proof}
	Let $n=\lambda(K_1,K_2;R)$ and $f$, $g$ be homogeneous chain maps realising it, i.e.
	\[
	\begin{tikzcd}
		\BNr(K_1;R[G])
		\arrow[bend left=5]{r}{f}
		\arrow[bend right=5,leftarrow,swap]{r}{g}
		&
		\BNr(K_2;R[G])
	\end{tikzcd}
	\]
	such that $g \circ f \simeq G^n \cdot \id$ and $f \circ g \simeq G^n \cdot \id$.
	Given~$i\in\Z$, let $f_i$ and $g_i$ be the components of $f$ and $g$ between the $i^\text{th}$ chain modules.
	Then, for every $a \in H_i(K_1;R[G])$: 
	\[
	\text{ord}_G \left( f_{i,*}(a) \right) \geq \text{ord}_G \left( g_{i,*} \circ f_{i,*}(a) \right) =\text{ord}_G \left( G^n \cdot a \right) \geq \text{ord}_G \left( a \right)  - n.
	\]
	Taking the maximum over $T_i(K_1;R[G])$ we get:
	\[
	\begin{split}
		\mathfrak{u}_i(K_2;R) & = \max_{b \in T_i(K_2;R[G])} \text{ord}_G \left( b \right) \geq \max_{a \in T_i(K_1;R[G])} \text{ord}_G \left( f_{i,*}(a) \right)
		\\
		& \geq \max_{a \in T_i(K_1;R[G])} \text{ord}_G \left( a \right) - n = \mathfrak{u}_i (K_1;R) - n.
	\end{split}
	\]
	This shows that $\mathfrak{u}_i (K_1;R) - \mathfrak{u}_i (K_2;R) \leq n = \lambda (K_1,K_2;R)$. The proof that $\mathfrak{u}_i (K_2;R) - \mathfrak{u}_i (K_1;R) \leq n$ is analogous. 
\end{proof}

Combined with \cref{thm:lambda_proper_rational_gordian}, \cref{prop:G_torsion_and_lambda} implies that if $K_1$ and $K_2$ are related by a single proper rational tangle replacement, then, in all homological degrees~$i$, 
$|\mathfrak{u}_i(K_1;R)-\mathfrak{u}_i(K_2;R)| \leq 1.$
Moreover, if one defines $\mathfrak{u}(K_1,K_2;R)$ to be
\[
	\mathfrak{u}(K_1,K_2;R)\coloneqq\max_{i\in\Z} |\mathfrak{u}_i(K_1;R)-\mathfrak{u}_i(K_2;R)|,
\]
it is clear that $\mathfrak{u}(K_1,K_2;R) \leq \lambda(K_1,K_2;R)$ also holds.

\begin{definition}
	Given a knot~$K$, let $\mathfrak{u}(K;R)$ be the maximal $G$-torsion order in~$\BNr(K;R[G])$, i.e.
	\[
	\mathfrak{u}(K;R)
	\coloneqq 
	\max_{i\in\Z} \mathfrak{u}_i(K;R).
	\]
\end{definition}

For~$R=\Z$, this is the same invariant as $\mathfrak{u}_G(K)$ of \cite[Definition 3.21]{lambda1}.
Over fields~$\F$, the structure theorem for $\BNr(K;\F[G])$ simplifies the computation of the invariants $\lambda$ and~$\lambdau$. We note the following result:

\begin{proposition} \label{prop:upper_bds_on_lambda-_fields}
	Let \(K_1\) and \(K_2\) be knots and \(\F\) a field. Then
	\begin{enumerate}[label=(\roman*)]
		\item \label{item:upper_bds_on_lambda-_fields:1} \(\lambdau(K_1,K_2;\F) \leq \max \{\nicefrac{|s_{\F}(K_1,K_2)|}{2},\mathfrak{u}(K_1;\F),\mathfrak{u}(K_2;\F)\}\).
		\item \(\lambdau(K;\F)=\max \{\nicefrac{|s_{\F}(K)|}{2}, \mathfrak{u}(K;\F)\}\).
		\item \(\lambda(K_1,K_2;\F) \leq \max \{\mathfrak{u}(K_1;\F),\mathfrak{u}(K_2;\F)\}\).
		\item \label{enu4:prop:upper_bds_on_lambda-_fields} \(\lambda(K;\F)=\mathfrak{u}(K;\F)\).
	\end{enumerate}
\end{proposition}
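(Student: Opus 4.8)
The plan is to exploit the structure theorem for $\BNr(K;\F[G])$ from \cref{rem:pawn_knight_pieces}: up to homotopy, this complex splits as the direct sum of a pawn piece $t^0\QGrad{q^{s_{\F}(K)}}\F[G]$ and finitely many $G^k$-knight pieces $t^i\QGrad{q^\ell}\F[G]\overset{G^k}{\longrightarrow}t^{i+1}\QGrad{q^{2k+\ell}}\F[G]$ with $1\leq k\leq\mathfrak{u}(K;\F)$. Since $\lambda$ and $\lambdau$ depend only on the homotopy types of the $\BNr(K_i;\F[G])$, I may work with these decomposed complexes throughout. The elementary observation underpinning the argument is that for a $G^k$-knight piece $P$ and any $m\geq k$ one has $G^m\cdot\id_P\simeq 0$: the $t$-degree $-1$ module map sending the generator of the target of the internal differential $G^k$ to the generator of its source is a null-homotopy of $G^k\cdot\id_P$, and multiplying it by $G^{m-k}$ handles all larger powers.

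For the two upper bounds \cref{item:upper_bds_on_lambda-_fields:1} and (iii), let $N$ be the respective right-hand side. Writing $s_i=s_{\F}(K_i)$ and assuming without loss of generality $s_1\geq s_2$ (by symmetry of $\lambda,\lambdau$), I will construct homogeneous chain maps $f,g$ between the decomposed complexes that vanish on every knight piece and restrict on the pawn pieces to the multiplication maps $f_p(1)=G^a$, $g_p(1)=G^b$ with $a+b=N$ and $a,b\geq 0$. Then $g\circ f$ and $f\circ g$ equal $G^N\cdot\id$ on the pawn summands and vanish on the knight summands; since $N$ is at least $\mathfrak{u}(K_i;\F)$ and hence at least every knight order $k$ occurring, the observation above upgrades this to $g\circ f\simeq G^N\cdot\id$ and $f\circ g\simeq G^N\cdot\id$ on all of $\BNr(K_i;\F[G])$. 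For (iii) there is no grading restriction, so any splitting (e.g.\ $a=N$, $b=0$) works and gives $\lambda(K_1,K_2;\F)\leq N$. For \cref{item:upper_bds_on_lambda-_fields:1} one computes $\QGrad{q}(f_p)=(s_2-s_1)-2a\leq0$ automatically, while $\QGrad{q}(g_p)=(s_1-s_2)-2b$; choosing $b=(s_1-s_2)/2=\tfrac12|s_{\F}(K_1,K_2)|$ and $a=N-b$ makes $\QGrad{q}(g_p)=0$, and $a\geq0$ holds precisely because $N\geq\tfrac12|s_{\F}(K_1,K_2)|$. This produces homogeneous $f,g$ with $\QGrad{q}(f),\QGrad{q}(g)\leq0$, so $\lambdau(K_1,K_2;\F)\leq N$.

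The equalities (ii) and \cref{enu4:prop:upper_bds_on_lambda-_fields} follow by combining these bounds with earlier results. Specializing \cref{item:upper_bds_on_lambda-_fields:1} and (iii) to $K_2=U$, where $s_{\F}(U)=0$ and $\mathfrak{u}(U;\F)=0$, gives $\lambdau(K;\F)\leq\max\{\tfrac12|s_{\F}(K)|,\mathfrak{u}(K;\F)\}$ and $\lambda(K;\F)\leq\mathfrak{u}(K;\F)$. For the reverse directions, \cref{prop:G_torsion_and_lambda} with $K_2=U$ yields $\mathfrak{u}_i(K;\F)=|\mathfrak{u}_i(K;\F)-\mathfrak{u}_i(U;\F)|\leq\lambda(K;\F)\leq\lambdau(K;\F)$ for every $i$, hence $\mathfrak{u}(K;\F)\leq\lambda(K;\F)\leq\lambdau(K;\F)$; and \eqref{eq:rasmussen_u_bound1} of \cref{thm:s_invariant_and_graded_lambda} gives $\tfrac12|s_{\F}(K)|\leq\lambdau(K;\F)$. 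Together these pin down the two equalities.

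The one place requiring care is the grading bookkeeping in \cref{item:upper_bds_on_lambda-_fields:1}: one must check that the constraints $a+b=N$, $a\geq0$, $\QGrad{q}(f_p)\leq0$, $\QGrad{q}(g_p)\leq0$ can be met simultaneously, and this is exactly the role of the summand $\tfrac12|s_{\F}(K_1,K_2)|$ in the maximum. Everything else is routine: the blockwise maps are manifestly chain maps, and ``vanishing on knights'' costs nothing precisely because $N$ dominates every knight order.
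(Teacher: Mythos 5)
Your proof is correct and follows essentially the same route as the paper: the paper packages your pawn/knight construction as \cref{lem:f_g_torsion} (maps vanishing on knight pieces, powers of $G$ on the pawns, with the null-homotopy of $G^m\cdot\id$ on a $G^k$-knight for $m\geq k$) and then chooses the exponents exactly as you do, the grading bookkeeping for \cref{item:upper_bds_on_lambda-_fields:1} included. The only cosmetic difference is the lower bound $\mathfrak{u}(K;\F)\leq\lambdau(K;\F)$ in (ii), where the paper argues directly that the restriction of $g\circ f$ to each knight piece must be null-homotopic because $\BNr(U;\F[G])$ contains no knights, whereas you invoke \cref{prop:G_torsion_and_lambda}; both are valid and the paper itself notes this alternative elsewhere.
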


\begin{remark}\label{rem:torsion_integers}
	There is no analogue to \cref{prop:upper_bds_on_lambda-_fields} over the integers, in general. However, the lemma holds over $\Z$ if the complexes $\BNr(K_1)$ and $\BNr(K_2)$ split as a sum of a pawn and some $G^k$-knights, with~$k\geq 1$. 
\end{remark}

\begin{lemma} \label{lem:f_g_torsion}
	Given knots \(K_1\) and~\(K_2\), consider two integers \(a\) and \(b\) such that\break \(2a\geq s_{\F}(K_1,K_2)\), \(2b\geq s_{\F}(K_2,K_1)\) and \(a+b\geq \max \{\mathfrak{u}(K_1;\F),\mathfrak{u}(K_2;\F)\}\).
	Then there exist homogeneous chain maps 
	\[
	\begin{tikzcd}
		\BNr(K_1;\F[G])
		\arrow[bend left=5]{r}{f}
		\arrow[bend right=5,leftarrow,swap]{r}{g}
		&
		\BNr(K_2;\F[G])
	\end{tikzcd}
	\]
	such that \(\QGrad{q}(f)=-2a\), \(\QGrad{q}(g)=-2b\) and \(f\circ g \simeq G^{a+b}\cdot \id,\, g\circ f \simeq G^{a+b}\cdot \id\).
\end{lemma}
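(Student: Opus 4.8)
The plan is to build the chain maps $f$ and $g$ by hand, using the structure theorem for $\BNr(K_i;\F[G])$ over a field from \cref{rem:pawn_knight_pieces}.

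First I would invoke \cref{rem:pawn_knight_pieces} to assume, up to chain homotopy equivalence, that for $i=1,2$ the complex $\BNr(K_i;\F[G])$ is literally the direct sum of its pawn piece $t^0\QGrad{q^{s_{\F}(K_i)}}\F[G]$ and finitely many $G^{k}$-knight pieces. Since the homology of a $G^k$-knight piece $t^j\QGrad{q^\ell}\F[G]\overset{G^k}{\longrightarrow}t^{j+1}\QGrad{q^{2k+\ell}}\F[G]$ is $\F[G]/(G^k)$, of $G$-torsion order $k$, the largest exponent $k$ occurring among the knight pieces of $\BNr(K_i;\F[G])$ is exactly $\mathfrak{u}(K_i;\F)$. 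I would then record the elementary observation that for any $G^k$-knight piece $P$ with $k\leq a+b$, the $\F[G]$-module map sending the generator of the target of $P$ to $G^{a+b-k}$ times the generator of the source is a nullhomotopy of $G^{a+b}\cdot\id_P$, so $G^{a+b}\cdot\id_P\simeq 0$. Combined with the hypothesis $a+b\geq\max\{\mathfrak{u}(K_1;\F),\mathfrak{u}(K_2;\F)\}$, this shows $G^{a+b}\cdot\id$ is nullhomotopic on every knight summand of either complex.

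Next I would set $m\coloneqq a-\tfrac12\bigl(s_{\F}(K_1)-s_{\F}(K_2)\bigr)$ and $m'\coloneqq b-\tfrac12\bigl(s_{\F}(K_2)-s_{\F}(K_1)\bigr)$. Since $s_{\F}(K_i)\in2\Z$, these are integers, the hypotheses $2a\geq s_{\F}(K_1,K_2)$ and $2b\geq s_{\F}(K_2,K_1)$ give $m,m'\geq 0$, and one checks $m+m'=a+b$. I would then define $f\co\BNr(K_1;\F[G])\to\BNr(K_2;\F[G])$ to be zero on every knight summand and, on the pawn pieces, multiplication by $G^{m}$; symmetrically, $g$ is zero on every knight summand of $\BNr(K_2;\F[G])$ and multiplication by $G^{m'}$ on the pawn pieces. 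Because the differentials of both complexes respect these direct-sum decompositions and the pawn pieces carry the zero differential, $f$ and $g$ are chain maps preserving the homological grading; tracking the quantum-grading shifts of the pawn pieces gives $\QGrad{q}(f)=-2a$ and $\QGrad{q}(g)=-2b$. Finally, $g\circ f$ is multiplication by $G^{m+m'}=G^{a+b}$ on the pawn piece of $\BNr(K_1;\F[G])$, hence equals $G^{a+b}\cdot\id$ there, and is the zero map — hence homotopic to $G^{a+b}\cdot\id$, by the first step — on each knight summand; summing over summands yields $g\circ f\simeq G^{a+b}\cdot\id$, and the same argument gives $f\circ g\simeq G^{a+b}\cdot\id$.

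There is no genuine obstacle: the argument is a direct construction. The only points that need care are the grading bookkeeping (making sure the quantum degree of $f$ on the pawn piece comes out as $-2a$ rather than $-2m$, which is where the grading shifts $\QGrad{q^{s_{\F}(K_i)}}$ enter) and the nullhomotopy of $G^k\cdot\id_P$ on a $G^k$-knight piece — this last step is precisely where the torsion hypothesis $a+b\geq\max\{\mathfrak{u}(K_1;\F),\mathfrak{u}(K_2;\F)\}$ is used.
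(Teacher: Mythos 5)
Your construction is correct and is essentially identical to the paper's proof: decompose each complex into a pawn and $G^k$-knight pieces, set $f$ and $g$ to zero on the knights and to $G^{a-\frac12 s_{\F}(K_1,K_2)}$ and $G^{b+\frac12 s_{\F}(K_1,K_2)}$ on the pawns, and use that $G^{a+b}\cdot\id$ is nullhomotopic on every knight piece. Your explicit verification that the exponents are non-negative and your written-out nullhomotopy are details the paper leaves implicit, but the argument is the same.
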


\begin{proof}
	Let $a,b\in\Z$ be as in the statement. For~$i=1,2$, the complex $\BNr(K_i;\F[G])$ splits as the sum of a pawn and some $G^k$-knights, with~$k\geq 1$. 
	We define $f$ and $g$ to be the zero maps everywhere, except between the pawn pieces. There, we define
	\begin{align*}
		f_\sympawn
		=
		G^{a-\frac{1}{2}s_\F(K_1,K_2)}\cdot\id
		\co 
		t^0\QGrad{q^{s_\F(K_1)}}\F[G] 
		\to 
		t^0\QGrad{q^{s_\F(K_2)}}\F[G]
		\\
		g_\sympawn
		=
		G^{b+\frac{1}{2}s_\F(K_1,K_2)}\cdot\id
		\co 
		t^0\QGrad{q^{s_\F(K_2)}}\F[G] 
		\to
		t^0\QGrad{q^{s_\F(K_1)}}\F[G]
	\end{align*}
	Clearly $\QGrad{q}(f)=-2a$ and~$\QGrad{q}(g)=-2b$. The fact that $f\circ g \simeq G^{a+b}\cdot \id$ and $g\circ f \simeq G^{a+b}\cdot \id$ follows from the observation that, on all $G^k$-knight pieces of $\BNr(K_1;\F[G])$ and $\BNr(K_2;\F[G])$, $G^{a+b}\cdot\id$ is homotopic to 0, since $k\leq  \max \{\mathfrak{u}(K_1;\F),\mathfrak{u}(K_2;\F)\} \leq a+b$. 
\end{proof}

\begin{proof}[Proof of \cref{prop:upper_bds_on_lambda-_fields}]
	Let $n=\max \{\nicefrac{|s_{\F}(K_1,K_2)|}{2},\mathfrak{u}(K_1;\F),\mathfrak{u}(K_2;\F)\}$. The first statement follows directly from \cref{lem:f_g_torsion}, by taking $a=n, b=0$ if $s_{\F}(K_1,K_2)\geq 0$, and $a=0, b=n$ if~$s_{\F}(K_1,K_2)< 0$.
	
	For the second statement, let $n=\lambdau(K;\F)$ and consider chain maps $f$ and $g$ witnessing this. By point \cref{item:upper_bds_on_lambda-_fields:1}, with $K_1=K$ and~$K_2=U$, one has $n \leq \max \{\nicefrac{|s_{\F}(K)|}{2},\mathfrak{u}(K;\F)\}$. 
	The complex $\BNr(U;\F[G])$ consists of a pawn piece in homological degree 0, and does not contain any (generalised) knight pieces. 
	Hence the restriction of $g\circ f\simeq G^n\cdot \id$ to every $G^k$-knight piece of $\BNr(K;\F[G])$ must be null-homotopic.
	Consequently,~$n \geq \mathfrak{u}(K;\F)$. 
	\cref{thm:s_invariant_and_graded_lambda} yields $n \geq \nicefrac{|s_{\F}(K)|}{2}$.
	
	As for the last two statements, they follow from \cref{lem:f_g_torsion} by setting $a=\nicefrac{s_\F(K_1,K_2)}{2}+\max\{\mathfrak{u}(K_1;\F),\mathfrak{u}(K_2;\F)\}$ and~$b=\nicefrac{s_\F(K_2,K_1)}{2}$. 
\end{proof}

\section{Applications to torus knots}\label{sec:applications}

Having established the main results of the paper in the previous section, we now turn to examples and applications.
We focus on torus knots (in particular those on two and three strands),
which form a family with exciting and relatively well-understood Khovanov homologies.

\subsection{\texorpdfstring{Knots at $u_q$-distance 1 from 3-stranded torus knots}{Knots at u\_q-distance 1 from 3-stranded torus knots}}

This subsection contains the proof of the following theorem,
which obstructs the existence of a proper rational tangle replacement (\cref{def:proper_rational_gordian}) relating a knot to a positive torus knot on 3 strands.

\thmtorusPRTR*

The optimality of the bound on the number of positive crossings of a diagram of $K$ follows directly from the following lemma.

\begin{lemma}\label{lem:optimal_torus_PRTR}
	For any \(n\in\Z_{\geq 0}\) there exists a knot \(K_n\) that admits a diagram \(D_n\) with \(c_+(D_n)=4n+1\) and that is related to both \(T(3,3n+1)\) and \(T(3,3n+2)\) by a proper rational tangle replacement.
\end{lemma}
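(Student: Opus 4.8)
The inequality $c_+(D)\ge 4n+1$ is precisely \cref{thm:torus_PRTR}, so the content of the lemma is to exhibit, for every $n\ge 0$, one knot $K_n$ together with a diagram $D_n$ realising $c_+(D_n)=4n+1$, and to locate in $D_n$ two rational tangles whose replacement yields $T(3,3n+1)$ and $T(3,3n+2)$ respectively. The plan is to write $D_n$ down explicitly; the low cases already indicate the pattern ($K_0$ the unknot with a single positive kink; $K_1=T(2,5)$ with its standard $5$-crossing diagram).

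The structural input that makes this tractable is that the two target torus knots are themselves related by a single twist-box change. Working in $B_3$ and using that the full twist $\Delta^2=(\sigma_1\sigma_2)^3$ is central, one has $T(3,3n+1)=\widehat{\Delta^{2n}\sigma_1\sigma_2}$; and since $\sigma_1^3\sigma_2$ is conjugate in $B_3$ to $\sigma_1^2\sigma_2\sigma_1=(\sigma_1\sigma_2)^2$, also $T(3,3n+2)=\widehat{\Delta^{2n}\sigma_1^3\sigma_2}$. A twist box carrying an odd number of crossings is an integer (hence rational) tangle of connectivity $\ConnectivityX$ by \cref{lem:parity}, so passing between $\sigma_1^{\,a}$ and $\sigma_1^{\,a'}$ for odd $a,a'$ is always a proper rational tangle replacement. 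Hence it suffices to construct $D_n$ carrying a distinguished twist box such that filling it with a single positive crossing yields (a diagram isotopic to) the standard diagram $\widehat{\Delta^{2n}\sigma_1\sigma_2}$ of $T(3,3n+1)$, with the box going to the displayed crossing $\sigma_1$: filling the same box with three positive crossings then automatically gives $\widehat{\Delta^{2n}\sigma_1^3\sigma_2}=T(3,3n+2)$, and both operations are proper rational tangle replacements of $K_n$. A first candidate is $K_n=\widehat{\Delta^{2n}\sigma_1^{-1}\sigma_2}$ (fill the box with one negative crossing), which is $T(2,5)$ for $n=1$; by the above it is related to $T(3,3n+1)$ via $Q_{-1}\rightsquigarrow Q_1$ and to $T(3,3n+2)$ via $Q_{-1}\rightsquigarrow Q_3$.

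The heart of the proof is therefore the explicit construction of $D_n$: a diagram with exactly $4n+1$ positive crossings that becomes the $(6n+2)$-crossing standard diagram of $T(3,3n+1)$ after inserting one more positive crossing in a prescribed twist box. Since the obvious positive-braid diagram of $\widehat{\Delta^{2n}\sigma_1^{-1}\sigma_2}$ has $6n$ positive crossings, far more than $4n+1$ for $n\ge2$, one must either find a much more economical (necessarily non-positive) diagram of this knot or replace it by a better-chosen knot sharing the same two tangle sites; I would look among twist-region/pretzel-type diagrams, in which most of the $\Delta^{2n}$-worth of positive twisting is cancelled against negative twisting so that only $4n+1$ positive crossings survive, and then verify by a finite explicit sequence of Reidemeister moves (or braid-word manipulations) both that $D_n$ represents $K_n$ and that the distinguished twist box is carried onto the correct crossing of $T(3,3n+1)$. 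The main obstacle is exactly this simultaneous bookkeeping — pinning the positive crossing count at the value $4n+1$ forced by \cref{thm:torus_PRTR} while tracking the twist box through the isotopy — and I expect the cleanest route is to display $D_n$ and the intermediate moves directly, as in \cref{fig:LambdaExample}, and to read off the two fillings from the picture.
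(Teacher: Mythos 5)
Your proposal correctly reduces the lemma to exhibiting an explicit diagram $D_n$ with exactly $4n+1$ positive crossings, and your twist-box framework (work in $B_3$, use centrality of $\Delta^2$, replace an odd-integer tangle by another odd-integer tangle) is sound as far as it goes. But the proposal stops exactly at the point where the actual work lies: you candidly note that the obvious diagram of your candidate $\widehat{\Delta^{2n}\sigma_1^{-1}\sigma_2}$ has on the order of $6n$ positive crossings, and you defer the construction of an economical diagram to an unspecified search among ``twist-region/pretzel-type diagrams'' followed by unspecified Reidemeister moves. That construction is the content of the lemma, so as written there is a genuine gap. Moreover, the gap is not merely one of bookkeeping: as long as the replacement site is the single crossing $Q_{-1}$, the full twists $\Delta^{2n}$ must survive in the diagram of $K_n$ outside the tangle ball, and there is no reason a diagram of that braid closure exposing that particular tangle site should get anywhere near $4n+1$ positive crossings.

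The missing idea is to choose a \emph{large even twist region}, rather than a single crossing, as the site of the replacement. The paper sets $\beta_n=\sigma_1^{-1}(\sigma_2\sigma_1^2\sigma_2)^n\sigma_2$ and proves by induction (using $\Delta^2=\sigma_1(\sigma_2\sigma_1^2\sigma_2)\sigma_1$ and centrality of $\Delta^2$) the identity $(\sigma_1\sigma_2)^{3n+1}=\sigma_1^{2n+2}\beta_n$, together with the companion computation showing that $T(3,3n+2)$ is the closure of $\sigma_1^{2n+4}\beta_n$. The closure $D_n$ of $\beta_n$ has exactly $4n+1$ positive crossings, and the replacements are $Q_{-2n-2}\leadsto Q_0$ and $Q_{-2n-4}\leadsto Q_0$, which are proper since both tangles have connectivity $\No$ by \cref{lem:parity}. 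The point is that this rewriting trades the $6n$ crossings of $\Delta^{2n}$ for the $4n$ crossings of $(\sigma_2\sigma_1^2\sigma_2)^n$ plus a purely negative twist region $\sigma_1^{-2n-2}$, and it is precisely that negative twist region (not a single crossing) that gets deleted by the rational replacement. Your approach, fixing the site $Q_{-1}\leadsto Q_{\pm1},Q_3$, cannot absorb the full twists in this way; to complete the proof along your lines you would have to either move to the even-twist-region site as above or supply the explicit non-braid diagram you allude to, which you do not do.
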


\begin{proof}
    Let $B_3=\langle \sigma_1,\sigma_2 \;|\; \sigma_1\sigma_2\sigma_1 = \sigma_2\sigma_1\sigma_2 \rangle$ be the braid group on three strands.
    Let $D_n$ be the knot diagram given by the closure of the braid $\beta_n=\sigma_1^{-1}(\sigma_2\sigma_1^2\sigma_2)^n \sigma_2$. The diagram $D_n$ has $4n+1$ positive crossings.
    
    We now construct a proper rational tangle replacement to $T(3,3n+1)$. For this we need the following identity: 
    \begin{equation}\label{eq:2n+1_twists}
        (\sigma_1 \sigma_2)^{3n+1}=\sigma_1^{2n+2}\beta_n.
    \end{equation}
    We prove this identity by induction on $n\in\Z_{\geq 0}$. 
    For $n=0$ the claim is obvious.
    Now suppose \cref{eq:2n+1_twists} holds for $n$; we prove it for $n+1$. 
    Let $\Delta^2$ denote the full twist $\Delta^2=(\sigma_1\sigma_2)^3$. 
    Then 
    \[
    (\sigma_1 \sigma_2)^{3n+4}
    =
    \Delta^2(\sigma_1 \sigma_2)^{3n+1}
    =
    \Delta^2\sigma_1^{2n+2}\beta_n
    =
    \sigma_1^{2n+2}\Delta^2\beta_n.
    \]
    The last equality comes from the well-known fact that the full twist $\Delta^2$ is in the centre of the braid group.
    Moreover, observe that 
    \[
    \Delta^2
    =
    \sigma_1\sigma_2\sigma_1(\sigma_2\sigma_1\sigma_2
)    =
    \sigma_1\sigma_2\sigma_1(\sigma_1\sigma_2\sigma_1)    
    =
    \sigma_1(\sigma_2\sigma_1^2\sigma_2)\sigma_1.
    \]
    Hence
    \[
    (\sigma_1 \sigma_2)^{3n+4}
    =
    \sigma_1^{2n+2}\Delta^2\beta_n
    =
    \sigma_1^{2n+3}(\sigma_2\sigma_1^2\sigma_2)\sigma_1\beta_n
    =
    \sigma_1^{2n+4}\beta_{n+1}.
    \]

    The knot $T(3,3n+1)$ is given by the closure of the braid $(\sigma_1 \sigma_2)^{3n+1}\in B_3$.
    Observe that the operation $(\sigma_1 \sigma_2)^{3n+1}=\sigma_1^{2n+2}\beta_n \leadsto \beta_n$ corresponds to a proper rational tangle replacement $Q_{-2n-2} \leadsto Q_{0}$ from a diagram of $T(3,3n+1)$ to $D_n$. 

    A proper rational replacement for $T(3,3n+2)$ can be constructed similarly.
		We have
    \[
    (\sigma_1 \sigma_2)^{3n+2}
    =
    \sigma_1(\sigma_2 \sigma_1)^{3n} \sigma_2\sigma_1\sigma_2
    =
    \sigma_1(\Delta^2)^{n} \sigma_1\sigma_2\sigma_1
    =
    \sigma_1(\sigma_1 \sigma_2)^{3n+1}\sigma_1
    =
    \sigma_1^{2n+3}\beta_n\sigma_1.
    \]
    The knot $T(3,3n+2)$ is given by the closure of the braid $(\sigma_1 \sigma_2)^{3n+2}\in B_3$.
    Furthermore, the closures of $\sigma_1^{2n+3}\beta_n\sigma_1$ and $\sigma_1^{2n+4}\beta_n$ coincide.
    It follows that the operation ${\sigma_1^{2n+4}\beta_n \leadsto \beta_n}$ corresponds to a proper rational tangle replacement $Q_{-2n-4} \leadsto Q_{0}$ from a diagram of $T(3,3n+2)$ to $D_n$. 
\end{proof}

The proof of \cref{thm:torus_PRTR} relies on the $\lambda$-invariant. In particular, it requires computing the Bar-Natan chain complexes of positive torus knots on 3 strands.

\begin{proposition}\label{prop:homology_torus}
    For any \(n\in\Z_{\geq 0}\), the chain complexes \(\BNr(T(3,3n+1);\F_2[G])\) and \(\BNr(T(3,3n+2);\F_2[G])\) are given in \cref{fig:homology_torus}. Each colored box stands for a shifted copy of \(\F_2[G]\) and each
    \(\resizebox{0.379\width}{0.3755\height}{\usebox\OrangeBox}\), 
    \(\raisebox{-0.014\textwidth}{\resizebox{0.379\width}{0.3755\height}{\usebox\bluepiece}}\), and
    \(\raisebox{-0.02\textwidth}{\resizebox{0.379\width}{0.3755\height}{\usebox\greenpiece}}\)
    corresponds respectively to a pawn piece, a \(G\)\nobreakdash-knight, and a \(G^2\)-knight. More precisely, for \(i\in\{1,2\}\), \(\BNr(T(3,3n+i);\F_2[G])\) is chain homotopy equivalent to the sum of the following three subcomplexes:
    \begin{gather*}
        t^0\QGrad{q^{6n+2(i-1)}}\F_2[G],
        \\
        \bigoplus_{k=0}^{n+i-2} \left(
        t^{4k+2}\QGrad{q^{6(n+k)+2i+2}}\F_2[G]
        \overset{G}{\longrightarrow}
        t^{4k+3}\QGrad{q^{6(n+k)+2i+4}}\F_2[G]
        \right),
        \\
        \bigoplus_{h=0}^{n-1} \left(
        t^{4h+4}\QGrad{q^{6(n+h)+2i+4}}\F_2[G]
        \overset{G^2}{\longrightarrow}
        t^{4h+5}\QGrad{q^{6(n+h)+2i+8}}\F_2[G]
        \right).
    \end{gather*}   
\end{proposition}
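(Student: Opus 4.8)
The plan is to prove \cref{prop:homology_torus} by induction on $n$, working throughout over the field $\F_2$. By \cref{rem:pawn_knight_pieces}, the complex $\BNr(T(3,3n+i);\F_2[G])$ decomposes up to chain homotopy equivalence into one pawn piece $t^0\QGrad{q^s}\F_2[G]$ with $s=s_{\F_2}(T(3,3n+i))$ together with a collection of $G^k$-knight pieces; hence it suffices to determine $s$, the orders $k$, and the bigradings of the knights, and part of the assertion is that every knight has order $k\in\{1,2\}$. The pawn grading is forced immediately: positive torus knots satisfy $s_{\F_2}(T(3,q))=2g_4(T(3,q))=(3-1)(q-1)$, so $s_{\F_2}(T(3,3n+i))=6n+2(i-1)$, in agreement with \cref{fig:homology_torus}. (Alternatively, this persistence of the pawn can be read off from the maps used in the induction.)

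The base case is $n=0$: here $T(3,1)=U$ and $T(3,2)=T(2,3)$, and delooping together with Gaussian elimination applied to the standard diagrams yields $\BNr(U;\F_2[G])\simeq t^0\QGrad{q^0}\F_2[G]$ and $\BNr(T(2,3);\F_2[G])\simeq t^0\QGrad{q^2}\F_2[G]\oplus(t^2\QGrad{q^6}\F_2[G]\overset{G}{\longrightarrow}t^3\QGrad{q^8}\F_2[G])$, which match the displayed formulas for $i=1$ and $i=2$. The complex of $T(3,4)$ recorded in \cref{exa:pretzel-replacement} (the case $n=1$, $i=1$) provides a further consistency check.

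For the inductive step I would use an unoriented skein triangle at a single crossing of the positive $3$-braid diagram $(\sigma_1\sigma_2)^{3n+i}$ of $T(3,3n+i)$. Up to isotopy and Markov moves, and with explicit $t$- and $\QGrad{q}$-shifts governed by the skein triangle and the writhe normalization, the two smoothings give a $2$-stranded torus link $T(2,m)$ and a torus link of the form $T(3,3n+i-1)$; this produces a long exact sequence, equivalently a mapping cone presentation of $\BNr(T(3,3n+i);\F_2[G])$. Iterating and inserting the inductive hypothesis — along with the elementary Bar-Natan complexes of $2$-strand torus links (a pawn and a string of $G^1$-knights) and of the $3$-component torus links $T(3,3k)$, which one computes in parallel — one assembles the cone, cancels the differentials that become isomorphisms, and reads off a complex supported on a pawn and on $G^1$- and $G^2$-knights. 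The grading computations here are lengthy but mechanical, since every differential in $\BNr$ raises the homological grading by $1$ and preserves the quantum grading, which pins down all shifts.

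The real obstacle is the extension problem built into the long exact sequence: a priori a connecting homomorphism could splice the pawn of one term onto a knight of the other, or fuse two knights into a single knight of order $\geq 3$, either of which would destroy the claimed pawn-plus-knights form. To rule this out I would invoke that $\Khr(T(3,q);\F_2)$ is supported on only two adjacent $\delta$-diagonals (established by Turner and Stošić, and collected in \cite{benheddi}), which forces every $G^k$-knight to have $k\leq 2$, and couple this with a rank count: once $s_{\F_2}$ is known, the graded Euler characteristic (the Jones polynomial of $T(3,q)$) together with the total $\F_2$-dimension of $\Khr(T(3,q);\F_2)$ determines how many knights of each order occur, and the bigradings of the two ends of each knight are then forced by their homological and quantum degrees. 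A cleaner alternative that avoids the reconstruction altogether is to carry the full $\F_2[G]$-module structure of $H_*(\BNr(T(3,q);\F_2[G]))$ through the induction, or to extract the complexes directly from the module-structure computations surveyed in \cite{benheddi} after a translation of conventions; in all cases the final bigradings are then straightforward to verify.
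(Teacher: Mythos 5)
Your main inductive route has a genuine gap at the step that resolves the extension problem. The claim that $\Khr(T(3,q);\F_2)$ is supported on two adjacent $\delta$-diagonals is false for $q\geq 7$: the very answer you are trying to prove places generators on $n+1$ distinct diagonals (for $T(3,7)$, i.e.\ $n=2$, $i=1$, the bigradings listed in \cref{prop:homology_torus} have $\delta=q-2t\in\{8,10,12\}$), and indeed the homological width of the Khovanov homology of $T(3,q)$ grows with $q$. Even if the $\delta$-support were bounded by three diagonals, that would not exclude $G^3$-knights, since a $G^k$-knight spans diagonals differing by $2k-2$. Nor do the graded Euler characteristic and the total $\F_2$-dimension of $\Khr$ determine the bigraded dimensions by themselves, so the ``rank count'' is not forced either. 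The fact that actually closes the argument---and is the entire content of the paper's proof---is Benheddi's computation that $\Khr(T(3,3n+i);\F_2)=\BNr(T(3,3n+i);\F_2[G]/G)$ is at most \emph{one-dimensional in each homological degree}. Combined with the pawn-plus-knights decomposition of \cref{rem:pawn_knight_pieces}, this pins everything down: the pawn occupies degree $0$ alone, the remaining generators must pair off into knights $(j,j+1)$, the pairing is forced because each homological degree is occupied at most once, and the exponent $k$ of each knight is then read off from the quantum gradings. Your proposal never isolates this one-dimensionality, which is the key input.

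The ``cleaner alternative'' you mention in passing---extracting the complexes from Benheddi's computations after translating conventions---is in fact the paper's proof, but in your write-up it remains an unexecuted gesture rather than an argument. If you insist on the skein-triangle induction, note the further cost that one smoothing produces the $3$-component links $T(3,3k)$, whose reduced Bar-Natan complexes do not decompose into pawns and knights, so you would have to compute those in parallel and control all connecting homomorphisms explicitly; this is substantially more work than citing the known $\F_2$-homology and observing that its sparseness determines the $G$-action.
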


\begin{proof}
        The reduced $\mathbb{F}_2$-Khovanov homologies
        $\BNr(T(3,3n+i);\F_2[G]/G)$ were computed by Benheddi \cite[Theorem~4.12]{benheddi}. 
	(When comparing \cref{fig:homology_torus} with \cite[Figure~24]{benheddi}, note that his torus knots are negative, therefore the sign of all gradings is switched.
	Also note that he denotes the homological grading by $i$ and uses the $\delta$-grading, which is related to our quantum grading \(\QGrad{q}\) by~\(\delta=q-2i\).) 
	Since the vector space $\BNr(T(3,3n+i);\F_2[G]/G)$ is at most one-dimensional in each homological grading, this completely determines the action of~\(G\) and thus $\BNr(T(3,3n+i);\F_2[G])$.
\end{proof}

\begin{remark}\label{rem:homology_torus:coeffs}
	\cref{prop:homology_torus} remains true if we replace \(\F_2\) by~\(\Q\). 
	This follows from the universal coefficient theorem applied to reduced integral Khovanov homology, and the computation of the unreduced integral Khovanov homology of these knots
	\cite{homology_torus_knots_integers}.
	Computer calculations and computations by Kelomäki for the setting $G=0$ \cite{arXiv2306.11186} suggest that \cref{prop:homology_torus} also holds if we replace \(\F_2\) by~\(\Z\) 
	(and thus for any coefficients). 
        This is confirmed by computer calculations~\cite{khoca,khtpp} for all~$n \leq 50$.
\end{remark}

\begin{figure}[t]
    \centering
    \hspace{-0.7cm}
    \begin{tikzpicture}
        \node at (-0.2,0){$\BNr(T(3,3n+1);\F_2[G])$};
        \node[scale=.61*.93] at (-0.5,3.3)
         {\usebox\HomologyTorusThreeNPlusOne};
        \node at (7.3,0){$\BNr(T(3,3n+2);\F_2[G])$};
        \node[scale=.60*.93] at (7,3.3)
         {\usebox\HomologyTorusThreeNPlusTwo};
    \end{tikzpicture}
    \caption{The chain complexes of the positive torus knots $T(3,3n+1)$ and $T(3,3n+2)$. Each colored box represents a copy of $\F_2[G]$. The differentials of the complexes are given by multiplication by $G$ on the blue pairs and multiplication by $G^2$ on the green pairs. These tables are inspired by those of \cite{homology_torus_knots_integers}.}
    \label{fig:homology_torus}
\end{figure}
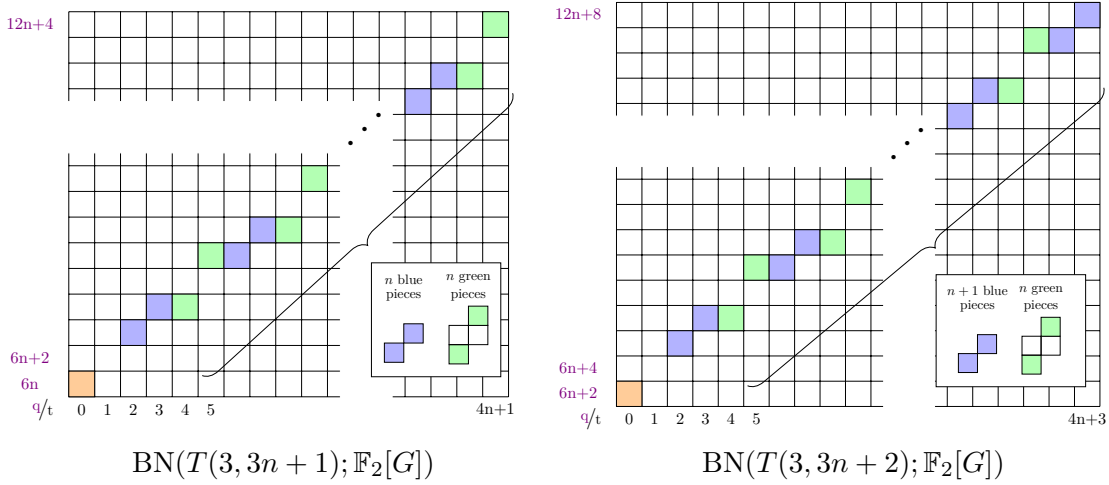

\begin{proof}[Proof of \cref{thm:torus_PRTR}]
    Let $i\in\{1,2\}$, and consider a knot $K$ at $u_q$-distance 1 from $T(3,3n+i)$.
    By \cref{thm:lambda_proper_rational_gordian}, this implies that $\lambda=\lambda(T(3,3n+i),K;\F_2)\leq 1$.
    In particular, for any diagram $D$ of $K$, there exist chain maps 
    $$
    \begin{tikzcd}
    \BNr(T(3,3n+i);\F_2[G])
    \arrow[bend left=3, start anchor={3}]{r}{f}
		\arrow[bend right=3,leftarrow,swap, start anchor={-3}]{r}{g}
		&
    \BNr(D;\F_2[G])
    \end{tikzcd}
  	$$
    such that $g\circ f \simeq G^\lambda \cdot \id$ and $f\circ g \simeq G^\lambda \cdot \id$. 
    The complex $\BNr(T(3,3n+i);\F_2[G])$ is shown in \cref{fig:homology_torus}. Observe that it contains a $G^2$-knight piece supported in homological degrees $4n$ and $4n+1$. 
    On this piece, the map $G^\lambda \cdot \id \not\simeq 0$, therefore the restrictions of $f$ and $g$ to it must be non-zero. This implies that $\BNr_{4n}(D;\F_2[G])\ne 0$ and $\BNr_{4n+1}(D;\F_2[G])\ne 0$, where $\BNr_k(D;\F_2[G])$ is the $k^\text{th}$ chain module of $\BNr(D;\F_2[G])$, for all $k\in\Z$.
    By construction, the complex $\BNr(D;\F_2[G])$ (not considered up to chain homotopy equivalence) is supported in homological degrees $k$ such that $-c_-(D)\leq k \leq c_+(D)$, where $c_-(D)$ is the number of negative crossings of $D$. 
    One concludes that $4n+1 \leq c_+(D)$, for all diagrams $D$ representing $K$.
\end{proof}

\begin{remark}
An alternative proof for \cref{thm:torus_PRTR} is provided by
using the `local torsion order' bound of \cref{prop:G_torsion_and_lambda}.
\end{remark}

\subsection{\texorpdfstring{$\lambda$- and $\lambdau$-distance between torus knots}{λ- and λ⁻-distance between torus knots}}

\begin{proposition}\label{prop:homology_torus_two}
    For any \(m\in\Z_{\geq 0}\), the chain complex \(\BNr(T(2,2m+1))\) is given in \cref{fig:homology_torus_two}.
    Each colored box stands for a shifted copy of \(\Z[G]\) and each
    \(\resizebox{0.379\width}{0.3755\height}{\usebox\OrangeBox}\) and 
    \(\raisebox{-0.014\textwidth}{\resizebox{0.379\width}{0.3755\height}{\usebox\bluepiece}}\)
    corresponds respectively to a pawn piece and a \(G\)-knight. More precisely, \(\BNr(T(2,2m+1))\) is chain homotopy equivalent to the sum of the following two subcomplexes:

    \begin{gather*}
        t^0\QGrad{q^{2m}}\Z[G],
        \\
        \bigoplus_{k=0}^{m-1} (t^{2k+2}\QGrad{q^{2m+4k+4}}\Z[G]\overset{G}{\longrightarrow}t^{2k+3}\QGrad{q^{2m+4k+6}}\Z[G]).
    \end{gather*}       
\end{proposition}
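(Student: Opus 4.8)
The plan is to deduce the structure of $\BNr(T(2,2m+1);\Z[G])$ from the well-understood reduced Khovanov homology of the $(2,2m+1)$-torus knots, in exactly the same spirit as the proof of \cref{prop:homology_torus}. The knot $T(2,2m+1)$ is alternating, so its reduced Khovanov homology over any field $\F$ is thin: it is supported on the single diagonal $\delta = \QGrad{q}-2t = -\sigma(T(2,2m+1)) = 2m$, it is at most one-dimensional in each homological grading, and over $\Z$ it is moreover torsion-free. Concretely (reading this off from the Jones polynomial of $T(2,2m+1)$ together with the thin-knot formula, or from the classical computation of $Kh(T(2,n))$), $\Khr(T(2,2m+1);\F)$ is $\F$ in the bidegrees $(t,\QGrad{q}) = (0,2m)$ and $(i,2m+2i)$ for $i = 2,3,\dots,2m+1$, and $0$ in all other bidegrees.

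Now fix a field $\F$ and invoke \cref{rem:pawn_knight_pieces}: $\BNr(T(2,2m+1);\F[G])$ is chain homotopy equivalent to a pawn piece $t^0\QGrad{q^{s_\F}}\F[G]$ together with some $G^k$-knight pieces. Setting $G = 0$ recovers $\Khr(T(2,2m+1);\F)$, where a $G^k$-knight $t^i\QGrad{q^\ell}\F[G]\overset{G^k}{\longrightarrow}t^{i+1}\QGrad{q^{2k+\ell}}\F[G]$ contributes one generator in bidegree $(i,\ell)$ and one in bidegree $(i+1,2k+\ell)$. Since the homology is at most one-dimensional per homological degree, no two of these generators share a homological degree, so every knight spans two consecutive homological degrees; requiring both of its generators to lie on the diagonal $\delta = 2m$ then forces $2k-2=0$, i.e.\ $k = 1$. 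Matching generators bidegree-by-bidegree, the pawn must be $t^0\QGrad{q^{2m}}\F[G]$ (so $s_\F(T(2,2m+1)) = 2m$, as it should be) and the knights must be exactly $t^{2j+2}\QGrad{q^{2m+4j+4}}\F[G]\overset{G}{\longrightarrow}t^{2j+3}\QGrad{q^{2m+4j+6}}\F[G]$ for $j = 0,\dots,m-1$. This establishes the proposition with $\F[G]$-coefficients for every field $\F$.

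It remains to upgrade from all fields to $\Z$, and this is the only genuine obstacle, since $\Z[G]$ is not a PID and the structure theorem of \cref{rem:pawn_knight_pieces} is not directly available; this is precisely the hypothesis flagged in \cref{rem:torsion_integers}. I would resolve it either of two ways. First route: argue as in \cref{rem:torsion_integers}. Because $\Khr(T(2,2m+1);\Z)$ is free of rank $\leq 1$ in each homological degree, a minimal (reduced) model of $\BNr(T(2,2m+1);\Z[G])$ has chain modules that are free $\Z[G]$-modules of the ranks dictated by $\Khr(T(2,2m+1);\Z)$ — one in degree $0$, none in degree $1$, one in each of degrees $2,\dots,2m+1$ — and the only bidegree-compatible differentials are powers of $G$ between the consecutive pairs $(2j+2,2j+3)$; reducing mod each prime $p$ and comparing with the $\F_p[G]$-computation above shows each such differential is $\pm G$ (in particular nonzero, so no further cancellation is possible), and reducing mod $G$ shows the degree-$0$ summand splits off as the pawn. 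Second route (the one I would actually write out in full): compute $\BNr(T(2,2m+1);\Z[G])$ directly from the standard $2$-strand braid diagram, the closure of $\sigma_1^{2m+1}$, by delooping and Gaussian elimination — this is Khovanov's original staircase computation of $Kh(T(2,n))$ carried out over $\Z[G]$, and it produces the stated complex verbatim with no structure theory involved. Equivalently, $T(2,2m+1) = Q_0\cup Q_{-(2m+1)}$, so \cref{thm:pairing_thm} gives $\QGrad{q^{-1}}\BNr(T(2,2m+1)) \simeq \Mor(-\DD(Q_0),\DD(Q_{-(2m+1)}))$ with $\DD(Q_0) = \DD(\No) = \bullet$ and $\DD(Q_{-(2m+1)})\simeq -\DD(Q_{2m+1})$ by \cref{lem:mirror_complex}; the explicit ``staircase'' shape of the zigzag complex $\DD(Q_{2m+1})$ from the recursive construction of the appendix makes the pairing an elementary computation, with all gradings pinned down by \cref{lem:grading_end_zz} and the requirement that differentials preserve $\QGrad{q}$ and raise $t$ by $1$. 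In all of these routes, once the $\Z$-coefficient splitting is in hand the remaining work — verifying that the surviving differentials are $G^1$ rather than higher powers, and tracking the $\QGrad{q}$- and $t$-shifts — is routine bookkeeping forced by $s(T(2,2m+1)) = 2m$.
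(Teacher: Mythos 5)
Your proposal is correct and takes essentially the same approach as the paper, whose proof is just a two-line citation of exactly the two routes you describe: reading $\BNr(T(2,2m+1))$ off the complex of the rational tangle $Q_{-2m-1}$ whose $0$-closure is the knot (your second route, the one you commit to writing out), and alternatively the observation that these knots are alternating, so their Khovanov homology is thin and determined by the Jones polynomial and signature (your first route). The only soft spot is the ``minimal model over $\Z[G]$'' step in your integral upgrade of the thinness argument, but since you fall back on the direct tangle/staircase computation over $\Z[G]$, this does not affect the correctness of the proof you would actually write.
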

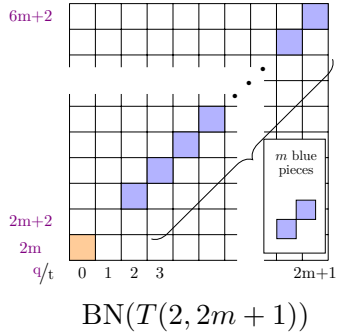
\begin{figure}[b]
    \centering
    \begin{tikzpicture}
        \node at (0.3,0){$\BNr(T(2,2m+1))$};
        \node[scale=.61*.93] at (0,2.3)
         {\usebox\HomologyTorusTwoNPlusOne};
    \end{tikzpicture}
    \caption{The chain complex of the positive torus knot $T(2,2m+1)$. Each colored box represents a copy of $\Z[G]$. The differentials are given by multiplication by $G$ on the blue pairs, that is in homological degrees $2k+2$ and $2k+3$, for $0\leq k\leq m-1$.}
    \label{fig:homology_torus_two}
\end{figure}

\begin{proof}
	This result is well-known; see for instance \cite[Section~2.1]{StableKhTorusKnots}.
	Regarding \(T(2,2m+1)\) as the 0-closure of the rational tangle \(Q_{-2m-1}\), 
	the complex $\BNr(T(2,2m+1))$ can be easily read off the complex \(\DD(Q_{-2m-1})\); see for instance \cite[Example~4.27]{KWZ}.
	Alternatively, one can argue that the knots \(T(2,2m+1)\) are alternating, so their Khovanov homology $C_{\Z}(T(2,2m+1))$ is thin and hence determined by the Jones polynomial and the signature. 
\end{proof}

\begin{proposition} \label{thm:pairs_torus_knots}
    Let \(K\) and \(J\) be two non-isotopic knots such that 
    \[
    \{ K,J \} 
    \subset 
    \{ T(2,2m+1)  \left|\, m \geq 0 \right.\} 
    \uplus 
    \{ T(3,3n+i)  \left|\, n \geq 1, i=1,2 \right.\}.
    \]
    Then
    \(\lambdau(K,J;\F_2)=1\) if and only if
    \begin{enumerate}[label=(\roman*)]
    	\item \label{item:pairs_torus_knots:1} \(\{ K,J \} = \{ T(2,2m+1), T(2,2m+3) \}\) for some \(m\geq 0\), or
    	\item \label{item:pairs_torus_knots:2} \(\{ K,J \} = \{ T(3,3n+1), T(3,3n+2) \}\) for some \(n\geq 1\), or
    	\item \label{item:pairs_torus_knots:3} \(\{ K,J \} \in \{\{ T(3,4), T(2,5) \}, \{ T(3,4), T(2,7) \},\)\\
    	\(\phantom{\{ K,J \} \in \{}	\{ T(3,5), T(2,7) \}, \{ T(3,5), T(2,9) \}\}\). 
    \end{enumerate}   
    Moreover, 
    \(\lambda(K,J;\F_2)=1\) if and only if
    \begin{enumerate}[label=(\roman*')]
    	\item \label{item:pairs_torus_knots:1'} \(\{ K,J \} = \{ T(2,2m+1), T(2,2m'+1) \}\) for some \(m'>m\geq 0\), or
    	\item \label{item:pairs_torus_knots:2'} \(\{ K,J \} = \{ T(3,3n+1), T(3,3n+2) \}\) for some \(n\geq 1\), or
    	\item \label{item:pairs_torus_knots:3'} \(\{ K,J \} \in \{\{ T(3,4), T(2,2m+1) \}, \{ T(3,5), T(2,2m+1) \}\}\)
    	for some \(m\geq 2\).
    \end{enumerate} 
\end{proposition}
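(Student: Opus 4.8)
The whole statement can be reduced to the explicit piece decompositions of $\BNr(\,\cdot\,;\F_2[G])$ supplied by \cref{prop:homology_torus_two} and \cref{prop:homology_torus}. Since $\F_2[G]$ is a PID, each complex occurring here splits, up to homotopy, as a pawn piece plus $G$\nobreakdash-knights and $G^2$-knights (cf.\ \cref{rem:pawn_knight_pieces}), and from the stated gradings one reads off, for every knot in the family: its Rasmussen invariant ($2m$ for $T(2,2m+1)$, $6n$ for $T(3,3n+1)$, $6n+2$ for $T(3,3n+2)$), the number of $G^k$-knights, and the two homological degrees each knight occupies---equivalently, the local torsion profile $\mathfrak{u}_i(\,\cdot\,;\F_2)$. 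In particular $\mathfrak{u}(T(2,2m+1);\F_2)\le1$, while $\mathfrak{u}(T(3,3n+i);\F_2)=2$ for $n\ge1$, and the $G^2$-knights of a $3$-stranded knot sit in homological degrees $\{4h+4,4h+5\}$ for $h=0,\dots,n-1$. I would record this bookkeeping once at the start.

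\textbf{The ``only if'' directions.} Two non-isotopic knots in the family already have non-homotopy-equivalent complexes over $\F_2[G]$, even up to a quantum shift (their Rasmussen invariants or their knight counts differ), so $\lambda(K,J;\F_2),\lambdau(K,J;\F_2)\ge1$ always, by \cref{prop:properties_lambda}\cref{item:properties_lambda:3}--\cref{item:properties_lambda:4}. For $\lambdau$, \cref{thm:s_invariant_and_graded_lambda} gives $|s_{\F_2}(K)-s_{\F_2}(J)|\le 2\lambdau(K,J;\F_2)$, so $\lambdau(K,J;\F_2)=1$ forces $|s_{\F_2}(K)-s_{\F_2}(J)|\le2$; this at once eliminates all $3$-stranded pairs except the consecutive ones $\{T(3,3n+1),T(3,3n+2)\}$ and confines mixed pairs to a short window of $m$ around $3n$. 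The remaining exclusions---and the entire ``only if'' direction for $\lambda$, where no $s$-bound is available---come from a rigidity observation: on a $G^2$-knight piece $E$ of $K$ one needs $g\circ f|_E\simeq G\cdot\id_E$, which is \emph{not} null-homotopic; since $E$ sits in positive homological degrees it has no component to the pawn, so $f|_E$ equals, up to homotopy and $\F_2$-scalar, the unique nonzero chain map from $E$ into the unique piece of $\BNr(J)$ spanning the same two homological degrees---a $G^2$-knight when $J$ is $3$-stranded, a knight $[\,\cdot\xrightarrow{G}\cdot\,]$ when $J$ is $2$-stranded. Homogeneity of $f$ then demands all these restrictions share one quantum degree; computing $\QGrad{q}(f|_E)$ from the gradings of \cref{prop:homology_torus,prop:homology_torus_two} shows it depends linearly, with nonzero slope, on the index $h$ of $E$. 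Hence $K$ can carry at most one $G^2$-knight once a compatible pair $(f,g)$ exists: in a mixed pair this forces $n=1$, and in a $3$-stranded pair it forces consecutiveness; in the remaining cases (non-consecutive, or mixed with $n\ge2$) either these quantum degrees disagree or some $G^2$-knight of $K$ has \emph{no} piece of $\BNr(J)$ in its homological degrees at all (e.g.\ the top $G^2$-knight of $T(3,7)$ versus $T(3,4)$), so no such map exists and $\lambda(K,J;\F_2)\ge2$; \cref{prop:G_torsion_and_lambda} gives an alternative argument when $\mathfrak{u}_i(J;\F_2)=0$ in the relevant degree $i$. Finally, for $\lambdau$ the extra condition $\QGrad{q}(f),\QGrad{q}(g)\le0$ with $\QGrad{q}(f)+\QGrad{q}(g)=-2$ pins $\QGrad{q}(f)$ to $\{0,-2\}$; matching this against the value of $\QGrad{q}(f|_E)$ just computed singles out exactly the four mixed pairs of item \cref{item:pairs_torus_knots:3}, and leaves only the consecutive pairs among $2$-stranded knots in item \cref{item:pairs_torus_knots:1}.

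\textbf{The ``if'' directions.} In each listed case I would write $f$ and $g$ down explicitly by matching pieces by homological degree: pawn to pawn, and each knight of $K$ to the knight of $J$ spanning the same two homological degrees (in item \cref{item:pairs_torus_knots:2} one $G$-knight of $J$ is left unmatched, which is harmless since $G\cdot\id$ is already null-homotopic on a $G$-knight, and similarly in the mixed cases any surplus $G$-knights of the $2$-stranded knot are sent to $0$). On a matched pawn--pawn or knight--knight pair the powers of $G$ defining $f$ and $g$ are forced by homogeneity; on a $G^2$-knight-to-$G^2$-knight or $G^2$-knight-to-$G$-knight pair one uses $(f_X,f_Y)=(G,1)$ against $(g_X,g_Y)=(1,G)$, so that both composites equal $G\cdot\id$ on the nose there. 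A short verification then gives $g\circ f\simeq G\cdot\id$ and $f\circ g\simeq G\cdot\id$, and in the $\lambdau$-cases the constructed $f,g$ have $\QGrad{q}(f),\QGrad{q}(g)\in\{0,-2\}$. For the two $2$-stranded families one can also shortcut this with \cref{prop:upper_bds_on_lambda-_fields}: part (iii) yields $\lambda(T(2,2m+1),T(2,2m'+1);\F_2)\le1$, and part (i) yields $\lambdau(T(2,2m+1),T(2,2m+3);\F_2)\le1$ since there $|s_{\F_2}|=2$ and the torsion orders are $\le1$.

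\textbf{Main obstacle.} The crux is the rigidity step of the ``only if'' direction: making precise that $f|_E$ is forced up to homotopy for every $G^2$-knight $E$, tracking the quantum gradings carefully through the formulas of \cref{prop:homology_torus}, and extracting from homogeneity (and, for $\lambdau$, from the sign condition on $\QGrad{q}(f),\QGrad{q}(g)$) exactly the admissible triples $(n,i,m)$. The explicit maps in the ``if'' direction, though numerous, are routine once that bookkeeping is in place.
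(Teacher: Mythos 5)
Your outline follows essentially the same route as the paper's proof: read off the piece decompositions from \cref{prop:homology_torus,prop:homology_torus_two}, get the lower bound $1$ from non-equivalence, kill non-consecutive $3$-stranded pairs and the case $m<2n$ via the missing $G^2$-knight (equivalently \cref{prop:G_torsion_and_lambda}), force the components of $f$ and $g$ on each $G^2$-knight from $g\circ f\simeq G\cdot\id$ (using that $G\cdot\id$ is the only morphism in its homogeneous homotopy class there), observe that the resulting quantum degree $\QGrad{q}(f|_E)=-6n+2(m+k-i+1)$ depends on the knight index, so homogeneity forces $n=1$ in the mixed case, and finally pin $\QGrad{q}(f)\in\{0,-2\}$ for $\lambdau$ to isolate the four pairs of \cref{item:pairs_torus_knots:3}; the ``if'' directions are explicit maps plus \cref{prop:upper_bds_on_lambda-_fields}. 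All of this matches the paper's argument in substance.

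One concrete step fails as written, namely your explicit witnesses for the consecutive $3$-stranded pairs \cref{item:pairs_torus_knots:2}/\cref{item:pairs_torus_knots:2'}: on a $G^2$-knight-to-$G^2$-knight matching the prescription $(f_X,f_Y)=(G,1)$, $(g_X,g_Y)=(1,G)$ is not a chain map, since with both differentials equal to $G^2$ the commuting-square condition forces equal components ($G^2 f_X=f_Y G^2$, so $f_X=f_Y$ over $\F_2[G]$). The correct witnesses, as in the paper, are $f=(1,1)$ and $g=(G,G)$; their composites are $G\cdot\id$ on the nose and the quantum degrees are $-2$ and $0$, so they do realise $\lambda=\lambdau=1$ there. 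Your formula $(G,1)$ versus $(1,G)$ is correct only for the $G^2$-knight-to-$G$-knight matchings in the mixed pairs. Relatedly, the phrase ``the unique nonzero chain map up to homotopy and $\F_2$-scalar'' should be ``unique up to multiplication by a power of $G$'', with the power then pinned down by the requirement $g\circ f\simeq G\cdot\id$ (not null-homotopic on a $G^2$-knight); with these local repairs your proof goes through and coincides with the paper's.
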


\begin{proof}
    Let $K$ and $J$ be as in the statement of the proposition.
    Let us write 
    \[
    \mathcal{K}\coloneqq\BNr(K;\F_2[G])
    \quad
    \text{and}
    \quad
    \mathcal{J}\coloneqq\BNr(J;\F_2[G])
    \] 
    and likewise
    \[
    \lambda\coloneqq\lambda(K,J;\F_2)
    \quad
    \text{and}
    \quad
    \lambdau\coloneqq\lambdau(K,J;\F_2).
    \] 
    It is clear from \cref{prop:homology_torus,prop:homology_torus_two} that 
    \(\mathcal{K}\not\simeq\QGrad{q^\ell}\mathcal{J}\)
    for all $\ell\in\Z$, 
    and therefore $1\leq\lambda\leq\lambdau$. 
    Moreover, note that $\mathcal{K}$ and $\mathcal{J}$ decompose as a sum of pawns and generalised knights. 
    The former sit in homological degree 0. 
    Each of the latter is supported in homological degrees \(2j\) and \(2j+1\) for some \(j\geq1\), and in each degree, there is at most one piece. 

    Suppose first that both $K$ and $J$ are two-stranded torus knots: $K=T(2,2m+1)$ and $J=T(2,2m'+1)$, for some $m\ne m'$. The complexes $\mathcal{K}$ and $\mathcal{J}$ only contain pawns and $G$-knights.
    This implies that, $\mathfrak{u}(K;\F_2),\mathfrak{u}(J;\F_2)\leq 1$, and, by \cref{prop:upper_bds_on_lambda-_fields}, $\lambda\leq 1$.
    Furthermore, by \cref{thm:s_invariant_and_graded_lambda,prop:upper_bds_on_lambda-_fields}, since $\tfrac{1}{2}|s_{\F_2}(K,J)|=|m-m'|\geq 1$, one has
    \[
    \tfrac{1}{2}|s_{\F_2}(K,J)| 
    \leq 
    \lambdau 
    \leq 
    \max \{\tfrac{1}{2}|s_{\F_2}(K,J)|,\mathfrak{u}(K;\F_2),\mathfrak{u}(J;\F_2)\}
    =
    \tfrac{1}{2}|s_{\F_2}(K,J)|.
    \]
    Therefore, $\lambdau=\tfrac{1}{2}|s_{\F_2}(K,J)|=|m-m'|= 1$ if and only if $m'\in\{m-1,m+1\}$.

    Next, suppose that $K=T(3,3n+i)$ and $J=T(3,3n'+i')$, for some $n\geq n'\geq1$ and $i,i'\in\{1,2\}$. 
    If $n > n'$, $\mathcal{K}$ has a $G^2$-knight piece in homological degrees $4n$ and $4n+1$, where $\mathcal{J}$ is trivial. It follows from \cref{prop:G_torsion_and_lambda} that 
    \begin{equation}\label{eq:torsion_difference_G^2_knight}
    2-0=|\mathfrak{u}_{4n+1}(K;\F_2)-\mathfrak{u}_{4n+1}(J;\F_2)|\leq \lambda \leq \lambdau.
    \end{equation}
    Now let $n=n'$. In that case $i\ne i'$, since $K$ and $J$ are not isotopic. 
    Without loss of generality, we set $i=2$ and $i'=1$. 
    Let us find maps $f\colon \mathcal{K}\to \mathcal{J}$ and $g\colon \mathcal{J}\to \mathcal{K}$ realising $\lambda\leq 1$ and $\lambdau\leq 1$. 
    The complexes $\mathcal{K}$ and $\mathcal{J}$ decompose as a sum of pawns, $G$-knights and $G^2$-knights, as shown in \cref{fig:homology_torus}. 
    On the pawn pieces, we set $f=1$ and $g=G$.    
    The $G$-knight pieces of $\mathcal{K}$ and $\mathcal{J}$ are supported in homological degrees $4k+2$ and $4k+3$, for $0\leq k\leq n$ and $0\leq k\leq n-1$ respectively. We set $f$ and $g$ to be the zero maps on these pieces.
    Lastly, the $G^2$-knights of both complexes are supported in homological degrees $4k+4$ and $4k+5$, with $0\leq k\leq n-1$. On these pieces, we define $f$ and $g$ as follows:
    \[
    \begin{tikzcd}
        t^{4k+4}\QGrad{q^{6(n+k)+8}}\F_2[G]
        \arrow{r}{G^2}
        \arrow[bend left=10]{d}{f=1}
    	\arrow[bend right=10,leftarrow,swap]{d}{g=G}
        &
        t^{4k+5}\QGrad{q^{6(n+k)+12}}\F_2[G]
         \arrow[bend left=10]{d}{f=1}
    	\arrow[bend right=10,leftarrow,swap]{d}{g=G}
        \\
        t^{4k+4}\QGrad{q^{6(n+k)+6}}\F_2[G]
        \arrow{r}{G^2}
        &
        t^{4k+5}\QGrad{q^{6(n+k)+10}}\F_2[G]
    \end{tikzcd}
    \]    
    The maps $f$ and $g$ are homogeneous chain maps whose compositions are homotopic to multiplication by $G$. Moreover, $\QGrad{q}(f)=-2<0$ and $\QGrad{q}(g)=0$, therefore $\lambda\leq \lambdau\leq 1$.

    The last case we need to consider is $K=T(3,3n+i)$ and $J=(2,2m+1)$, for some $n\geq 1$, $m\geq 0$ and $i\in\{1,2\}$.
    Suppose that $\lambda=1$ and consider maps $f$ and $g$ realising it. 
    Note that $m\geq 2n$; 
    otherwise $\lambda\geq 2$ 
    by \cref{eq:torsion_difference_G^2_knight}.
    The complex $\mathcal{J}$ decomposes as the sum of a pawn and some $G$-knights, 
    and the latter are supported in homological degrees $2h+2$ and $2h+3$, 
    with $0\leq h \leq m-1$. 
    On the other hand, as mentioned above, 
    the $G^2$-knights of $\mathcal{K}$ are in homological degrees $4k+4$ and $4k+5$, 
    with $0\leq k\leq n-1$. 
    These degrees coincide for \(h=2k+1\).
    Let $f_j$ and $g_j$ denote the components of $f$ and $g$ between the $j^\text{th}$ chain modules. 
    For \(0\leq k\leq n-1\), we have:
    \[
    \begin{tikzcd}
        t^{4k+4}\QGrad{q^{6(n+k)+4+2i}}\F_2[G]
        \arrow{r}{G^2}
        \arrow[bend left=10]{d}{f_{4k+4}}
    	\arrow[bend right=10,leftarrow,swap]{d}{g_{4k+4}}
        &
        t^{4k+5}\QGrad{q^{6(n+k)+8+2i}}\F_2[G]
         \arrow[bend left=10]{d}{f_{4k+5}}
    	\arrow[bend right=10,leftarrow,swap]{d}{g_{4k+5}}
        \\
        t^{4k+4}\QGrad{q^{2m+8k+8}}\F_2[G]
        \arrow{r}{G}
        &
        t^{4k+5}\QGrad{q^{2m+8k+10}}\F_2[G]
    \end{tikzcd}
    \]
    Since \(f\) and \(g\) are homogeneous, the components must be of the form \(a\cdot G^\ell\) for some \(a\in\F_2\) and \(\ell\in\Z^{\geq0}\). 
    Moreover, \(G^2 f_{4k+5}=G f_{4k+4}\) and \(G g_{4k+5}=G^2 g_{4k+4}\), since \(f\) and \(g\) are chain maps. 
    Also, $G\cdot \id$ on the $G^2$-knight piece is the only morphism in its (\(\QGrad{q}\)-homogeneous) chain homotopy class, so the assumption \(g\circ f\simeq G\cdot \id\) in fact forces \(g\circ f= G\cdot \id\). 
    These conditions combined imply that 
    \[
    g_{4k+4}=1,\quad
    f_{4k+4}=G,\quad
    g_{4k+5}=G,\quad
    f_{4k+5}=1.
    \]
    Note that $f\circ g\simeq G\cdot \id$ is then still satisfied.
    Since $f$ is homogeneous, it holds for all $0\leq k\leq n-1$ that
    \begin{equation}\label{eq:qdeg_f_4k+4}
    \QGrad{q}(f)=\QGrad{q}(f_{4k+4})=-6n+2(m+k-i+1).
    \end{equation}
    This means that $n=1$, i.e.\ $K=T(3,4)$ or $K=T(3,5)$. 
    Moreover, one can check that, in this case, the maps $f$ and $g$ given by 
    \begin{gather} \label{eq:f_torus_2_and_3}
        f_0=1, \quad f_4=G, \quad f_5=1, \quad f_j=0 \text{ for } j \ne 0,4,5,
        \\ \label{eq:g_torus_2_and_3}
        g_0=G, \quad g_4=1, \quad g_5=G, \quad g_j=0 \text{ for } j \ne 0,4,5
    \end{gather}
    realise $\lambda=1$.

    Let us turn to $\lambdau$. By the discussion above, if $\lambdau=1$ and $K$ and $J$ are respectively a 3-stranded torus knot and a 2-stranded torus knot, then $K\in\{T(3,4),T(3,5)\}$. 
    Consider first $K=T(3,4)$. We observe that $\tfrac{1}{2}|s_{\F_2}(K,J)|=|3-m|$, where $J=T(2,2m+1)$ and $m\geq 0$. \cref{thm:s_invariant_and_graded_lambda} implies that if $\lambdau=1$, then $m\in \{2,3,4\}$. 
    For any maps $f$ and $g$ realising $\lambdau=1$, the quantum degree of $f$ is equal to $2m-6$; this is determined by setting $n=1,\, k=0$ and $i=1$ in \cref{eq:qdeg_f_4k+4}.
    If $m=4$, one finds $\QGrad{q}(f) > 0$, which means that $\lambdau(T(3,4),T(2,9))\geq 2$. One can check that the maps $f$ and $g$ of \cref{eq:f_torus_2_and_3} and \cref{eq:g_torus_2_and_3} realise $\lambdau(T(3,4),T(2,5))=1=\lambdau(T(3,4),T(2,7))$.
    The fact that $\lambdau(T(3,5),T(2,2m+1))=1$ iff $m\in\{3,4\}$ follows by a similar argument.    
\end{proof}

\begin{remark}
	In view of \cref{rem:homology_torus:coeffs}, 
	\cref{thm:pairs_torus_knots} also holds, 
	with identical proof, 
	if we replace \(\F_2\) by \(\Q\). 
	Similarly, 
	if \cref{prop:homology_torus} is true over \(\Z\), 
	then so is \cref{thm:pairs_torus_knots}. 
\end{remark}

The pairs of torus knots at Gordian distance 1 have been completely determined in \cite[Corollary 1.3]{gordian_distance_1}. These pairs correspond exactly to those given in \cref{item:pairs_torus_knots:1}, \cref{item:pairs_torus_knots:2}, and \cref{item:pairs_torus_knots:3} of \cref{thm:pairs_torus_knots}, minus the pairs $\{T(3,4),T(2,7)\}$ and $\{T(3,5),T(2,9)\}$, which are not related by a crossing change. 
This shows that $\lambdau$ detects all pairs of two-stranded torus knots at Gordian distance~1, and all pairs of three-stranded torus knots at Gordian distance~1.

On the other hand, the $u_q$-distance has not been completely determined for 2-stranded and 3-stranded torus knots.
All knots in \cref{item:pairs_torus_knots:1'} of \cref{thm:pairs_torus_knots} are related by a proper rational tangle replacement $Q_a \leadsto Q_b$, for $a,b\in\Z$. The pairs of knots in \cref{item:pairs_torus_knots:2'} are even related by a crossing change. But we do not know which pairs of knots in \cref{item:pairs_torus_knots:3'} are related by a proper rational tangle replacement. For example, 
we observe that, for $\{K,J\}=\{T(3,5),T(2,5)\}$,
\[
1=u_q(K,J)=\lambda(K,J)<\lambdau(K,J)=u(K,J)=2.
\]

\begin{example}\label{exa:lambda-coefficients-matter}
For general torus knots $T(p,q)$, with $p$ and $q$ coprime integers, the complex $\BNr(T(p,q))$ does not split as the sum of pawns and generalised knights. 
Instead, more complicated pieces can appear, as shown in \cite[Example 3.18]{lambda1}
for the complex $\BNr(T(5,6))$.
As a consequence, in general, the $\lambda$- and $\lambdau$-distances of torus knots vary depending on the base ring considered. For instance, simple calculations show that
\begin{equation} \label{eq:small_lambda-_different_rings}
\begin{split}
1&=\lambdau(T(5,6),T(3,11);\Q)\\&
<\lambdau(T(5,6),T(3,11);\F_5)=\lambdau(T(5,6),T(3,11))=2.
\end{split}
\end{equation}
Observe that $s_{\F}(T(5,6),T(3,11))=0$ for all fields $\F$. Therefore, \cref{eq:small_lambda-_different_rings} shows that, for the knots $T(5,6)$ and $T(3,11)$, the $\lambda$-distance over any $\F$ is strictly sharper than $s_\F$ as a lower bound for the Gordian distance.
In \cref{tab:lambda_and_s_torus} below, we give a (non-exhaustive) list of torus knots $K,J$ for which $\tfrac{1}{2}s_{\Q}(K,J)<\lambdau(K,J;\Q)$ and $\lambdau(K,J;\Q)>1$.
The corresponding complexes $\BNr$ were computed using the programs~\cite{khoca,homca};
given the complexes, $\lambdau$ was computed manually, similarly as 
in the proof of \cref{thm:pairs_torus_knots}.

\begin{table}[!ht]
    \centering
    \begin{tabular}{cccc}
        \toprule
        $K$ & $J$ & $\tfrac{1}{2}s_{\Q}(K,J)$ & $\lambdau(K,J;\Q)$ \\
        \hline
        $T(3,16)$ & $T(6,7)$ & 0 & 2 \\
        $T(3,17)$ & $T(5,9)$ & 0 & 2 \\
        $T(2,11)$ & $T(3,7)$ & 1 & 2 \\
        $T(2,11)$ & $T(4,5)$ & 1 & 2 \\
        $T(2,13)$ & $T(3,8)$ & 1 & 2 \\
        $T(3,14)$ & $T(5,8)$ & 1 & 2 \\
        $T(3,16)$ & $T(5,8)$ & 1 & 2 \\
        \bottomrule
    \end{tabular}
    \caption{Some pairs of torus knots $K,J$ for which $\tfrac{1}{2}|s_{\Q}(K,J)|<\lambdau(K,J;\Q)$.}
    \label{tab:lambda_and_s_torus}
\end{table}
\end{example}
\begin{remark}
This section has provided us with pairs of knots $K$, $J$ such that $\lambdau(K,J)$
is strictly greater than $\tfrac{1}{2}|s_{\mathbb{F}}(K,J)|$ and
than $\mathfrak{u}(K, J;\mathbb{F})$ for all $\mathbb{F}$:
Take for example $K = T(3,3a+1)$, $J = T(2,6a+1)$ for any $a \in \{2,\ldots, 50\}$ (conjecturally any $a\geq 2$),
then $s_{\mathbb{F}}(K,J) = 0$, $\mathfrak{u}(K, J;\mathbb{F}) = 1$ and
$\lambdau(K,J) = \lambdau(K,J;\mathbb{F}) = 2$ for all fields $\mathbb{F}$.

The question whether there exists such a pair of knots with $J$ the unknot remains open.
More succinctly,
is there a knot $K$ with $\lambdau(K) > \tfrac{1}{2}|s_{\mathbb{F}}(K)|$ and $\lambdau(K) > \mathfrak{u}(K;\mathbb{F})$ for all fields~$\mathbb{F}$?
\cref{prop:upper_bds_on_lambda-_fields}(ii) tells us that such a knot $K$ does not exist with $\lambdau(K; \mathbb{F})$ in lieu of $\lambdau(K)$.
However, working with $\lambdau(K) = \lambdau(K; \mathbb{Z})$, the authors are not aware of an algebraic reason that such a knot $K$ should not exist.
\end{remark}

\section{\texorpdfstring{A generalisation of $\lambda$ and $\lambdau$}{A generalisation of λ and λ⁻}}\label{sec:Lambda_set}

In this section, we push the techniques from \cref{sec:small_lambda} to a limit
and introduce a new invariant $\Lambda$,
which takes values in subsets of $\mathbb{Z}^2$ and subsumes $\lambda$ and $\lambdau$.
Unlike the latter, $\Lambda$ is sensitive to the signs of crossing changes,
or more generally the slopes of proper rational tangle replacements.

\subsection{Definition and basic properties}

\begin{definition}
	Given a unital commutative ring $R$ and two knots \(K_1\) and \(K_2\), define
	\[
	\Lambda(K_1,K_2;R)
	=
	\left\{%
	\begin{minipage}{2cm}\centering
		\((q_1,q_2)\in\Z^2\)
		\\
		\text{ with }
		\\
		\(q_1+q_2\geq0\)
	\end{minipage}
	\left|\,
    \begin{minipage}{6.5cm}
    \centering
	\(
	\exists
    \)
    homogeneous chain maps
    \\
    \(
		\begin{tikzcd}
			\BNr(K_1;R[G])
			\arrow[bend left=5]{r}{f}
			\arrow[bend right=5,leftarrow,swap]{r}{g}
			&
			\BNr(K_2;R[G]):
		\end{tikzcd}
	\)
    \\
    \(
	f\circ g\simeq G^{q_1+q_2}\cdot \id, \,
	g\circ f\simeq G^{q_1+q_2}\cdot \id
	\)
    \\
	and
    \(\QGrad{q}(f)=-2q_1, \QGrad{q}(g)=-2q_2\)
	\end{minipage}
	\right.
	\right\}
	\]
\end{definition}

As in \cref{def:lambda}, we set $\Lambda(K_1,K_2)=\Lambda(K_1,K_2;\Z)$ and $\Lambda(K;R)=\Lambda(K,U;R)$ where $K$ is a knot. Furthermore, observe that, for any ring~$R$, 
\[
\Lambda(K_1,K_2) \subseteq \Lambda(K_1,K_2;R).
\]

\begin{definition}\label{def:V}
	For \(n\geq0\) and \((n_1,n_2)\in\Z^2\), let 
	\begin{align*}
		\mathsf{V}_n
		&\coloneqq
		\{(q_1,q_2)\mid q_1+q_2\geq n\},
		\\
		\mathsf{V}(n_1,n_2)
		&\coloneqq
		\{(q_1,q_2)\mid q_1\geq n_1,q_2\geq n_2\}.
	\end{align*}
\end{definition}
These two types of subsets of $\Z^2$ will be useful to describe $\Lambda(K_1,K_2)$.
For example, note that
for all $K_1, K_2$ we have
$\Lambda(K_1,K_2)\subset \mathsf{V}_0$ 
by definition.

We like to illustrate subsets of \(\Z^2\) by shading areas of the Cartesian plane~$\mathbb{R}^2$. 
More precisely, given a subset $\Lambda\subset \Z^2$ and a point~$(x,y)\in\Z^2$, we shade the unit square centred around \((x,y)\) if and only if~$(x,y)\in\Lambda$.
For an example, see \cref{fig:Lambda_dummy}. 

 \begin{figure}[hb]
	\centering
	\begin{tikzpicture}[scale=.80]
		\fill[lightred] (0,2) |- (1,1) |- (2,0) |- (3,-1) |- (4,-2) |- (5,-3) |- (-1,3) |- cycle ;
		\draw[step=1cm,gray,very thin,xshift=0.5cm,yshift=0.5cm] (-1.25,-3.25) grid (4.25,2.25);    
		\draw[thick,->] (-1,0.5) -- (5.5,0.5);
		\draw[thick,->] (0.5,-3) -- (0.5,3.5);
		\draw[thick,red] (-1,2) -- (0,2) |- (1,1) |- (2,0) |- (3,-1) |- (4,-2) |- (4,-3);
		\node at (-0.5,2.5)[red,circle,fill,inner sep=1.25pt]{};
		\node at (0.5,2.5)[red,circle,fill, inner sep=1.25pt]{};
		\node at (1.5,2.5)[red,circle,fill, inner sep=1.25pt]{};
		\node at (2.5,2.5)[red,circle,fill, inner sep=1.25pt]{};
		\node at (3.5,2.5)[red,circle,fill, inner sep=1.25pt]{};
		\node at (4.5,2.5)[red,circle,fill, inner sep=1.25pt]{};
		\node at (0.5,1.5)[red,circle,fill, inner sep=1.25pt]{};
		\node at (1.5,1.5)[red,circle,fill, inner sep=1.25pt]{};
		\node at (2.5,1.5)[red,circle,fill, inner sep=1.25pt]{};
		\node at (3.5,1.5)[red,circle,fill, inner sep=1.25pt]{};
		\node at (4.5,1.5)[red,circle,fill, inner sep=1.25pt]{};
		\node at (1.5,0.5)[red,circle,fill, inner sep=1.25pt]{};
		\node at (2.5,0.5)[red,circle,fill, inner sep=1.25pt]{};
		\node at (3.5,0.5)[red,circle,fill, inner sep=1.25pt]{};
		\node at (4.5,0.5)[red,circle,fill, inner sep=1.25pt]{};
		\node at (2.5,-0.5)[red,circle,fill,inner sep=1.25pt]{};
		\node at (3.5,-0.5)[red,circle,fill,inner sep=1.25pt]{};
		\node at (4.5,-0.5)[red,circle,fill,inner sep=1.25pt]{};
		\node at (3.5,-1.5)[red,circle,fill,inner sep=1.25pt]{};
		\node at (4.5,-1.5)[red,circle,fill,inner sep=1.25pt]{};
		\node at (4.5,-2.5)[red,circle,fill,inner sep=1.25pt]{};
		\node [scale=1] at (3,1) [fill,red!30!white] {\textcolor{red}{$\mathsf{V}_{1}$}};
	\end{tikzpicture}
	\qquad
	\begin{tikzpicture}[scale=.80]
		\fill[lightblue] (1,-1) |- (5,3) |- cycle ;
		\draw[step=1cm,gray,very thin,xshift=0.5cm,yshift=0.5cm] (-1.25,-3.25) grid (4.25,2.25);    
		\draw[thick,->] (-1,0.5) -- (5.5,0.5);
		\draw[thick,->] (0.5,-3) -- (0.5,3.5);
		\draw[thick,blue] (1,3) |- (5,-1);
		\node at (1.5,-0.5)[blue,circle,fill, inner sep=1.25pt]{};
		\node at (2.5,-0.5)[blue,circle,fill, inner sep=1.25pt]{};
		\node at (3.5,-0.5)[blue,circle,fill, inner sep=1.25pt]{};
		\node at (4.5,-0.5)[blue,circle,fill, inner sep=1.25pt]{};
		\node at (1.5,0.5)[blue,circle,fill, inner sep=1.25pt]{};
		\node at (2.5,0.5)[blue,circle,fill, inner sep=1.25pt]{};
		\node at (3.5,0.5)[blue,circle,fill, inner sep=1.25pt]{};
		\node at (4.5,0.5)[blue,circle,fill, inner sep=1.25pt]{};
		\node at (1.5,1.5)[blue,circle,fill, inner sep=1.25pt]{};
		\node at (2.5,1.5)[blue,circle,fill, inner sep=1.25pt]{};
		\node at (3.5,1.5)[blue,circle,fill, inner sep=1.25pt]{};
		\node at (4.5,1.5)[blue,circle,fill, inner sep=1.25pt]{};
		\node at (1.5,2.5)[blue,circle,fill, inner sep=1.25pt]{};
		\node at (2.5,2.5)[blue,circle,fill, inner sep=1.25pt]{};
		\node at (3.5,2.5)[blue,circle,fill, inner sep=1.25pt]{};
		\node at (4.5,2.5)[blue,circle,fill, inner sep=1.25pt]{};
		\node [scale=1] at (3,1) [fill,blue!30!white] {\textcolor{blue}{$\mathsf{V}(1,-1)$}};
	\end{tikzpicture}
	\caption{Illustration of \cref{def:V}.}
	\label{fig:Lambda_dummy}
\end{figure}

\begin{remark}\label{rem:lambda_from_Lambda}
    The set $\Lambda(K_1,K_2;R)$ contains all the information provided by $\lambda$ and~$\lambdau$. Indeed, it follows directly from the definition that
    \begin{enumerate}[label=(\roman*)]
        \item \label{item:rem:lambda_from_Lambda:lambda}
        \(
		\lambda(K_1,K_2;R)
		=
		\min\{q_1+q_2\mid (q_1,q_2)\in\Lambda(K_1,K_2;R)\}
		\).
        \item \label{item:rem:lambda_from_Lambda:lambda-}
		\(
		\lambdau(K_1,K_2;R)
		=
		\min\{q_1+q_2\mid (q_1,q_2)\in\Lambda(K_1,K_2;R)\cap\mathsf{V}(0,0)\}
		\).
    \end{enumerate}
\end{remark}

Below, we state some properties of~$\Lambda$.

\begin{proposition}\label{prop:Lambda-basic-properties}
	For \(K_1,K_2\), and \(K_3\) three knots, the following is true:
	\begin{enumerate}[label=(\roman*)]
	\item \label{item:Lambda-basic-properties:1}
		\(
		\Lambda(K_2,K_1;R)
		=
		\{(q_2,q_1)\mid (q_1,q_2)\in\Lambda(K_1,K_2;R)\}
		\).
        \item \label{item:Lambda-basic-properties:2} For any \(q_1,q_2\in\Z\),
		\(
		(q_1,q_2)\in\Lambda(K_1,K_2;R)
		\Rightarrow
		\mathsf{V}(q_1,q_2)\subseteq\Lambda(K_1,K_2;R)
		\). 
        \item \label{item:Lambda-basic-properties:3} For any~\(\ell\in \Z\),
        \[
        \Lambda(K_1,K_2;R) \ni (\ell,-\ell) 
        \iff 
        \BNr(K_1;R[G]) \simeq \QGrad{q^{2\ell}}\BNr(K_2;R[G]).
        \]
        \item \label{item:Lambda-basic-properties:4}
        \(\Lambda(K_1,K_2;R) + \Lambda(K_2,K_3;R) \subseteq \Lambda(K_1,K_3;R)\).
	\end{enumerate}
\end{proposition}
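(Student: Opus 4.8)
The plan is to verify each of the four statements directly from the definition of $\Lambda(K_1,K_2;R)$, mostly by elementary manipulations of the chain maps $f$ and $g$ and bookkeeping of their quantum degrees. For \cref{item:Lambda-basic-properties:1}, the symmetry is built into the definition: a pair $(q_1,q_2)$ lies in $\Lambda(K_1,K_2;R)$ via maps $f\colon\BNr(K_1;R[G])\to\BNr(K_2;R[G])$ and $g$ in the other direction with $\QGrad{q}(f)=-2q_1$, $\QGrad{q}(g)=-2q_2$; interchanging the roles of $f$ and $g$ (and of $K_1$ and $K_2$) shows $(q_2,q_1)\in\Lambda(K_2,K_1;R)$, and this operation is an involution. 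First I would spell this out in one or two sentences.

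For \cref{item:Lambda-basic-properties:2}, given $(q_1,q_2)\in\Lambda(K_1,K_2;R)$ realised by $f,g$, and any $(q_1',q_2')$ with $q_1'\geq q_1$, $q_2'\geq q_2$, I would set $f' = G^{q_1'-q_1}\cdot f$ and $g' = G^{q_2'-q_2}\cdot g$. These are still homogeneous chain maps (multiplication by a power of $G\in Z(\mathcal{B})$ preserves the chain map property and shifts the quantum degree by $-2(q_1'-q_1)$, resp.\ $-2(q_2'-q_2)$), with $\QGrad{q}(f')=-2q_1'$, $\QGrad{q}(g')=-2q_2'$, and $g'\circ f' \simeq G^{q_1'-q_1+q_2'-q_2}\cdot g\circ f \simeq G^{q_1'+q_2'}\cdot\id$, and symmetrically for $f'\circ g'$. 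Hence $(q_1',q_2')\in\Lambda(K_1,K_2;R)$, i.e.\ $\mathsf{V}(q_1,q_2)\subseteq\Lambda(K_1,K_2;R)$.

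For \cref{item:Lambda-basic-properties:3}: the direction $\Leftarrow$ is immediate, taking $f$ to be a homotopy equivalence $\BNr(K_1;R[G])\to\QGrad{q^{2\ell}}\BNr(K_2;R[G])$ and $g$ a homotopy inverse, so that $g\circ f\simeq\id = G^0\cdot\id$ and $\QGrad{q}(f)=-2\ell$, $\QGrad{q}(g)=2\ell$. For $\Rightarrow$, if $(\ell,-\ell)\in\Lambda(K_1,K_2;R)$ then there are homogeneous chain maps $f,g$ with $g\circ f\simeq G^0\cdot\id=\id$ and $f\circ g\simeq\id$, which exactly says that $f$ is a chain homotopy equivalence; since $\QGrad{q}(f)=-2\ell$, it is an equivalence $\BNr(K_1;R[G])\xrightarrow{\ \sim\ }\QGrad{q^{2\ell}}\BNr(K_2;R[G])$. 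Finally, for \cref{item:Lambda-basic-properties:4}, given $(q_1,q_2)\in\Lambda(K_1,K_2;R)$ via $f,g$ and $(q_1',q_2')\in\Lambda(K_2,K_3;R)$ via $f',g'$, I would compose: $f'\circ f\colon\BNr(K_1;R[G])\to\BNr(K_3;R[G])$ and $g\circ g'$ in the other direction, homogeneous of quantum degrees $-2(q_1+q_1')$ and $-2(q_2+q_2')$ respectively, with $(g\circ g')\circ(f'\circ f) = g\circ(g'\circ f')\circ f \simeq g\circ(G^{q_1'+q_2'}\cdot\id)\circ f = G^{q_1'+q_2'}\cdot(g\circ f)\simeq G^{q_1+q_2+q_1'+q_2'}\cdot\id$ (using that $G$ is central), and symmetrically for the other composition; also $(q_1+q_1')+(q_2+q_2')\geq 0$ since each summand pair is. This shows $(q_1+q_1',q_2+q_2')\in\Lambda(K_1,K_3;R)$, which is exactly the claimed Minkowski-sum inclusion. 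None of the steps presents a real obstacle; the only points requiring a word of care are that multiplication by $G$ (being central and of quantum degree $-2$) genuinely preserves homogeneity and the chain map condition, and that homotopies may be pre- and post-composed with chain maps — both of which are standard and already used elsewhere in the paper (e.g.\ in the proof of \cref{lem:maps-tangles-to-links}).
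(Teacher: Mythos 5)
Your proposal is correct and follows essentially the same route as the paper's own proof: symmetry from the definition for (i), multiplying $f$ and $g$ by powers of the central element $G$ for (ii), reading off a (shifted) chain homotopy equivalence for (iii), and composing the realising maps for (iv). The degree bookkeeping (e.g.\ that a degree-preserving equivalence $\BNr(K_1;R[G])\to\QGrad{q^{2\ell}}\BNr(K_2;R[G])$ corresponds to $(\ell,-\ell)$) matches the paper's conventions, so nothing further is needed.
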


\begin{proof}
    The first property follows immediately from the definitions.
	For the second, observe that 
	if \(f\) and \(g\) are maps realising \((q_1,q_2)\), then for any \(a,b\geq0\),
	\(G^a f\) and \(G^b g\) realise \((q_1+a,q_2+b)\).
 
	The implication $\Leftarrow$ of property \cref{item:Lambda-basic-properties:3} is obvious, as $(\ell,-\ell)$ is realised by the maps $f=\id$ and~$g=\id$. The other direction can be seen as follows. Suppose $\Lambda(K_1,K_2;R) \ni (\ell,-\ell)$. Then there exist homogeneous chain maps $f$ and $g$ with $\QGrad{q}(f) = -2\ell,\, \QGrad{q}(g)=2\ell$ such that $f \circ g \simeq \id$ and $g \circ f \simeq \id$, which proves the statement.

    For property \cref{item:Lambda-basic-properties:4}, let $(a_1,b_1)\in \Lambda(K_1,K_2;R)$ and $(a_2,b_2)\in \Lambda(K_2,K_3;R)$, and consider maps $f_i$ and $g_i$ realising $(a_i,b_i)$ for~$i=1,2$. Then the maps $f_2\circ f_1$ and $g_1\circ g_2$ realise $(a_1+a_2,b_1+b_2)\in \Lambda(K_1,K_3;R)$.
\end{proof}

\begin{proposition} \label{prop:Lambda_mirror}
    Let \(K_1\) and \(K_2\) be two knots. 
    For \(-K_1\) and \(-K_2\) the mirror images of \(K_1\) and~\(K_2\), one has 
    \[
    \Lambda(-K_2,-K_1;R)=\Lambda(K_1,K_2;R).
    \]
\end{proposition}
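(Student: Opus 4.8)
The plan is to reduce the statement to the mirror formula for $\BNr$ from \cref{lem:mirror_complex}, namely $\BNr(-K) \simeq -\BNr(K)$, together with the behaviour of dualisation on chain maps. First I would recall that for a bigraded chain complex $C$ of right $\mathcal{B}$-modules, the dual complex $-C$ is obtained by reversing differentials and negating gradings, and that this operation is contravariant and involutive: there is a natural bijection between chain maps $f\colon C \to C'$ and chain maps $-f\colon -C' \to -C$, which sends homotopies to homotopies, preserves homological degree, and negates the quantum degree, i.e.\ $\QGrad{q}(-f) = -\QGrad{q}(f)$. Moreover, for $a \in Z(\mathcal{B})$, the dual of the map $a \cdot \id_C$ is $a' \cdot \id_{-C}$ where $a'$ is the image of $a$ under the grading-negation; in particular, since $G$ has quantum degree $-2$, the dual of $G^n \cdot \id$ is (up to sign, or exactly, depending on conventions) $G^n \cdot \id$ on the dual complex — more precisely one should check that $G$, being a specific element of $\mathcal{B}$, is taken to $G$ under dualisation of $\BNr(K;R[G])$, which follows because $\BNr(-K)\simeq -\BNr(K)$ is an equivalence of $\Z[G]$-chain complexes.

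The key steps, in order, would be: (1) establish that dualisation induces, for any two knots $J_1, J_2$, a bijection from homogeneous chain maps $\BNr(J_1;R[G]) \to \BNr(J_2;R[G])$ to homogeneous chain maps $-\BNr(J_2;R[G]) \to -\BNr(J_1;R[G])$, compatible with composition (contravariantly), with homotopies, and negating quantum degrees; (2) use $\BNr(-K_i;R[G]) \simeq -\BNr(K_i;R[G])$ from \cref{lem:mirror_complex} to rewrite these as homogeneous chain maps between $\BNr(-K_i;R[G])$'s; (3) observe that if $(f,g)$ is a pair realising $(q_1,q_2) \in \Lambda(K_1,K_2;R)$, then the dual pair $(-g, -f)$ — note the swap, since $-$ is contravariant and $g$ goes $K_2 \to K_1$ so $-g$ goes $-\BNr(K_1) \to -\BNr(K_2)$, i.e.\ $\BNr(-K_1) \to \BNr(-K_2)$ — gives maps $\BNr(-K_1;R[G]) \to \BNr(-K_2;R[G])$ and back, with $\QGrad{q}(-g) = -\QGrad{q}(g) = 2q_2 = -2(-q_2)$ and similarly for $-f$; (4) check that $(-f)\circ(-g) \simeq -(g \circ f) \simeq -(G^{q_1+q_2}\cdot \id) \simeq G^{q_1+q_2}\cdot \id$ and likewise for the other composition, so that $(-g,-f)$ realises $(q_2, q_1)$ — wait, I should be careful: with $\QGrad{q}(-g) = 2q_2$ we get first coordinate $-q_2$, which seems wrong, so the correct bookkeeping must yield $(q_1, q_2)$ again, and I would verify this by tracking the definition of the pair $(\text{map to } K_2, \text{map to } K_1)$ versus $(\text{map to } -K_2, \text{map to } -K_1)$ carefully. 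Combining with \cref{prop:Lambda-basic-properties}\cref{item:Lambda-basic-properties:1} if a swap does occur resolves the indexing.

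The main obstacle I anticipate is purely bookkeeping: getting the quantum-degree signs and the two coordinates of the element of $\Z^2$ to line up correctly, given that dualisation is contravariant (hence swaps $f$ and $g$) and negates quantum gradings (hence would naively send $(q_1,q_2)$ to $(-q_2,-q_1)$), yet the target statement asserts the set is \emph{unchanged}. The resolution is that the dual complex $-C$ has its gradings negated, so a map of quantum degree $-2q_1$ between the $C_i$'s becomes a map of quantum degree $-2q_1$ again between the $-C_i$'s once one accounts for the grading shift built into the definition of $-C$ — i.e.\ the two sign negations (one from the map, one from the ambient grading convention) cancel. I would make this precise by writing the dual complex concretely as $(-C)_k = C_{-k}$ with $\QGrad{q}$-degree of $(-C)_k$ being minus that of $C_{-k}$, computing directly that a homogeneous $\mathcal{B}$-linear $f$ of bidegree $(0,-2q_1)$ induces $-f$ of bidegree $(0,-2q_1)$, and that $G^n\cdot\id$ dualises to $G^n\cdot\id$ (no sign, since $G \in \Z[G] \subset Z(\mathcal{B})$ is central of even quantum degree and the construction $\BNr(-K)\simeq-\BNr(K)$ is $\Z[G]$-linear). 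Everything else — composition, homotopy, the inequality $q_1 + q_2 \geq 0$ being symmetric — is immediate, and the same argument run in the other direction (or the involutivity of $-$) gives the reverse inclusion, completing the proof.
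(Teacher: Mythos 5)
Your proposal is correct and follows essentially the same route as the paper: apply \cref{lem:mirror_complex}, dualise the pair $(f,g)$ to get maps between the $-\BNr(K_i;R[G]) \simeq \BNr(-K_i;R[G])$, and observe that the negation of gradings in the dual complex cancels against the contravariance so that $\QGrad{q}(\overline{f})=\QGrad{q}(f)$ and $\QGrad{q}(\overline{g})=\QGrad{q}(g)$ — which is exactly the resolution you arrive at in your final paragraph. The coordinate bookkeeping you worried about does close up as you suggest (the dual of $f$ is the map \emph{out of} $\BNr(-K_2;R[G])$, so it plays the role of the first map for $\Lambda(-K_2,-K_1;R)$ and no appeal to \cref{prop:Lambda-basic-properties}\cref{item:Lambda-basic-properties:1} is actually needed), and your extra check that $G^n\cdot\id$ dualises to $G^n\cdot\id$ is a point the paper leaves implicit.
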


\begin{proof}
    The complex $\BNr(-K_i;R[G])$, for~$i=1,2$, is chain homotopy equivalent to $-\BNr(K_i;R[G])$, by \cref{lem:mirror_complex}. Let $(a,b)\in \Lambda(K_1,K_2;R)$ and consider maps
    \[
	\begin{tikzcd}
		\BNr(K_1;R[G])
		\arrow[bend left=5]{r}{f}
		\arrow[leftarrow, bend right=5, swap]{r}{g}
		&
		\BNr(K_2;R[G])
		\end{tikzcd}
		\]
    realising it.
    The maps $f$ and $g$ induce maps
    \[
		\begin{tikzcd}
		-\BNr(K_1;R[G])
		\arrow[leftarrow, bend left=5]{r}{\overline{f}}
		\arrow[bend right=5, swap]{r}{\overline{g}}
		&
		-\BNr(K_2;R[G]).
		\end{tikzcd}
		\]
    The homological and quantum gradings in $-\BNr(K_i;R[G])$ are the opposites of the gradings in $\BNr(K_i;R[G])$. But the maps $\overline{f}$ and $\overline{g}$ also go in the opposite direction to $f$ and~$g$. Hence, $\QGrad{q}(\overline{f})=\QGrad{q}(f)$ and $\QGrad{q}(\overline{g})=\QGrad{q}(g)$. It follows that $(a,b)\in \Lambda(-K_2,-K_1;R)$.
\end{proof}

\subsection{Relationship with proper rational tangle replacements}

The main result of this subsection is the following.

\begin{theorem}\label{thm:obstruction_rational_repl_other_cr}
	Let \(K_1\) and \(K_2\) be knots and suppose that \(K_2\) can be obtained from \(K_1\) by the proper rational tangle replacement
	\[
	\textnormal{$\vcenter{\hbox{\def\svgwidth{11.6pt}}}$} \leadsto Q_{\nicefrac{p}{q}},
	\]
	where \(p\) and \(q\) are odd coprime integers and \(\nicefrac{p}{q} \ne -1\).
	Then \((1-\alpha,\alpha) \in \Lambda(K_1,K_2)\), where
	\[
	\alpha=\alpha(p,q) \coloneqq \tfrac{1}{2} s_{\Q}(Q_{\nicefrac{p}{q}}(0)) + \begin{cases}
		1 & \textrm{if }\; -1<\nicefrac{p}{q}<0,
		\\
		0 & \textrm{else}.
	\end{cases}
	\]
\end{theorem}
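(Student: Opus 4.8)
The plan is to follow the proof of \cref{thm:lambda_proper_rational_gordian}, but to keep careful track of the quantum degrees of the maps involved, so that we obtain a specific lattice point of $\Lambda(K_1,K_2)$ rather than merely the bound $\lambda(K_1,K_2)\leq 1$. First I would set up the splitting exactly as in that proof: since $K_2$ is obtained from $K_1$ by the replacement $\PCr\leadsto Q_{\nicefrac{p}{q}}$, we may write $K_1=\PCr\cup T$ and $K_2=Q_{\nicefrac{p}{q}}\cup T$ for a Conway tangle $T$. As $p,q$ are odd, \cref{lem:parity} gives $\conn{\PCr}=\conn{Q_{\nicefrac{p}{q}}}=\ConnectivityX$, and since $K_1,K_2$ are knots while the orientations of $\PCr$ and $Q_{\nicefrac{p}{q}}$ at the tangle ends agree, the orientation of the two crossing strands forces $\conn{T}=\No$. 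Hence $\{\conn{\PCr},\conn{T},\Ni\}=\{\ConnectivityX,\No,\Ni\}$, so we are in position to invoke \cref{lem:maps-tangles-to-links} with $T_1=\PCr$, $T_2=Q_{\nicefrac{p}{q}}$, $a=D_\bullet$ (note $x_{D_\bullet}=\Ni$), and $n=1$.

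The heart of the argument is then the observation that the homogeneous chain maps $f\colon\DD(\PCr)\to\DD(Q_{\nicefrac{p}{q}})$ and $g\colon\DD(Q_{\nicefrac{p}{q}})\to\DD(\PCr)$ built in the four cases of \cref{tab:lambda_rational_tangles:maps_for_cases_p/q} (which, by the proof of \cref{lem:lambda_rational_tangles}, satisfy $g\circ f\simeq D_\bullet\cdot\id$ and $f\circ g\simeq D_\bullet\cdot\id$) have precisely the quantum degrees $\QGrad{q}(f)=-2(1-\alpha)$ and $\QGrad{q}(g)=-2\alpha$. To verify this I would recall that the $\bullet$-end of $\DD(\PCr)$ has bigrading $\GGzqh{\bullet}{0}{1}{0}$, while by \cref{lem:grading_end_zz}, applied to $Q_{\nicefrac{p}{q}}$ with the orientation of $\PCr$ (which coincides with that of $\NCr$ at the tangle ends), the odd $\bullet$-end of $\DD(Q_{\nicefrac{p}{q}})$ has bigrading $\GGzqh{\bullet}{0}{\ell}{0}$ with $\ell=s_{\Q}(Q_{\nicefrac{p}{q}}(0))-1$ when $\nicefrac{p}{q}>0$ and $\ell=s_{\Q}(Q_{\nicefrac{p}{q}}(0))+1$ when $\nicefrac{p}{q}<0$. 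Each component of $f$ and $g$ in the table is one of the morphisms $\iota_\bullet,\iota_\circ,D_\bullet,S$ between shifted copies of $\circ$ and $\bullet$, whose quantum degree is then read off from the shifts; carrying this out case by case yields $\QGrad{q}(f)=-2(1-\alpha)$ and $\QGrad{q}(g)=-2\alpha$ uniformly, where the two-part definition of $\alpha$ (the extra $+1$ occurring exactly for $-1<\nicefrac{p}{q}<0$) matches the split between cases (b),(c) and cases (a),(d) of the table. Since $s_{\Q}$ of a knot is even, $\alpha$ and $1-\alpha$ are integers, and $(1-\alpha)+\alpha=1\geq 0$.

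Finally, feeding these maps into \cref{lem:maps-tangles-to-links} produces homogeneous chain maps $f'\colon\BNr(K_1)\to\BNr(K_2)$ and $g'\colon\BNr(K_2)\to\BNr(K_1)$ with $f'\circ g'\simeq G\cdot\id$, $g'\circ f'\simeq G\cdot\id$, and $\QGrad{q}(f')=\QGrad{q}(f)=-2(1-\alpha)$, $\QGrad{q}(g')=\QGrad{q}(g)=-2\alpha$. By the definition of $\Lambda$, this is exactly the statement $(1-\alpha,\alpha)\in\Lambda(K_1,K_2)$.

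The work is essentially bookkeeping, so I expect the main obstacle to be purely the sign-and-shift accounting: one must carry the homological and quantum shifts through all four cases of \cref{tab:lambda_rational_tangles:maps_for_cases_p/q} and confirm that it is precisely the $+1$-correction in the range $-1<\nicefrac{p}{q}<0$ that makes $\QGrad{q}(g)=-2\alpha$ hold across cases. A secondary point requiring care is the orientation convention: one should check that equipping $Q_{\nicefrac{p}{q}}$ with the orientation of $\PCr$ is compatible with the orientation of $\NCr$ used in \cref{lem:grading_end_zz}, so that the stated value of $\ell$ is correct — this is the same compatibility already used implicitly in the proof of \cref{lem:lambda_rational_tangles}.
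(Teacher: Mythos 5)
Your proposal is correct and follows essentially the same route as the paper: the paper's proof simply invokes \cref{lem:Lambda_rational_repl} (which establishes $(1-\alpha,\alpha)\in\Lambda_{D_\bullet}(\PCr,Q_{\nicefrac{p}{q}})$ via exactly the quantum-degree bookkeeping on the maps of \cref{tab:lambda_rational_tangles:maps_for_cases_p/q} and \cref{lem:grading_end_zz} that you describe) together with \cref{prop:Lambda-tangles-to-links} (which is itself just \cref{lem:maps-tangles-to-links} applied as you apply it). Your observation that $\QGrad{q}(f)$ can be deduced from $\QGrad{q}(f\circ g)=-2$ once $\QGrad{q}(g)=-2\alpha$ is known is precisely the shortcut the paper uses as well.
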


See \cref{rem:s_of_2-bridge_knots} for a formula expressing \(s_{\Q}(Q_{\nicefrac{p}{q}}(0))\) in terms of $p$ and~$q$.
Before proving \cref{thm:obstruction_rational_repl_other_cr}, 
we first observe some of its immediate consequences and discuss some examples. 

\begin{corollary}\label{cor:obstruction_rational_repl}
	Let \(K_1\) and \(K_2\) be knots and suppose that \(K_2\) can be obtained from \(K_1\) by the proper rational tangle replacement
	\[
	\textnormal{$\vcenter{\hbox{\def\svgwidth{11.6pt}}}$} \leadsto Q_{\nicefrac{p}{q}}
	\]
	where \(p\) and \(q\) are odd coprime integers and \(\nicefrac{p}{q} \ne 1\).
	Then \((\beta,1-\beta) \in \Lambda(K_1,K_2)\), where
	\[
	\beta 
	= 
	-\tfrac{1}{2} s_{\Q}(Q_{\nicefrac{p}{q}}(0)) 
	+ 
	\begin{cases}
		1 & \textrm{if }\; 0<\nicefrac{p}{q}<1,
		\\
		0 & \textrm{else}.
	\end{cases}
	\]
\end{corollary}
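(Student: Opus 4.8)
The statement is a corollary of \cref{thm:obstruction_rational_repl_other_cr}, and the natural strategy is to reduce the replacement $\NCr \leadsto Q_{\nicefrac{p}{q}}$ to a replacement of the form $\PCr \leadsto Q_{\nicefrac{p'}{q'}}$ by passing to mirror images, and then apply the theorem together with \cref{prop:Lambda_mirror}. Concretely: if $K_2$ is obtained from $K_1$ by the replacement $\NCr \leadsto Q_{\nicefrac{p}{q}}$, then mirroring the whole diagram turns this into a replacement $-\NCr \leadsto -Q_{\nicefrac{p}{q}}$ relating $-K_1$ to $-K_2$. Now $-\NCr = \PCr$ (the mirror of the negative crossing is the positive crossing), and $-Q_{\nicefrac{p}{q}} = Q_{\nicefrac{-p}{q}}$ by Conway's correspondence (as recalled in the excerpt, $Q_{-r}$ and $-Q_r$ are isotopic). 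So $-K_1$ is obtained from $-K_2$... wait, more carefully: $-K_2$ is obtained from $-K_1$ by the proper rational tangle replacement $\PCr \leadsto Q_{\nicefrac{-p}{q}}$. Since $p,q$ are odd and coprime, so are $-p,q$; and $\nicefrac{p}{q} \neq 1$ forces $\nicefrac{-p}{q} \neq -1$, so \cref{thm:obstruction_rational_repl_other_cr} applies with slope $\nicefrac{-p}{q}$.

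\textbf{Carrying it out.} From \cref{thm:obstruction_rational_repl_other_cr} applied to the replacement $\PCr \leadsto Q_{\nicefrac{-p}{q}}$ relating $-K_1$ to $-K_2$, we obtain $(1-\alpha', \alpha') \in \Lambda(-K_1, -K_2)$, where
\[
\alpha' = \alpha(-p,q) = \tfrac{1}{2} s_{\Q}(Q_{\nicefrac{-p}{q}}(0)) + \begin{cases} 1 & \text{if } -1 < \nicefrac{-p}{q} < 0, \\ 0 & \text{else.} \end{cases}
\]
By \cref{prop:Lambda_mirror}, $\Lambda(-K_1,-K_2;R) = \Lambda(-(-K_2),-(-K_1);R)$... no: the proposition says $\Lambda(-K_2,-K_1;R) = \Lambda(K_1,K_2;R)$. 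Applying it with the roles of $K_1,K_2$ played by $-K_1,-K_2$ gives $\Lambda(K_2,K_1;R) = \Lambda(-K_1,-K_2;R)$, hence $(1-\alpha',\alpha') \in \Lambda(K_2,K_1)$. Then \cref{prop:Lambda-basic-properties}\cref{item:Lambda-basic-properties:1} swaps the coordinates, yielding $(\alpha', 1-\alpha') \in \Lambda(K_1,K_2)$. It remains to verify that $\alpha' = \beta$.

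\textbf{The grading bookkeeping.} The only genuine computation is checking $\alpha' = \beta$. Using that $Q_{\nicefrac{-p}{q}}(0)$ is the mirror of $Q_{\nicefrac{p}{q}}(0)$, hence $s_{\Q}(Q_{\nicefrac{-p}{q}}(0)) = -s_{\Q}(Q_{\nicefrac{p}{q}}(0))$ (the Rasmussen invariant changes sign under mirroring), the linear term of $\alpha'$ is exactly $-\tfrac12 s_{\Q}(Q_{\nicefrac{p}{q}}(0))$, matching the linear term of $\beta$. For the correction term, the inequality $-1 < \nicefrac{-p}{q} < 0$ is equivalent to $0 < \nicefrac{p}{q} < 1$, so the "$+1$" cases coincide as well; thus $\alpha' = \beta$. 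This finishes the proof. The main obstacle, such as it is, is simply keeping the mirroring conventions straight — which map sends $\NCr$ to $\PCr$, how $Q_r$ behaves under mirroring, and the sign of $s_{\Q}$ under mirroring — but no step requires any new idea beyond the cited results. One should double-check the excluded-slope condition carefully: the hypothesis $\nicefrac{p}{q} \neq 1$ in the corollary is exactly what translates, under $p \mapsto -p$, to the hypothesis $\nicefrac{-p}{q} \neq -1$ required by \cref{thm:obstruction_rational_repl_other_cr}.
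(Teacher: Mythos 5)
Your proposal is correct and follows exactly the same route as the paper's proof: mirror both knots to convert $\NCr \leadsto Q_{\nicefrac{p}{q}}$ into $\PCr \leadsto Q_{\nicefrac{-p}{q}}$, apply \cref{thm:obstruction_rational_repl_other_cr}, transfer back via \cref{prop:Lambda_mirror} and the coordinate swap of \cref{prop:Lambda-basic-properties}, and verify $\alpha(-p,q)=\beta$ using $s_{\Q}(Q_{\nicefrac{-p}{q}}(0))=-s_{\Q}(Q_{\nicefrac{p}{q}}(0))$. All the convention checks (mirroring of crossings, the excluded slope, the sign of $s_{\Q}$) match the paper's argument.
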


\begin{proof}
    If the knots $K_1$ and $K_2$ are related by the tangle replacement $\textnormal{$\vcenter{\hbox{\def\svgwidth{11.6pt}}}$} \leadsto Q_{\nicefrac{p}{q}}$, then their mirror images $-K_1$ and $-K_2$ are related by the tangle replacement $\textnormal{$\vcenter{\hbox{\def\svgwidth{11.6pt}}}$} \leadsto -Q_{\nicefrac{p}{q}}=Q_{\nicefrac{-p}{q}}$. 
    By \cref{thm:obstruction_rational_repl_other_cr}, $(1-\alpha(-p,q),\alpha(-p,q))\in \Lambda(-K_1,-K_2)$. One computes
    \begin{align*}
    \alpha(-p,q) & 
    = 
    \tfrac{1}{2} s_{\Q}(Q_{\nicefrac{-p}{q}}(0)) 
    + 
    \begin{cases}
        1 & \textrm{if }\; -1<-\nicefrac{p}{q}<0
        \\
        0 & \textrm{else}
    \end{cases} \\
    & =
    -\tfrac{1}{2} s_{\Q}(Q_{\nicefrac{p}{q}}(0)) 
    + 
    \begin{cases}
        1 & \textrm{if }\; 0<\nicefrac{p}{q}<1
        \\
        0 & \textrm{else}
    \end{cases} = \beta.
    \end{align*}
    Using \cref{prop:Lambda_mirror}, one has that $(1-\beta,\beta)\in \Lambda(K_2,K_1)$, which implies that $(\beta,1-\beta)\in \Lambda(K_1,K_2)$ by \cref{prop:Lambda-basic-properties}.
\end{proof}

\begin{corollary}\label{cor:obstruction_cr_change}
    Let \(K_-\) and \(K_+\) be knots and suppose that \(K_+\) is obtained from \(K_-\) by changing a negative crossing to a positive crossing. Then \((0,1) \in \Lambda(K_-,K_+)\).
    \qed
\end{corollary}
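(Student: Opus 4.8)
The plan is to obtain this immediately from \cref{cor:obstruction_rational_repl}. First I would observe that changing a negative crossing of $K_-$ into a positive one is nothing but the orientation-preserving proper rational tangle replacement $\NCr \leadsto \PCr$: writing $K_- = \NCr \cup T$ for the Conway tangle $T$ obtained by cutting out a small ball around the relevant crossing, the knot $K_+$ is $\PCr \cup T$, and since the two strands passing through the ball keep their orientations, the orientations induced at the four tangle ends of $\NCr$ and of $\PCr$ coincide. As $\PCr = Q_{\nicefrac{-1}{1}}$ with $p = -1$ and $q = 1$ odd, coprime, and $\nicefrac{p}{q} \neq 1$, \cref{cor:obstruction_rational_repl} applies with $\nicefrac{p}{q} = \nicefrac{-1}{1}$ and $(K_1, K_2) = (K_-, K_+)$.

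It then remains to evaluate the constant $\beta$ of \cref{cor:obstruction_rational_repl}. Since $-1 \notin (0,1)$, the case distinction contributes $0$, so $\beta = -\tfrac12 s_{\Q}(Q_{\nicefrac{-1}{1}}(0))$. The $0$-closure $Q_{\nicefrac{-1}{1}}(0) = Q_0 \cup Q_{-1}$ is a knot, because $\conn{Q_{-1}} = \ConnectivityX$ by \cref{lem:parity}; being represented by a diagram with a single crossing, it is the unknot $U$, so $s_{\Q}(Q_{\nicefrac{-1}{1}}(0)) = s_{\Q}(U) = 0$. (Alternatively this value can be read off \cref{rem:s_of_2-bridge_knots}, the relevant sum being empty, together with $\sigma(Q_{\nicefrac{-p}{q}}(0)) = -\sigma(Q_{\nicefrac{p}{q}}(0))$.) Hence $\beta = 0$, and \cref{cor:obstruction_rational_repl} gives $(\beta, 1 - \beta) = (0,1) \in \Lambda(K_-, K_+)$, as claimed.

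There is essentially no genuine obstacle here; the work is purely bookkeeping. The one point that deserves a moment's care is matching orientation conventions, so that a negative-to-positive crossing change is genuinely captured by $\NCr \leadsto Q_{-1}$ with the orientations of $\NCr$ and $\PCr$ as drawn, rather than by some orientation-reversed variant — recall from the discussion around \cref{exa:pretzel-replacement} that orientations matter in tangle replacements. The identification of the $0$-closure of a lone crossing with the unknot is immediate.
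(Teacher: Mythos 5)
Your proof is correct and is exactly the derivation the paper intends: the corollary is stated with a \qed as an immediate consequence of \cref{cor:obstruction_rational_repl}, applied to the replacement $\NCr \leadsto \PCr = Q_{-1}$ (so $\nicefrac{p}{q}=-1$, $\beta = -\tfrac12 s_{\Q}(U) = 0$), giving $(0,1)\in\Lambda(K_-,K_+)$. Your bookkeeping, including the orientation check and the identification of the $0$-closure with the unknot, matches the paper's conventions.
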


\cref{thm:obstruction_rational_repl_other_cr} also implies a stronger version of \cref{thm:lambda-gordian}. To formulate it, we come back to the ``very simple'' rational tangle replacements discussed in \cref{ref:simple}.
\begin{definition}\label{def:u0}
Let $u^0(K, J)$ be the minimal number of proper rational tangle replacements relating 
two knots $K, J$, where each replacement is of the form $\textnormal{$\vcenter{\hbox{\def\svgwidth{11.6pt}}}$} \rightsquigarrow Q_{\nicefrac{p}{q}}$
such that~$\alpha(p,q)\in\{0,1\}$, where \(\alpha\) is the function defined in \cref{thm:obstruction_rational_repl_other_cr}.
\end{definition}
Clearly, we have
\[
u_q(K, J) \leq u^0(K, J) \leq u(K,J).
\]%
The replacements allowed in \cref{def:u0}  include:
\begin{itemize}
\item Crossing changes.
\item For $k\in\Z\setminus\{0\}$, tying $2k$ crossings into two oppositely oriented strands, which is equivalent to a replacement $\textnormal{$\vcenter{\hbox{\def\svgwidth{11.6pt}}}$} \rightsquigarrow Q_{\nicefrac{1}{2k-1}}$. This is called a \(\overline{t}_{2k}\)-move by Przytycki~\cite{zbMATH04088518}.
\item Many further replacements like $Q_{-1} \rightsquigarrow
Q_{\nicefrac{-3}{5}}, 
Q_{\nicefrac{3}{5}}, 
Q_{\nicefrac{-7}{9}}, 
Q_{\nicefrac{-7}{5}}, 
Q_{\nicefrac{-7}{3}}, 
Q_{\nicefrac{7}{9}}$, to mention just a few. 
\end{itemize}
\begin{corollary}\label{cor:strongerthm1}
For any two knots \(K\) and \(J\),
\[
\lambdau(K,J) \leq u^0(K,J).
\]
Combining this with \cref{thm:s_invariant_and_graded_lambda} implies for any field \(\mathbb{F}\) that
\[
\tfrac{1}{2}|s_{\mathbb{F}}(K,J)|
\leq 
u^0(K,J).
\myqed
\]
\end{corollary}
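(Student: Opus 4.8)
The plan is to reduce the inequality $\lambdau(K,J)\le u^0(K,J)$ to the single-replacement case via the triangle inequality for $\lambdau$ (\cref{item:properties_lambda:2} of \cref{prop:properties_lambda}), exactly as in the proof of \cref{thm:lambda-gordian}. One chooses a shortest sequence $K=J_0,J_1,\dots,J_m=J$ with $m=u^0(K,J)$ in which each $J_i$ is obtained from $J_{i-1}$ by a proper rational tangle replacement $\PCr\rightsquigarrow Q_{\nicefrac{p_i}{q_i}}$ with $p_i,q_i$ odd coprime and $\alpha(p_i,q_i)\in\{0,1\}$ (the degenerate case $\nicefrac{p_i}{q_i}=-1$ is a trivial replacement and may be discarded). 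Then
\[
\lambdau(K,J)\le\sum_{i=1}^m\lambdau(J_{i-1},J_i),
\]
so it suffices to prove $\lambdau(J_{i-1},J_i)\le1$ for each $i$.

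For a single such replacement, apply \cref{thm:obstruction_rational_repl_other_cr} with $K_1=J_{i-1}$ and $K_2=J_i$: it yields $(1-\alpha,\alpha)\in\Lambda(J_{i-1},J_i)$ with $\alpha=\alpha(p_i,q_i)$. Since $\alpha\in\{0,1\}$, the pair $(1-\alpha,\alpha)$ equals either $(1,0)$ or $(0,1)$; in both cases it lies in $\mathsf{V}(0,0)$ and has coordinate sum $1$. By \cref{item:rem:lambda_from_Lambda:lambda-} of \cref{rem:lambda_from_Lambda}, $\lambdau(J_{i-1},J_i)=\min\{q_1+q_2\mid(q_1,q_2)\in\Lambda(J_{i-1},J_i)\cap\mathsf{V}(0,0)\}\le1$. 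Summing over $i$ gives $\lambdau(K,J)\le m=u^0(K,J)$, which is the first claim.

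The second inequality is then immediate: combining the first inequality of \cref{thm:s_invariant_and_graded_lambda}, namely $|s_{\mathbb{F}}(K,J)|\le2\lambdau(K,J;\mathbb{F})$, with the general bound $\lambdau(K,J;\mathbb{F})\le\lambdau(K,J)$ and the first part of the corollary yields $\tfrac12|s_{\mathbb{F}}(K,J)|\le\lambdau(K,J)\le u^0(K,J)$.

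There is no genuine obstacle here; the only point requiring attention is that every replacement permitted in \cref{def:u0} indeed falls within the hypotheses of \cref{thm:obstruction_rational_repl_other_cr} ($p,q$ odd coprime, $\nicefrac pq\ne-1$), which is built into the definition, and that the output pair $(1-\alpha,\alpha)$ of that theorem lands in the first quadrant $\mathsf{V}(0,0)$ precisely when $\alpha\in\{0,1\}$ — this is exactly why \cref{def:u0} imposes that restriction. Everything else is bookkeeping via \cref{prop:properties_lambda} and \cref{rem:lambda_from_Lambda}.
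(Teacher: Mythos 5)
Your proposal is correct and follows exactly the route the paper intends: the corollary is stated without a separate proof precisely because it is the combination of \cref{thm:obstruction_rational_repl_other_cr} (each allowed replacement yields $(1-\alpha,\alpha)\in\Lambda$ with $\alpha\in\{0,1\}$, hence $\lambdau\leq 1$ via \cref{rem:lambda_from_Lambda}) with the triangle inequality of \cref{prop:properties_lambda}, and then \cref{thm:s_invariant_and_graded_lambda} for the $s$-invariant bound. Your bookkeeping, including the observation that $(1-\alpha,\alpha)\in\mathsf{V}(0,0)$ exactly when $\alpha\in\{0,1\}$, matches the paper's argument.
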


\begin{example}\label{ex:Lambda_trefoil}
    For illustration, we compute $\Lambda(T(2,3))=\Lambda(T(2,3),U;\Z)$.
    The chain complex $\BNr(T(2,3))$ splits as the sum of a pawn piece and a $G$-knight piece, while $\BNr(U)$ consists of a single pawn piece: 
    \[
    \BNr(T(2,3)) \simeq \;t^0\QGrad{q^2}\Z[G]\; \oplus \;t^2\QGrad{q^6}\Z[G] \overset{G}{\longrightarrow} t^3\QGrad{q^8}\Z[G]\;,
    \qquad
    \BNr(U) \simeq \;t^0\QGrad{q^0}\Z[G].
    \]
    We can split the computation of $\Lambda$ by homological degree:
    \[
    \Lambda(T(2,3),U)=\Lambda(t^0\QGrad{q^2}\Z[G] \;,\; t^0\QGrad{q^0}\Z[G]) \;\cap\; \Lambda(t^2\QGrad{q^6}\Z[G] \overset{G}{\longrightarrow} t^3\QGrad{q^8}\Z[G] \;,\; 0).
    \]
We claim that $\Lambda(t^0\QGrad{q^2}\Z[G] \;,\; t^0\QGrad{q^0}\Z[G]) = \mathsf{V}(1,-1)$. 
        This can be easily checked by looking at the quantum degrees of any maps $f$ and $g$ between $t^0\QGrad{q^2}\Z[G]$ and $t^0\QGrad{q^0}\Z[G]$: One finds
        \[
        \QGrad{q}(f) \leq 0-2=-2 \qquad \text{and} \qquad \QGrad{q}(g) \leq 2-0=2,
        \]
        which implies that $\Lambda(t^0\QGrad{q^2}\Z[G] \;,\;t^0\QGrad{q^0}\Z[G]) \subseteq \mathsf{V}(1,-1)$. 
        Moreover, the identity maps realise the fact that $(1,-1)\in\Lambda(t^0\QGrad{q^2}\Z[G] \;,\;t^0\QGrad{q^0}\Z[G])$.

Next, we claim that \[\Lambda(t^2\QGrad{q^6}\Z[G] \overset{G}{\longrightarrow} t^3\QGrad{q^8}\Z[G] \;,\; 0)=\mathsf{V}_1.\]
        Indeed, any maps $f$ and $g$ between $t^2\QGrad{q^6}\Z[G] \overset{G}{\longrightarrow} t^3\QGrad{q^8}\Z[G]$ and $0$ satisfy~$g \circ f =0$. In the first complex, $0 \simeq G^k \cdot \id$ if and only if~$k\geq 1$. Therefore one must have 
        \[
        \QGrad{q}(g)+\QGrad{q}(f) = \QGrad{q}(g \circ f) = \QGrad{q}(G^k \cdot \id) = -2k 
        \qquad \text{for some } k\geq 1.
        \]
        Hence $\Lambda(t^2\QGrad{q^6}\Z[G] \overset{G}{\longrightarrow} t^3\QGrad{q^8}\Z[G] \;,\; 0)\subseteq\mathsf{V}_1$. 
        The opposite inclusion comes from the fact that the maps $f=0$ and $g=0$ satisfy both conditions above. 
    Overall, we have shown $\Lambda(T(2,3))=\mathsf{V}(1,-1) \cap \mathsf{V}_1$, as illustrated in \cref{fig:Lambda_trefoil}. 
    In the second point of \cref{prop:torsion_and_biglambda}, we will generalise this result. 

    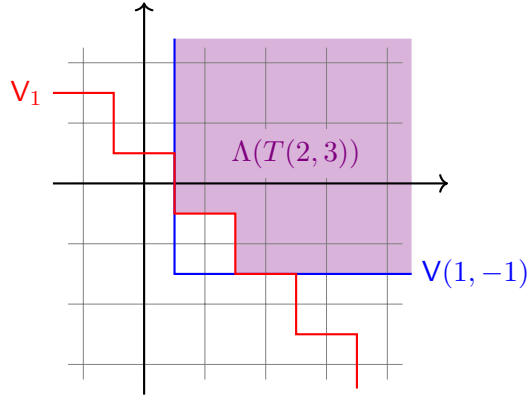
\begin{figure}[t]
        \centering
        \begin{tikzpicture}[scale=.80]
        \fill[lightviolet] (1,2.9) |- (2,0) |- (4.9,-1) |- (1,2.9);
				\draw[step=1cm,gray,very thin,xshift=0.5cm,yshift=0.5cm] (-1.25,-3.25) grid (4.25,2.25);    
        \draw[thick,->] (-1,0.5) -- (5.5,0.5);
        \draw[thick,->] (0.5,-3) -- (0.5,3.5);
        \draw[thick,blue] (1,2.9) |- (4.9,-1);
        \draw[thick,red] (-1,2) -- (0,2) |- (1,1) |- (2,0) |- (3,-1) |- (4,-2) |- (4,-2.9);
        \node [scale=1] at (3,1) [fill,lightviolet] {\textcolor{violet}{$\Lambda(T(2,3))$}};
        \node [scale=1, left, red] at (-1,2) {$\mathsf{V}_{1}$};
        \node [scale=1, right, blue] at (4.9,-1) {$\mathsf{V}(1,-1)$};
    \end{tikzpicture}
        \caption{The shaded region illustrates the set~$\Lambda(T(2,3))$. It is given by the intersection of $\mathsf{V}_{1}$ (bounded by the red path) and $\mathsf{V}(1,-1)$ (bounded by the blue path) from \cref{fig:Lambda_dummy}.}
        \label{fig:Lambda_trefoil}
    \end{figure}
    The geometric significance of $\Lambda(T(2,3))$ has already been discussed in the introduction. Let us now provide more details:
    \begin{itemize}
        \item $(0,1)\not\in \Lambda(T(2,3))$. By \cref{cor:obstruction_cr_change} this implies that there is no negative to positive crossing change relating $T(2,3)$ to the unknot.
        \item $(1,0)\in \Lambda(T(2,3))$. This agrees with the fact that there exists a positive to negative crossing change relating $T(2,3)$ to the unknot.
        \item $(2,-1)\in \Lambda(T(2,3))$. This agrees with the fact that there is a proper rational tangle replacement $\textnormal{$\vcenter{\hbox{\def\svgwidth{11.6pt}}}$} \leadsto Q_3$ relating $T(2,3)$ to the unknot (use \cref{thm:obstruction_rational_repl_other_cr} and the fact that $s_{\Q}(Q_{3}(0))=-2$, as $Q_3(0)$ is the negative trefoil knot).
    \end{itemize}

\end{example}
In \cref{exa:lambda-coefficients-matter}, we pointed out a pair of knots $K$ and $J$ with different $\lambdau$-distance over $\Q$ and~$\F_5$. The following example shows that the shape of $\Lambda$ over different rings $R$ and $R'$ may vary even when $\lambdau(K,J;R)=\lambdau(K,J;R')$.

\begin{example}
    Consider the knots $K=T(5,6)$ and~$J=T(3,11)$. 
    For all rings~$R$, the complex $\BNr(J;R[G])$ can be obtained from the right-hand side of \cref{fig:homology_torus}, by setting~$n=3$. 
    The complex $\BNr(K)$ is shown in \cite[Example 3.18]{lambda1} (there, the bigraded module $t^i\QGrad{q^\ell}\Z[G]$ is denoted by $_i\Z[G]\{\ell\}$).
    As described in \cref{fig:chain_complexes_related}, one can obtain from it, among others, the complexes~$\BNr(K;\F[G])$, for $\F=\Q$ and $\F_p$ with $p$ prime.
    Simple calculations for each of these complexes yield the sets $\Lambda(K,J)$ and $\Lambda(K,J;\F)$ of \cref{fig:big_Lambda_different_rings}.
    Note that, in this example, $\Lambda(K,J)$ is strictly contained in $\Lambda(K,J;\F)$ for all~$\F$, and one has
    \[
    \Lambda(K,J)=\bigcap_{\F} \Lambda(K,J;\F).
    \]
  \end{example}

    \begin{figure}[t]
    \centering
    \begin{tikzpicture}
        \node at (0,0){$\Lambda(K,J;\F')$};
        \node at (0,1.7)
         {
         \begin{tikzpicture}[scale=.50]
            \fill[lightviolet] (0,3.9) |- (2,1) |- (4.9,0) |- (0,3.9);
            \draw[step=1cm,gray,very thin,xshift=0.5cm,yshift=0.5cm] (-1.25,-1.25) grid (4.25,3.25);    
            \draw[thick,->] (-1,0.5) -- (5.5,0.5);
            \draw[thick,->] (0.5,-1) -- (0.5,4.5);
            \draw[thick,violet] (0,3.9) |- (2,1) |- (4.9,0);
        \end{tikzpicture}
         };
        \node at (3.7,0){$\Lambda(K,J;\F_2) = \Lambda(K,J;\F_3)$};
        \node at (3.7,1.7)
         {
         \begin{tikzpicture}[scale=.50]
            \fill[lightviolet] (0,3.9) |- (3,1) |- (4.9,0) |- (0,3.9);
            \draw[step=1cm,gray,very thin,xshift=0.5cm,yshift=0.5cm] (-1.25,-1.25) grid (4.25,3.25);
            \draw[thick,->] (-1,0.5) -- (5.5,0.5);
            \draw[thick,->] (0.5,-1) -- (0.5,4.5);
            \draw[thick,violet] (0,3.9) |- (3,1) |- (4.9,0);
        \end{tikzpicture}
         };
        \node at (7.4,0){$\Lambda(K,J;\F_5)$};
        \node at (7.4,1.7)
         {
         \begin{tikzpicture}[scale=.50]
            \fill[lightviolet] (0,3.9) |- (1,2) |- (2,1) |- (4.9,0) |- (0,3.9);
            \draw[step=1cm,gray,very thin,xshift=0.5cm,yshift=0.5cm] (-1.25,-1.25) grid (4.25,3.25);
            \draw[thick,->] (-1,0.5) -- (5.5,0.5);
            \draw[thick,->] (0.5,-1) -- (0.5,4.5);
            \draw[thick,violet] (0,3.9) |- (1,2) |- (2,1) |- (4.9,0);
        \end{tikzpicture}
         };
        \node at (11.1,0){$\Lambda(K,J)$};
        \node at (11.1,1.7)
         {
         \begin{tikzpicture}[scale=.50]
            \fill[lightviolet] (0,3.9) |- (1,2) |- (3,1) |- (4.9,0) |- (0,3.9);
            \draw[step=1cm,gray,very thin,xshift=0.5cm,yshift=0.5cm] (-1.25,-1.25) grid (4.25,3.25);
            \draw[thick,->] (-1,0.5) -- (5.5,0.5);
            \draw[thick,->] (0.5,-1) -- (0.5,4.5);
            \draw[thick,violet] (0,3.9) |- (1,2) |- (3,1) |- (4.9,0);
        \end{tikzpicture}
         };
    \end{tikzpicture}
        \caption{Let $K=T(5,6)$ and~$J=T(3,11)$. The shaded regions indicate $\Lambda(K,J;R)$ over different rings~$R$. Here, $\F'$ denotes the fields $\Q$ and~$\F_{p'}$, where $p'$ is a prime different from 2, 3 and 5.}
        \label{fig:big_Lambda_different_rings}
    \end{figure}
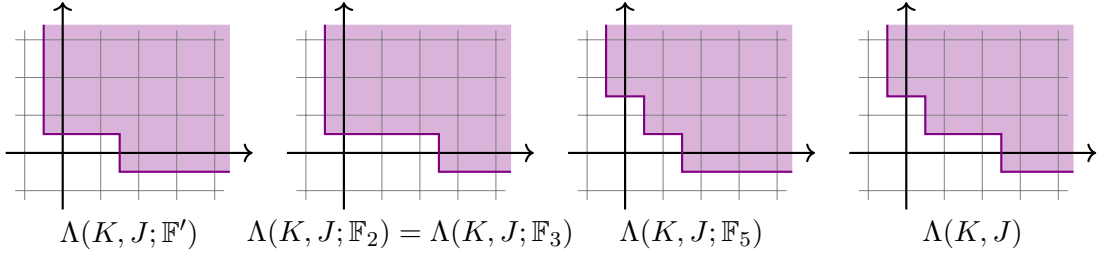

We now turn to the proof of \cref{thm:obstruction_rational_repl_other_cr}. 
Imitating the proof of \cref{thm:lambda-gordian,thm:lambda_proper_rational_gordian} for \(\lambda\) in \cref{sec:small_lambda}, we first generalise \(\Lambda\) to Conway tangles. 

\begin{definition}
	Given \(a\in\{D_\bullet,S^2,D_\circ,G\}\subset\mathcal{B}\) and two Conway tangles \(T_1\) and \(T_2\), define
	\[
	\Lambda_a(T_1,T_2)
	=
	\left\{%
	\begin{minipage}{2cm}\centering
		\((q_1,q_2)\in\Z^2\)
		\\
		\text{ with }
		\\
		\(q_1+q_2\geq0\)
	\end{minipage}
	\left|\,
	\begin{minipage}{6.5cm}\centering
		\(
		\exists
        \)
		homogeneous chain maps 
        \\
		\(
		\begin{tikzcd}
			\DD(T_1)
			\arrow[bend left=5]{r}{f}
			\arrow[bend right=5,leftarrow,swap]{r}{g}
			&
			\DD(T_2):
		\end{tikzcd}
		\)
		\\
		\(f\circ g\simeq a^{q_1+q_2}\cdot \id\),
		\(g\circ f\simeq a^{q_1+q_2}\cdot \id\),
		\\
        and
		\(\QGrad{q}(f)=-2q_1, \QGrad{q}(g)=-2q_2\)
	\end{minipage}
	\right.
	\right\}
	\]
\end{definition}

\begin{remark}\label{rem:Lambda-basic-props-generalize}
    The properties of \cref{rem:lambda_from_Lambda} and \cref{prop:Lambda-basic-properties} still hold with tangles $T_1,\, T_2,\, T_3$ in lieu of knots $K_1,\, K_2,\,K_3$, and $\Lambda_a,\lambda_a,\lambdau_a$ in lieu of $\Lambda, \lambda,\lambdau$, for \(a\in\{D_\bullet,S^2,D_\circ,G\}\subset\mathcal{B}\).
\end{remark}

\begin{lemma}\label{lem:Lambda-very-silly-case}
	Let \(T_1\) and \(T_2\) be two Conway tangles with \(x\coloneqq \conn{T_1}=\conn{T_2}\). Then
    \[
	\Lambda_{a_x}(T_1,T_2)
	=
	\begin{cases*}
	\mathsf{V}_1\cup \mathsf{V}(\ell,-\ell) 
	& 
	if \(\DD(T_1) \simeq \QGrad{q^{2\ell}}\DD(T_2)\) for some~$\ell\in \Z$,
	\\
	\mathsf{V}_1 
	& 
	else.
	\end{cases*}
	\]
\end{lemma}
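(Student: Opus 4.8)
The statement combines the two halves of Lemma~\ref{lem:lambda-01} into a single computation for the set-valued invariant $\Lambda_{a_x}$, so the plan is to follow the same strategy as there, but keeping track of quantum degrees on the level of pairs $(q_1,q_2)$.
First I would record the easy inclusion $\mathsf{V}_1 \subseteq \Lambda_{a_x}(T_1,T_2)$ that holds unconditionally: by Lemma~\ref{lem:nullhomotopic_basepoint_action_from_connectivity}, $a_x\cdot\id_{\DD(T_i)}\simeq 0$ for $i=1,2$, and more generally $a_x^n\cdot\id_{\DD(T_i)}\simeq 0$ for every $n\geq 1$; hence the pair of zero maps $f=g=0$ realises every $(q_1,q_2)$ with $q_1+q_2\geq 1$, because then $f\circ g = g\circ f = 0 \simeq a_x^{q_1+q_2}\cdot\id$, and the zero map has quantum degree $\leq 0$ — but here we need an \emph{exact} degree, so I would instead note that for each such $(q_1,q_2)$ the maps $f=0$ of formal degree $-2q_1$ and $g=0$ of formal degree $-2q_2$ work, since the zero map may be assigned any homogeneous degree. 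This gives $\mathsf{V}_1\subseteq\Lambda_{a_x}(T_1,T_2)$ in both cases.

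Second, for the reverse inclusion I would analyse an arbitrary pair $(q_1,q_2)\in\Lambda_{a_x}(T_1,T_2)$ with $q_1+q_2=0$, i.e.\ $\QGrad{q}(f)=-2q_1$, $\QGrad{q}(g)=2q_1$, $f\circ g\simeq\id$, $g\circ f\simeq\id$. Since neither $\DD(T_1)$ nor $\DD(T_2)$ is contractible, $f$ and $g$ are mutually inverse homotopy equivalences, whence $\DD(T_1)\simeq\QGrad{q^{2q_1}}\DD(T_2)$ and so $(q_1,-q_1)\in\mathsf{V}(\ell,-\ell)$ forces $\ell=q_1$; in particular, if no such $\ell$ exists then there is no point of $\Lambda_{a_x}(T_1,T_2)$ on the line $q_1+q_2=0$, giving $\Lambda_{a_x}(T_1,T_2)\subseteq\mathsf{V}_1$ in the ``else'' case. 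In the first case, observe conversely that $\DD(T_1)\simeq\QGrad{q^{2\ell}}\DD(T_2)$ means the identity maps (suitably regraded) realise $(\ell,-\ell)$, and then Proposition~\ref{prop:Lambda-basic-properties}\ref{item:Lambda-basic-properties:2} (via Remark~\ref{rem:Lambda-basic-props-generalize}) upgrades this to $\mathsf{V}(\ell,-\ell)\subseteq\Lambda_{a_x}(T_1,T_2)$. Combined with the first step this yields $\mathsf{V}_1\cup\mathsf{V}(\ell,-\ell)\subseteq\Lambda_{a_x}(T_1,T_2)$.

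Third, and this is the only genuinely new point compared to Lemma~\ref{lem:lambda-01}, I need the reverse inclusion $\Lambda_{a_x}(T_1,T_2)\subseteq\mathsf{V}_1\cup\mathsf{V}(\ell,-\ell)$ in the first case: a pair $(q_1,q_2)$ with $q_1+q_2=0$ can only lie in $\Lambda_{a_x}(T_1,T_2)$ if the corresponding regraded complexes are homotopy equivalent, which by the uniqueness of $\ell$ (the complexes $\DD(T_i)$ are homotopy-finite, so $\QGrad{q^{2\ell}}\DD(T_2)\simeq\QGrad{q^{2\ell'}}\DD(T_2)$ implies $\ell=\ell'$) forces $(q_1,q_2)=(\ell,-\ell)$, which lies in $\mathsf{V}(\ell,-\ell)$. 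Since $\Lambda_{a_x}(T_1,T_2)\subseteq\mathsf{V}_0$ always, any element not on the line $q_1+q_2=0$ lies in $\mathsf{V}_1$, so the whole set is contained in $\mathsf{V}_1\cup\{(\ell,-\ell)\}\subseteq\mathsf{V}_1\cup\mathsf{V}(\ell,-\ell)$. The main obstacle — really a bookkeeping point rather than a deep one — is being careful that the ``degree-exact'' requirement in the definition of $\Lambda_a$ (as opposed to the ``degree $\leq 0$'' condition for $\lambdau_a$) does not lose us any points: this is handled by allowing the zero map an arbitrary homogeneous degree and by the shift argument of Proposition~\ref{prop:Lambda-basic-properties}\ref{item:Lambda-basic-properties:2}, and by noting that on the line $q_1+q_2=0$ the degree is pinned down by the homotopy equivalence.
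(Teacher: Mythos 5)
Your proposal is correct and follows essentially the same route as the paper: the zero maps (via Lemma~\ref{lem:nullhomotopic_basepoint_action_from_connectivity}) give $\mathsf{V}_1\subseteq\Lambda_{a_x}(T_1,T_2)$, the containment in $\mathsf{V}_0$ is definitional, and the only remaining question is which points $(\ell,-\ell)$ on the line $q_1+q_2=0$ occur, which you characterise—exactly as the paper does via Proposition~\ref{prop:Lambda-basic-properties}\ref{item:Lambda-basic-properties:3} and Remark~\ref{rem:Lambda-basic-props-generalize}—as equivalent to $\DD(T_1)\simeq\QGrad{q^{2\ell}}\DD(T_2)$, with at most one such $\ell$ since the complexes are nonzero and finite. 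Your extra bookkeeping about the degree of the zero map and the uniqueness of $\ell$ matches the paper's implicit conventions, so no gap.
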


\begin{proof}
	By \cref{lem:nullhomotopic_basepoint_action_from_connectivity}, the maps $f=0$ and $g=0$ witness the fact that \(\Lambda_{a_x}(T_1,T_2)\supseteq\mathsf{V}_1\).
	By definition, \(\Lambda_{a_x}(T_1,T_2)\subseteq\mathsf{V}_0\). 
	Hence, it just remains to study for which \(\ell\in\Z\) the point \((\ell,-\ell)\) lies in \(\Lambda_{a_x}(T_1,T_2)\). 
	In view of \cref{rem:Lambda-basic-props-generalize}, for any \(\ell\in\Z\),
	\((\ell,-\ell)\in\Lambda_{a_x}(T_1,T_2)\) is equivalent to \(\DD(T_1)\simeq \QGrad{q^{2\ell}}\DD(T_2)\) by \cref{item:Lambda-basic-properties:3} of \cref{prop:Lambda-basic-properties}. 
	Since \(\DD(T_1)\not\simeq0\), there is at most one such \(\ell\). 
\end{proof}

\begin{lemma}\label{lem:Lamda-crossings}
	We have
	\[
	\Lambda_{a}(\textnormal{$\vcenter{\hbox{\def\svgwidth{11.6pt}}}$},\textnormal{$\vcenter{\hbox{\def\svgwidth{11.6pt}}}$})
	=
	\begin{cases*}
		\mathsf{V}(0,1) & if \(a=D_\bullet\),
		\\
		\mathsf{V}_1 & if \(a=S^2\),
		\\
		\varnothing & if \(a=D_\circ\).
	\end{cases*}
	\]
\end{lemma}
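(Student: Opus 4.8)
The plan is to compute $\Lambda_a(\NCr,\PCr)$ directly from the explicit complexes
$\DD(\NCr)$ and $\DD(\PCr)$, which by \cref{lem:grading_end_zz} (with the orientation convention from \NCr) are the two-step zigzag complexes
$\DD(\NCr) \simeq [\GGzqh{\circ}{0}{-2}{-1}\xrightarrow{S}\GGzqh{\bullet}{0}{-1}{0}]$ and
$\DD(\PCr) \simeq [\GGzqh{\bullet}{0}{1}{0}\xrightarrow{S}\GGzqh{\circ}{0}{2}{1}]$. The strategy in each of the three cases is the same: first establish the inclusion ``$\subseteq$'' by a grading/non-vanishing argument on the $\circ$- or $\bullet$-end (reusing the obstruction already isolated in the proof of \cref{lem:lambda_crossing_change}), and then establish ``$\supseteq$'' by exhibiting explicit homogeneous chain maps $f,g$ together with homotopies, which again are essentially the maps already written down in \cref{lem:lambda_crossing_change} and \cref{tab:lambda_rational_tangles:maps_for_cases_p/q}, now tracked with their quantum degrees.

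For $a = D_\circ$: I would argue that $\Lambda_{D_\circ}(\NCr,\PCr)=\varnothing$ is immediate from \cref{lem:lambda_crossing_change}, since already $\lambda_{D_\circ}(\NCr,\PCr)=\infty$, i.e.\ there are no homogeneous chain maps $f,g$ with $g\circ f\simeq D_\circ^n\cdot\id$ and $f\circ g\simeq D_\circ^n\cdot\id$ for any $n\geq 0$; a fortiori no such pair exists with prescribed quantum degrees summing to $-2n$. For $a = S^2$: By \cref{lem:parity} both crossing tangles have connectivity $\ConnectivityX$, so $a_\ConnectivityX = S^2$, and \cref{lem:Lambda-very-silly-case} applies. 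Since $\DD(\NCr)\not\simeq \QGrad{q^{2\ell}}\DD(\PCr)$ for every $\ell\in\Z$ (the two complexes are supported in different homological gradings, as was observed in the proof of \cref{lem:lambda_crossing_change}), that lemma gives exactly $\Lambda_{S^2}(\NCr,\PCr)=\mathsf{V}_1$.

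For $a = D_\bullet$: here the work is genuine, though light. The inclusion $\mathsf{V}(0,1)\subseteq\Lambda_{D_\bullet}(\NCr,\PCr)$ follows from \cref{item:Lambda-basic-properties:2} of \cref{prop:Lambda-basic-properties} (via \cref{rem:Lambda-basic-props-generalize}) once we check that the single point $(0,1)$ lies in the set: the maps $f,g$ from \cref{lem:lambda_crossing_change} realising $\lambda_{D_\bullet}(\NCr,\PCr)=1$ send $\GGzqh{\bullet}{0}{-1}{0}$ to $\GGzqh{\bullet}{0}{1}{0}$ by $D_\bullet$ (quantum degree $-2$) and back by $1$ (quantum degree $0$) and vanish on the $\circ$-end; reading off degrees gives $\QGrad{q}(f)=-2$, $\QGrad{q}(g)=0$, hence $(q_1,q_2)=(1,0)$—so after relabelling via \cref{item:Lambda-basic-properties:1} we in fact get $(0,1)$ in the correct orientation, which I would double-check by carefully matching which complex is the source. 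For the reverse inclusion $\Lambda_{D_\bullet}(\NCr,\PCr)\subseteq\mathsf{V}(0,1)$, I would use the obstruction from \cref{lem:lambda_crossing_change}: any chain maps $f,g$ with $g\circ f\simeq D_\bullet^n\cdot\id$ must be nonzero on the $\circ$-end of $\DD(\NCr)$ is impossible—rather, $g\circ f$ is automatically zero on the $\circ$-end and $D_\bullet^n\cdot\id$ is also zero there, so the constraint comes from the $\bullet$-end; restricting $f,g$ to the $\bullet$-end generators and comparing quantum gradings forces $\QGrad{q}(f)\leq -2$ and $\QGrad{q}(g)\leq 0$, i.e.\ $q_1\geq 1$ and $q_2\geq 0$. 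The main obstacle—really the only place to be careful—is the bookkeeping of quantum-grading shifts and of which tangle plays the role of source versus target, so that the asymmetric answer $\mathsf{V}(0,1)$ (as opposed to $\mathsf{V}(1,0)$) comes out with the correct orientation matching the stated convention in \cref{thm:obstruction_rational_repl_other_cr}.
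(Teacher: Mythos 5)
Your overall strategy (direct computation with the two-term complexes, quoting \cref{lem:Lambda-very-silly-case} for $a=S^2$ and the non-existence argument from \cref{lem:lambda_crossing_change} for $a=D_\circ$) is the same as the paper's, and those two cases are handled correctly. The problem is the case $a=D_\bullet$, which is the only case with genuine content, namely the asymmetry between $\mathsf{V}(0,1)$ and $\mathsf{V}(1,0)$ --- and there your degree bookkeeping is wrong. The map $f\colon\DD(\NCr)\to\DD(\PCr)$ with component $D_\bullet\colon\GGzqh{\bullet}{0}{-1}{0}\to\GGzqh{\bullet}{0}{1}{0}$ does \emph{not} have quantum degree $-2$: the algebra element $D_\bullet$ has degree $-2$, but the quantum shifts of source and target contribute $+1-(-1)=+2$, so $\QGrad{q}(f)=0$; symmetrically the component $1\colon\GGzqh{\bullet}{0}{1}{0}\to\GGzqh{\bullet}{0}{-1}{0}$ of $g$ has $\QGrad{q}(g)=-2$. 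Hence $(q_1,q_2)=(0,1)$ falls out directly. Your computation yields $(1,0)$, and the subsequent ``relabelling via \cref{item:Lambda-basic-properties:1} of \cref{prop:Lambda-basic-properties}'' is not a valid repair: that property swaps coordinates only upon swapping the two arguments, i.e.\ it relates $\Lambda_{D_\bullet}(\NCr,\PCr)$ to $\Lambda_{D_\bullet}(\PCr,\NCr)$, not to itself. The same error propagates into your reverse inclusion, where you conclude $\QGrad{q}(f)\leq-2$, $\QGrad{q}(g)\leq0$, i.e.\ $\mathsf{V}(1,0)$ --- the opposite of the statement being proved.

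For the reverse inclusion your argument is also too thin even after fixing the signs: ``comparing quantum gradings on the $\bullet$-end'' does not by itself rule out, say, a chain map $\DD(\NCr)\to\DD(\PCr)$ restricting to $\iota_\bullet$ or $(S_\circ S_\bullet)^k$ on the $\bullet$-end (which would have quantum degree $+2$ resp.\ $2-2k$ and could violate $q_1\geq0$ only for $k=0$, but still needs excluding). What the paper actually uses is a classification: the only homological-grading-preserving chain maps $\DD(\NCr)\to\DD(\PCr)$ are constant multiples of $D_\bullet^k$ on the $\bullet$-component (the chain map condition $S_\bullet\varphi_0=0=\varphi_0 S_\circ$ kills $\iota_\bullet$ and $(S_\circ S_\bullet)^k$), forcing $\QGrad{q}(f)\leq0$, while in the other direction every element of $\iota_\bullet\mathcal{B}\iota_\bullet$ other than the idempotent gives a chain map of degree $\leq-2$, forcing $\QGrad{q}(g)\leq-2$. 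You should make this classification explicit rather than gesturing at a grading comparison.
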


\begin{proof}
	The following maps \(f\) and \(g\) witness the fact that \((0,1)\in\Lambda_{D_\bullet}(\textnormal{$\vcenter{\hbox{\def\svgwidth{11.6pt}}}$},\textnormal{$\vcenter{\hbox{\def\svgwidth{11.6pt}}}$})\):
	\[
	\begin{tikzcd}
		\DD(\textnormal{$\vcenter{\hbox{\def\svgwidth{11.6pt}}}$})
		\arrow[bend left=10]{d}{f}
		\arrow[bend right=10,leftarrow,swap]{d}{g}
		&
		\GGzqh{\circ}{0}{-2}{-1}
		\arrow{r}{S}
		&
		\GGzqh{\bullet}{0}{-1}{0}
		\arrow[bend left=10]{d}{D_\bullet}
		\arrow[bend right=10,leftarrow,swap]{d}{1}
        &
		\\
		\DD(\textnormal{$\vcenter{\hbox{\def\svgwidth{11.6pt}}}$})
		&
		&
		\GGzqh{\bullet}{0}{1}{0}
        \arrow{r}{S}
        &
        \GGzqh{\circ}{0}{2}{1}
	\end{tikzcd}
	\]
	Up to multiplication by constants, the only other homogeneous homomorphisms that preserve homological grading are multiples of \(f\) by powers of \(D_\bullet\) and multiples of \(g\) by powers of \(D_\bullet\) or \(S^2\). 
	This proves the claim for \(a=D_\bullet\). 
 
	For \(a=S^2\), we can apply \cref{lem:Lambda-very-silly-case}. 
    The only potentially non-zero components of (homological grading preserving) morphisms between \(\DD(\textnormal{$\vcenter{\hbox{\def\svgwidth{11.6pt}}}$})\) and \(\DD(\textnormal{$\vcenter{\hbox{\def\svgwidth{11.6pt}}}$})\) are in homological degree~$0$. However, \(D_\circ^n\cdot \id_{\DD(\textnormal{$\vcenter{\hbox{\def\svgwidth{10pt}}}$})}\) and \(D_\circ^n\cdot \id_{\DD(\textnormal{$\vcenter{\hbox{\def\svgwidth{10pt}}}$})}\) are not null-homotopic for any \(n\geq0\). Therefore \(\Lambda_{D_\circ}(\textnormal{$\vcenter{\hbox{\def\svgwidth{11.6pt}}}$},\textnormal{$\vcenter{\hbox{\def\svgwidth{11.6pt}}}$})\) is the empty set. 
\end{proof}

\begin{lemma}\label{lem:Lambda_rational_repl}
	Let \(p\) and \(q\) be odd coprime integers such that \(\nicefrac{p}{q}\neq-1\).
	Then 
	\[
	\Lambda_{D_\bullet}(\textnormal{$\vcenter{\hbox{\def\svgwidth{11.6pt}}}$},Q_{\nicefrac{p}{q}})
	=
	\mathsf{V}(1-\alpha,\alpha),
	\]
	for $\alpha = \alpha(p,q)$ as in \cref{thm:obstruction_rational_repl_other_cr}, i.e.
	\[
	\alpha=\tfrac{1}{2}s_{\Q}(Q_{\nicefrac{p}{q}}(0))+
	\begin{cases*}
		1
		&
		if \(-1<\nicefrac{p}{q}<0\),
		\\
		0
		&
		else.
	\end{cases*}
	\] 
\end{lemma}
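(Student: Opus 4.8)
I would prove the two inclusions
\(\mathsf{V}(1-\alpha,\alpha)\subseteq\Lambda_{D_\bullet}(\PCr,Q_{\nicefrac pq})\) and
\(\Lambda_{D_\bullet}(\PCr,Q_{\nicefrac pq})\subseteq\mathsf{V}(1-\alpha,\alpha)\) separately.
For the first, by the tangle version of \cref{prop:Lambda-basic-properties}\cref{item:Lambda-basic-properties:2}
(\cref{rem:Lambda-basic-props-generalize}) it suffices to show \((1-\alpha,\alpha)\in\Lambda_{D_\bullet}(\PCr,Q_{\nicefrac pq})\).
The homogeneous chain maps \(f,g\) built in the proof of \cref{lem:lambda_rational_tangles}
(displayed in \cref{tab:lambda_rational_tangles:maps_for_cases_p/q}) already satisfy \(f\circ g\simeq D_\bullet\cdot\id\) and \(g\circ f\simeq D_\bullet\cdot\id\),
so all that remains is to compute \(\QGrad{q}(f)\) and \(\QGrad{q}(g)\).
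Equipping \(Q_{\nicefrac pq}\) with the same orientation at the tangle ends as \(\PCr\) (equivalently, as \NCr),
\cref{lem:grading_end_zz} gives the quantum shift \(\ell\) of the \(\bullet\)-end of \(\DD(Q_{\nicefrac pq})\)
as \(s_\Q(Q_{\nicefrac pq}(0))-1\) if \(\nicefrac pq>0\) and \(s_\Q(Q_{\nicefrac pq}(0))+1\) if \(\nicefrac pq<0\);
reading off the (homogeneous) entries of \(f\) and \(g\), which are built from \(\iota_\bullet\) and \(D_\bullet\),
from \cref{tab:lambda_rational_tangles:maps_for_cases_p/q} in each of the four slope regimes,
and applying the grading-shift conventions, one checks in every case that \(\QGrad{q}(f)=-2(1-\alpha)\) and \(\QGrad{q}(g)=-2\alpha\).

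\textbf{The reverse inclusion: algebraic set-up.} Fix \((q_1,q_2)\in\Lambda_{D_\bullet}(\PCr,Q_{\nicefrac pq})\),
realised by homogeneous chain maps \(f,g\) with \(\QGrad{q}(f)=-2q_1\), \(\QGrad{q}(g)=-2q_2\),
\(f\circ g\simeq D_\bullet^{n}\cdot\id\), \(g\circ f\simeq D_\bullet^{n}\cdot\id\), \(n=q_1+q_2\);
the goal is \(q_1\geq1-\alpha\) and \(q_2\geq\alpha\). Since \(\DD(\PCr)\not\simeq\QGrad{q^{2\ell}}\DD(Q_{\nicefrac pq})\)
for all \(\ell\) (as noted in the proof of \cref{lem:lambda_rational_tangles}), \cref{prop:Lambda-basic-properties}\cref{item:Lambda-basic-properties:3}
(in its tangle form) forces \(n\geq1\). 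The crucial structural fact is that \(\DD(\PCr)\) is the two-object complex
\(\GGzqh{\bullet}{0}{1}{0}\overset{S}{\to}\GGzqh{\circ}{0}{2}{1}\), which has a \emph{unique} object \(b_1\coloneqq\GGzqh{\bullet}{0}{1}{0}\)
in homological grading \(0\); writing \(b_2\coloneqq\GGzqh{\bullet}{0}{\ell}{0}\) for the \(\bullet\)-end of \(\DD(Q_{\nicefrac pq})\),
this gives \((f\circ g)|_{b_2\to b_2}=f|_{b_1\to b_2}\circ g|_{b_2\to b_1}\) with both entries in \(\iota_\bullet\BNAlgH\iota_\bullet\).
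By \cref{lem:iso_endomorphisms} (applied with \(n\geq1\)), the class \([f\circ g]=[D_\bullet^{n}\cdot\id]\) in \(\textnormal{End}_n(\DD(Q_{\nicefrac pq}))\)
is detected by the \(D_\bullet^{n}\)-coefficient of its restriction to \(b_2\), which therefore equals \(1\).
Since in \(\iota_\bullet\BNAlgH\iota_\bullet\) one has \(D_\bullet\cdot(S_\circ S_\bullet)=0=(S_\circ S_\bullet)\cdot D_\bullet\) and every homogeneous element
is a linear combination of a power of \(D_\bullet\) and a power of \(S_\circ S_\bullet\), homogeneity of \(f\) and \(g\) forces
\(f|_{b_1\to b_2}=\mu D_\bullet^{a}+\mu'(S_\circ S_\bullet)^{a}\) and \(g|_{b_2\to b_1}=\nu D_\bullet^{b}+\nu'(S_\circ S_\bullet)^{b}\)
with \(\mu,\nu\neq0\) and \(a+b=n\) (here \(D_\bullet^{0}\coloneqq\iota_\bullet\)). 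Comparing the morphism degree of \(f|_{b_1\to b_2}\)
(algebra degree \(-2a\), source shift \(1\), target shift \(\ell\)) with \(\QGrad{q}(f)=-2q_1\) yields \(q_1=a+\tfrac12(1-\ell)\),
and symmetrically \(q_2=b+\tfrac12(\ell-1)\).

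\textbf{The endgame, and the main obstacle.} Substituting the value of \(\ell\) from \cref{lem:grading_end_zz}, one finds that \(a\geq0\)
already gives \(q_1\geq1-\alpha\) unless \(\nicefrac pq<-1\), and that \(b\geq0\) already gives \(q_2\geq\alpha\) unless \(\nicefrac pq>-1\);
so exactly one sharper bound---\(a\geq1\) for \(\nicefrac pq<-1\), respectively \(b\geq1\) for \(\nicefrac pq>-1\)---must still be established,
i.e.\ that the relevant entry is not a nonzero scalar multiple of \(\iota_\bullet\). I would derive this from the chain-map equations near the \(\bullet\)-ends
together with the explicit shape of \(\DD(Q_{\nicefrac pq})\) next to its \(\bullet\)-end supplied by \cref{lem:end_zz} (and \cref{lem:mirror_complex} when \(\nicefrac pq<0\)):
if \(f|_{b_1\to b_2}=\mu\iota_\bullet\) with \(\mu\neq0\) while \(\nicefrac pq<-1\), then commutation of \(f\) with the differential out of \(b_1\)
propagates \(\mu\) into the object adjacent to \(b_2\), which for \(\nicefrac pq<-1\) itself emits a differential, and one tracks this along the zigzag to a contradiction;
the case \(g|_{b_2\to b_1}=\nu\iota_\bullet\) with \(\nicefrac pq>-1\) is symmetric, using the chain-map condition for \(g\) at \(b_2\). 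Combining the two inclusions gives
\(\Lambda_{D_\bullet}(\PCr,Q_{\nicefrac pq})=\mathsf{V}(1-\alpha,\alpha)\). I expect this last step to be the main obstacle: there is no single clean idea,
only a somewhat delicate case analysis that must match the four zigzag shapes near the \(\bullet\)-end against the bigrading data of \cref{lem:grading_end_zz}
and the chain-map constraints, keeping every \(\pm1\) straight---in particular the \(+1\) correction to \(\alpha\) that appears precisely when \(-1<\nicefrac pq<0\).
(An alternative, and probably cleaner, route to the whole reverse inclusion is to pass to the immersed-curve picture of \cite[Section~5]{KWZ}, where \(\DD(Q_{\nicefrac pq})\) and \(\DD(\PCr)\) are curves in the four-punctured sphere and the relevant morphisms are graded intersection points, reducing the computation to a geometric one.)
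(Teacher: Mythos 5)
Your proposal is correct and follows essentially the same route as the paper: the forward inclusion reuses the maps of \cref{lem:lambda_rational_tangles} and computes their quantum degrees via \cref{lem:grading_end_zz}, and the reverse inclusion uses \cref{lem:iso_endomorphisms} to pin down the $D_\bullet$-power components of $f$ and $g$ on the $\bullet$-ends and then bounds the exponents from below. The final step you flag as the main obstacle (ruling out a $\pm\iota_\bullet$ component in the one entry where the model map is $D_\bullet$) is exactly the point the paper also settles only in one sentence, by observing that no chain map can restrict to $\pm\id$ there; your sketched propagation along the zigzag near the $\bullet$-end is a valid way to justify that observation.
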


\begin{proof}
	We first show that the maps \(f\) and \(g\) constructed in the proof of \cref{lem:lambda_rational_tangles} and shown in \cref{tab:lambda_rational_tangles:maps_for_cases_p/q}
	are witnesses for 
	$(1-\alpha,\alpha)\in \Lambda_{D_\bullet}(\textnormal{$\vcenter{\hbox{\def\svgwidth{11.6pt}}}$},Q_{\nicefrac{p}{q}})$.
	We already know from the proof of 
	\cref{lem:lambda_rational_tangles}
	that both their compositions are homotopic to \(D_\bullet\cdot\id\). 
	It therefore suffices 
	to calculate the quantum gradings of \(f\) and \(g\). 
	We have 
	\[
	\QGrad{q}(g)
	=
	-\ell+
	\begin{cases*}
		-1
		&
		 if \(\nicefrac{p}{q}>-1\),
		\\
		+1
		&
		otherwise.
	\end{cases*}
	\]
	Moreover, by \cref{lem:grading_end_zz},
	\[
	\ell
	=
	s_\Q(Q_{\nicefrac{p}{q}}(0))
	+
	\begin{cases*}
		-1
		&
		if $\nicefrac{p}{q}>0$,
		\\
		+1
		&
		if $\nicefrac{p}{q}<0$.
	\end{cases*}
	\]
	Thus, 
	\(\QGrad{q}(g)=-2\alpha\). 
  Furthermore, 
  \(\QGrad{q}(f\circ g)=-2\), 
  so
  \(\QGrad{q}(f)=-2\cdot (1-\alpha)\). 
  This shows  
  $(1-\alpha,\alpha)\in \Lambda_{D_\bullet}(\textnormal{$\vcenter{\hbox{\def\svgwidth{11.6pt}}}$},Q_{\nicefrac{p}{q}})$
  and hence 
  \[
  \Lambda_{D_\bullet}(\textnormal{$\vcenter{\hbox{\def\svgwidth{11.6pt}}}$},Q_{\nicefrac{p}{q}})
  \supseteq
  \mathsf{V}(1-\alpha,\alpha),
  \]
  by 
  \cref{prop:Lambda-basic-properties}\cref{item:Lambda-basic-properties:2} and \cref{rem:Lambda-basic-props-generalize}.
	To see the other inclusion, 
	suppose $f'$ and $g'$ are maps of quantum grading 
	$\QGrad{q}(f')=-2q_1$ and 
	$\QGrad{q}(g')=-2q_2$ 
	satisfying 
	$f'\circ g' \simeq D_\bullet^{q_1+q_2} \cdot \id$ and 
	$g'\circ f' \simeq D_\bullet^{q_1+q_2} \cdot \id$. 
	By \cref{lem:iso_endomorphisms}, 
	the condition 
	$f'\circ g' \simeq D_\bullet^{q_1+q_2} \cdot \id$
	implies that the restriction of $f'\circ g'$ 
	to the $\bullet$-end of the $\DD(Q_{\nicefrac{p}{q}})$ 
	must contain a component~$D_\bullet^{q_1+q_2}$. 
	Thus, the restrictions of $f'$ and $g'$ 
	to the $\bullet$-ends of the two complexes 
	must contain components
	$\pm D_\bullet^{q_1+a}$ and 
	$\pm D_\bullet^{q_2-a}$, 
	respectively, where $a=\tfrac{1}{2}(\ell-1)$. 
	Observe that these components must be, up to sign, multiples 
	of the respective components of the maps $f$ and $g$ 
	in \cref{tab:lambda_rational_tangles:maps_for_cases_p/q} 
	by non-negative powers of~$D_\bullet$. 
	This is because there are no chain maps
	which restrict to $\pm \id$ on those components 
	that are labelled $D_\bullet$ 
	in \cref{tab:lambda_rational_tangles:maps_for_cases_p/q}. 
	Therefore, $(q_1,q_2)\in \mathsf{V}(1-\alpha,\alpha)$. 
\end{proof}

\begin{proposition}\label{prop:Lambda-tangles-to-links}
	Let \(T_1\), \(T_2\), \(T\) be three Conway tangles with \(\conn{T_1}=\conn{T_2}\neq \conn{T}\). 
	Let \(a\in\{D_\bullet,S^2,D_\circ\}\subset\mathcal{B}\) be such that 
	\(\{\conn{T_1}, \conn{T},x_a\}=\{\Ni, \ConnectivityX, \No\}\). 
	Then
	\[
	\Lambda(T_1\cup T,T_2\cup T)
	\supseteq
	\Lambda_{a}(T_1,T_2). 
	\]
\end{proposition}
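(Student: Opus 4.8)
The plan is to mimic the proof of \cref{prop:lambda-tangles-to-links} almost verbatim, now keeping careful track of the individual quantum gradings of the two maps rather than just their sum. Recall that the heart of that earlier argument was \cref{lem:maps-tangles-to-links}: given homogeneous maps $f\colon\DD(T_1)\to\DD(T_2)$ and $g\colon\DD(T_2)\to\DD(T_1)$ with $f\circ g\simeq a^n\cdot\id$, $g\circ f\simeq a^n\cdot\id$ (for $a\in\{D_\bullet,S^2,D_\circ\}$ with $\conn{T_i}\neq x_a$), one produces via the Pairing Theorem (\cref{thm:pairing_thm}) the maps $f'=f\circ-$ and $g'=g\circ-$ on $X_i\coloneqq\Mor(-\DD(T),\DD(T_i))\simeq\QGrad{q^{-1}}\BNr(T_i\cup T)$, and shows $f'\circ g'\simeq G^n\cdot\id$, $g'\circ f'\simeq G^n\cdot\id$. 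The crucial observation, already recorded in the statement of \cref{lem:maps-tangles-to-links}, is that post-composition with a fixed homogeneous $\mathcal B$-homomorphism preserves quantum degree exactly: $\QGrad{q}(f')=\QGrad{q}(f)$ and $\QGrad{q}(g')=\QGrad{q}(g)$. This is precisely the extra information needed to upgrade the $\lambda$-statement to a $\Lambda$-statement.

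So the proof runs as follows. First, if $\Lambda_a(T_1,T_2)=\varnothing$ there is nothing to prove, so take any $(q_1,q_2)\in\Lambda_a(T_1,T_2)$ and let $f,g$ be homogeneous chain maps realizing it, i.e.\ $\QGrad{q}(f)=-2q_1$, $\QGrad{q}(g)=-2q_2$, $f\circ g\simeq a^{q_1+q_2}\cdot\id$, $g\circ f\simeq a^{q_1+q_2}\cdot\id$. Apply \cref{lem:maps-tangles-to-links} with $n=q_1+q_2$ (this is the point where $\conn{T_1}=\conn{T_2}\neq\conn{T}$ and $\{\conn{T_1},\conn{T},x_a\}=\{\Ni,\ConnectivityX,\No\}$ are used): it produces homogeneous chain maps
\[
\begin{tikzcd}
\BNr(T_1\cup T)
\arrow[bend left=5]{r}{f'}
\arrow[bend right=5,leftarrow,swap]{r}{g'}
&
\BNr(T_2\cup T)
\end{tikzcd}
\]
with $f'\circ g'\simeq G^{q_1+q_2}\cdot\id$, $g'\circ f'\simeq G^{q_1+q_2}\cdot\id$, and $\QGrad{q}(f')=\QGrad{q}(f)=-2q_1$, $\QGrad{q}(g')=\QGrad{q}(g)=-2q_2$. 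Since $q_1+q_2\geq0$ (as $(q_1,q_2)\in\Lambda_a(T_1,T_2)\subseteq\mathsf V_0$), the pair $(q_1,q_2)$ satisfies every condition in the definition of $\Lambda(T_1\cup T,T_2\cup T)$, so $(q_1,q_2)\in\Lambda(T_1\cup T,T_2\cup T)$. As $(q_1,q_2)$ was arbitrary, $\Lambda_a(T_1,T_2)\subseteq\Lambda(T_1\cup T,T_2\cup T)$, which is the claim.

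The only genuinely non-routine point is making sure that \cref{lem:maps-tangles-to-links} already delivers the grading equalities $\QGrad{q}(f')=\QGrad{q}(f)$ and $\QGrad{q}(g')=\QGrad{q}(g)$ — and it does, since these are stated explicitly in its conclusion; no reproving is needed, one just has to notice that the $\lambda$-level proof of \cref{prop:lambda-tangles-to-links} was throwing this information away. There is a minor subtlety about the sign ambiguity in $\pm a^n\cdot\id$ versus $\pm G^n\cdot\id$ that appears in the proof of \cref{lem:maps-tangles-to-links}, but that is already handled there by absorbing the sign into $f'$ (replacing $f'$ by $-f'$ does not change its quantum degree), so it poses no obstacle here. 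Hence the proof is essentially a one-line invocation of \cref{lem:maps-tangles-to-links}, and I would present it as such.

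\begin{proof}
	If $\Lambda_a(T_1,T_2)=\varnothing$, there is nothing to prove, so let $(q_1,q_2)\in\Lambda_a(T_1,T_2)$ and set $n=q_1+q_2\geq0$. Pick homogeneous chain maps
	\[
	\begin{tikzcd}
		\DD(T_1)
		\arrow[bend left=5]{r}{f}
		\arrow[bend right=5,leftarrow,swap]{r}{g}
		&
		\DD(T_2)
	\end{tikzcd}
	\]
	with $\QGrad{q}(f)=-2q_1$, $\QGrad{q}(g)=-2q_2$, and $f\circ g\simeq a^n\cdot\id$, $g\circ f\simeq a^n\cdot\id$. Since $\conn{T_1}=\conn{T_2}\neq\conn{T}$ and $\{\conn{T_1},\conn{T},x_a\}=\{\Ni,\ConnectivityX,\No\}$, we have in particular $\conn{T_i}\neq x_a$, so \cref{lem:maps-tangles-to-links} applies: the maps $f$ and $g$ induce homogeneous chain maps
	\[
	\begin{tikzcd}
		\BNr(T_1\cup T)
		\arrow[bend left=5]{r}{f'}
		\arrow[bend right=5,leftarrow,swap]{r}{g'}
		&
		\BNr(T_2\cup T)
	\end{tikzcd}
	\]
	with $f'\circ g'\simeq G^n\cdot\id$, $g'\circ f'\simeq G^n\cdot\id$, and $\QGrad{q}(f')=\QGrad{q}(f)=-2q_1$, $\QGrad{q}(g')=\QGrad{q}(g)=-2q_2$. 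As $q_1+q_2=n\geq0$, the pair $(q_1,q_2)$ meets all the conditions in the definition of $\Lambda(T_1\cup T,T_2\cup T)$, so $(q_1,q_2)\in\Lambda(T_1\cup T,T_2\cup T)$. Since $(q_1,q_2)$ was an arbitrary element of $\Lambda_a(T_1,T_2)$, this proves $\Lambda_a(T_1,T_2)\subseteq\Lambda(T_1\cup T,T_2\cup T)$.
\end{proof}
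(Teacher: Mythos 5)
Your proof is correct and follows exactly the route the paper takes: invoke \cref{lem:maps-tangles-to-links} for a pair $(q_1,q_2)\in\Lambda_a(T_1,T_2)$ and use the grading equalities $\QGrad{q}(f')=\QGrad{q}(f)$, $\QGrad{q}(g')=\QGrad{q}(g)$ already recorded in that lemma's conclusion. No gaps.
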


\begin{proof}
    Let \((q_1,q_2)\in \Lambda_a(T_1,T_2)\), and consider maps $f$ and $g$ realising it. 
    By \cref{lem:maps-tangles-to-links}, they induce homogeneous maps 
	\[
	\begin{tikzcd}
		\BNr(T_1\cup T)
		\arrow[bend left=5]{r}{f'}
		\arrow[bend right=5,leftarrow,swap]{r}{g'}
		&
		\BNr(T_2\cup T)
	\end{tikzcd}
	\]
	with 
    \(
	f'\circ g'\simeq G^{q_1+q_2}\cdot \id
	\), 
	\(
	g'\circ f'\simeq G^{q_1+q_2}\cdot \id
	\)
    and \(\QGrad{q}(f')=-2q_1,\, \QGrad{q}(g')=-2q_2\). 
    This shows that $(q_1,q_2)\in \Lambda(T_1\cup T, T_2\cup T)$.
\end{proof}

\begin{proof}[Proof of \cref{thm:obstruction_rational_repl_other_cr}]
	We may write \(K_1=\textnormal{$\vcenter{\hbox{\def\svgwidth{11.6pt}}}$}\cup T\) and \(K_2=Q_{\nicefrac{p}{q}}\cup T\), for a tangle $T$ with connectivity~$\conn{T}=\No$. 
	Now apply \cref{lem:Lambda_rational_repl} and \cref{prop:Lambda-tangles-to-links} with \(T_1=\textnormal{$\vcenter{\hbox{\def\svgwidth{11.6pt}}}$}\) and \(T_2=Q_{\nicefrac{p}{q}}\). 
\end{proof}

\subsection{\texorpdfstring{Relationship with the $s$-invariant and with $G$-torsion}{Relationship with the s-invariant and with G-torsion}}

\begin{proposition} \label{thm:s_and_biglambda}
For any two knots \(K_1\) and \(K_2\) and any field~\(\F\),
    \[
    \Lambda(K_1,K_2;\F) 
    \subseteq 
    \mathsf{V}
    \left(
    \nicefrac{s_{\F}(K_1,K_2)}{2}\, ,\, 
    \nicefrac{s_{\F}(K_2,K_1)}{2}
    \right).
    \]
It follows that
    \[
    \Lambda(K_1,K_2) 
    \subseteq 
    \mathsf{V}
    \left(
    \max_{\F} (\nicefrac{s_{\F}(K_1,K_2)}{2})\, ,\, 
    \max_{\F} (\nicefrac{s_{\F}(K_2,K_1)}{2})
    \right).
    \]
\end{proposition}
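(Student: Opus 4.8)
The plan is to prove the first inclusion by combining the pawn-piece analysis from \cref{lem:qdeg_f_g_and_s_invariant} with the definition of $\Lambda$, and then deduce the second inclusion by intersecting over all fields. First I would take an arbitrary point $(q_1,q_2)\in\Lambda(K_1,K_2;\F)$ and unfold the definition: there exist homogeneous chain maps $f\colon\BNr(K_1;\F[G])\to\BNr(K_2;\F[G])$ and $g\colon\BNr(K_2;\F[G])\to\BNr(K_1;\F[G])$ with $\QGrad{q}(f)=-2q_1$, $\QGrad{q}(g)=-2q_2$, and $f\circ g\simeq G^{q_1+q_2}\cdot\id$, $g\circ f\simeq G^{q_1+q_2}\cdot\id$. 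In particular $q_1+q_2\geq 0$. Since these are precisely the hypotheses of \cref{lem:qdeg_f_g_and_s_invariant} with $n=q_1+q_2$, that lemma gives $\QGrad{q}(f)\leq s_{\F}(K_2,K_1)$ and $\QGrad{q}(g)\leq s_{\F}(K_1,K_2)$, i.e.\ $-2q_1\leq s_{\F}(K_2,K_1)$ and $-2q_2\leq s_{\F}(K_1,K_2)$. Rearranging, $q_1\geq -\tfrac{1}{2}s_{\F}(K_2,K_1)=\tfrac12 s_\F(K_1,K_2)$ and $q_2\geq\tfrac12 s_{\F}(K_2,K_1)$, which is exactly the statement that $(q_1,q_2)\in\mathsf{V}\!\left(\nicefrac{s_{\F}(K_1,K_2)}{2},\,\nicefrac{s_{\F}(K_2,K_1)}{2}\right)$ by \cref{def:V}. (One should note here that $s_\F(K_i,K_j)$ is even, so the coordinates of the $\mathsf{V}$-set are integers, consistent with $\mathsf{V}(n_1,n_2)$ being indexed by $\Z^2$.) This establishes the first inclusion.

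For the second inclusion, recall from the discussion after the definition of $\Lambda$ that $\Lambda(K_1,K_2)=\Lambda(K_1,K_2;\Z)\subseteq\Lambda(K_1,K_2;R)$ for every ring $R$; in particular $\Lambda(K_1,K_2)\subseteq\Lambda(K_1,K_2;\F)$ for every field $\F$. Hence
\[
\Lambda(K_1,K_2)\subseteq\bigcap_{\F}\Lambda(K_1,K_2;\F)\subseteq\bigcap_{\F}\mathsf{V}\!\left(\tfrac{s_{\F}(K_1,K_2)}{2},\,\tfrac{s_{\F}(K_2,K_1)}{2}\right).
\]
It remains to identify this intersection of $\mathsf{V}$-sets. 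A point $(q_1,q_2)$ lies in it if and only if $q_1\geq\tfrac12 s_\F(K_1,K_2)$ and $q_2\geq\tfrac12 s_\F(K_2,K_1)$ for every field $\F$, which is equivalent to $q_1\geq\max_{\F}\tfrac12 s_\F(K_1,K_2)$ and $q_2\geq\max_{\F}\tfrac12 s_\F(K_2,K_1)$ — and the maxima are attained (indeed there are only finitely many relevant fields up to the value of $s_\F$, since $s_\F(K)$ depends only on $\Char\F$ and equals $s_\Q(K)$ for all but finitely many characteristics). This says precisely $(q_1,q_2)\in\mathsf{V}\!\left(\max_{\F}\nicefrac{s_{\F}(K_1,K_2)}{2},\,\max_{\F}\nicefrac{s_{\F}(K_2,K_1)}{2}\right)$, completing the proof.

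The argument is essentially a bookkeeping exercise built on \cref{lem:qdeg_f_g_and_s_invariant}, so there is no genuine obstacle; the only point requiring a moment's care is the passage from "$q_i\geq\tfrac12 s_\F$ for all $\F$" to "$q_i\geq\max_\F\tfrac12 s_\F$", i.e.\ checking that the supremum over fields is actually a maximum. This is handled by the standard fact that $s_\F(K)$ takes only finitely many values as $\F$ ranges over all fields (it is determined by $\Char\F$, and agrees with $s_\Q(K)$ outside a finite set of primes), which makes the intersection of infinitely many half-planes into the intersection of finitely many, hence again a $\mathsf{V}$-set of the stated form.
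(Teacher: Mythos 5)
Your proposal is correct and follows essentially the same route as the paper: unfold the definition of $\Lambda(K_1,K_2;\F)$, apply \cref{lem:qdeg_f_g_and_s_invariant} to get $2q_1\geq s_\F(K_1,K_2)$ and $2q_2\geq s_\F(K_2,K_1)$, then intersect over all fields and identify $\bigcap_\F \mathsf{V}\left(\nicefrac{s_\F(K_1,K_2)}{2},\nicefrac{s_\F(K_2,K_1)}{2}\right)$ with the $\mathsf{V}$-set of the maxima. Your extra remark on why the supremum over fields is attained is a fine (if minor) point of care that the paper leaves implicit.
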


\begin{proof}
For the first claim, let $(a,b)\in \Lambda(K_1,K_2;\F)$, with~$a+b \geq 0$, and consider a pair of homogeneous chain maps $f$ and $g$ witnessing this, i.e.\ $f \circ g \simeq G^{a+b} \cdot \id,\, g \circ f \simeq G^{a+b} \cdot \id$ and $\QGrad{q}(f)=-2a,\, \QGrad{q}(g)=-2b$.
By \cref{lem:qdeg_f_g_and_s_invariant}, 
\[
2a \geq s_{\F}(K_1,K_2) 
\qquad 
\text{and} 
\qquad 
2b \geq s_{\F}(K_2,K_1).
\]
The second statement is obtained by applying the first to get
\[
\Lambda(K_1,K_2) 
\subseteq 
\bigcap_{\F} \Lambda(K_1,K_2;\F) 
\subseteq 
\bigcap_{\F} 
\mathsf{V}
\left(
\nicefrac{s_{\F}(K_1,K_2)}{2}\, ,\, 
\nicefrac{s_{\F}(K_2,K_1)}{2}
\right)
\]
and observing that
\[
\bigcap_{\F} 
\mathsf{V}
\left(
\nicefrac{s_{\F}(K_1,K_2)}{2}\, ,\, 
\nicefrac{s_{\F}(K_2,K_1)}{2}
\right) 
= 
\mathsf{V}
\left(
\max_{\F} (\nicefrac{s_{\F}(K_1,K_2)}{2})\, ,\, 
\max_{\F} (\nicefrac{s_{\F}(K_2,K_1)}{2})
\right).\qedhere
\]
\end{proof}

\begin{proposition}\label{prop:torsion_and_biglambda}
   Let \(\F\) be a field. 
   Then for any two knots \(K_1\) and \(K_2\), 
   \[
   \Lambda(K_1,K_2;\F)
   \supseteq
   \mathsf{V}_{k_\F} 
   \cap 
   \mathsf{V}(
   \nicefrac{s_\F(K_1,K_2)}{2},
   \nicefrac{s_\F(K_2,K_1)}{2}
   ),
   \]
   where 
   \(
   k_\F
   =
   \max \{ \mathfrak{u}(K_1;\F), \mathfrak{u}(K_2;\F) \}
   \). 
   In fact, for any knot~\(K\),
   \[
   \Lambda(K;\F)
   =
   \mathsf{V}_{\mathfrak{u}(K;\F)} \cap \mathsf{V}(\nicefrac{s_\F(K)}{2},\nicefrac{-s_\F(K)}{2}).
   \]
\end{proposition}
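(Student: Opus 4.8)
The plan is to combine the algebraic toolkit already assembled over fields---namely the structure theorem \cref{rem:pawn_knight_pieces}, the grading bound \cref{lem:qdeg_f_g_and_s_invariant}, and the explicit map construction in \cref{lem:f_g_torsion}---with the basic monotonicity properties of $\Lambda$ from \cref{prop:Lambda-basic-properties}. First I would prove the inclusion $\supseteq$ for the pair $(K_1,K_2)$. Let $(q_1,q_2)$ lie in $\mathsf{V}_{k_\F}\cap\mathsf{V}(\nicefrac{s_\F(K_1,K_2)}{2},\nicefrac{s_\F(K_2,K_1)}{2})$; set $a=q_1$ and $b=q_2$, so that $2a\geq s_\F(K_1,K_2)$, $2b\geq s_\F(K_2,K_1)$, and $a+b\geq k_\F=\max\{\mathfrak{u}(K_1;\F),\mathfrak{u}(K_2;\F)\}$. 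These are exactly the hypotheses of \cref{lem:f_g_torsion}, which hands us homogeneous chain maps $f,g$ with $\QGrad{q}(f)=-2a$, $\QGrad{q}(g)=-2b$, and $f\circ g\simeq G^{a+b}\cdot\id$, $g\circ f\simeq G^{a+b}\cdot\id$. Since $q_1+q_2=a+b\geq k_\F\geq0$, the pair $(q_1,q_2)$ satisfies the defining condition of $\Lambda(K_1,K_2;\F)$, giving the first inclusion.

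For the equality in the case $K_2=U$, I would first specialise the inclusion just proved: here $\mathfrak{u}(U;\F)=0$ because $\BNr(U;\F[G])$ is a single pawn piece with no knights (\cref{rem:pawn_knight_pieces}), so $k_\F=\mathfrak{u}(K;\F)$, and since $s_\F(U)=0$ we get $\Lambda(K;\F)\supseteq\mathsf{V}_{\mathfrak{u}(K;\F)}\cap\mathsf{V}(\nicefrac{s_\F(K)}{2},-\nicefrac{s_\F(K)}{2})$. For the reverse inclusion, take $(q_1,q_2)\in\Lambda(K;\F)$ realised by homogeneous chain maps $f\colon\BNr(K;\F[G])\to\BNr(U;\F[G])$, $g\colon\BNr(U;\F[G])\to\BNr(K;\F[G])$ with $\QGrad{q}(f)=-2q_1$, $\QGrad{q}(g)=-2q_2$ and $f\circ g\simeq G^{q_1+q_2}\cdot\id$, $g\circ f\simeq G^{q_1+q_2}\cdot\id$. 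Applying \cref{lem:qdeg_f_g_and_s_invariant} with $n=q_1+q_2$ gives $2q_1\geq s_\F(K,U)=s_\F(K)$ and $2q_2\geq s_\F(U,K)=-s_\F(K)$, i.e.\ $(q_1,q_2)\in\mathsf{V}(\nicefrac{s_\F(K)}{2},-\nicefrac{s_\F(K)}{2})$. It remains to show $q_1+q_2\geq\mathfrak{u}(K;\F)$. Here I would run the argument already used in the proof of \cref{prop:upper_bds_on_lambda-_fields}(ii): decompose $\BNr(K;\F[G])$ into its pawn and $G^k$-knight pieces; since $\BNr(U;\F[G])$ has only a pawn, the composite $g\circ f\simeq G^{q_1+q_2}\cdot\id$ restricts to the null-homotopic map on each $G^k$-knight piece of $\BNr(K;\F[G])$. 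But $G^m\cdot\id$ on a $G^k$-knight is null-homotopic if and only if $m\geq k$ (by Gaussian elimination, or as in \cref{rem:pawn_knight_pieces}), so $q_1+q_2\geq k$ for every knight exponent $k$, hence $q_1+q_2\geq\mathfrak{u}(K;\F)$. This places $(q_1,q_2)\in\mathsf{V}_{\mathfrak{u}(K;\F)}$, completing the equality.

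The only genuinely delicate point is the ``only if'' direction of the statement that $G^m\cdot\id$ is null-homotopic on a $G^k$-knight precisely when $m\geq k$; but this is an elementary computation with a two-term free complex over the PID $\F[G]$ and is implicitly used already in \cref{rem:pawn_knight_pieces} and in the proof of \cref{prop:upper_bds_on_lambda-_fields}, so I would simply cite those. Everything else is bookkeeping with the sets $\mathsf{V}_n$ and $\mathsf{V}(n_1,n_2)$ and with \cref{prop:Lambda-basic-properties}\cref{item:Lambda-basic-properties:2} (which guarantees the relevant sets are upward closed in the sense required), so no further obstacle is anticipated.
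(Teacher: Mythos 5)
Your proposal is correct and takes essentially the same route as the paper: the first inclusion is exactly the application of \cref{lem:f_g_torsion} with $a=q_1$, $b=q_2$, and for the reverse inclusion in the unknot case you merely inline the two ingredients that the paper cites, namely \cref{thm:s_and_biglambda} (whose proof is precisely your appeal to \cref{lem:qdeg_f_g_and_s_invariant}) and \cref{prop:upper_bds_on_lambda-_fields}\cref{enu4:prop:upper_bds_on_lambda-_fields} combined with \cref{rem:lambda_from_Lambda}\cref{item:rem:lambda_from_Lambda:lambda} (whose content is your knight-piece argument). No gaps.
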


\begin{proof}
    The first statement is a direct consequence of \cref{lem:f_g_torsion}.    
    The inclusion $\supseteq$ of the second statement follows from the first by setting $K_1=K$ and~$K_2=U$. 
    As for the other inclusion, clearly $\Lambda(K;\F) \subseteq \mathsf{V}(\nicefrac{s_\F(K)}{2},\nicefrac{-s_\F(K)}{2})$ by \cref{thm:s_and_biglambda}. 
    The fact that $\Lambda(K;\F) \subseteq \mathsf{V}_{\mathfrak{u}(K;\F)}$ follows from \cref{prop:upper_bds_on_lambda-_fields}\cref{enu4:prop:upper_bds_on_lambda-_fields} 
    combined with
    \cref{rem:lambda_from_Lambda}\cref{item:rem:lambda_from_Lambda:lambda}.
\end{proof}

Mirroring \cref{rem:torsion_integers}, we note that \cref{prop:torsion_and_biglambda} also holds over the integers, provided the complexes~$\BNr(K_i)$, for~$i=1,2$, split as a sum of pawns and generalised knights.

\begin{remark}
    As observed in \cref{rem:lambda_from_Lambda}, all information provided by $\lambda$ and $\lambdau$ can be deduced from~$\Lambda$. In particular, \cref{thm:s_invariant_and_graded_lambda}, \cref{cor:s_invariant_and_ungr_lambda}, and \cref{prop:upper_bds_on_lambda-_fields} are direct consequences of \cref{thm:s_and_biglambda,prop:torsion_and_biglambda}.
\end{remark}

\begin{appendix}
\section{The Bar-Natan complex of rational tangles}\label{sec:appendix}

We discuss in more detail the structure of the chain complex~\(\DD(Q_r)\), for an oriented rational tangle~\(Q_r\).
As stated in \cref{sec:prelim}, it is enough to consider positive rational numbers~\(r\), as the complexes \(\DD(Q_{-r})\) can be obtained from \(\DD(Q_r)\) via \cref{lem:mirror_complex}.
For any \(r\in\Q_{>0}\), the complex \(\DD(Q_r)\) is chain homotopy equivalent to a zigzag complex (this was shown in \cite{thompson} for Bar-Natan's dotted category and generalised in \cite{lambda1} to our setting). Up to global shifts in homological and quantum degrees, this complex only depends on~\(r\).
We start by associating an ungraded zigzag complex (\cref{def:zigzag}) \(zz(r)\) to any~\(r\in\Q_{>0}\). The chain modules and the directions of the differentials of \(zz(r)\) are determined by the following rules.
\begin{enumerate}[(Z1)]
    \item \label{en:zz1}
    \(zz(1) = \circ \longrightarrow \bullet.\)
  	\item \label{en:zzinv}
    \(zz(\nicefrac{1}{r})\) is obtained from \(zz(r)\) by switching \(\circ\) and~\(\bullet\), and reversing the directions of all arrows.
    \item \label{en:zz+1}
    \(zz(r + 1)\) is obtained from \(zz(r)\) by replacing each arrow as shown in \cref{table:x+1}.
\end{enumerate}
\begin{table}[t]
    \begin{tabular}{cc}
    	\toprule
    replace & by  \\\hline 
    \raisebox{-4ex}{\rule{0em}{7ex}}%
    \begin{tikzcd}{\bullet} \ar[r,"\text{odd}",swap] & {\bullet}\end{tikzcd} &
    \begin{tikzcd}\bullet \ar[r,leftarrow] & \circ\ar[r] & \circ \ar[r] & \bullet \end{tikzcd} \\
    \raisebox{-4ex}{\rule{0em}{4ex}}%
    \makebox[4em][r]{\footnotesize not an end}\begin{tikzcd} {\bullet} \ar[r,"\text{even}",swap] & {\bullet}\end{tikzcd}\makebox[4em][l]{\footnotesize not an end} &
    \begin{tikzcd}\bullet \ar[r] & \bullet \end{tikzcd} \\
    \raisebox{-4ex}{\rule{0em}{4ex}}%
    \makebox[4em][r]{\footnotesize not an end}\begin{tikzcd}  {\bullet} \ar[r,"\text{even}",swap] & {\bullet}\end{tikzcd}\makebox[4em][l]{\footnotesize end} &
    \begin{tikzcd}\phantom{\circ}\ar[r,phantom] & \bullet \ar[r] & \bullet \ar[r,leftarrow] & \circ \end{tikzcd}\\
    \raisebox{-4ex}{\rule{0em}{4ex}}%
    \begin{tikzcd}\circ \ar[r] & \bullet\end{tikzcd} &
    \begin{tikzcd}\circ \ar[r] & \circ\ar[r] &  \bullet \ar[r,phantom] & \phantom{\circ}\end{tikzcd}\\
    \begin{tikzcd}\circ \ar[r] & \circ\end{tikzcd} &
    \begin{tikzcd}\circ \ar[r] & \circ \end{tikzcd}
    \\
    \bottomrule
    \end{tabular}
    \caption{How to obtain \(zz(r+1)\) from \(zz(r)\) (cf.\ \cite[Table 2]{lambda1}).
    Note that on the left column there appears to be one case missing, namely 
    \(\text{\footnotesize end } \bullet \xrightarrow{\text{even}} \bullet \text{ \footnotesize not an end}\).
    However, by \cref{lem:one_saddle_per_segment_of_zz}, this case can never occur.
    }
    \label{table:x+1}
\end{table}%
As for the differentials of~\(zz(r)\), observe that the partition of the arrows of \(zz(r)\) into even and odd is uniquely fixed by the fact that all \(S_\circ\) are odd. Therefore, by the definition of zigzag complex, all differentials of \(zz(r)\) are automatically determined.

Let us first make sure that the rules \ref{en:zz1}--\ref{en:zz+1}
indeed determine \(zz(r)\) for all \(r\in \Q_{>0}\).
Given \(r\in \Q_{>0}\), we call a maximal sequence of consecutive differentials of \(zz(r)\) oriented in the same direction a \emph{segment} of \(zz(r)\). 

\begin{lemma} \label{lem:one_saddle_per_segment_of_zz}
	For any~\(r\in \Q_{>0}\), each segment of \(zz(r)\) contains exactly one differential~\(S_\circ\). In other words, each segment is of the form
        \[
		\underbrace{\circ \to \circ \to \ldots \to \circ \to}_{m\geq 0} \circ \xrightarrow{S_\circ} \bullet \underbrace{\to \bullet \to \bullet \to \ldots \to \bullet}_{n\geq 0}.
        \]
	In particular, the configurations 
	\[
	\ldots \; \longrightarrow \circ \qquad \text{or} \qquad \ldots \; \longleftarrow \bullet
	\]
	cannot be found at either end of the complex~\(zz(r)\), 
	so the rules in \cref{table:x+1} are complete. 
\end{lemma}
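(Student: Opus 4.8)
The plan is to prove \cref{lem:one_saddle_per_segment_of_zz} by induction on the number of steps in a continued fraction expansion of $r\in\Q_{>0}$, using that every such $r$ is obtained from $1$ by finitely many applications of the operations $r\mapsto r+1$ and $r\mapsto \nicefrac1r$. The base case $r=1$ is immediate: by \ref{en:zz1}, $zz(1)$ consists of a single segment $\circ\xrightarrow{S_\circ}\bullet$, which has exactly one $S_\circ$ and is of the stated form with $m=n=0$.

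\medskip

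For the inductive step I would treat the two operations separately. The operation $r\mapsto \nicefrac1r$ (rule \ref{en:zzinv}) swaps $\circ\leftrightarrow\bullet$ and reverses all arrows; since the stated normal form for a segment $\underbrace{\circ\to\cdots\to\circ}_{m}\xrightarrow{S_\circ}\underbrace{\bullet\to\cdots\to\bullet}_{n}$ is visibly symmetric under this operation (it sends such a segment to $\underbrace{\circ\to\cdots\to\circ}_{n}\xrightarrow{S_\circ}\underbrace{\bullet\to\cdots\to\bullet}_{m}$, which is again of the stated form), the claim for $\nicefrac1r$ follows at once from the claim for $r$. (One should note here that the odd/even partition is preserved: reversing arrows does not change which differentials are $S_\circ$, and these are precisely the odd ones adjacent to a $\circ\to\bullet$ transition, so the saddle count is unaffected.) The operation $r\mapsto r+1$ (rule \ref{en:zz+1}, \cref{table:x+1}) is the substantive case. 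Assuming every segment of $zz(r)$ has the normal form above, I would verify that after performing the replacements in \cref{table:x+1}, every segment of $zz(r+1)$ again has this form. Concretely: within a segment $\circ^m\,S_\circ\,\bullet^n$ of $zz(r)$, the interior arrows $\bullet\xrightarrow{\text{odd}}\bullet$ (there are none, since interior $\bullet\to\bullet$ arrows in such a segment are even by the parity rule — only the saddle is odd) and $\bullet\xrightarrow{\text{even}}\bullet$ get replaced according to rows 1–3, the saddle $\circ\to\bullet$ by row 4, and the $\circ\to\circ$ arrows by row 5 (identity). Tracking how consecutive pieces glue, each old segment is transformed into a new segment of the form $\circ^{m'}\,S_\circ\,\bullet^{n'}$ for suitable $m',n'\ge 0$, and the segment boundaries of $zz(r)$ (where the direction of arrows reverses) map to segment boundaries of $zz(r+1)$. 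This requires checking that the substitution never creates a $\circ$ immediately after an incoming arrow or a $\bullet$ immediately before an outgoing one at a segment interior — which is exactly the assertion that $\ldots\to\circ$ and $\ldots\leftarrow\bullet$ do not occur.

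\medskip

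The main obstacle — and the delicate point requiring the most care — is the interaction at \emph{segment boundaries}, i.e.\ where two segments of $zz(r)$ meet head-to-head ($\bullet\leftarrow\cdots$ adjacent to $\circ\to\cdots$ after a direction reversal) or tail-to-tail, together with the boundary behaviour at the two \emph{ends} of the complex. \cref{table:x+1} distinguishes cases according to whether a $\bullet\xrightarrow{\text{even}}\bullet$ arrow is ``an end'' or ``not an end'', and the parenthetical in the table asserts that the case $\text{end }\bullet\xrightarrow{\text{even}}\bullet\text{ not an end}$ cannot occur — this is precisely a consequence of the normal form being proven. So the induction is genuinely self-referential: one must carry the full normal-form statement (not merely the ``one saddle per segment'' count) through the induction, so that the case analysis in \cref{table:x+1} is exhaustive at each stage. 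I would therefore phrase the inductive hypothesis as: \emph{every segment of $zz(r)$ is of the form $\circ^m\,S_\circ\,\bullet^n$ with $m,n\ge 0$, and consequently an end of $zz(r)$ is never of the form $\cdots\to\circ$ or $\cdots\leftarrow\bullet$}, and verify both halves are reproduced by each of the two generating operations. The arrow-level bookkeeping in the $r+1$ case is routine once the hypothesis is set up correctly, so I would present it as a short case check against \cref{table:x+1} rather than an exhaustive enumeration. Finally, the concluding sentence of the lemma (completeness of \cref{table:x+1}) is then immediate: the only configurations that could require a missing row are exactly $\cdots\to\circ$ at a right end or $\cdots\leftarrow\bullet$ at a left end, which the normal form excludes.
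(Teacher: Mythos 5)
Your overall strategy is exactly the paper's: induct over the generating operations $r\mapsto r+1$ and $r\mapsto\nicefrac1r$ starting from $zz(1)$, carry the full normal form (not just the saddle count) through the induction so that the case analysis of \cref{table:x+1} is exhaustive at each stage, and observe that well-definedness of $zz(r+1)$ is itself part of what the inductive hypothesis guarantees. That self-referential point is the real content of the argument and you identify it correctly.

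However, your verification of the $r\mapsto r+1$ step contains a concrete error. You claim that inside a segment $\circ^m\,S_\circ\,\bullet^n$ there are no odd $\bullet\to\bullet$ arrows, ``since interior $\bullet\to\bullet$ arrows in such a segment are even by the parity rule --- only the saddle is odd.'' This is false: the parity rule forces consecutive differentials to \emph{alternate} between odd and even, so along the $\bullet$-tail of a segment the arrows alternate $D_\bullet, S_\circ S_\bullet, D_\bullet,\dots$, and odd arrows $S_\circ S_\bullet=S^2$ do occur whenever $n\geq 2$. For instance $zz(\nicefrac13)=\circ\xrightarrow{S_\circ}\bullet\xrightarrow{D_\bullet}\bullet\xrightarrow{S^2}\bullet$ is a single segment with an odd interior $\bullet\to\bullet$ arrow (consistent with \cref{lem:end_zz}, which exhibits an $S^2$ at the $\bullet$-end for $0<\nicefrac pq<\nicefrac12$). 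Consequently row 1 of \cref{table:x+1} is not vacuous, and since it replaces $\bullet\xrightarrow{\text{odd}}\bullet$ by $\bullet\leftarrow\circ\to\circ\to\bullet$, it introduces a new direction reversal: a single segment of $zz(r)$ can be split into several segments of $zz(r+1)$ (e.g.\ the one segment of $zz(\nicefrac13)$ becomes two segments of $zz(\nicefrac43)$). So your assertions that ``each old segment is transformed into a new segment'' and that segment boundaries map to segment boundaries are both wrong as stated. The lemma itself survives --- each of the pieces produced by a row-1 replacement, together with the unchanged neighbouring arrows, again has the form $\circ^{m'}\,S_\circ\,\bullet^{n'}$ --- but the check must account for segments being created, not merely preserved, and your write-up as it stands would fail at exactly this point.
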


\begin{proof}
	For \(r=1\), the statement is clearly true. 
	It is enough to show that the property is preserved by the rules \ref{en:zz1}--\ref{en:zz+1}. For the first two rules, this is clearly true. The fact that the operation \(zz(r) \leadsto zz(r+1)\) of \ref{en:zz+1} preserves the property can be seen as follows: 
	Firstly, since \(zz(r)\) satisfies the property by assumption, it does not contain \(\text{\footnotesize end } \bullet \xrightarrow{\text{even}} \bullet \text{ \footnotesize not an end}\) nor a differential \(\bullet \to \circ\). 
	Thus, \(zz(r+1)\) is well-defined. 
	Secondly, one may check one-by-one that the rules in \cref{table:x+1} preserve the property. 	
\end{proof}

The reason for the definition of \(zz\) is the following.
\begin{proposition}[{\cite[Theorem 5.6]{lambda1}, \cite{thompson}}]\label{prop:zigzag_equivalent}
    Up to shifts in homological and quantum gradings, \(zz(r) \simeq \DD(Q_r)\) for any \(r\in \Q_{>0}\) and any orientation on~\(Q_r\). 
\end{proposition}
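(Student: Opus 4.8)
The statement to prove is \cref{prop:zigzag_equivalent}: up to shifts in homological and quantum gradings, $zz(r) \simeq \DD(Q_r)$ for every $r \in \Q_{>0}$ and any orientation on $Q_r$.

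\medskip

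\textbf{Plan of proof.} The plan is to proceed by induction on the number of crossings of $Q_r$, using the standard fact that any positive rational number $r$ can be built from $r=1$ by finitely many applications of the operations $r \mapsto r+1$ and $r \mapsto 1/r$ (the continued fraction / Conway tangle algorithm). The base case $r = 1$ amounts to checking that $\DD(Q_1) \simeq \circ \to \bullet$ up to grading shifts, which is immediate from \cref{prop:equiv_cat_4-tangles}: the positive crossing tangle $Q_{-1}$ (equivalently $Q_1$ after mirroring via \cref{lem:mirror_complex}) has a two-term Bar-Natan complex, and under the equivalence $\mathcal{M}_{\BNAlgH} \to \text{Mat}(\Cob_{/l}(4))$ one reads off that it corresponds to $\circ \xrightarrow{S} \bullet$, matching \ref{en:zz1}. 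The inductive step has two parts, one for each generating operation. For $r \mapsto 1/r$: adding a $1/\,\cdot\,$ to a rational tangle corresponds to rotating the tangle by $90^\circ$, which (after appropriate basepoint bookkeeping, as in \cite[Section~4]{KWZ}) interchanges the two elementary objects $\circ$ and $\bullet$ and reverses all differentials; this is exactly rule \ref{en:zzinv}, and one must check that the odd/even parity bookkeeping is compatible (since $S_\circ \leftrightarrow S_\bullet$ under the rotation, and $S_\bullet$ does not appear in zigzag complexes, one uses instead that after rotation the $S$-type differentials can be re-expressed; this is the content of \cref{lem:one_saddle_per_segment_of_zz} ensuring well-definedness). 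For $r \mapsto r+1$: adding a single crossing to $Q_r$ corresponds to tensoring $\DD(Q_r)$ with the Bar-Natan complex of the crossing, i.e.\ taking a mapping cone; one then performs Gaussian elimination (delooping and cancellation) to bring the result back into zigzag form, and the bookkeeping of which modules survive and which differentials appear is precisely encoded in \cref{table:x+1}. The heart of the argument is therefore a careful case analysis of how each local configuration ($\bullet \xrightarrow{\text{odd}} \bullet$, $\bullet \xrightarrow{\text{even}} \bullet$, $\circ \to \bullet$, $\circ \to \circ$, and the end cases) transforms under "add a crossing, deloop, cancel."

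\medskip

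\textbf{Key steps in order.} (1) Set up the generating operations and reduce to checking compatibility with $r \mapsto r+1$ and $r \mapsto 1/r$. (2) Base case $r=1$ via \cref{prop:equiv_cat_4-tangles}. (3) Handle $r \mapsto 1/r$ using the $90^\circ$-rotation symmetry of rational tangles and the corresponding symmetry of $\BNAlgH$ (swap $\circ \leftrightarrow \bullet$, $S_\circ \leftrightarrow S_\bullet$, $D_\circ \leftrightarrow D_\bullet$), noting that \cref{lem:one_saddle_per_segment_of_zz} guarantees the result is again a legitimate zigzag complex with differentials drawn from the allowed list. (4) Handle $r \mapsto r+1$: write $\DD(Q_{r+1})$ as the total complex of $\DD(Q_r) \otimes \DD(\text{crossing})$, i.e.\ a mapping cone, then apply delooping to every $\bullet$ (resp.\ $\circ$) that acquires a circle and cancel the resulting isomorphism components via Gaussian elimination; verify that the surviving complex matches the local replacement rules of \cref{table:x+1}. (5) Track gradings: since all differentials in a zigzag complex are homogeneous of quantum degree $0$ and homological degree $1$, and since $\DD(Q_r)$ is determined up to chain homotopy by the tangle, the induction automatically propagates the grading shifts; one only needs that the shape (the underlying diagram of $\circ$'s and $\bullet$'s) is correct, which is what the rules provide. (6) Conclude that $zz(r) \simeq \DD(Q_r)$ up to global shifts, and observe that the independence of orientation follows because reversing orientations of tangle strands only shifts the bigrading by \cite[Proposition~4.8]{KWZ}, which we are allowed to ignore.

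\medskip

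\textbf{Main obstacle.} The main obstacle is the $r \mapsto r+1$ step: carrying out the mapping cone, delooping, and Gaussian elimination carefully enough to confirm that \emph{every} local configuration transforms exactly as in \cref{table:x+1}, including the subtle end-cases (where a cancellation near an end of the complex produces or destroys an $S_\circ$, changing which end is "odd" or "even"). One must also be vigilant that no configuration outside the tabulated list can arise — in particular the configuration $\text{\footnotesize end } \bullet \xrightarrow{\text{even}} \bullet \text{ \footnotesize not an end}$ — and this is exactly where \cref{lem:one_saddle_per_segment_of_zz} is invoked to close the gap. A secondary, more bookkeeping-heavy difficulty is checking that the parity assignment (odd/even) remains consistent through both operations, since the rules are stated in terms of this partition and an inconsistency would make the recursion ill-defined; this is handled by the observation that the partition is forced once one declares all $S_\circ$ differentials odd, so it suffices to check that the replacement rules never create two consecutive differentials of the same parity.
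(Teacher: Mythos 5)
The paper does not prove \cref{prop:zigzag_equivalent} itself; it quotes it from \cite[Theorem 5.6]{lambda1} and \cite{thompson}, and your plan is essentially the proof given there: induct over the construction of $r$ from $1$ by the moves $r\mapsto r+1$ and $r\mapsto \nicefrac{1}{r}$, treat the twist-addition step by coning with the crossing complex and then delooping and Gaussian elimination to recover the local rules of \cref{table:x+1}, treat inversion by the $\circ\leftrightarrow\bullet$ symmetry, and dispose of orientations by the overall grading shift of \cite[Proposition~4.8]{KWZ}. Two caveats are worth recording. First, rule \ref{en:zzinv} is not induced by the $90^\circ$ rotation alone: rotation carries $Q_r$ to $Q_{-\nicefrac{1}{r}}$ and swaps $\circ$ with $\bullet$, while the reversal of all arrows comes from the additional mirroring, via \cref{lem:mirror_complex}; relatedly, in this paper's conventions $Q_1=\NCr$ and $Q_{-1}=\PCr$, so the base case is $\DD(\NCr)\simeq\circ\xrightarrow{S}\bullet$ (as in \cref{lem:lambda_crossing_change}), whereas $\DD(\PCr)\simeq\bullet\to\circ$. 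Second, what you yourself identify as the main obstacle---the case-by-case verification that coning, delooping and cancellation reproduce exactly \cref{table:x+1}, with the end cases policed by \cref{lem:one_saddle_per_segment_of_zz}---is precisely the substance of \cite[Theorem~5.6]{lambda1} and is not carried out in your text, so as written this is a correct outline of the cited proof rather than a complete independent argument.
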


In the rest of the appendix, we state and prove some general facts about \(zz(r)\) and compute the quantum and homological grading shifts of \cref{prop:zigzag_equivalent} when \(Q_r\) has connectivity~\(\ConnectivityX\).
The following is a reformulation of \cref{lem:end_zz}.
\begin{lemma}\label{lem:end_zzAppendix}
    Let \(p\) and \(q\) be odd coprime integers such that \(\nicefrac{p}{q}>0\). The chain complex \(zz(\nicefrac{p}{q})\) has an odd \(\circ\)-end and an odd \(\bullet\)-end. In a neighborhood of the \(\bullet\)-end, the complex \(zz(\nicefrac{p}{q})\) has the following shape
    \begin{equation*}
    zz(\nicefrac{p}{q})=
    \begin{cases*}
        \cdots \; \circ \overset{D}{\longrightarrow} \circ \overset{S}{\longrightarrow} \bullet & if \(\nicefrac{p}{q} > 1\) \\
        \phantom{\cdots \; \circ \overset{D}{\longrightarrow}}\;
        \circ \overset{S}{\longrightarrow} \bullet & if \(\nicefrac{p}{q} = 1\) \\
        \cdots \; \circ \overset{D}{\longleftarrow} \circ \overset{S}{\longrightarrow} \bullet & if \(\nicefrac{1}{2} < \nicefrac{p}{q} < 1\) \\
        \cdots \; \bullet \overset{D}{\longrightarrow} \bullet \overset{S^2}{\longrightarrow} \bullet & if \(0 < \nicefrac{p}{q} < \nicefrac{1}{2}\) \\
    \end{cases*}
    \end{equation*}
\end{lemma}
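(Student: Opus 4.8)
By \cref{prop:zigzag_equivalent} it suffices to analyse the combinatorial object $zz(\nicefrac{p}{q})$ rather than $\DD(Q_{\nicefrac{p}{q}})$, and in fact we only need to track its underlying chain of objects together with the directions of its arrows: as noted after \cref{table:x+1}, the labels of the differentials are then pinned down by the zigzag axioms. The plan is to prove, by induction, a description of the shape of $zz(r)$ near \emph{both} of its ends simultaneously, valid for \emph{every} $r\in\Q_{>0}$ in lowest terms — not only for $r=\nicefrac{p}{q}$ with $p$ and $q$ both odd. This level of generality is forced on us: rule \cref{en:zzinv} interchanges the two ends of the complex, and rule \cref{en:zz+1} turns an odd/odd slope into an even/odd slope and then into an odd/even one, so the induction inevitably visits all parity classes. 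The general statement would be organised by the position of $r$ relative to the breakpoints $1$ and $\tfrac{1}{2}$ (equivalently, of $\nicefrac{1}{r}$ relative to $1$ and $2$) and by the parity class of $(p,q)$. Granting such a description, the four cases of \eqref{eq:ends_zz} are read off from the lines corresponding to $p,q$ both odd, with \cref{lem:goodzigzag} confirming that $zz(\nicefrac{p}{q})$ genuinely has an odd $\bullet$-end and an odd $\circ$-end in that case.

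The induction is on $p+q$. The base case $r=1$ is immediate from \cref{en:zz1}, giving $zz(1)=\circ\overset{S_\circ}{\longrightarrow}\bullet$, which is exactly the line $\nicefrac{p}{q}=1$ of \eqref{eq:ends_zz}. For the inductive step, suppose $r\neq 1$, so $p\neq q$. If $r=\nicefrac{p}{q}>1$, then $zz(r)$ is obtained from $zz(\tfrac{p-q}{q})$ via \cref{en:zz+1}, and $(p-q)+q=p<p+q$, so the induction hypothesis describes $zz(\tfrac{p-q}{q})$ near its ends; one substitutes this into the local replacement rules of \cref{table:x+1}, using \cref{lem:one_saddle_per_segment_of_zz} to check that the rules are applicable and that no forbidden configuration is created, and reads off the ends of $zz(r)$. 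If $0<r=\nicefrac{p}{q}<1$, then $\nicefrac{q}{p}>1$, so $zz(\nicefrac{q}{p})$ is obtained from $zz(\tfrac{q-p}{p})$ — with $(q-p)+p=q<p+q$ — via \cref{en:zz+1}, and then $zz(r)$ is obtained from $zz(\nicefrac{q}{p})$ via \cref{en:zzinv} (as $r=1/(q/p)$); since \cref{en:zzinv} exchanges $\circ$ with $\bullet$ and reverses every arrow, the $\bullet$-end of $zz(r)$ is the image of the $\circ$-end of $zz(\nicefrac{q}{p})$ under that operation. This is precisely why the inductive statement must carry information about both ends. (Equivalently, one may phrase the whole argument as an induction along the continued fraction expansion of $r$, which amounts to the same reductions.)

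Carrying this out, the $\bullet$-end of $zz(r)$ comes out as $\cdots\circ\overset{D}{\longrightarrow}\circ\overset{S}{\longrightarrow}\bullet$ for all $r>1$; a slope in $(\tfrac{1}{2},1)$, whose reciprocal lies in $(1,2)$, picks up the end $\cdots\circ\overset{D}{\longleftarrow}\circ\overset{S}{\longrightarrow}\bullet$ after one application of \cref{en:zz+1} followed by \cref{en:zzinv}; and a slope in $(0,\tfrac{1}{2})$, whose reciprocal lies in $(2,\infty)$, yields $\cdots\bullet\overset{D}{\longrightarrow}\bullet\overset{S^2}{\longrightarrow}\bullet$. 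Restricting the resulting description to odd/odd slopes gives exactly \eqref{eq:ends_zz}.

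The main obstacle is not any single step but the bookkeeping: one must maintain, throughout the induction, a description of $zz(r)$ near both ends that is detailed enough — for every positive rational $r$ and in every parity class — for the surgery rules of \cref{table:x+1} and the reflection rule \cref{en:zzinv} to be applied and their effect on the ends computed in each regime. Once the correct general statement is isolated, each individual verification is a short mechanical check against \cref{table:x+1} and \cref{lem:one_saddle_per_segment_of_zz}; the only genuine risk is formulating the general claim too weakly for the induction to close.
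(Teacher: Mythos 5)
Your overall strategy---recursing along the continued-fraction expansion via the rules \ref{en:zz1}--\ref{en:zz+1}, and tracking \emph{both} ends because \ref{en:zzinv} exchanges them---is the same machinery the paper's proof runs on, and it could be made to work. But as written there is a genuine gap: the entire substance of the lemma is delegated to an inductive invariant (``a description of $zz(r)$ near both ends, for every $r\in\Q_{>0}$ and every parity class'') that you never formulate, and the conclusions you then assert in your third paragraph are exactly what needs to be proved rather than consequences of anything you have established. Symptomatically, the claim that ``the $\bullet$-end of $zz(r)$ comes out as $\cdots\circ\overset{D}{\longrightarrow}\circ\overset{S}{\longrightarrow}\bullet$ for all $r>1$'' is not even meaningful in all parity classes (for $q$ even there is no $\bullet$-end at all, by \cref{lem:goodzigzag}), and the case $\nicefrac12<\nicefrac{p}{q}<1$ shows where the circularity bites: to know the $\circ$-end of $zz(\nicefrac{q}{p})$ with $\nicefrac{q}{p}\in(1,2)$ after ``one application of \ref{en:zz+1}'', you must already know the relevant end of $zz(\nicefrac{q-p}{p})$, a slope in $(0,1)$ in the even/odd parity class---precisely the data the unformulated invariant was supposed to supply. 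Until that invariant is written down (per slope regime relative to $1$ and $\nicefrac12$ and per parity class) and checked row by row against \cref{table:x+1} and against \ref{en:zzinv}, what you have is a plan, not a proof; you flag this risk yourself in the final paragraph.

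For comparison, the paper's proof avoids carrying any strengthened inductive statement. It takes the oddness of the two ends from \cref{lem:goodzigzag}, notes that the regime of $\nicefrac{p}{q}$ determines the last one or two operations in the continued-fraction sequence, and then simply enumerates which local configurations the right-hand column of \cref{table:x+1} can possibly produce at an odd $\bullet$-end (respectively, at the odd $\circ$-end of $zz(\nicefrac{q}{p})$ when $\nicefrac{p}{q}<1$), discarding the spurious possibilities with \cref{lem:one_saddle_per_segment_of_zz}; in the regime $0<\nicefrac{p}{q}<\nicefrac12$ one additional differential is pinned down by the parity of the end together with \cref{lem:one_saddle_per_segment_of_zz} again, and the case $\nicefrac{p}{q}<1$ is finished by applying \ref{en:zzinv}. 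So to repair your argument you can either actually state and verify the strengthened two-ended hypothesis your induction needs, or adopt this ``last move'' analysis, which requires no induction beyond \cref{lem:one_saddle_per_segment_of_zz} itself.
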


\begin{proof}
    For \(\nicefrac{p}{q}=1\), the statement is immediate by \ref{en:zz1}.
    Let now \(p\) and \(q\) be odd coprime integers different from 1, such that~\(\nicefrac{p}{q}>0\). Up to inserting or removing two consecutive \(r \mapsto \nicefrac{1}{r}\), \(\nicefrac{p}{q}\) is obtained uniquely from 1 by a sequence \(\mathcal{S}_{\nicefrac{p}{q}}\) of \(r \mapsto r+1\) and \(r \mapsto \nicefrac{1}{r}\). This sequence can be deduced from the continued fraction expansion of \(\nicefrac{p}{q}\) coming from Euclid's algorithm:
    \[
    \frac{p}{q} = a_n + \frac{1}{
    a_{n-1} + \frac{1}{
    a_{n-2} + \frac{1}{
    \cdots + \frac{1}{a_1+1}
    }
    }
    }
    \]
    with \(a_n\in \Z_{\geq 0}\) and \(a_i\in \Z_{>0}\) for~\(i<n\).
    If \(\nicefrac{p}{q}>1\), one finds~\(a_n>0\), which implies that the last operation of the sequence \(\mathcal{S}_{\nicefrac{p}{q}}\) is given by~\(r \mapsto r+1\). On the other hand, if~\(\nicefrac{p}{q}<1\), then \(a_n=0\) and the last operation is \(r \mapsto \nicefrac{1}{r}\).

    Let us start by considering the case~\(\nicefrac{p}{q}>1\).  The last operation of \(\mathcal{S}_{\nicefrac{p}{q}}\) is~\(r \mapsto r+1\), therefore the \(\bullet\)-end of \(zz(\nicefrac{p}{q})\) comes from an end of \(zz(\nicefrac{p-q}{q})\) via the rules of \cref{table:x+1}.
    By examining the right column of the table, we see that the odd \(\bullet\)-end of \(zz(\nicefrac{p}{q})\) has to be of one of the following types:
    \begin{gather*}
        \text{\footnotesize end } \bullet \longleftarrow \circ \longrightarrow \circ \longrightarrow \bullet \cdots \\
        \cdots \bullet \longleftarrow \circ \longrightarrow \circ \longrightarrow \bullet \text{ \footnotesize end} \\
        \cdots \circ \longrightarrow \circ \longrightarrow \bullet \text{ \footnotesize end}
    \end{gather*}
    Furthermore, we observe that the first configuration cannot occur: It would have to come from an arrow of the form \(\text{\footnotesize end } \bullet \longrightarrow \bullet \cdots\) in~\(zz(\nicefrac{p-q}{q})\), which is excluded by \cref{lem:one_saddle_per_segment_of_zz}. This concludes the first case.

    We turn to the case~\(\nicefrac{p}{q}<1\). The last operation of \(\mathcal{S}_{\nicefrac{p}{q}}\) is~\(r \mapsto \nicefrac{1}{r}\), and \(zz(\nicefrac{p}{q})\) is obtained from \(zz(\nicefrac{q}{p})\) by rule \ref{en:zzinv}. 
    Thus, we are now interested in the odd \(\circ\)-end of \(zz(\nicefrac{q}{p})\). By a similar argument as above, observing that~\(\nicefrac{q}{p}>1\), we conclude that this end is of one of the two types below: 
    \begin{gather}
        \label{eq:circ-end_zz_qp<2} \cdots \bullet \longrightarrow \bullet \longleftarrow \circ \text{ \footnotesize end} \\
        \label{eq:circ-end_zz_qp>2} \text{\footnotesize end } \circ \longrightarrow \circ \cdots
    \end{gather}
    The last operation in \(\mathcal{S}_{\nicefrac{q}{p}}\) is~\(r\mapsto r+1\), while the one before that is 
    \(r \mapsto \nicefrac{1}{r}\) if~\(\nicefrac{q}{p}<2\), and \(r \mapsto r+1\) if~\(\nicefrac{q}{p}>2\). 
    One can check that, in the first case, only configuration \cref{eq:circ-end_zz_qp<2} can occur, while in the second case the only possible configuration is \cref{eq:circ-end_zz_qp>2}.
    Furthermore, in the latter case, we can deduce one more differential of the complex: By \cref{lem:one_saddle_per_segment_of_zz} and the fact that the \(\circ\)-end is odd, it follows that the differential adjacent to \(\text{\footnotesize end } \circ \longrightarrow \circ\) must be of the form~\(\circ \longrightarrow \circ\). Indeed, it has to be even, and therefore cannot be a~\(S_\circ\). Thus, we get \(\text{\footnotesize end } \circ \longrightarrow \circ \longrightarrow \circ \cdots\).
\end{proof}

Consider a Conway tangle \(T\) with a basepoint. Suppose that~\(\conn{T}=\ConnectivityX\), and that \(T\) is oriented as the tangle \textnormal{$\vcenter{\hbox{\def\svgwidth{11.6pt}}}$}.
By the equivalence of categories of \cref{prop:equiv_cat_4-tangles}, we can view \(\DD(T)\) as a complex \(\llbracket T \rrbracket\) over \(\text{Mat}(\Cob_{/l}(4))\).
By joining with an arc the two upper end-points of every Conway tangle appearing in~\(\llbracket T \rrbracket\), we obtain a chain complex over~\(\text{Mat}(\Cob_{/l}(2))\). 
This complex is chain homotopy equivalent to~\(\llbracket\overline{T}\rrbracket\), where \(\overline{T}\) is the 2-ended tangle obtained by joining the two upper end-points of~\(T\).
Note that this is where the orientation of \(T\) comes into play: The operation of connecting the two upper end-points of \(T\) with an arc must be consistent with the orientation of the tangle. 
By \cref{prop:equiv_cat_2-tangles}, we can view \(\llbracket\overline{T}\rrbracket\) as a chain complex over \(\mathcal{M}_{\Z[G]}\) which is chain homotopy equivalent to~\(\BNr(\overline{T})\).
Now observe that \(T(0)\) is obtained from \(\overline{T}\) by joining the two remaining end-points with an arc. Then \(\BNr(\overline{T})\) extends to the chain complex~\(\BNr(T(0))\). 
This can be summarized as follows:
\begin{equation} \label{eq:corresp_complexes}
\DD(T) \overset{\text{\cref{prop:equiv_cat_4-tangles}}}{\longleftrightarrow}
\llbracket T \rrbracket  \overset{\text{join upper end-points}}{\longrightarrow}
\llbracket\overline{T}\rrbracket \overset{\text{\cref{prop:equiv_cat_2-tangles}}}{\longleftrightarrow}
\BNr(T(0)).
\end{equation}

When \(p\) and \(q\) are odd positive coprime integers, the complex \(zz(\nicefrac{p}{q})\) has an odd \(\bullet\)-end and an odd \(\circ\)-end. For~\(i,\ell\in\Z\), let \(t^i\QGrad{q^\ell}zz(\nicefrac{p}{q})\) be the graded chain complex obtained from \(zz(\nicefrac{p}{q})\) by setting the bigrading of the \(\bullet\)-end to be~\(\GGzqh{\bullet}{0}{\ell}{i}\). 
The grading of all other modules of \(t^i\QGrad{q^\ell}zz(\nicefrac{p}{q})\) is automatically determined by requiring that the differentials increase the homological grading by 1 and preserve the quantum grading. By \cref{prop:zigzag_equivalent}, if the tangle \(Q_{\nicefrac{p}{q}}\) is given an orientation, there exist \(i,\ell\in \Z\) such that \(t^i\QGrad{q^\ell}zz(\nicefrac{p}{q})\simeq \DD(Q_{\nicefrac{p}{q}})\). The gradings are determined by the following lemma, which is a reformulation of \cref{lem:grading_end_zz}.

\begin{lemma}\label{lem:grading_end_zzAppendix}
    Let \(p\) and \(q\) be odd coprime integers. The rational tangle \(Q_{\nicefrac{p}{q}}\) has connectivity~\(\ConnectivityX\). We equip it with the same orientation as the tangle \textnormal{$\vcenter{\hbox{\def\svgwidth{11.6pt}}}$}.
    If \(p\) and \(q\) are positive, then \(t^0\QGrad{q^\ell}zz(\nicefrac{p}{q})\simeq \DD(Q_{\nicefrac{p}{q}})\) for \(\ell = s_\Q(Q_{\nicefrac{p}{q}}(0))-1\).
    As a consequence, the odd \(\bullet\)-end of \(\DD(Q_{\nicefrac{p}{q}})\) has homological degree \(0\) and quantum degree
    \[
    \begin{cases*}
        s_\Q(Q_{\nicefrac{p}{q}}(0))-1 & if \(\nicefrac{p}{q}>0\), \\
        s_\Q(Q_{\nicefrac{p}{q}}(0))+1 & if \(\nicefrac{p}{q}<0\).
    \end{cases*}
    \]
\end{lemma}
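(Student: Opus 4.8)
The plan is to reduce the statement, via \cref{lem:parity} and \cref{lem:mirror_complex}, to a single claim about the bigrading of the odd $\bullet$-end of $\DD(Q_{\nicefrac{p}{q}})$ for $\nicefrac{p}{q}>0$, and then to read that bigrading off from the bigrading of the pawn piece of $\BNr(Q_{\nicefrac{p}{q}}(0))$ by following the chain of identifications \eqref{eq:corresp_complexes}. First, since $p$ and $q$ are both odd, \cref{lem:parity} gives $\conn{Q_{\nicefrac{p}{q}}}=\ConnectivityX$, so $Q_{\nicefrac{p}{q}}(0)$ is a knot (indeed an alternating $2$-bridge knot). By \cref{prop:zigzag_equivalent} and \cref{lem:goodzigzag}, for $\nicefrac{p}{q}>0$ there are $i,\ell\in\Z$ with $\DD(Q_{\nicefrac{p}{q}})\simeq t^i\QGrad{q^\ell}zz(\nicefrac{p}{q})$, and then the odd $\bullet$-end sits in bigrading $\GGzqh{\bullet}{0}{\ell}{i}$; proving $i=0$ and $\ell=s_\Q(Q_{\nicefrac{p}{q}}(0))-1$ is the entire content for $\nicefrac{p}{q}>0$. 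The quantum-grading formula for $\nicefrac{p}{q}<0$ then follows: one has $\DD(Q_{\nicefrac{p}{q}})\simeq -\DD(Q_{-\nicefrac{p}{q}})$ by \cref{lem:mirror_complex}, dualizing a zigzag complex again yields a zigzag complex, sending the (by \cref{lem:goodzigzag} unique) odd $\bullet$-end to the unique odd $\bullet$-end and negating both gradings, and $Q_{-\nicefrac{p}{q}}(0)=-Q_{\nicefrac{p}{q}}(0)$ has $s_\Q(Q_{-\nicefrac{p}{q}}(0))=-s_\Q(Q_{\nicefrac{p}{q}}(0))$ (see the footnote in \cref{rem:s_of_2-bridge_knots}); so the $\bullet$-end of $\DD(Q_{\nicefrac{p}{q}})$ lands in homological degree $0$ and quantum degree $-\bigl(s_\Q(Q_{-\nicefrac{p}{q}}(0))-1\bigr)=s_\Q(Q_{\nicefrac{p}{q}}(0))+1$.

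For the main step, fix $\nicefrac{p}{q}>0$ with the orientation of the statement and apply the identifications \eqref{eq:corresp_complexes} to $T=Q_{\nicefrac{p}{q}}$. Under \cref{prop:equiv_cat_4-tangles} the $\bullet$-end corresponds to a crossingless $4$-ended tangle, and I would check that joining its two upper end-points produces the trivial $2$-ended tangle together with one extra disjoint closed component (whereas the crossingless tangle attached to $\circ$ closes up to the bare trivial tangle). Delooping this circle and passing through \cref{prop:equiv_cat_2-tangles}, the $\bullet$-end in bigrading $\GGzqh{\bullet}{0}{\ell}{i}$ therefore contributes to the complex $\BNr(Q_{\nicefrac{p}{q}}(0))$ two free rank-one $\Z[G]$-summands concentrated in homological degree $i$ whose quantum degrees differ by $2$; write them $t^i\QGrad{q^{\ell+c}}\Z[G]$ and $t^i\QGrad{q^{\ell+c-2}}\Z[G]$, where the shift $c$ depends only on the conventions in \eqref{eq:corresp_complexes}, not on $\nicefrac{p}{q}$. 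By the explicit shape of the $\bullet$-end in \cref{lem:end_zzAppendix} the differential adjacent to it is $S$ or $S^2$; after closing up, its degree-zero part cancels the lower copy $t^i\QGrad{q^{\ell+c-2}}\Z[G]$ against the neighbouring (closed-up) generator by Gaussian elimination, leaving $t^i\QGrad{q^{\ell+c}}\Z[G]$. Since $Q_{\nicefrac{p}{q}}(0)$ is alternating, $\BNr(Q_{\nicefrac{p}{q}}(0);\F[G])$ is homotopy equivalent to one pawn piece plus $G$-knights; tracking the differentials through \eqref{eq:corresp_complexes} shows that the surviving summand $t^i\QGrad{q^{\ell+c}}\Z[G]$ is exactly this pawn piece (e.g.\ because it is the unique summand surviving modulo $G-1$, cf.\ \cref{rem:pawn_knight_pieces}), which lies in homological degree $0$ and quantum degree $s_\F(Q_{\nicefrac{p}{q}}(0))=s_\Q(Q_{\nicefrac{p}{q}}(0))$. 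Hence $i=0$ and $\ell+c=s_\Q(Q_{\nicefrac{p}{q}}(0))$. Finally I would pin down $c$ with the base case $\nicefrac{p}{q}=1$: there $Q_1=\NCr$, the $\bullet$-end of $\DD(\NCr)$ lies in bigrading $\GGzqh{\bullet}{0}{-1}{0}$ by the computation in the proof of \cref{lem:lambda_crossing_change}, while $Q_1(0)$ is the unknot with pawn piece of quantum degree $0$; so $-1+c=0$, i.e.\ $c=1$, giving $\ell=s_\Q(Q_{\nicefrac{p}{q}}(0))-1$ for all $\nicefrac{p}{q}>0$.

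The hard part is the bookkeeping in the main step: confirming that it is the $\bullet$-object rather than the $\circ$-object whose closure carries the extra circle, that the degree-zero part of the terminal differential from \cref{lem:end_zzAppendix} really cancels the \emph{lower} delooped copy (and not the upper one), and that what remains after all Gaussian eliminations is the pawn and not one end of a $G$-knight. The first two points come down to unwinding the grading conventions in \cref{prop:equiv_cat_4-tangles,prop:equiv_cat_2-tangles} and their compatibility with \eqref{eq:corresp_complexes}; the four-case description in \cref{lem:end_zzAppendix} turns this into a finite check. For the last point, the characterization of the pawn as the part of $\BNr(\,\cdot\,;\F[G])$ surviving modulo $G-1$ (\cref{rem:pawn_knight_pieces}) together with the thinness of $2$-bridge knots keeps the argument short. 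Everything else---the two reductions, the universal-constant argument, and evaluating $c$ on $Q_1$---is routine.
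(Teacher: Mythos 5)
Your proposal is correct and takes essentially the same route as the paper's proof: reduce to positive slope by the mirror argument, pass through the chain of identifications \eqref{eq:corresp_complexes}, deloop the circle created by closing up the $\bullet$-end, cancel the degree-zero component by Gaussian elimination, and identify the surviving free rank-one summand with the pawn piece of $\BNr(Q_{\nicefrac{p}{q}}(0);\Q[G])$ to conclude $i=0$ and $\ell+1=s_\Q(Q_{\nicefrac{p}{q}}(0))$. The only difference is cosmetic: the paper carries out the finite check explicitly (delooping shifts $\QGrad{q^{\ell\pm1}}$ and matrix components $1$ and $G$ in the two local cases of \cref{lem:end_zzAppendix}), whereas you pin down the universal quantum shift by normalising against the base case $Q_1$, which amounts to the same verification.
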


\begin{proof}
     Let \(i,\ell\in \Z\) such that \(t^i\QGrad{q^\ell}zz(\nicefrac{p}{q})\simeq \DD(Q_{\nicefrac{p}{q}})\).
     We will use \cref{eq:corresp_complexes} to relate \(t^i\QGrad{q^\ell}zz(\nicefrac{p}{q})\) to the chain complex \(\BNr(Q_{\nicefrac{p}{q}}(0))\).
     In order to reduce the complexity of \(\llbracket\overline{Q_{\nicefrac{p}{q}}}\rrbracket\) and \(\BNr(Q_{\nicefrac{p}{q}}(0))\), we will apply delooping and Gaussian elimination. These tools were first introduced by Bar-Natan in \cite{BN_fast}; the version of delooping used here is described in \cref{fig:delooping} (cf.\ \cite[Figure 5]{lambda1}).

    \begin{figure}[t]
        \centering
        {\def\svgwidth{190pt}
\begingroup%
  \makeatletter%
  \providecommand\color[2][]{%
    \errmessage{(Inkscape) Color is used for the text in Inkscape, but the package 'color.sty' is not loaded}%
    \renewcommand\color[2][]{}%
  }%
  \providecommand\transparent[1]{%
    \errmessage{(Inkscape) Transparency is used (non-zero) for the text in Inkscape, but the package 'transparent.sty' is not loaded}%
    \renewcommand\transparent[1]{}%
  }%
  \providecommand\rotatebox[2]{#2}%
  \newcommand*\fsize{\dimexpr\f@size pt\relax}%
  \newcommand*\lineheight[1]{\fontsize{\fsize}{#1\fsize}\selectfont}%
  \ifx\svgwidth\undefined%
    \setlength{\unitlength}{394.82471376bp}%
    \ifx\svgscale\undefined%
      \relax%
    \else%
      \setlength{\unitlength}{\unitlength * \real{\svgscale}}%
    \fi%
  \else%
    \setlength{\unitlength}{\svgwidth}%
  \fi%
  \global\let\svgwidth\undefined%
  \global\let\svgscale\undefined%
  \makeatother%
  \begin{picture}(1,0.49019829)%
    \lineheight{1}%
    \setlength\tabcolsep{0pt}%
    \put(0,0){\includegraphics[width=\unitlength,page=1]{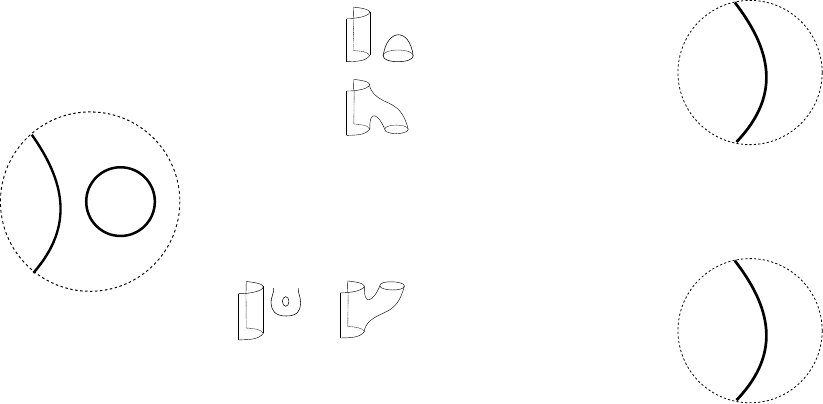}}%
    \put(0.38261318,0.102198){\color[rgb]{0,0,0}\makebox(0,0)[lt]{\lineheight{1.25}\smash{\begin{tabular}[t]{l}-\end{tabular}}}}%
    \put(0,0){\includegraphics[width=\unitlength,page=2]{deloopingv2.pdf}}%
    \put(0.71967516,0.39803026){\color[rgb]{0,0,0}\makebox(0,0)[lt]{\lineheight{1.25}\smash{\begin{tabular}[t]{l}$\QGrad{q^{-1}}$\end{tabular}}}}%
    \put(0.71967516,0.07415242){\color[rgb]{0,0,0}\makebox(0,0)[lt]{\lineheight{1.25}\smash{\begin{tabular}[t]{l}$\QGrad{q^{1}}$\end{tabular}}}}%
  \end{picture}%
\endgroup%
}
        \caption{Delooping (cf.\ \cite[Figure 5]{lambda1}).}
        \label{fig:delooping}
    \end{figure} 
     
     Let \(p\) and \(q\) be odd coprime positive integers.
     By \cref{lem:end_zzAppendix}, if~\(\nicefrac{p}{q}>\nicefrac{1}{2}\), the differential connected to the \(\bullet\)-end of \(t^i\QGrad{q^\ell}zz(\nicefrac{p}{q})\) is given by~\(S_\circ\). By \cref{eq:corresp_complexes}, one has:
     \[
        \llbracket Q_{\nicefrac{p}{q}}\rrbracket \simeq \quad
        \cdots\;
        t^{i-1}\QGrad{q^{\ell-1}} \Ni
        \overset{\vcenter{\hbox{\def\svgwidth{7pt}\includegraphics[width=0.03\textwidth]{figures/Scirc.pdf}}}}{\longrightarrow}
        t^{i}\QGrad{q^{\ell}} \No
        \; , \qquad
        \llbracket \overline{Q_{\nicefrac{p}{q}}} \rrbracket \simeq \quad
        \cdots\;
        t^{i-1}\QGrad{q^{\ell-1}} 
        \vcenter{\hbox{\def\svgwidth{7pt}\includegraphics[width=0.028\textwidth]{figures/2-tangle.pdf}}}
        \overset{\vcenter{\hbox{\def\svgwidth{7pt}\includegraphics[width=0.03\textwidth]{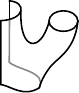}}}}{\longrightarrow}
        t^{i}\QGrad{q^{\ell}} 
        \vcenter{\hbox{\def\svgwidth{7pt}\includegraphics[width=0.028\textwidth]{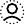}}}
     \]
    Now, we apply delooping to \(\llbracket \overline{Q_{\nicefrac{p}{q}}} \rrbracket\),  and use the correspondence between \(\llbracket \overline{Q_{\nicefrac{p}{q}}} \rrbracket\) and~\(\BNr(Q_{\nicefrac{p}{q}}(0))\). The complex \(\BNr(Q_{\nicefrac{p}{q}}(0))\) is chain homotopy equivalent to the following, where the dashed arrows stand for zero maps:
    \[
    \begin{tikzcd}[row sep=0pt]
        &
        t^{i}\QGrad{q^{\ell-1}} \Z[G]
        \\
        \cdots\; t^{i-1}\QGrad{q^{\ell-1}} \Z[G]
        \arrow{ru}{1}
        \arrow[swap]{rd}{G}
        &
        \oplus
        \\
        &
        t^{i}\QGrad{q^{\ell+1}} \Z[G]
    \end{tikzcd}
    \quad
    \overset{\raisebox{1.5ex}{\parbox{12ex}{\centering\tiny Gaussian elimination}}}{\simeq}
    \quad
    \begin{tikzcd}[row sep=0pt]
        &
        0
        \\
        \cdots\; 0
        \arrow[dashed]{ru}
        \arrow[dashed]{rd}
        &
        \oplus
        \\
        &
        t^{i}\QGrad{q^{\ell+1}} \Z[G]
    \end{tikzcd}
    \]
    This proves that \(\BNr(Q_{\nicefrac{p}{q}}(0))\) decomposes as 
    \[
    \BNr(Q_{\nicefrac{p}{q}}(0)) \simeq t^{i}\QGrad{q^{\ell+1}} \Z[G] \oplus C,
    \]
    for some chain complex~\(C\). On the other hand, by \cref{rem:pawn_knight_pieces}, we know that the complex \(\BNr(Q_{\nicefrac{p}{q}}(0);\Q[G])\) decomposes as a sum of a single pawn piece \(t^0\QGrad{q^{s_\Q(Q_{\nicefrac{p}{q}}(0))}} \Q[G]\) and some \(G^k\)-knight pieces.
    Therefore, when taking the tensor product of \(\BNr(Q_{\nicefrac{p}{q}}(0))\) with~\(\Q[G]\), the summand \(t^{i}\QGrad{q^{\ell+1}} \Z[G]\) must be sent to the pawn piece of \(\BNr(Q_{\nicefrac{p}{q}}(0);\Q[G])\). This means that
\(i=0\) and \(\ell+1=s_\Q(Q_{\nicefrac{p}{q}}(0))\), as desired.
    
    Let now \(0<\nicefrac{p}{q}<\nicefrac{1}{2}\). In this case, the differential connected to the \(\bullet\)-end of \(t^i\QGrad{q^\ell}zz(\nicefrac{p}{q})\) is~\(S^2\). By \cref{eq:corresp_complexes}:
    \[
        \llbracket Q_{\nicefrac{p}{q}}\rrbracket \simeq \quad
        \cdots\;
        t^{i-1}\QGrad{q^{\ell-2}} \No
        \overset{\vcenter{\hbox{\def\svgwidth{7pt}\includegraphics[width=0.03\textwidth]{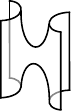}}}}{\longrightarrow}
        t^{i}\QGrad{q^{\ell}} \No
        \; , \quad
        \llbracket \overline{Q_{\nicefrac{p}{q}}} \rrbracket \simeq \quad
        \cdots\;
        t^{i-1}\QGrad{q^{\ell-2}} 
        \vcenter{\hbox{\def\svgwidth{7pt}\includegraphics[width=0.028\textwidth]{figures/2-tangleCircle.pdf}}}
        \overset{\vcenter{\hbox{\def\svgwidth{7pt}\includegraphics[width=0.03\textwidth]{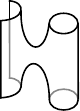}}}}{\longrightarrow}
        t^{i}\QGrad{q^{\ell}} 
        \vcenter{\hbox{\def\svgwidth{7pt}\includegraphics[width=0.028\textwidth]{figures/2-tangleCircle.pdf}}}
    \]
    Using delooping, the complex \(\BNr(Q_{\nicefrac{p}{q}}(0))\) is chain homotopy equivalent to the following complexes (where all dashed arrows are zero maps):
    \[
    \ldots
    \begin{tikzcd}[row sep=0pt]
        t^{i-1}\QGrad{q^{\ell-3}} \Z[G]
        \arrow[dashed]{r}
        \arrow[dashed]{rdd}
        &
        t^{i}\QGrad{q^{\ell-1}} \Z[G]
        \\
        \oplus
        &
        \oplus
        \\
        t^{i-1}\QGrad{q^{\ell-1}} \Z[G]
        \arrow[swap]{r}{G}
        \arrow[{pos=0.67}]{ruu}{1}
        &
        t^{i}\QGrad{q^{\ell+1}} \Z[G]
    \end{tikzcd}
    \hspace{.7em}
    \overset{\raisebox{1.5ex}{\parbox{12ex}{\centering\tiny Gaussian elimination}}}{\simeq}
    \hspace{.7em}
    \ldots
    \begin{tikzcd}[row sep=0pt]
        t^{i-1}\QGrad{q^{\ell-3}} \Z[G]
        \arrow[dashed]{r}
        \arrow[dashed]{rdd}
        &
        0
        \\
        \oplus
        &
        \oplus
        \\
        0
        \arrow[dashed]{r}
        \arrow[dashed]{ruu}
        &
        t^{i}\QGrad{q^{\ell+1}} \Z[G]
    \end{tikzcd}
    \]
    As in the previous case, \(\BNr(Q_{\nicefrac{p}{q}}(0))\) now decomposes as 
    \(
    \BNr(Q_{\nicefrac{p}{q}}(0)) \simeq t^{i}\QGrad{q^{\ell+1}} \Z[G] \oplus C',
    \)
    for some chain complex~\(C'\), and we find again that \(i=0\) and \({\ell+1=s_\Q(Q_{\nicefrac{p}{q}}(0))}\).

    For \(\nicefrac{p}{q}>0\), the statement about the odd $\bullet$-end of $\DD(Q_{\nicefrac{p}{q}})$ follows directly. We turn to the case~\(\nicefrac{p}{q}<0\). 
    By the first part of the proof, we have \(\DD(Q_{\nicefrac{-p}{q}}) \simeq t^0\QGrad{q^\ell}zz(\nicefrac{-p}{q})\) with \(\ell = s_{\mathbb{Q}}(Q_{\nicefrac{-p}{q}}(0)) - 1\).
    Since \(Q_{\nicefrac{-p}{q}} = -Q_{\nicefrac{p}{q}}\), it follows that \(\ell = -s_{\mathbb{Q}}(Q_{\nicefrac{p}{q}}(0)) - 1\). Moreover by \cref{lem:mirror_complex}, \(\DD(Q_{\nicefrac{-p}{q}}) \simeq \DD(-Q_{\nicefrac{p}{q}}) \simeq -\DD(Q_{\nicefrac{p}{q}})\). Thus
    \[ \DD(Q_{\nicefrac{p}{q}}) \simeq t^0 \QGrad{q^{-\ell}} \overline{zz}(\nicefrac{-p}{q}), \]
    where \(\overline{zz}(\nicefrac{-p}{q})\) is the complex obtained by reversing all arrows of~\(zz(\nicefrac{-p}{q})\). So the odd $\bullet$-end of $\DD(Q_{\nicefrac{p}{q}})$ is in homological degree 0 and quantum degree $-\ell = s_{\mathbb{Q}}(Q_{\nicefrac{p}{q}}(0)) + 1$ as claimed.
\end{proof}

\begin{remark}
    For any field \(\F\) and Conway tangle~\(T\), consider the chain complex \(\DD(T;\F)=\DD(T)\otimes_\Z \F\) over the \(\F\)-algebra~\(\BNAlgH \otimes_\Z \F\).
    The complex \(\DD(T;\F)\) admits a geometric interpretation in terms of the technology of \emph{immersed curves} developed in~\cite{KWZ}. 
    In particular, when \(T\) is a rational tangle~\(Q_r\), \(\DD(Q_r;\F)\) corresponds to a straight line segment of slope \(r\) in the covering space \(\mathbb{R}^2\setminus \Z^2\) of \(\mathbb{S}^2_{4,\ast}\) (where \(\mathbb{S}^2_{4,\ast}\) is the 4-punctured sphere with one special puncture). The segment connects the lifts of two non-special punctures of~\(\mathbb{S}^2_{4,\ast}\). 
    In an upcoming paper \cite{upcomingClaudius}, this technology is extended to coefficients in~\(\Z\) for a certain class of tangles, which includes rational tangles. 
    As a consequence, one obtains alternative and more conceptual proofs of \cref{lem:end_zzAppendix,lem:grading_end_zzAppendix}.
\end{remark}

\end{appendix}

\newcommand{\order}[1]{}%
\bibliographystyle{myamsalpha}
\bibliography{main}
\end{document}